\newtheorem{theorem}{Theorem}           
\newtheorem{corollary}[theorem]{Corollary}
\newtheorem{lemma}[theorem]{Lemma}
\newtheorem{prop}[theorem]{Proposition}
\newtheorem{mainthm}{Theorem}           
\theoremstyle{definition}              
\newtheorem{definition}{Definition}
\theoremstyle{remark}                  
\newtheorem{step}{Step}
\newtheorem{case}{Case}
\newtheorem{remark}{Remark}
\DeclareMathOperator{\dist}{dist}
\DeclareMathOperator{\spt}{spt}
\let\div\relax
\DeclareMathOperator{\div}{div}
\DeclareMathOperator{\hc}{hc}
\DeclareMathOperator{\rad}{rad}
\newcommand{\abs}[1]{\left| #1 \right|}
\newcommand{\norm}[1]{\left\| #1 \right\|}
\newcommand{\mres}
{\mathbin{\vrule height 1.6ex depth 0pt width 0.13ex\vrule height 0.13ex depth 0pt width 1.3ex}}
\newcommand{\csubset}{\subset\!\subset} 
\DeclareMathAlphabet{\mathpzc}{OT1}{pzc}{m}{it}
\newcommand{\T}{\mathrm{T}}
\renewcommand{\d}{\mathrm{d}}
\newcommand{\N}{\mathbb{N}}       
\newcommand{\R}{\mathbb{R}}
\newcommand{\Z}{\mathbb{Z}}
\newcommand{\M}{\mathbb{M}}
\newcommand{\F}{\mathbb{F}}
\renewcommand{\SS}{\mathbb{S}}
\newcommand{\G}{\mathbf{G}} 
\renewcommand{\P}{\mathbb{P}}
\newcommand{\e}{\mathbf{e}}
\newcommand{\NN}{\mathscr{N}}     
\newcommand{\MM}{\mathscr{M}}
\newcommand{\GG}{\mathscr{G}}
\renewcommand{\H}{\mathscr{H}}
\newcommand{\eps}{\varepsilon}
\newcommand{\RR}{\varrho}
\newcommand{\GN}{\pi_{k-1}(\mathscr{N})}
\newcommand{\X}{\mathscr{X}}
\newcommand{\nablaT}{\nabla_{\top}}
\renewcommand{\S}{\mathbf{S}}
\definecolor{lightblue}{rgb}{0.22,0.45,0.70}   
\definecolor{darkgray}{gray}{0.4}    
\definecolor{lightgray}{gray}{0.8}
\title{$\Gamma$-convergence of the $p$-Dirichlet energy\\ for manifold-valued maps}
\author{Giacomo Canevari\thanks{Dipartimento di Informatica, Università di Verona. Strada le Grazie 15, 37134 Verona, Italy. E-mail: \texttt{giacomo.canevari@univr.it}, \texttt{ramon.oliverbonafoux@univr.it}, \texttt{giandomenico.orlandi@univr.it}} \and Van Phu Cuong Le\thanks{Institut für Mathematik, Universität Heidelberg,
Im Neuenheimer Feld 205, 69120 Heidelberg, Germany. E-mail: \texttt{cuong.le@uni-heidelberg.de}} \and Ramon Oliver-Bonafoux\footnotemark[1] \and Giandomenico Orlandi\footnotemark[1]}
\date{\today}
\begin{document}

\maketitle

\maketitle
\begin{abstract}
We prove a ${\Gamma}$-convergence result for the $p$-Dirichlet energy functional defined on maps from a smooth bounded domain $\Omega \subseteq \R^{n+k}$ to $\NN$, a $(k-2)$-connected and smooth closed Riemannian manifold with Abelian fundamental group, where $n$ and $k$ are integers, $n \geq 0$, $k \geq 2$. We focus on the regime $p \to~k^-$ under Dirichlet boundary conditions. The result provides a description of the asymptotic behavior of the \textit{topological singular sets} for families of $\NN$-valued Sobolev maps which satisfy suitable energy bounds. Such topological singular sets are $n$-dimensional flat chains with coefficients in $\pi_{k-1}(\NN)$ endowed with a suitable norm. As a consequence of our main result, it follows that the topological singular sets of energy minimizing $p$-harmonic maps converge to a $n$-dimensional flat chain $S$ with coefficients in $\pi_{k-1}(\NN)$ which has finite mass and solves the Plateau problem within the homology class associated to the boundary datum. 

\medskip
\noindent{\bf Keywords.}  $\cdot$ $p$-Dirichlet energy $\cdot$  $\Gamma$-convergence 
$\cdot$ Topological singularities $\cdot$ Flat chains $\cdot$ Minimal surfaces 
	
	\noindent{\bf 2020 Mathematics Subject Classification.}
	49Q15  
	$\cdot$ 49Q20 
	$\cdot$ 58E12. 
	$\cdot$ 58E20. 
\end{abstract}

\section*{Introduction}
\pdfbookmark[1]{Introduction}{introduction}
Let $\Omega \subseteq \R^{n+k}$ be a smooth bounded domain, with $n$ and $k$ natural numbers such that $k \geq 2$. Let $\NN$ be a smooth closed Riemannian manifold isometrically embedded into $\R^m$ for some integer $m$ such that $m \geq 2$. We assume that $\NN$ is $(k-2)$-connected with Abelian fundamental group, that is
\begin{equation}\tag{H}\label{hp:N}
\pi_0(\NN)= \ldots = \pi_{k-2}(\NN) = 0, \hspace{2mm} \pi_{k-1}(\NN) \not = \{0\}, \hspace{2mm} \pi_1(\NN) \mbox{ is Abelian}.  
\end{equation}
Homotopy groups~$\pi_{j}(\NN)$ with~$j\geq 2$
are always Abelian, so the assumption~\eqref{hp:N}
implies that~$\pi_{k-1}(\NN)$ is Abelian.
For any $p \in [1,\, k]$, let us denote as usual
\begin{equation*}
W^{1,p}(\Omega,\,\NN):= \{ u \in W^{1,p}(\Omega, \, \R^m): u(x) \in \NN \mbox{ for a. e. } x \in \Omega\}
\end{equation*}
and define the trace spaces $W^{1-1/p,p}(\partial \Omega,\, \NN)$ in an analogous fashion. For any $v \in W^{1-1/p,p}(\Omega,\NN)$, we define $W^{1,p}_v(\Omega,\, \NN)$ as the set of maps in $W^{1,p}(\Omega,\, \NN)$ with trace $v$. A key consequence of assumption~\eqref{hp:N}, due to Bethuel and Demengel~\cite[Theorem 4]{BethuelDemengel}, is that there exists $v \in W^{1-1/k,k}(\partial \Omega,\, \NN)$ such that $W^{1,k}_v(\Omega,\, \NN)$ is empty. In particular, there are no minimizing $k$-harmonic maps which coincide with such $v$ on $\partial \Omega$. In other words, under Assumption~\eqref{hp:N} the minimization problem
\begin{equation}\label{min_k_harmonic_maps}
\mbox{Find } v_k \in W^{1,k}_v(\Omega,\, \NN) \mbox{ such that } \int_\Omega \lvert \nabla v_k(x) \rvert^k \mathrm{d}x=\inf\left\{ \int_\Omega \lvert \nabla u(x) \rvert^{k} \mathrm{d}x: u \in W^{1,k}_v(\Omega,\, \NN)  \right\}
\end{equation}
does not always have a solution. Nevertheless, if one takes $p \in (k-1, \, k)$, then the space $W^{1,p}_v(\Omega,\, \NN)$ is always non-empty, see Hardt and Lin~\cite[Theorem 6.2]{HardtLin-Minimizing}. Hence, it is natural to consider the minimization problem
\begin{equation}\label{min_p_harmonic_maps}
\mbox{Find } v_p \in W^{1,p}_v(\Omega, \, \NN) \mbox{ such that } D_p(v_p)=\inf\left\{D_p(u): u \in W^{1,p}_v(\Omega,\, \NN) \right\},
\end{equation}
where, for all $u \in W^{1,p}_v(\Omega,\, \NN)$,
\begin{equation}\label{pDirichletFunctional_intro}\tag{$p$-DE}
D_p(u):= \int_{\Omega}\lvert \nabla u(x) \rvert^p \mathrm{d}x.
\end{equation}
By applying the direct method, one checks that a solution to~\eqref{min_p_harmonic_maps} exists. One might then hope that~\eqref{min_p_harmonic_maps} can be used as a relaxation for the possibly ill-posed problem~\eqref{min_k_harmonic_maps}. This leads to the question of studying the asymptotic behavior of families of maps $(u_p)_{p \in (k-1, \, k)}$ along with $(D_p(u_p))_{p \in (k-1, \, k)}$ as $p \to k^{-}$. 
\medskip

A contradiction argument shows that $\lim_{p \to k}D_{p}(u_{p}) = +\infty$  if (and only if) $W^{1,k}_v(\Omega,\, \NN)$ is empty  because of topological obstructions carried by the boundary datum. This suggests the presence of singularities of codimension~$k$, which cost an infinite amount of~$k$-energy while costing a finite amount of $p$-energy when $p<k$. In this paper we establish the previous guess in a rigorous manner. Roughly speaking, we show that, up to extracting a (countable) subsequence, the \textit{topological singular sets} (a notion introduced in~\cite{PakzadRiviere, CO1}) of~$(u_{p})_{p \in (k-1, \, k)}$ converge in a suitable sense as $p \to k$ to some $n$-dimensional surface $S$ which has minimal volume within a certain class. We note that the terms ``surface'' and ``volume'' are to be understood in a generalized sense, which in this paper is that relative to \textit{flat chains} with coefficients in a normed Abelian group. Standard references on this topic are the works by Whitney~\cite{Whitney-GIT}, Fleming~\cite{Fleming}, and White~\cite{White-Rectifiability}.
The theory of flat chains has been applied to the analysis of the space~$W^{1,p}(\Omega,\,  \NN)$ since the work of Pakzad and Rivière~\cite{PakzadRiviere}. It also provides a more general setting for the notion of \textit{minimal connection} introduced in the classical work of Brezis, Coron and Lieb~\cite{BrezisCoronLieb} (see also~\cite{BaLe,GiacomoCuong} for recent extensions). 
Within this framework, the topological singular set~$\S(u_p)$
of~$u_p\in W^{1,p}(\Omega, \, \NN)$ is defined in~\cite{PakzadRiviere, CO1}
as an $n$-dimensional flat chain with coefficients in the normed group $(\pi_{k-1}(\NN)$, $|\, \,|_{k})$. In order to prove lower bounds for the energy of~$u_p$
in terms of~$\S(u_p)$, the group norm~$|\, \,|_{k}$ must be chosen
in a specific way, depending on the target manifold~$\NN$
and the parameter~$k$; see Subsection~\ref{normsonhomotopygroups} 
for the details of the definition. The limiting chain~$S$ solves the homological Plateau problem --- that is, it minimizes the mass among all the chains in its homology class, which is determined by the domain~$\Omega$ and the boundary datum~$v$. Moreover, the sequence of energies~$((k-p)D_p(u_{p}))_{p \in (k-1, \, k)}$ converges towards the mass of $S$.
A precise statement of this convergence result
is given in Theorem~\ref{MainThm_harmonic} below.
In turn, Theorem~\ref{MainThm_harmonic} is a direct corollary 
of our main result, Theorem~\ref{MainThm}, which is
formulated in the framework of~$\Gamma$-convergence.

Let us now give a precise statement of our results. 
For any integer~$q$ between $0$ and $n+k$, let us denote by $\F_q(\Omega;\,  \pi_{k-1}(\NN))$ the set of $q$-dimensional flat chains relative to the open set $\Omega$ with coefficients in the norm group ($\pi_{k-1}(\NN)$, $|\, \,|_{k}$). (We will recall the definition in Section~\ref{Section:preliminaries} below.) Given a chain $S \in \F_q(\Omega;\, \pi_{k-1}(\NN))$, we denote its 
mass as~$\M_k(S)$ and its flat norm relative to~$\Omega$ as~$\F_{\Omega,k}(S)$.
Given a boundary datum~$v \in W^{1-1/k,k}(\Omega, \, \NN)$, there is a uniquely defined cobordism class~$\mathscr{C}(\Omega, \, v)$ of finite-mass $n$-chains that is associated with (the domain~$\Omega$ and)~$v$. The definition of~$\mathscr{C}(\Omega, \, v)$ is given in~\eqref{C_Omega_v} below. Heuristically speaking, the chains~$\mathscr{C}(\Omega, \, v)$ describe the topological singularities that are compatible with the obstructions set by the boundary datum~$v$.
For any open set $U \subseteq \R^{n+k}$, $p \in (k-1, \, k)$ and $u \in W^{1,p}(U,\, \NN)$ we set
\begin{equation*}
D_p(u,\, U):= \int_U \lvert \nabla u(x) \rvert^p \mathrm{d}x.
\end{equation*}
We can now state our main result, which reads as follows:
\begin{mainthm}[$\Gamma$-convergence of the $p$-Dirichlet energies and topological singular sets]\label{MainThm}
Assume that~\eqref{hp:N} holds and let $v \in W^{1-1/k,k}(\Omega, \, \NN)$. Then, we have:
\begin{enumerate}[label=(\roman*)]
\item\label{MainThm1}\emph{Compactness and lower bound.}
Let $(u_p)_{p \in (k-1, \, k)}$  be a family such that $u_p \in W^{1,p}_v(\Omega,\, \NN)$ for all $p$ and
\begin{equation*}
\sup_{p \in (k-1, \, k)}(k-p)D_p(u_p) <+\infty.
\end{equation*}
Then, there exists a (non relabelled) countable
sequence $p\to k$ and a finite-mass chain $S$ in $\mathscr{C}(v,\, \Omega)$ such that~$\F_{\Omega,k}(\S(u_p)- S) \to 0$ as $p \to k$ and, for any open subset~$A\subseteq\R^{n+k}$,
\[
\M_k(S\mres A) \leq \liminf_{p\to k}
(k-p)D_p(u_p, \, A\cap\Omega).
\]	
\item\label{MainThm2} \emph{Upper bound.} 
For any $S$ in $\mathscr{C}(v,\Omega)$ 
there exists a family of maps~$(u_p)_{p \in (k-1, \, k)}$ such that $u_p \in W^{1,p}_v(\Omega,\, \NN)$ for all $p$, $\F_{\Omega,k}(\S(u_p)- S) \to 0$ as $p \to k$ and
\[
\limsup_{p\to k} (k-p)D_p(u_p)
\leq \M_k(S).
\]
\end{enumerate}
\end{mainthm}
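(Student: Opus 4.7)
The plan is to follow the standard $\Gamma$-convergence template, paralleling the higher-codimension analysis in~\cite{CO1, PakzadRiviere} and the Ginzburg--Landau-style lower bounds going back to Sandier and Serfaty. The cornerstone is a sharp local lower bound at the scale of a $k$-dimensional cross-section transverse to the singular set: for $u\in W^{1,p}(B_r^k\setminus B_\rho^k,\NN)$ whose trace on each concentric slice has homotopy class $\alpha\in\pi_{k-1}(\NN)$, the plan is to establish
\[
(k-p)\int_{B_r^k\setminus B_\rho^k} |\nabla u|^p\,\mathrm{d}x \geq \lvert\alpha\rvert_k\cdot(r^{k-p}-\rho^{k-p}) + o(1)
\]
as $p\to k^-$. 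This should follow from Hölder's inequality on each spherical slice, radial integration, and the definition of $\lvert\cdot\rvert_k$ as the infimum of $(k-1)$-energies over maps $S^{k-1}\to\NN$ in the given homotopy class. Combined with Fubini in the $n$ tangential directions, this yields the quantitative link between $p$-energy concentration and singular mass.

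For part~\ref{MainThm1}, the first step is to use $(k-p)D_p(u_p)\leq C$ and the local estimate applied to nested shells to obtain a uniform flat-norm bound on $\S(u_p)\in\F_n(\Omega;\pi_{k-1}(\NN))$; White's compactness theorem~\cite{White-Rectifiability} then extracts a subsequence with $\F_{\Omega,k}(\S(u_p)-S)\to 0$. Since the trace of $u_p$ is pinned to $v$, each $\S(u_p)$ lies in $\mathscr{C}(v,\Omega)$, and closedness of this cobordism class under flat limits places $S\in\mathscr{C}(v,\Omega)$. For the localized lower bound on an open $A\subseteq\R^{n+k}$, I would cover $\spt S\cap A$ by a fine disjoint Vitali family of small balls $\{B_{r_j}(x_j)\}$; on each, a Fubini argument on concentric $k$-spheres transverse to the approximate tangent of $\spt S$ furnishes a slice where the trace of $u_p$ is well defined and has homotopy class matching the local mass of $\S(u_p)$, so the sharp local estimate applies. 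Refining the cover as $p\to k$ and invoking lower semicontinuity of $\M_k$ under flat convergence yields $\M_k(S\mres A)\leq\liminf (k-p)D_p(u_p,A\cap\Omega)$. The delicate point here is the simultaneous selection of good slicing radii and foliation directions on each ball.

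For part~\ref{MainThm2}, the plan is to first reduce to polyhedral $S\in\mathscr{C}(v,\Omega)$ by the density of polyhedral chains in the flat norm (with controlled mass) followed by a diagonal argument in the $\F_{\Omega,k}$-topology. For each $n$-simplex $\sigma\subset\spt S$ with coefficient $\alpha$, I would work in a tubular neighborhood $\sigma\times B_r^k$ and set $u_p(y,x):=f_\alpha(x/|x|)$ for $|x|\geq\eps_p$, where $f_\alpha\colon S^{k-1}\to\NN$ is a near-minimizer realizing $\lvert\alpha\rvert_k$, interpolating smoothly on $\{|x|<\eps_p\}$. An explicit radial computation then gives $(k-p)D_p(u_p,\sigma\times B_r^k)=\lvert\alpha\rvert_k\,\H^n(\sigma)+o(1)$ for a suitable choice of $\eps_p\to 0$. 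The hard part, and in my view the main obstacle of the whole argument, is the gluing: adjacent simplices may carry incompatible radial profiles, and the construction must also match the prescribed trace $v$ on $\partial\Omega$. This is handled by inserting thin transition layers in which the radial profiles are interpolated via homotopies realizing the required local cobordisms; the compatibility at $\partial\Omega$ is precisely what the membership $S\in\mathscr{C}(v,\Omega)$ guarantees, and the transition cost is made $o(1)$ by choosing the layer thickness to tend to zero fast enough relative to $(k-p)$.
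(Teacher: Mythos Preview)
Your proposal has a genuine gap in Part~\ref{MainThm1}. You write that the local annulus estimate yields a uniform flat-norm bound on~$\S(u_p)$ and then invoke White's compactness theorem; but compactness for flat chains requires a uniform \emph{mass} bound (plus a mass bound on the boundary), not merely a flat-norm bound, and here~$\S(u_p)$ may well have \emph{infinite mass}, since~$u_p$ is only in~$W^{1,p}$ with~$p<k$. The paper flags this explicitly as one of the main obstacles: the deformation theorem cannot be applied to~$\S(u_p)$ directly. Instead, the paper first constructs, for each~$p$, a polyhedral $n$-chain~$T_p$ supported on the dual grid of a well-chosen cubical grid~$\GG^p$ of size~$h_p=(k-p)^3$, with multiplicities given by the homotopy classes of~$u_p$ on the boundaries of the $k$-cells. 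The key step is to show that~$\F_{U,k}(\S(u_p)-T_p)\to 0$: this is done not via deformation, but by building (following Stern and Hang--Lin) an auxiliary map~$\bar u_p$ obtained by composing~$u_p$ with successive radial projections onto lower skeleta, so that~$\S(\bar u_p)=T_p$, and then using the continuity estimate~\eqref{basic_estimate_flat_norm} to control~$\F_{U,k}(\S(u_p)-\S(\bar u_p))$. Only then does one obtain uniform mass bounds on~$T_p$ (via the Jerrard--Sandier ball construction in the critical $k$-dimensional cells) and apply compactness to~$T_p$, not to~$\S(u_p)$.

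A second, related gap: you assert that each~$\S(u_p)$ lies in~$\mathscr{C}(v,\Omega)$ and that this class is closed under flat limits. But~$\mathscr{C}(v,\Omega)$ is by definition a class of \emph{finite-mass} chains, and~\eqref{C_Omega_v} is stated only for maps in~$W^{1,k}_v$, not~$W^{1,p}_v$ with~$p<k$. The paper handles this by extending~$u_p$ across~$\partial\Omega$ using an averaged retraction~$\RR_y\circ u$ of a fixed~$W^{1,k}$ extension~$u$ of~$v$ (Lemma~\ref{lemma:goodlemma}), applying the local compactness on a slightly enlarged domain, and then invoking a delicate cobordism-closure argument (Lemmas~\ref{lemma:cobordism}--\ref{lemma:CUv}) that does \emph{not} assume the approximating chains have finite mass. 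Your Vitali-cover slicing scheme for the lower bound is also not what the paper does; the sharp constant is obtained instead by a grid-based localisation together with Lemma~\ref{lemma:co2planes} (mass as a supremum over projections onto $n$-planes), and by carefully controlling the dependence of~$|\cdot|_p$ on~$p$ via uniform-in-$p$ gradient estimates for $p$-harmonic maps (Theorem~\ref{th:p-harmonic}), which you do not mention.

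For Part~\ref{MainThm2} your outline is closer in spirit, but the paper does not build the recovery sequence by gluing local radial profiles from scratch. Rather, it starts from a fixed reference map~$w_*=\RR_{y_*}\circ u_*$ that already satisfies the boundary condition and has a controlled (locally polyhedral) singular set, then modifies it inside~$\Omega$ via an ``insertion of dipoles'' procedure (Lemma~\ref{lemma:5_co2}, taken from~\cite{CO2}) to achieve~$\S(w)=S$. The boundary matching is thus automatic, and the transition cost near~$\partial\Omega$ is controlled by the averaging estimate of Lemma~\ref{lemma:Fatou_ub}; your proposed thin-layer interpolation would have to reproduce this, and it is not clear how your layers would respect the manifold constraint without a retraction-type device.
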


One could also prove a $\Gamma$-convergence result without prescribed boundary conditions in the spirit of~\cite[Theorem 1.1]{ABO2} and~\cite[Proposition D]{CO2}; see Proposition~\ref{prop:Gamma-nobd} below.
In the particular case $n=0$ and $k=2$ but for arbitrary $\NN$ (possibly, with non-Abelian fundamental group), Theorem~\ref{MainThm} has been recently established by Van Vaerenbergh and Van Schaftingen~\cite{VanVanVaeren}. 
Soon before the completion of this paper, Caselli, Freguglia and Picenni~\cite{CaselliFregugliaPicenni} announced a proof of Theorem~\ref{MainThm} for the particular case in which $\NN=\SS^{k-1}$.
In this case, the flat chain $\mathbf{S}(u)$ has integer multiplicities and coincides with the distributional Jacobian of~$u$,  as considered in~\cite{ABO1, JerrardSoner-jacobians} and in the references therein. Compactness and lower bound for maps between closed manifolds
had been given in Chapter~12 of Stern's PhD Thesis~\cite{SternPhD}. His approach is different than ours, mainly because his definition of topological singular set,
contrary to the one in~\cite{CO1}, is based on differential forms. As one does for any result of $\Gamma$-convergence type, one can apply Theorem~\ref{MainThm} to families of minimizers (rather than arbitrary families), yielding the following:
\begin{mainthm}[Compactness and convergence for $p$-harmonic maps as $p \to k^-$]\label{MainThm_harmonic}
Assume that~\eqref{hp:N} holds and let $v \in W^{1-1/k,k}(\Omega, \, \NN)$. Consider a family $(v_p)_{p \in \N}$ such that $v_p \in W^{1,p}_v(\Omega,\, \NN)$ solves~\eqref{min_p_harmonic_maps} for each $p$. Then, up to an extraction of a subsequence, there exists $S \in \mathscr{C}(\Omega,\, v)$ such that $\F_{\Omega,k}(\mathbf{S}(v_p)-S) \to 0$ as $p \to k$ and 
\begin{equation*}
\lim_{p \to k}(k-p)D_p(v_p)=\mathbb{M}_k(S).
\end{equation*}
Moreover, $S$ has minimal mass among the flat chains in the class $\mathscr{C}(\Omega,\, v)$.
\end{mainthm}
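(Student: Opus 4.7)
The plan is to deduce Theorem~\ref{MainThm_harmonic} from Theorem~\ref{MainThm} by the standard route that converts a $\Gamma$-convergence statement into convergence of minimizers; essentially all of the technical work sits inside Theorem~\ref{MainThm}, and the present corollary amounts to bookkeeping. First I would establish the uniform bound $\sup_p (k-p) D_p(v_p) < +\infty$. The cobordism class $\mathscr{C}(\Omega, v)$ is non-empty---for instance, it contains the topological singular set of any $u_0 \in W^{1,p_0}_v(\Omega, \NN)$ for some fixed $p_0 \in (k-1, k)$, such a map existing by Hardt--Lin---so fix any $T_0 \in \mathscr{C}(\Omega, v)$. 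Applying part~\ref{MainThm2} of Theorem~\ref{MainThm} to $T_0$ furnishes a family $(w_p)_{p \in (k-1,k)}$ with $w_p \in W^{1,p}_v(\Omega, \NN)$ and $\limsup_p (k-p) D_p(w_p) \leq \M_k(T_0)$. Because $v_p$ minimizes $D_p$ among maps with trace~$v$, the comparison $(k-p) D_p(v_p) \leq (k-p) D_p(w_p)$ yields the desired bound.

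Next, part~\ref{MainThm1} of Theorem~\ref{MainThm} produces, along a countable subsequence $p \to k^-$, a finite-mass chain $S \in \mathscr{C}(\Omega, v)$ with $\F_{\Omega,k}(\S(v_p) - S) \to 0$ and $\M_k(S) \leq \liminf_p (k-p) D_p(v_p)$. To upgrade this into the sharp energy identity and the mass-minimality of $S$, fix an arbitrary competitor $T \in \mathscr{C}(\Omega, v)$. By part~\ref{MainThm2} there is a family $(w_p^T) \subset W^{1,p}_v(\Omega, \NN)$ with $\limsup_p (k-p) D_p(w_p^T) \leq \M_k(T)$, and minimality of $v_p$ then gives
\[
\M_k(S) \leq \liminf_{p \to k} (k-p) D_p(v_p) \leq \limsup_{p \to k} (k-p) D_p(v_p) \leq \M_k(T).
\]
Specializing to $T = S$ shows that the limit of $(k-p) D_p(v_p)$ exists and equals $\M_k(S)$; letting $T$ range over $\mathscr{C}(\Omega, v)$ shows that $S$ has minimal mass in this class.

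The main obstacle is not in this corollary but in Theorem~\ref{MainThm} itself: the compactness and $\Gamma$-liminf estimates of part~\ref{MainThm1} and the construction of boundary-preserving recovery sequences in part~\ref{MainThm2}. Once those are available the deduction above is purely formal; the only point to watch is that every recovery map $w_p$ or $w_p^T$ belongs to the same admissible class $W^{1,p}_v(\Omega, \NN)$ as $v_p$, so that the variational comparison applies. A minor practical point is that the subsequence extracted in part~\ref{MainThm1} is indexed countably, but this suffices to identify the limit of the numerical sequence $((k-p) D_p(v_p))_p$ because the liminf/limsup bounds obtained above apply along any subsequence, so by a standard subsequence argument the full family converges.
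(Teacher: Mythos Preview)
Your proposal is correct and follows exactly the standard route the paper has in mind: the paper does not spell out a proof of Theorem~\ref{MainThm_harmonic} but simply remarks that it is ``a direct corollary'' of Theorem~\ref{MainThm}, obtained ``as one does for any result of $\Gamma$-convergence type''. Your argument is precisely that deduction. One small imprecision: you justify non-emptiness of~$\mathscr{C}(\Omega,v)$ via a map in~$W^{1,p_0}_v(\Omega,\NN)$, but in the paper the class is defined through maps in~$W^{1,k}_v(\Omega,\R^m)\cap L^\infty$ (see~\eqref{C_Omega_v}); non-emptiness is immediate from that definition, since any bounded $W^{1,k}$-extension of~$v$ will do.
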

Theorem~\ref{MainThm_harmonic} provides a description of the asymptotic behavior of $p$-harmonic maps in a rather general setting. For the case $n=0$ and $\NN=\mathbb{S}^{k-1}$, Theorem~\ref{MainThm_harmonic} had already been proven by Hardt, Lin and Wang~\cite{LingWang}, building upon on an earlier result by Hardt and Lin~\cite{RobertHardtFanghuaLin} devoted to the case $k=2$, see also Hardt and Chen~\cite{HardtChen}. For $\NN$ having a (possibly non-Abelian) finite fundamental group, $n=1$ and $k=2$, a version of Theorem~\ref{MainThm_harmonic} has been established recently by Bulanyi, Van Vaerenbergh and Van Schaftingen~\cite{VanVanVaeren2}. The results in~\cite{RobertHardtFanghuaLin,LingWang, VanVanVaeren} also address higher-order expansions and renormalized energies. Nevertheless, to the best of our knowledge, for the setting of domains with boundary only the cases $n=0$ and $n=1$ (corresponding to point and line singularities, respectively) had been addressed before with the exception of the recent work~\cite{CaselliFregugliaPicenni}. Here we deal with the case of arbitrary~$n$, and, as a consequence, the approach we take here is substantially different from~\cite{RobertHardtFanghuaLin,LingWang, VanVanVaeren, VanVanVaeren2}. Besides the already mentioned results in~\cite[Chapter 12]{SternPhD}, Stern~\cite{DanielStern1} also studied the asymptotic analysis, as~$p\to 2$, of $p$-harmonic maps from a closed Riemannian manifold of arbitrary dimension to $\NN=\mathbb{S}^1$. The spirit of the results in~\cite{DanielStern1} is rather different than ours. In particular, they do not rely on $\Gamma$-convergence and deal instead with families of stationary $p$-harmonic maps. Hence, the results in~\cite{DanielStern1} do not imply $\Gamma$-convergence, and viceversa.

One can also consider a different approximation for~\eqref{min_k_harmonic_maps}, based on functionals of Ginzburg-Landau type, written in a general form as follows:
\begin{equation}\label{GL_intro_general} \tag{$\eps$-GL}
E_\eps(u):= \int_\Omega \left( \frac{1}{k}\lvert \nabla u \rvert^k+\frac{1}{\eps^k}f(u) \right)\mathrm{d}x,
\end{equation}
where now $u \in W^{1,k}_v(\Omega,\, \R^m)$, $\eps$ is a positive parameter and $f$ is a nonnnegative smooth potential vanishing exactly on $\NN$ and satisfying some additional assumptions of a more technical purpose. Functionals of the type~\eqref{GL_intro_general} have been thoroughly studied since the seminal book of Bethuel, Brezis and Hélein~\cite{BBH}. The results in~\cite{BBH} deal with the case $n=0$, $k=2$ and $f(u):= \frac{1}{4}(\lvert u \rvert^2-1)^2$, which implies that $\NN=\mathbb{S}^1$. For such a choice,~\eqref{GL_intro_general} is a reduced model for superconductivity. Roughly speaking, passing to the limit $\eps \to 0^+$ for~\eqref{GL_intro_general} is analogous to performing the limit $p \to k^-$ for~\eqref{pDirichletFunctional_intro}. Generally speaking, results based on the functionals~\eqref{GL_intro_general} were established before the counterpart based on~\eqref{pDirichletFunctional_intro} was proven. For instance, in~\cite[Chapter XI, Problem 9]{BBH} the authors asked whether one could provide an analog of their results for the problem~\eqref{pDirichletFunctional_intro}, which was later answered affirmatively in~\cite{RobertHardtFanghuaLin}. The question of $\Gamma$-convergence for the planar Ginzburg-Landau functionals was not addressed in~\cite{BBH}. The lower bound (with a compactness result) was obtained later by Jerrard and Soner~\cite{JerrardSoner-GL}, while a full $\Gamma$-convergence result for abitrary~$n$, $k$ and~$\NN = \mathbb{S}^{k-1}$ was announced in \cite{AlbertiBUMI} and proved in~\cite{ABO2}. The first and second-order $\Gamma$-limits in the planar case were studied by Alicandro and Ponsiglione~\cite{AlicandroPonsiglione}, based on the so-called ball construction established in the independent works by Jerrard and Sandier~\cite{Jerrard,Sandier}. 

The cases~$\NN = \SS^1$ and~$\NN = \mathbb{S}^{k-1}$ are probably the most common ones in the literature on variational problems of the form~\eqref{GL_intro_general}. (Circle-valued Sobolev maps are also the topic of a recent monograph~\cite{BrezisMironescu-book}.) Circle-valued and sphere-valued maps arise naturally, e.g., in the context of superconductivity, micromagnetism, and materials science. However, other manifolds such as the real projective plane~$\NN=\mathbb{RP}^2$ (with~$k=2$) have also deserved particular attention, as they have physical relevance as well. With the choice~$\NN=\mathbb{RP}^2$, \eqref{GL_intro_general} can be interpreted as the Landau-de Gennes free energy of nematic liquid crystals in the one-constant approximation (see e.g.~\cite{MajumdarZarnescu, NguyenZarnescu, GolovatyMontero}).
In turn, the $p$-Dirichlet energy with~$\NN = \mathbb{RP}^2$ reads
\begin{equation} \label{pEnergyQ}
D_p(\mathbf{Q})=\int_{\Omega} \vert \nabla \mathbf{Q} \vert^p \mathrm{d}x,
\end{equation}
where $p \in (1,2)$ and $\mathbf{Q}\in W^{1,p}(\Omega,\,  \mathbb{RP}^2)$ is the so-called $Q$-tensor, which describes the configuration of the crystal. Functionals with sub-quadratic growth such as~\eqref{pEnergyQ} have also been proposed as a model for nematic liquid crystals, see~\cite{GiaquintaMucci, Mucci2012, BallBeford, CaMaStro}.
Regarding more general classes of target manifolds, several $\Gamma$-convergence results for~\eqref{GL_intro_general} have been established. Monteil, Rodiac and Van Schaftingen~\cite{MonteilAntonin,AntoRemyRodiacJean} focused on the case $n=0$ and $k=2$ with $\NN$ any closed manifold. The case of arbitrary~$n$ and~$k$ and with $\NN$ satisfying~\eqref{hp:N} was studied in~\cite{CO2}.

The functionals~\eqref{pDirichletFunctional_intro} and~\eqref{GL_intro_general} are certainly not the only ways to approximate~\eqref{min_k_harmonic_maps}. For instance, Caselli, Freguglia and Picenni~\cite{CaselliFregugliaPicenni-nonlocal} have considered an approximation in terms of nonlocal functionals and proved a $\Gamma$-convergence result for the fractional Gagliardo seminorm in~$H^{\frac{1 + s}{2}}(\Omega, \, \mathbb{S}^1)$ as~$s\to 1$.
Another possible approach is the approximation of~\eqref{min_k_harmonic_maps} by discrete problems, such as the XY-model of statistical mechanics (see e.g.~\cite{AlicandroCicalese, ADGP} and the erferences therein).

The proof of Theorem~\ref{MainThm} follows the same scheme of~\cite[Theorem C]{CO2}. In particular, using the approach of~\cite{Jerrard,Sandier} we prove a lower energy bound for~\eqref{pDirichletFunctional_intro} in the critical case $n=0$  (Proposition~\ref{prop:lowerbound-k}) which is of independent interest.  However, along the proof we encounter several technical obstacles which are not present in~\cite{CO2}. One of these obstacles arises in the proof of Proposition \ref{prop:lowerbound-k}, as at some point one needs to have suitable uniform (as $p \to k^-$) gradient bounds for $p$-harmonic maps from $\SS^{k-1}$ into $\NN$ which are minimizers in a given homotopy class, see Theorem \ref{th:p-harmonic} below. To our knowledge, the proof of such bounds was not available in the literature and here we provide one in a more general setting (Theorem \ref{th:p_harmonic_appendix}). Another difficulty arises from the fact that the singular sets of maps in $W^{1,p}_v(\Omega,\, \NN)$ do not necessarily have finite mass, which does not allow to apply the deformation theorem for flat chains as in \cite{CO2} in the proof of the compactness and lower bound statement. This is circumvented by applying an argument based on performing suitable modifications on the sequence $(u_p)_{p \in (k-1,k)}$, which represents a deviation with respect to \cite{CO2}. See Section \ref{Section:lower_bound} below for more details. Furthermore, the manifold constraint does not allow to perform extensions and interpolations as in the Ginzburg-Landau setting, where the absence of the constraint allows to perform such procedures more freely. This needs to be circumvented by using approaches which are specific to the manifold-constrained setting, see e. g. Lemma \ref{lemma:goodlemma} below.

We point out that the assumption~\eqref{hp:N} is standard in the field of manifold-valued maps (see e. g.~\cite{Hajlasz,RobertHardtFanghuaLin,PakzadRiviere,CO2}) and completely essential to our approach, for several reasons. To start off, there is no available theory of flat chains with coefficients in a non-Abelian group.
Moreover, if $k \geq 3$ and $\pi_{k-2}(\NN) \not = \{0\}$ one might have that $W^{1,p}_v(\Omega,\, \NN)=\emptyset$ for all $p$ in a left neighbourhood of~$k$. (For instance, this is the case if the domain~$\Omega$ is the
unit ball~$B^2\subseteq\R^2$, the target manifold is~$\NN=\SS^1$, $k=3$,
and the boundary datum is~$v(x) = x$, as~$W^{1,p}_v(\Omega, \, \SS^1)$ is empty for any~$p\geq 2$.) At the technical level, our proof relies heavily on the use of a retraction map introduced by Hardt, Kinderlehrer and Lin~\cite{HKL,HardtLin-Minimizing} and the existence of such map requires that $\pi_i(\NN)=0$ for all integer $i$ ranging from~$0$ to~$k-2$, see~\cite{CO1,CO2} for additional comments. 

The paper is organized as follows. In Section \ref{Section:preliminaries} we provide some preliminary notions and results on retraction maps, flat chains, norms on Abelian groups, the and topological singular sets. In Section \ref{Section:lower_bound} we prove the compactness and lower bound statement of Theorem \ref{MainThm1}. In Section \ref{Section:upper_bound} we provide the proof of the upper bound statement of Theorem \ref{MainThm1}. In Section \ref{Section:GammaNB} we write a precise statement of the analogous to Theorem \ref{MainThm1} without prescribed boundary conditions. Finally, in Appendix \ref{appendix} we prove some uniform gradient estimates for supercritical harmonic maps between manifolds.

\medskip

\textbf{Acknowledgments.} R.O.-B. acknowledges support from the Program Horizon Europe Marie Sklodowska-Curie Post-Doctoral Fellowship (HORIZON-MSCA-2023-PF-01). Grant agreement: \linebreak 101149877. Project acronym: NFROGS. V. P. C. Le gratefully acknowledges the support of the Department of Mathematics at Heidelberg University through a postdoctoral fellowship, as well as the support of STRUCTURES, Cluster of Excellence at Heidelberg University.

\medskip

\numberwithin{equation}{section}
\numberwithin{definition}{section}
\numberwithin{theorem}{section}
\numberwithin{remark}{section}
\numberwithin{example}{section}

\section{Notations and preliminary results}\label{Section:preliminaries}

Following~\cite{CO1,CO2}, we recall some preliminary notations and properties. We also provide proofs of some technical results whose statements we were not able to find in the literature. Throughout the paper, given nonnegative real numbers $a$ and $b$ we will write $a \lesssim b$ whenever there exists a positive constant $C$ depending only on $n$, $k$, $\Omega$ and $\NN$ such that $a \leq C b$. Given $\ell \in \N^*$, the ball of center $x \in \R^{\ell}$ and radius $r>0$ will be denoted as $B^\ell(x,\, r)$. When the center is the origin, we will write $B^\ell_r$ and $B^\ell$ for $r=1$. Given a set~$E\subseteq\R^{n+k}$, we will denote by~$\overline{E}$ the closure of~$E$ and write~$E\csubset F$ if and only if~$\overline{E}\subseteq F$.

\subsection{Norms on~\texorpdfstring{$\pi_{k-1}(\NN)$}{the homotopy group of N}}\label{normsonhomotopygroups}
Let~$\G$ be an Abelian group. We recall that a norm on~$\G$
is a function $\abs{\, \,}\colon \G\to [0, \, +\infty)$ 
that satisfies the following properties:
\begin{enumerate}[label=(\roman*)]
 \item $|\sigma| = 0$ if and only if~$\sigma=0$;
 \item $|-\sigma| = |\sigma|$ for any~$\sigma\in \G$;
 \item $|\sigma_1 + \sigma_2|\leq |\sigma_1|+|\sigma_2|$ for any~$\sigma_1$, $\sigma_2\in \G$.
\end{enumerate}
In particular, we do \emph{not} require that $|n\sigma| = n|\sigma|$ 
for any~$n\in\N$, $\sigma\in \G$. This is consistent with the theory of flat chains as developed 
in~\cite{Fleming, White-Rectifiability}. However, for technical reasons (see~\cite{CO1}),
we are interested in norms 
that satisfy the additional condition
\begin{equation}\label{discrete_norm}
  \inf\left\{ \abs{\sigma}\colon \sigma\in \G, \ \sigma\neq 0\right\}  > 0.
\end{equation}
Condition~\eqref{discrete_norm} implies that~$\abs{\, \,}$
induces the discrete topology on~$\G$.

We consider a suitable family of norms
on the group~$\G = \pi_{k-1}(\NN)$, depending on $p>k-1$. For any such $p$
and any~$\sigma\in\pi_{k-1}(\NN)$, we define
\begin{equation} \label{E_p}
 E_p(\sigma) := \inf\left\{
 \int_{\SS^{k-1}}\abs{\nablaT u}^p \,\d\H^{k-1}
 \colon u\in W^{1,p}(\SS^{k-1}, \, \NN)\cap\sigma \right\} \! ,
\end{equation}
where~$\nablaT$ denotes the tangential gradient 
on~$\SS^{k-1} = \partial B^k_1\subseteq\R^k$,
that is, the restriction of the Euclidean gradient~$\nabla$
to the tangent plane to the sphere. Because of the compact 
embedding
\begin{equation*}
W^{1,p}(\SS^{k-1}, \NN)\hookrightarrow C(\SS^{k-1}, \, \NN),\quad \mbox{ for } p>k-1,
\end{equation*}
the set~$W^{1,p}(\SS^{k-1}, \, \NN)\cap\sigma$ is 
sequentially weakly closed and, hence,
the infimum at the right-hand side of~\eqref{E_p} is achieved.
However, the function~$E_p$ is not a norm, in general,
because it may not satisfy the triangle inequality~(iii).
Instead, we define 
\begin{equation} \label{groupnorm}
 |\sigma|_p := 
   \inf\left\{\sum_{i=1}^h E_p(\sigma_i)\colon h\in\N, \ (\sigma_i)_{i=1}^h\in\pi_{k-1}(\NN)^h,
   \ \sum_{i=1}^h \sigma_i = \sigma\right\}
\end{equation}
for any~$p > k-1$ and~$\sigma\in\pi_{k-1}(\NN)$. By reasoning as in~\cite[Proposition~A.1]{CO2}, one checks that the infimum in \eqref{groupnorm}.
Then, the function~$\abs{\, \,}_p$ is a norm on~$\pi_{k-1}(\NN)$
and satisfies
\begin{equation} \label{alpha_p}
 \alpha_p := \inf\left\{\abs{\sigma}_p\colon 
 \sigma\in \pi_{k-1}(\NN), \ \sigma\neq 0 \right\} > 0
\end{equation}
(see e.g. the argument in~\cite[Proposition~2.1]{CO2}). The norm $|\, \,|_{k}$ is the one considered in~\cite{CO2} up to the multiplicative constant~$1/k$. This small modification does not have any relevant effect on the results of~\cite{CO2}. Alternatively, one could modify the definition of the functional $D_p$ in~\eqref{pDirichletFunctional_intro} by multiplying it by~$1/p$ so that both norms would coincide. 
\subsection{Retraction maps}\label{Subsection:retraction}
In~\cite[Lemma 6.1]{HardtLin-Minimizing}, see also \cite[Lemma~6.1]{HardtLin-Minimizing}, 
\cite[Proposition~2.1]{BousquetPonceVanSchaftingen}, \cite[Lemma~4.5]{Hopper},
it was proven that:
\begin{prop}\label{prop:retractionmap}
There exist a set~$\X\subseteq\R^m$ and a smooth map $\RR\colon\R^m\setminus\X\to\NN$, such that~$\X$ is a finite union of compact polyhedra of dimension~$m-k$, while~$\RR$ satisfies $\RR(z) = z$ for any~$z\in\NN$ and
\[
 \sup_{z\in\R^m}\dist(z, \, \X)|\nabla\RR(z)| < +\infty.
\]
\end{prop}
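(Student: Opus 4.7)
The plan is to combine the smooth nearest-point projection near $\NN$ with an obstruction-theoretic skeletal extension on a large bounded piece of $\R^m$, taking $\X$ to be the dual $(m-k)$-skeleton of a fine cubical complex.

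First, I would fix $\delta > 0$ such that the nearest-point projection $\pi_\NN$ is smooth on the tubular neighborhood $U_{2\delta} := \{z \in \R^m \colon \dist(z,\NN) < 2\delta\}$, and choose $R > 0$ large enough that $\NN \subseteq B^m_R$. Then I take a cubical decomposition $\mathcal{K}$ of $\overline{B^m_{2R}}$ with mesh size much smaller than $\delta$, and after a small generic translation I may assume that no $(m-k)$-dimensional dual face meets $\overline{U_\delta}$. Let $\X$ be the union, over all $k$-cells $Q$ of $\mathcal{K}$ disjoint from $U_\delta$, of the $(m-k)$-dimensional dual cube through the barycenter of $Q$; this is a finite union of compact polyhedra of dimension $m-k$.

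Second, I would set $\RR := \pi_\NN$ on $\overline{U_\delta}$ (which yields $\RR|_\NN = \mathrm{id}$), set $\RR$ to a constant $p_0 \in \NN$ outside $B^m_{2R}$, and interpolate in between by extending skeleton-by-skeleton over $\mathcal{K}$. Starting from the $0$-skeleton, the obstruction to extending a continuous map from the $j$-skeleton to the $(j+1)$-skeleton lies in $\pi_j(\NN)$, which vanishes for $j\leq k-2$ by~\eqref{hp:N}; so the extension reaches the $(k-1)$-skeleton. On each $k$-cell $Q$ the restriction to $\partial Q \cong \SS^{k-1}$ may represent a nontrivial class of $\pi_{k-1}(\NN)$; rather than extending over $Q$, I would extend radially from the barycenter $c_Q$. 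On cells of dimension $j > k$, I iterate, extending radially from the $(j-k)$-dimensional dual face sitting inside the cell, leaving $\X$ out. A final mollification at scale $\sim\dist(\cdot,\X)$ followed by a reprojection through $\pi_\NN$ turns the continuous map into a smooth one while preserving $\RR|_\NN = \mathrm{id}$.

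The gradient bound follows from the radial extension: on a $k$-cell, $\RR(z)$ depends only on the direction $(z - c_Q)/\|z - c_Q\|_Q$ (with $\|\cdot\|_Q$ the cube gauge centered at $c_Q$), so $|\nabla\RR(z)| \lesssim 1/|z - c_Q|$, with a constant controlled by the Lipschitz norm of the skeletal extension — uniform because the cubical construction realizes only finitely many combinatorial patterns up to isometry. The main technical obstacle is to ensure that the final smoothing step preserves both the identity on $\NN$ and the bound $\dist(z,\X)|\nabla\RR(z)| \leq C$; this is achieved by mollifying with a variable scale $\eta(z)$ proportional to $\dist(z,\X)$ and reprojecting to $\NN$ through $\pi_\NN$, which is the standard but delicate step of the argument.
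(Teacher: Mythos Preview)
The paper does not prove this proposition at all: it simply attributes the result to Hardt--Lin~\cite[Lemma~6.1]{HardtLin-Minimizing} (with further references to Bousquet--Ponce--Van~Schaftingen and Hopper) and moves on. Your sketch is precisely the Hardt--Kinderlehrer--Lin construction carried out in those references, so there is no divergence in approach to discuss.

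That said, two small points on your write-up. First, your description of~$\X$ as the union of the $(m-k)$-dimensional dual cubes through barycenters of $k$-cells is correct as a \emph{set}, but when you iterate the radial extension on a $j$-cell with $j>k$ you are projecting away from the dual $(j-k)$-face inside that cell, and you should check that these dual faces assemble to the same~$\X$ (they do: the dual $(m-k)$-skeleton is exactly the closure of the top-dimensional dual cubes you named). Second, the smoothing step is indeed the delicate one: mollifying at scale $\eta(z)\sim\dist(z,\X)$ and reprojecting via~$\pi_\NN$ will in general perturb the identity on~$\NN$. The standard fix is to leave the map untouched on~$U_{\delta/2}$ (where it already equals the smooth~$\pi_\NN$) and blend with a cutoff supported in~$U_\delta\setminus U_{\delta/2}$; since~$\X\cap\overline{U_\delta}=\emptyset$ by your genericity step, the mollification scale is uniformly bounded below there and the blending costs only a bounded gradient. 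You flagged this as ``standard but delicate'', which is accurate, but it is worth making explicit that the identity on~$\NN$ survives because no smoothing is performed in~$U_{\delta/2}$.
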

A map $\RR$ satisfying the properties in Proposition~\ref{prop:retractionmap} will be called a \textit{retraction map}.
Such a map is non-unique. However, we will choose a set~$\X$ and a map~$\RR$ satisfying the properties above and keep them fixed throughout the sequel. We collect here further properties of~$\RR$. As in~\cite[Subsection 3.3]{CO2}, we define
\begin{equation*}
\tilde{\RR}_y: z \in \R^m \setminus (\X+y) \to \tilde{\RR}_y(z):=\RR(z-y) \in \NN
\end{equation*}
Since~$y\mapsto(\tilde{\RR}_y)|_{\NN}$ defines a smooth map
from a neighbourhood of the origin in~$\R^m$ to~$C^1(\NN, \, \NN)$
and~$\tilde{\RR}_0$ is the identity on~$\NN$,
there exists $r_\RR>0$ such that $(\tilde{\RR}_y)|_{\NN}$ is a diffeomorphism of $\NN$ for all $y$ in $B^m_{r_\RR}$. Therefore, for all such $y$ we can define
\begin{equation}\label{rho_y}
\RR_y: z \in \R^m \setminus (\X+y) \to \RR_y(z):=((\tilde{\RR}_y)|_{\NN})^{-1}(\RR(z-y)) \in \NN,
\end{equation}
which is a smooth retraction of $\R^m \setminus (\X+y)$ into $\NN$. In particular, $\RR_y(z)=z$ for all $z \in \NN$. Moreover, we have that for all $y \in B^m_{r_\RR}$ and $z \in  \R^m \setminus (\X+y)$
\begin{equation}\label{ineq_gradient_rm_y}
\lvert \nabla \RR_y (z) \rvert \lesssim \frac{1}{\mathrm{dist}(z-y,\X)}.
\end{equation}
With the notations above, we have:
\begin{lemma}\label{lemma:retractionmap}
	Under the assumptions of Proposition~\ref{prop:retractionmap}, for any $R>0$ there exists a constant $C_R>0$ depending only on $\NN$ and $R$ such that for any $p \in (k-1, \, k)$ one has
	\begin{equation}\label{retractionmap_est}
	(k-p) \int_{B^m_R}\lvert \nabla \RR(z) \rvert^p \mathrm{d}z \leq C_R.
	\end{equation}
	Similarly, for all $z \in B^m_R$ we have
	\begin{equation}\label{retractionmap_y_est}
	(k-p) \int_{B^m_{r_\RR}}\lvert \nabla \RR_y(z) \rvert^p \mathrm{d}y \leq C_R
	\end{equation}
\end{lemma}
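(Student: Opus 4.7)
My plan would be to reduce both \eqref{retractionmap_est} and \eqref{retractionmap_y_est} to a single key estimate of the form
\[
\int_{B^m_{R'}} \dist(z, \, \X)^{-p}\,\d z \leq \frac{C_{R'}}{k-p} \qquad \text{for all } p \in (k-1, \, k),
\]
and then prove this estimate by exploiting the codimension-$k$ structure of $\X$.

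First, the pointwise bound $|\nabla \RR(z)| \lesssim \dist(z, \, \X)^{-1}$ from Proposition~\ref{prop:retractionmap} reduces \eqref{retractionmap_est} to the key estimate with $R' = R$. For \eqref{retractionmap_y_est}, I would apply \eqref{ineq_gradient_rm_y} to get
\[
\int_{B^m_{r_\RR}} |\nabla \RR_y(z)|^p\,\d y \lesssim \int_{B^m_{r_\RR}} \dist(z-y, \, \X)^{-p}\,\d y,
\]
and then perform the change of variable $w = z - y$, using that $B^m(z, \, r_\RR) \subseteq B^m_{R + r_\RR}$ whenever $z \in B^m_R$, to land on the key estimate with $R' = R + r_\RR$.

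The heart of the argument is then to show the codimension-$k$ tubular neighborhood bound
\[
\big|\{z \in B^m_{R'} : \dist(z, \, \X) < t\}\big| \leq C\, t^k \qquad \text{for all } t \in (0, \, 1].
\]
Since $\X$ is a finite union of compact $(m-k)$-dimensional polyhedra, this follows from a standard covering argument (or Weyl's tube formula applied simplex-by-simplex), with the factor $t^k$ being precisely the codimension of $\X$ in $\R^m$. Once this is in hand, I would use the Cavalieri/layer-cake rewriting
\[
\int_{B^m_{R'}} \dist(z, \, \X)^{-p}\,\d z = p \int_0^\infty t^{-p-1}\,\big|\{\dist(\cdot, \, \X) < t\} \cap B^m_{R'}\big|\,\d t
\]
and split the $t$-integral at $t = 1$: on $(0, \, 1]$ the volume bound gives $C p \int_0^1 t^{k-p-1}\,\d t = C\, p/(k-p)$, while on $[1, \, \infty)$ the integrand is controlled by $|B^m_{R'}| \cdot p t^{-p-1}$, whose integral is bounded uniformly in $p \in (k-1, \, k)$. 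Summing these two contributions and multiplying by $(k-p)$ produces the desired bound.

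The only nontrivial ingredient is the tubular neighborhood volume estimate, but for compact polyhedra this is classical, so I do not expect any substantial obstacle. The rest is a direct Cavalieri computation in which the crucial cancellation is the usual one: the integrand $t^{k-p-1}$ produces a logarithmically divergent integral exactly when $p = k$, and the $(k-p)$ prefactor is precisely what neutralizes this divergence.
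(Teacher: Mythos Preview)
Your proposal is correct and follows essentially the same approach as the paper: both reduce the two estimates to bounding $\int \dist(z,\X)^{-p}\,\d z$ via the gradient bounds in Proposition~\ref{prop:retractionmap} and~\eqref{ineq_gradient_rm_y}, and both handle~\eqref{retractionmap_y_est} by the change of variable $w=z-y$. The only cosmetic difference is that the paper computes the key integral by Fubini on a flat model piece $[0,1]^{m-k}\times\{0\}^k$ (reducing to $\int_{[0,1]^k}|z_1|^{-p}\,\d z_1$ in polar coordinates, then invoking scaling and rotation), whereas you package the same computation as a tubular-volume bound plus layer-cake; these are interchangeable.
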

\begin{proof}
We proceed as in~\cite[Lemma~2.3]{HKL}. Let $S_k:= [0, \, 1]^{m-k} \times \{0\}^{k}$ and
	\begin{equation*}
	g_p: z \in [0, \, 1]^m \to g_p(z):= \frac{1}{\dist(z,S_k)^p} \in (0,+\infty).
	\end{equation*}
	By Fubini's Theorem, we have
	\begin{equation*}
	\int_{[0, \, 1]^m}g_p(z)\mathrm{d}z = \int_{[0, \, 1]^{m-k}}\int_{[0, \, 1]^k}\frac{1}{\lvert z_1\rvert^p} \mathrm{d}z_1\mathrm{d}z_2=\int_{[0, \, 1]^k}\frac{1}{\lvert z_1 \rvert^p}\mathrm{d}z_1,
	\end{equation*}
	so that a change of variables into polar coordinates gives
	\begin{equation}\label{ineq_gp}
	(k-p)\int_{[0, \, 1]^m}g_p(z)\mathrm{d}z \lesssim 1
	\end{equation}
Thus,~\eqref{retractionmap_est} follows by Proposition~\ref{prop:retractionmap} combined with scaling and rotation arguments. In order to prove inequality \eqref{retractionmap_y_est}, notice that by applying \eqref{rho_y} along with the change of variables $\tilde{y}=z-y$ one gets
\begin{equation*}
(k-p) \int_{B^m_{r_\RR}}\lvert \nabla \RR_y(z) \rvert^p \mathrm{d}y \lesssim \int_{z+B^m_{r_\RR}}\frac{1}{\mathrm{dist}(\tilde{y},\X)^p}\mathrm{d}\tilde{y}
\end{equation*}
and the conclusion then follows again by \eqref{ineq_gp} after rotations and scalings, up to possibly choosing a larger constant $C_R$.
\end{proof}
From Lemma~\ref{lemma:retractionmap} one readily deduces Lemma~\ref{lemma:average_projection} below, a result which is analogous to~\cite[Proposition 6.4 (iii)]{ABO2},~\cite[Lemma 2 (iii)]{CO2} and that will be used repeatedly in the sequel.
\begin{lemma}\label{lemma:average_projection}
Let $V$ and open bounded set such that $\partial \Omega \subseteq \partial V$. Let $\tilde{v} \in W^{1,1-1/k}(\partial V,\, \NN)$ be such that $\tilde{v}$ agrees with $v$ a. e. in $\Omega$. Let $u \in W^{1,k}_{\tilde{v}}(V,\,\mathbb{R}^m) \cap L^\infty(V,\,\mathbb{R}^m)$. Then for all $p \in (k-1, \, k)$,
\begin{equation}\label{average_projection_inequality}
(k-p) \int_{B^m_{r_\RR}} \lVert \nabla w_y \rVert^p_{L^p(V)}\mathrm{d}y \leq C \lVert \nabla u \rVert_{L^k(V)}^p
\end{equation}
where $w_y:= \RR_y \circ u$ for all $y \in B^m_{r_\RR}$ and $C>0$ is a constant depending on $\lVert u \rVert_{L^\infty(V)}, \tilde{v}, k$ and $\NN$. In particular, $W^{1,p}_{\tilde{v}}(V, \,\NN)$ is non-empty.
\end{lemma}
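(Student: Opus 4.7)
The plan is to combine the pointwise bound~\eqref{ineq_gradient_rm_y} with Fubini's theorem, Lemma~\ref{lemma:retractionmap}, and a Hölder inequality to close the gap between $L^p$ and $L^k$. First, by the chain rule applied to $w_y = \RR_y\circ u$, which is legitimate since $\RR_y$ is smooth on $\R^m\setminus(\X+y)$ and the exceptional set $\{(x,y)\in V\times B^m_{r_\RR}: u(x)\in\X+y\}$ has product measure zero (for each fixed $x$ the slice $u(x)-\X$ has $m$-dimensional Lebesgue measure zero because $\X$ has codimension $k\geq 2$ in $\R^m$), one obtains for a.e.\ $x$ and a.e.\ $y$ the pointwise estimate $|\nabla w_y(x)| \leq |\nabla\RR_y(u(x))|\,|\nabla u(x)|$.

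Exchanging the order of integration and using that $u(x)\in B^m_R$ a.e.\ with $R:=\|u\|_{L^\infty(V)}$, the bound~\eqref{retractionmap_y_est} yields
\[
(k-p)\!\int_{B^m_{r_\RR}}\!\!\int_V|\nabla w_y(x)|^p\,\d x\,\d y \leq \int_V|\nabla u(x)|^p\Bigl((k-p)\!\int_{B^m_{r_\RR}}|\nabla\RR_y(u(x))|^p\,\d y\Bigr)\d x \leq C_R\!\int_V|\nabla u|^p\,\d x.
\]
Since $p<k$ and $V$ is bounded, Hölder's inequality gives $\int_V|\nabla u|^p\,\d x\leq |V|^{1-p/k}\|\nabla u\|_{L^k(V)}^p$, and $|V|^{1-p/k}\leq \max(1,|V|)$ for $p\in(k-1,k)$; this is controlled by a constant depending only on $V$ (and hence on $\tilde v$), $k$ and $\NN$. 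Concatenating the two inequalities yields~\eqref{average_projection_inequality}.

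For the non-emptiness statement, once~\eqref{average_projection_inequality} is in hand, its right-hand side is finite, so the integrand on the left is finite for a.e.\ $y\in B^m_{r_\RR}$; pick any such $y$. The map $w_y=\RR_y\circ u$ then lies in $W^{1,p}(V,\R^m)$, takes values in~$\NN$ because $\RR_y$ does, and since $\RR_y$ fixes $\NN$ pointwise while $\tilde v$ is $\NN$-valued, the trace of $w_y$ on $\partial V$ equals $\RR_y\circ\tilde v=\tilde v$, so $w_y\in W^{1,p}_{\tilde v}(V,\NN)$. The main technical point requiring care is the commutation of trace with the composition $u\mapsto \RR_y\circ u$: this follows from the fact that $\tilde v$ avoids the singular set $\X+y$ (because $\NN$ and~$\X$ are disjoint by construction of~$\RR$ and $r_\RR$ is chosen so that $(\tilde\RR_y)|_\NN$ remains a diffeomorphism), together with the standard continuity of Nemytskii operators for compositions with smooth maps along traces.
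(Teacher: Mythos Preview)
Your proof is correct and follows essentially the same approach as the paper: chain rule, Fubini, the bound~\eqref{retractionmap_y_est} from Lemma~\ref{lemma:retractionmap}, and H\"older to pass from $L^p$ to $L^k$. You add some useful extra justification (the null-set argument for the chain rule and the trace computation for the non-emptiness) that the paper leaves implicit.
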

\begin{proof}
We argue again as in \cite[Lemma 2.3]{HKL}. By applying Fubini's Theorem and the chain rule, it follows
\begin{equation*}
\int_{B^m_{r_\RR}}\lVert \nabla w_y \rVert^p_{L^p(V)}\mathrm{d}y \leq \int_{V} \abs{\nabla u(x)}^p\int_{B^m_{r_\RR}}\abs{\nabla \RR_y(u(x))}^p \mathrm{d}y\mathrm{d}x,
\end{equation*}
so that by inequality \eqref{retractionmap_y_est} in Lemma \ref{lemma:retractionmap} we get
\begin{equation*}
(k-p)\int_{B^m_{r_\RR}}\lVert \nabla w_y \rVert^p_{L^p(V)}\mathrm{d}y \leq C\lVert \nabla u \rVert^p_{L^p(V)}
\end{equation*}
and then~\eqref{average_projection_inequality} is obtained by applying Hölder's inequality.
\end{proof}

\subsection{Flat chains with coefficients in a normed Abelian group}\label{Subs:flat-chains}
We now recall some basic definitions on flat chains with coefficients in a normed group following \cite{CO1}. The reader is referred to \cite{CO1}, as well as the classical references \cite{Fleming,White-Rectifiability,Whitney-GIT}, for a much more detailed exposition. Let $(\G,\abs{\, \,})$ be an arbitrary normed Abelian group as in Subsection \ref{normsonhomotopygroups}. Given $q$ an integer between $0$ and $n+k$, consider the set of all compact, convex and oriented polyhedra of dimension $q$ in $\R^{n+k}$. One can then define an Abelian group by considering formal sums of elements of the previous set with coefficients in $\G$. We now consider $\sim$ a relation in such a module, characterized by $-\sigma \sim \sigma'$ if and only if $\sigma$ and $\sigma'$ differ only by the orientation and $\sigma \sim \sigma_1+\sigma_2$ if and only if $\sigma$ is obtained by gluing $\sigma_1$ and $\sigma_2$ along a common face and the orientations coincide. This defines an equivalence relation on the Abelian group, and the corresponding quotient is the group of polyhedral $q$-chains with coefficients in $\G$. Such a group is denoted as $\P_q(\R^{n+k};\, \G)$. If $S$ is an element of $\P_q(\R^{n+k};\, \G)$, then one can write
\begin{equation*}
S=\sum_{i=1}^{\ell} \alpha_i \llbracket \sigma_i \rrbracket,
\end{equation*}
where $\alpha_i$ belongs to $\G$ for each $i$ and  $\sigma_1, \, \ldots \, ,\sigma_{\ell}$ are compact, convex, oriented and non-overlaping polyhedra of dimension $q$, with $\llbracket \cdot \rrbracket$ being the equivalence class associated to the relation $\sim$. The mass of $S$, denoted as $\M(S)$, is then defined as
\begin{equation*}
\M(S):= \sum_{i=1}^{\ell}\abs{\alpha_i}\mathscr{H}^{q}(\sigma_i).
\end{equation*}
If $q$ is not smaller than $1$, then one can define a linear operator $\partial: \P_q(\R^{n+k};\, \G) \to \P_{q-1}(\R^{n+k};\, \G)$, called the boundary operator, in such a way that given $\sigma$ a polyhedron, $\partial\llbracket \sigma \rrbracket$ is the sum of the boundary faces of $\sigma$ with the orientation induced by $\sigma$ and multiplicity $1$. As expected, the boundary operator satisfies the property~$\partial \circ \partial=0$. The flat norm of $S$ an element of $\P_q(\R^{n+k};\, \G)$ is then 
defined as
\begin{equation*}
\F(S):= \inf\left\{ \M(P)+\M(Q): P \in \P_{q+1}(\R^{n+k};\, \G), \hspace{2mm} Q \in \P_{q}(\R^{n+k};\, \G), \hspace{2mm} S=\partial P+Q \right\}.
\end{equation*}
It can be shown that $\F$ defines indeed a norm on $\P_q(\R^{n+k};\, \G)$. The completion of $(\P_q(\R^{n+k};\, \G),\F)$ as a metric space is $\F_q(\R^{n+k},\, \G)$, called the set of flat chains of dimension $q$ with coefficients in $\G$, endowed with the metric $\F$. One can then extend the mass $\M$ as a functional from $\F_q(\R^{n+k};\, \G)$ into $[0,+\infty]$ which is lower semicontinuous with respect to $\F$. The set of $q$-dimensional flat chains with coefficients in $\G$ and finite mass is denoted as $\M_q(\R^{n+k};\, \G)$. It also turns out that
\begin{equation*}
\F(S)= \inf\left\{ \M(P)+\M(Q): P \in \F_{q+1}(\R^{n+k}M\, \G), \hspace{2mm} Q \in \F_{q}(\R^{n+k};\, \G), \hspace{2mm} S=\partial P+Q \right\},
\end{equation*}
whenever $S$ belongs to $\F_q(\R^{n+k},\, \G)$.

Given $A$ a Borel subset of $\R^{n+k}$ and $S$ a chain in $\M_q(\R^{n+k};\, \G)$, the restriction of $S$ to $A$, denoted as $S \mres A$, is defined naturally by approximating with polyhedral chains. One then finds that $S \to \M(S \mres A)$ is lower semicontinuous with respect to $\F$ and $A \to \M(S \mres A)$ is a Radon measure. A flat chain $S$ in $\F_q(\R^{n+k};\, \G)$ is said to be supported on a closed set $K$ if for any $V$ an open neighborhood of $K$ one can approximate $S$ with respect to $\F$ by chains in $\P_q(\R^{n+k};\, \G)$ which are supported on $V$, in the sense that any polyhedron of the chain is contained in $V$. If $S$ is supported on some compact set, one can define its support, $\spt S$, as the smallest of such sets. Given $K$ a closed subset of $\R^{n+k}$, the set $\F_q(K;\, \G)$ is defined as the set of chains in $\F_q(\R^{n+k};\, \G)$ which are supported in $K$. Likewise, one defines $\M_q(K;\, \G)$. One readily checks that both $\F_q(K;\, \G)$  and $\M_q(K;\, \G)$ are closed with respect to $\F$-convergence. 

Finally, let us recall the definition of relative flat chains on an open set. Given $U$ an open subset of $\R^{n+k}$ and $K$ a closed set contaning $U$, the space $\F_q(U;\, \G)$ is defined as the quotient $\F_q(K;\, \G)/\F_q(\R^{n+k}\setminus U; \, \G)$. It can be shown that such a definition does not depend on the closed set $K$. The space $\F_q(U;\, \G)$ is a complete Abelian group with respect to the quotient norm
\begin{equation} \label{flatrelative}
\F_U(S):= \inf\left\{\F(R): R \in \F_q(\R^{n+k};\, \G), \hspace{2mm} \spt(R) \subseteq K, \hspace{2mm} \spt(R-S) \subseteq K \setminus U   \right\}.
\end{equation}
In the right-hand side of~\eqref{flatrelative}, we have implicitely identified~$S\in\F_q(U; \, \G)$ with an arbitrary representative of the equivalence class~$S$ in~$\F_q(K; \, \G)$. 
We will implicitely make the same identification below, when convenient. The quotient norm defined in~\eqref{flatrelative} can be equivalently written as
\begin{align*}
\F_U(S)= \inf\{ & \M( P \mres U) + \M(Q \mres U): P \in \M_{q+1}(\R^{n+k};\, \G), \hspace{2mm} Q \in \M_{q}(\R^{n+k};\, \G),\\
& \spt(S- \partial P-Q) \subseteq \R^{n+k} \setminus U \}.
\end{align*}
If a class~$S\in\F_q(U; \G)$ admits a finite-mass representative in~$\M_q(K; \, \G)$, still denoted~$S$ by abuse of notation, we write~$\M_U(S) := \M(S\mres U)$. 
Equivalently, $\M_U$ could be defined by approximating elements of~$\F_q(U;\, \G)$ by polyhedral chains supported in~$U$.

\subsection{Topological singular sets}
Following~\cite{CO1,CO2}, we recall the definition and some properties of the operator $\S$ which describes the topological singular sets of vector-valued maps. We also proof some technical results which will be needed in the sequel. We consider the group $\GN$ endowed with the norm $\abs{\, \,}_k$ defined in Subsection \ref{normsonhomotopygroups}. We will use the definitions and results stated in Subsection \ref{Subs:flat-chains} for flat chains arising from such a specific choice of group and norm. The associated flat norm and the mass will be denoted by~$\F_k$, $\M_k$, respectively. These are as in~\cite{CO2}, where the subscript $k$ is not written. Given a bounded domain $U \subseteq \R^{n+k}$ and $S_1, S_2 \in \M_q(\overline{U}; \, \GN)$, we say that $S_1$ and $S_2$ are \textit{cobordant} in $\overline{U}$ if and only if there is $R\in \M_{q+1}(\overline{U}; \, \GN)$ such that  
\begin{align}\label{equivalent}
\partial R=S_1-S_2.
\end{align}
One has that~\eqref{equivalent} defines an equivalence relation on $\M_q(\overline{U}; \, \GN)$ and, moreover, cobordism classes are closed with respect to the flat-norm convergence (see Lemma~\ref{lemma:cobordism} below). The flat norm and mass for flat chains relative to $U$ will be denoted by $\F_{U,k}$ and $\M_{U,k}$, respectively.

Let~$\delta^*\in (0, \, \dist(\NN, \, \X))$ be fixed, and let $B^* := B^m_{\delta^*}\subseteq\R^m$. Let 
\[
 Y(U) := L^1(B^*, \, \F_{n}(U; \, \GN))
\]
be the set of Lebesgue-measurable maps
\begin{equation*}
S\colon B^*\to S_y \in \F_{n}(U; \, \GN)
\end{equation*}
such that
\[
\norm{S}_{Y(U)} := \int_{B^*} \F_{U,k}(S_y) \, \d y < +\infty.
\]
The set $Y(U)$ is then a complete normed modulus when endowed with the norm~$\|\cdot\|_{Y}$ and the space $\F_{n}(U; \, \GN)$ embeds canonically into~$Y(U)$. Then, the topological singular set for maps in $X(U):= W^{1,k-1}(U,\, \mathbb{R}^m) \cap L^\infty(U,\,\mathbb{R}^m)$ is defined in~\cite{CO1} as an operator $\S^U \colon X(U) \to Y(U)$.  Its construction  relies on the choice of a particular retraction map~$\RR$, as given by Lemma~\ref{lemma:retractionmap}. In case~$u$ is an~$\NN$-valued map, i.e.~$u\in W^{1,k-1}(U, \, \NN)$, the operator~$\S^U(u)$ keeps track of topological singularities of~$u$ and coincides with an object defined previously by Pakzad and Rivière~\cite{PakzadRiviere}. In particular, since the construction of~\cite{PakzadRiviere} does not make use of retraction maps, for~$u\in W^{1,k-1}(U, \, \NN)$ the object~$\S^U(u)$ turns out to be independent of the choice of~$\X$ and~$\RR$.
As shown in~\cite[Corollary 1]{CO1}, if $u_0$ and $u_1$ in $X(U)$ coincide in an open set $V \subseteq U$, then $\mathrm{spt}(\S_y^U(u_0)-\S_y^U(u_1)) \subseteq \overline{U}\setminus V$ for a. e. $y \in \mathbb{R}^m$. That is, $\S^U$ is well behaved with respect to restrictions. Therefore, 
we shall simply write $\S$ and drop the superscript indicating the domain, which will be clear from the context in all cases. In order to complete this subsection, we collect a few properties of $\S$ that will be useful in the sequel:
\begin{prop}\label{prop:operator_S}
Let $U\subseteq\R^{n+k}$ be a bounded, Lipschitz domain. Then,
\begin{enumerate}
	\item\label{S_Nvalued} If~$u \in W^{1,k-1}(U, \, \NN)$, then one has $\S_{y_1}(u)=\S_{y_2}(u)$ for a.e. $y_1, y_2 \in B^*$.
	\item\label{S:cobord-app} If~$u_0$ and $u_1$ belong to $W^{1,k}(U, \, \mathbb{R}^m) \cap L^\infty(U,\,\mathbb{R}^m)$ and
	\begin{equation*}
	u_{0|\partial U} = 
	u_{1|\partial U}\in W^{1-1/k, k}(\partial U, \, \mathbb{R}^m) \cap L^\infty(\partial U,\, \mathbb{R}^m)
	\end{equation*}
	in the sense of traces, then $\S_{y_0}(u_0)$ and $\S_{y_1}(u_1)$ are cobordant for a.e.~$y_0$, $y_1\in B^*$.
	\item \label{S_rel_boundary} For any $u \in X(U)$ and a. e. $y \in \mathbb{R}^m$, $\S_y(u)$ is a relative boundary of finite mass in $\Omega$,
	\item Let $U$ be an open subset of $\Omega$ and $u_0$, $u_1$ belong to $X(U)$. Let $\Lambda$ be equal to the maximum between $\lVert u_0 \rVert_{L^\infty(U)}$ and $\lVert u_1 \rVert_{L^\infty(U)}$. Then,
	\begin{equation}\label{basic_estimate_flat_norm_0}
	\int_{B^*}\F_{U,k}(\S_y(u_{0}) - \S_y(u_{1})) \mathrm{d}y \leq C_{\Lambda} \int_{U} \left( \abs{\nabla u_0}^{k-1} + \abs{\nabla u_{1}}^{k-1} \right)\vert u_0-u_{1} \vert \, \d x,
	\end{equation}
	where $C_{\Lambda}$ depends only on $\Lambda$, $k$ and $\NN$.
\end{enumerate}
\end{prop}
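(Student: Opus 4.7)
My approach is to reduce each item to a corresponding property of the Pakzad--Rivière topological singular-set operator $\S^{\mathrm{PR}}$, defined intrinsically on $\NN$-valued Sobolev maps, via the identity $\S_y(u) = \S^{\mathrm{PR}}(\RR_y \circ u)$ coming from the construction of $\S$ in \cite{CO1}. This identity makes sense because $\RR_y \circ u \in W^{1,k-1}(U, \NN)$ for a.e. $y \in B^*$, as can be shown by a Fubini argument combined with \eqref{retractionmap_y_est}, in the spirit of Lemma~\ref{lemma:average_projection} taken at the exponent $p = k-1$.

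Items 1 and 3 should be quick. For item 1, the definition recalled after \eqref{rho_y} gives $\RR_y|_\NN = \mathrm{id}_\NN$, so $\RR_y \circ u = u$ whenever $u$ is already $\NN$-valued; hence $\S_y(u) = \S^{\mathrm{PR}}(u)$ does not depend on $y$. For item 3, the $\NN$-valued map $\RR_y \circ u$ falls under the classical Pakzad--Rivière theory, which ensures that its topological singular set is a relative boundary of finite mass in $U$.

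For item 2, I would build a joint $\NN$-valued map $H \colon U \times [0,1] \to \NN$ by setting $H(x,t) := \RR_{\gamma(t)}(u_t(x))$ with $\gamma(t) := (1-t) y_0 + t y_1$ and $u_t := (1-t) u_0 + t u_1$. Since $u_0 = u_1$ on $\partial U$, the lateral trace $u_t|_{\partial U}$ is independent of $t$, and $H$ is $\NN$-valued for a.e. $(x,t)$ again by Fubini and \eqref{retractionmap_y_est}. Applying $\S^{\mathrm{PR}}$ on $U \times [0,1]$ produces an $(n+1)$-chain whose $t$-slices at $t = 0$ and $t = 1$ are $\S_{y_0}(u_0)$ and $\S_{y_1}(u_1)$, while the lateral contribution is supported on $\partial U \times [0,1]$; pushing forward along the projection $U\times[0,1]\to U$ and absorbing the lateral piece into $\overline U \setminus U$ yields the required cobordism chain in $\overline U$.

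Item 4 is the technical core and the main obstacle. Starting from the analogous basic estimate for $\NN$-valued maps,
$$
\F_{U,k}(\S^{\mathrm{PR}}(v_0) - \S^{\mathrm{PR}}(v_1)) \lesssim \int_U \bigl(|\nabla v_0|^{k-1} + |\nabla v_1|^{k-1}\bigr) |v_0 - v_1| \, \d x,
$$
applied to $v_i = \RR_y \circ u_i$, the chain rule and the mean-value inequality $|\RR_y(u_0(x)) - \RR_y(u_1(x))| \leq |u_0(x) - u_1(x)| \int_0^1 |\nabla \RR_y(u_t(x))| \, \d t$ reduce the task, after integration over $y \in B^*$ and Fubini, to bounding uniformly in $x$ an integral of the form $\int_{B^*} |\nabla \RR_y(z_0)|^{k-1} |\nabla \RR_y(z_1)| \, \d y$ with $|z_i| \leq \Lambda$. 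The pointwise bound $|\nabla \RR_y(z)| \lesssim \dist(z-y, \X)^{-1}$ puts this integral right at the borderline of integrability; I would handle it via Hölder's inequality combined with \eqref{retractionmap_y_est} at $p = k-1$ (where the prefactor $k - p = 1$ is harmless), yielding the constant $C_\Lambda$ of the statement.
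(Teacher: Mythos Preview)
The paper does not prove this proposition; all four items are imported from~\cite{CO1}, and the only hint of an argument is Remark~\ref{rk:flatcontinuity}, which sketches item~4 via the explicit cobordism $R_y=\pi_*\S_y(u)$ built from the linear interpolation $u(t,x)=(1-t)u_0(x)+tu_1(x)$.

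Your sketches for items~1--3 are reasonable and close in spirit to what underlies~\cite{CO1}. (In item~2, note that the lateral trace of your map~$H$ does depend on~$t$ through~$\gamma(t)$, so the bookkeeping is a bit more delicate than you indicate; but this is not a serious obstacle.)

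Your plan for item~4, however, has a genuine gap. You correctly identify that everything reduces to a uniform bound on
\[
J(z_0,z_1)=\int_{B^*}|\nabla\RR_y(z_0)|^{k-1}\,|\nabla\RR_y(z_1)|\,\d y,
\]
and you see this is borderline; but your proposed fix---H\"older plus~\eqref{retractionmap_y_est} at $p=k-1$---cannot close. With $|\nabla\RR_y(z)|\lesssim\dist(z-y,\X)^{-1}$ and~$\X$ of codimension~$k$, a H\"older split with exponents~$q,q'$ needs both $(k-1)q<k$ and $q'<k$; the first forces $q<k/(k-1)$ and hence $q'>k$, a contradiction. Indeed $J(z,z)=\int_{B^*}|\nabla\RR_y(z)|^k\,\d y=+\infty$, so no bound uniform in~$z_0,z_1$ exists. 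A sharper Riesz-potential estimate gives at best $J(z_0,z_1)\lesssim 1+\log(1/|z_0-z_1|)$, which after multiplying by~$|u_0-u_1|$ leaves you with the wrong integrand $|u_0-u_1|\log(1/|u_0-u_1|)$ rather than~$|u_0-u_1|$.

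The route in~\cite{CO1}, sketched in Remark~\ref{rk:flatcontinuity}, sidesteps this by \emph{not} composing with~$\RR_y$: one applies~$\S_y$ directly to the $\R^m$-valued interpolation $u(t,x)$ on the cylinder $[0,1]\times U$. The mass of~$\S_y(u)$ is controlled by a coarea-type formula, and the subsequent average $\int_{B^*}\M_U(R_y)\,\d y$ picks up only a \emph{single} factor of the form $\int_{B^*}\dist(u(t,x)-y,\X)^{1-k}\,\d y\lesssim C_\Lambda$, not two; the remaining $(t,x)$-integration then yields exactly the right-hand side of~\eqref{basic_estimate_flat_norm_0}. The extra~$|\nabla\RR_y|$ factor you incur by forcing the maps to be $\NN$-valued before estimating is precisely what pushes the integral over the critical threshold.
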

The proof of~\ref{S_Nvalued} in Proposition~\ref{prop:operator_S} can be found in~\cite[Proposition 3]{CO1}. As for~\ref{S:cobord-app}, its proof is given in~\cite[Proposition 2]{CO1}. The proof of~\ref{S_rel_boundary} is given in~\cite[$(P_3)$ Theorem 3.1]{CO1}. Regarding estimate~\eqref{basic_estimate_flat_norm}, it corresponds to~\cite[$(P_5)$ Theorem 3.1]{CO1}. 

\begin{remark} \label{rk:flatcontinuity}
Actually, it is possible to write explicitely the difference~$\S_y(u_0) - \S_y(u_1)$ as a boundary, in such a way that the estimate~\eqref{basic_estimate_flat_norm_0} follows. Indeed, given~$u_0\in X(U)$, $u_1\in X(U)$ with~$\norm{u_0}_{L^\infty(U)}\leq\Lambda$, $\norm{u_0}_{L^\infty(U)}\leq\Lambda$, let~$u\colon [0, \, 1]\times U\to \R^m$ be given as~$u(t, \, x) := (1-t) u_0(x) + t u_1(x)$, and let
 \begin{equation} \label{connection}
  R_y := \pi_{*}\S_y(u),
 \end{equation}
 where~$\pi_{*}$ denotes the push-forward through the canonical projection~$\pi\colon [0, \, 1]\times\R^{n+k}\to\R^{n+k}$. (See~e.g.~\cite[Section~5]{Fleming} for the definition of~$\pi_{*}$). Then, $\spt(\S_y(u_1) - \S_y(u_0) - \partial R_y)\subseteq\R^{n+k}\setminus U$ and we have the estimate
 \[
  \int_{B^*} \M_U(R_y) \, \d y 
  \leq C_{\Lambda} \int_{U} \left( \abs{\nabla u_0}^{k-1} + \abs{\nabla u_{1}}^{k-1} \right)\vert u_0-u_{1} \vert \, \d x,
 \] 
 which implies~\eqref{basic_estimate_flat_norm_0} by definition of the flat norm.
 These facts follow from~\cite[Proposition~4]{CO1}, combined with a density argument.
\end{remark}

As a consequence of~\ref{S_Nvalued} in Proposition~\ref{prop:operator_S}, the operator $\S$ is constant a. e. when applied to $\NN$-valued maps. Hence, given $u \in W^{1,k-1}(U,\,\NN)$ we shall identify $\S(u)$ with the flat chain $S_u \in \F_n(U;\,\GN)$ such that $\S_y(u)=S_u$ for a. e. $y \in B^*$. As a consequence,~\eqref{basic_estimate_flat_norm_0} reads
\begin{equation}\label{basic_estimate_flat_norm}
 \F_{U,k}(\S(u_{0}) - \S(u_{1})) \lesssim \int_{U} \left( \abs{\nabla u_0}^{k-1} + \abs{\nabla u_{1}}^{k-1} \right)\vert u_0-u_{1} \vert \,dx,
\end{equation}
whenever $u_0$ and $u_1$ belong to $W^{1,k-1}(U,\,\NN)$. Finally, by~\ref{S:cobord-app} in Proposition~\ref{S:cobord-app}, one can show that any $v \in W^{1-\frac{1}{k},k}(U, \NN)$ induces a cobordism class~$\mathscr{C}(U, \, v)\subseteq\M_n(\overline{U}; \, \GN)$ such that
\begin{equation}\label{C_Omega_v}
\S_y(u)\in\mathscr{C}(U, \, v) \quad
\textrm{ for any } u\in W^{1,k}_v(U, \, \R^m)
\cap L^\infty(U, \, \R^m) 
\textrm{ and a.e. } y\in B^*.
\end{equation}
Actually, it is possible to define~$\S_y(u)$ not only when~$u\in W^{1,k-1}(U, \, \R^m)\cap L^\infty(U, \, \R^m)$, but also when~$u\in W^{1,k}(U, \, \R^m)$. In the latter case, the higher integrability of the gradient compensates for the possible unboundedness of~$u$.
Moreover, the property~\eqref{C_Omega_v} remains true for arbitrary~$u\in W^{1,k}_v(U, \, \R^m)$ (see~\cite[Appendix~B]{CO2}), which is helpful in dealing with generalized Ginzburg-Landau functionals of the form~\eqref{GL_intro_general}. In this paper, we will not need such generality, since we are only interested in maps with values in a compact manifold. However, we will need further properties of the class~$\mathscr{C}(U, \, v)$. First of all, we provide a proof of the fact that~$\mathscr{C}(U, \, v)$ is closed with respect to the~$\F_k$-norm.

\begin{lemma} \label{lemma:cobordism}
Let~$U\subseteq\R^{n+k}$ be a bounded domain with Lipschitz boundary. Let~$(S_i)_{i\in\N}$, $(R_i)_{i\in\N}$ be sequences in~$\F_{n}(\overline{U}; \, \pi_{k-1}(\NN))$, $\F_{n+1}(\overline{U}; \, \pi_{k-1}(\NN))$ respectively. Suppose that~$S_i = \partial R_i$ for any~$i\in\N$ and that~$\F_{k}(S_i - S)\to 0$ as~$i\to\infty$, for some~$S\in\F_n(\overline{U}; \, \pi_{k-1}(\NN))$.
 Then, there exists a finite-mass chain~$R\in\M_{n+1}(\overline{U}; \, \pi_{k-1}(\NN))$ such that~$S = \partial R$.
 In particular, if a sequence in~$\mathscr{C}(U, \, v)$ converges to a finite-mass chain with respect to the~$\F_k$-norm, then the limit, too, belongs to~$\mathscr{C}(U, \, v)$.
\end{lemma}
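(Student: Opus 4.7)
The strategy is a rapidly convergent telescoping construction based on the finite-mass characterisation of the flat norm, supplemented by an isoperimetric filling argument for the residual cycle. I carry out the argument under the natural assumption that the chains $R_{i}$ can be taken of finite mass, which is in particular the setting to which the \emph{in particular} statement reduces: fixing a reference $S_{\ast} \in \mathscr{C}(U,v)$, the definition of cobordism class provides $R_{i} \in \M_{n+1}(\overline{U}; \pi_{k-1}(\NN))$ with $S_{i} - S_{\ast} = \partial R_{i}$, and the main statement will be applied to $(S_{i} - S_{\ast})_{i}$.

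After passing to a (non-relabelled) subsequence with $\F_{k}(S_{i+1} - S_{i}) < 2^{-i}$, the finite-mass characterisation of the flat norm recalled in Subsection~\ref{Subs:flat-chains} produces $A_{i} \in \M_{n+1}(\overline{U}; \pi_{k-1}(\NN))$ and $B_{i} \in \M_{n}(\overline{U}; \pi_{k-1}(\NN))$ with $S_{i+1} - S_{i} = \partial A_{i} + B_{i}$ and $\M_{k}(A_{i}) + \M_{k}(B_{i}) < 2^{-i+1}$. Each $S_{i}$ is a cycle (being a boundary), so applying $\partial$ to the identity forces $\partial B_{i} = 0$; moreover, the explicit identity $B_{i} = \partial(R_{i+1} - R_{i} - A_{i})$ displays $B_{i}$ as the boundary of a finite-mass chain supported in $\overline{U}$.

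Since $\sum_{i} \M_{k}(A_{i}) + \sum_{i} \M_{k}(B_{i}) < +\infty$ and the mass norm on $\M_{q}(\overline{U}; \pi_{k-1}(\NN))$ is complete (by completeness of the flat norm together with the lower semicontinuity of mass under flat convergence), the series
\[
 \tilde R := R_{1} + \sum_{i \geq 1} A_{i} \in \M_{n+1}(\overline{U}; \pi_{k-1}(\NN)), \qquad
 B := \sum_{i \geq 1} B_{i} \in \M_{n}(\overline{U}; \pi_{k-1}(\NN))
\]
converge in the mass norm. A telescoping computation then gives $\partial \tilde R = S - B$, so it suffices to exhibit $C \in \M_{n+1}(\overline{U}; \pi_{k-1}(\NN))$ with $\partial C = B$; then $R := \tilde R + C$ will satisfy $\partial R = S$ with $R \in \M_{n+1}(\overline{U}; \pi_{k-1}(\NN))$.

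The filling of $B$ is the main obstacle. For each $N$, the cycle $B_{(N)} := \sum_{i \leq N} B_{i}$ is already a finite-mass boundary: $B_{(N)} = \partial\bigl(R_{N+1} - R_{1} - \sum_{i \leq N} A_{i}\bigr)$. Using an isoperimetric/filling inequality for flat chains in the compact Lipschitz set $\overline{U}$ with coefficients in a discrete normed abelian group — which holds for $\pi_{k-1}(\NN)$ equipped with $|\,\cdot\,|_{k}$ thanks to~\eqref{alpha_p} and follows from the deformation theorem of Fleming and White — one can replace these fillings by $T_{(N)} \in \M_{n+1}(\overline{U}; \pi_{k-1}(\NN))$ with $\partial T_{(N)} = B_{(N)}$ and $\M_{k}(T_{(N)})$ controlled uniformly by $\M_{k}(B_{(N)}) \leq \sum_{i} \M_{k}(B_{i})$. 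Extracting a flat-convergent subsequence of $(T_{(N)})$ via mass-bounded compactness and invoking the lower semicontinuity of mass together with the continuity of $\partial$ yields the desired $C$, from which the main statement and, consequently, the \emph{in particular} claim follow.
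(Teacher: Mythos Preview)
Your telescoping construction is a genuinely different route from the paper's, but it contains two gaps.

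\textbf{Support of the decomposition.} When you write $S_{i+1}-S_i=\partial A_i+B_i$ via the flat-norm characterisation, the chains $A_i,B_i$ live in $\R^{n+k}$, not in $\overline{U}$. You cannot simply assert $A_i\in\M_{n+1}(\overline{U};\pi_{k-1}(\NN))$: the Lipschitz retraction $\pi_U$ onto $\overline{U}$ is only defined in a tubular neighbourhood, so one must first slice to that neighbourhood (picking up a boundary term) and then project. This is exactly the content of the paper's Lemma~\ref{lemma:flatsupport}, and without it your identity $B_i=\partial(R_{i+1}-R_i-A_i)$ with all chains in $\overline{U}$ is unjustified.

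\textbf{The filling step.} The ``isoperimetric/filling inequality'' you invoke for $B_{(N)}$ does not hold as stated. The deformation-theorem isoperimetric bound (Fleming~(7.6); the paper's Lemma~\ref{lemma:smallcycles}) only fills cycles whose mass is below a fixed threshold $\delta_U$, whereas $\M_k(B_{(N)})\le\sum_i\M_k(B_i)$ could exceed $\delta_U$. Thus you have no uniform mass bound on $T_{(N)}$, and the compactness argument collapses. There are easy repairs: either (a) pass to a subsequence so rapid that $\sum_i\M_k(B_i)<\delta_U$ and apply Lemma~\ref{lemma:smallcycles} directly to $B$; or (b) fill each $B_i$ individually for $i$ large (using that the deformation-theorem filling of a cycle of mass $m$ has mass $\lesssim m^{1+1/k}$, hence summable), and use your explicit fillings $R_{i+1}-R_i-A_i$ for the finitely many remaining indices.

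For comparison, the paper avoids telescoping altogether. Lemma~\ref{lemma:flatsupport} gives $S_i-S=P_i+\partial Q_i$ with $P_i,Q_i$ supported in $\overline{U}$ and $\M_k(P_i)+\M_k(Q_i)\to0$; since $S_i,S$ are cycles, $\partial P_i=0$, so for a \emph{single} large $i$ Lemma~\ref{lemma:smallcycles} fills $P_i=\partial T_i$, yielding $S=\partial(R_i-T_i-Q_i)$; Lemma~\ref{lemma:finitemass} then upgrades this to a finite-mass filling without assuming $\M_k(R_i)<\infty$. This is both shorter and handles the general statement, not only the finite-mass case you restrict to.
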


In case the sequence~$(S_i)_{i\in\N}$ satisfies a uniform bound on the mass, then Lemma~\ref{lemma:cobordism} follows rather easily from the isoperimetric inequality (see e.~g.~\cite[(7.6)]{Fleming}). Here, however, we do \emph{not} assume that the chains~$S_i$, $R_i$ have finite mass. We break down the proof of Lemma~\ref{lemma:cobordism} into separate statements, i.~e.~Lemma~\ref{lemma:flatsupport}, Lemma~\ref{lemma:finitemass} and Lemma~\ref{lemma:smallcycles}. In the proofs, we will use the following notation: given an open set~$U\subseteq\R^{n+k}$ and~$\rho > 0$, we define
\begin{equation} \label{Urho}
 U_\rho := \left\{x\in\R^{n+k}\colon\dist(x, \, U) < \rho\right\} \! .
\end{equation}

If~$U$ is bounded and has Lipschitz boundary, and~$\rho> 0$ is small enough, then there exists a Lipschitz map~$\pi_U\colon \overline{U_\rho}\to\overline{U}$ that coincides with the identity on~$\overline{U}$. (This follows, e.g., from~\cite[Propositon~8.1]{ABO2}). Given a chain~$S$ supported in~$\overline{U_\rho}$, we will denote by~$\pi_{U,*}S$ the push-forward of~$S$ via~$\pi_U$, as defined in~\cite[Section~5]{Fleming}. By construction, $\pi_{U,*}S$ is supported in~$\overline{U}$ and has finite mass if~$S$ has. Moreover, $\pi_{U,*}$ commutes with the boundary operator, i.~e.~$\pi_{U,*}\partial S = \partial (\pi_{U,*}S)$.

\begin{lemma} \label{lemma:flatsupport}
Let~$U\subseteq\R^{n+k}$ be a bounded domain with Lipschitz boundary. Let~$(S_i)_{i\in\N}$ be a sequence in~$\F_{n}(\overline{U}; \, \pi_{k-1}(\NN))$ and let~$S\in\F_n(\overline{U}; \, \pi_{k-1}(\NN))$ be such that~$\F_k(S_i - S)\to 0$ as~$i\to\infty$. Then, there exist chains~$P_i\in\M_{n}(\overline{U}; \, \pi_{k-1}(\NN))$ and~$Q_i\in\M_{n+1}(\overline{U}; \, \pi_{k-1}(\NN))$ such that 
 \begin{gather}
  S_i - S = P_i + \partial Q_i \qquad \textrm{for any } i\in\N, \label{flatsuppPQ}\\
  \M_k(P_i) + \M_k(Q_i) \to 0 \qquad \textrm{as } i\to\infty. \label{flatsuppmass}
 \end{gather}
\end{lemma}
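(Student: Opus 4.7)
The plan is to lift the flat-norm convergence to a finite-mass decomposition of $S_i - S$ in the ambient space $\R^{n+k}$, and then project this decomposition back into $\overline{U}$ using the Lipschitz retraction of a neighbourhood of $\overline{U}$ introduced right after the statement, employing a slicing argument to control the parts of the ambient chains that stick out of $\overline{U}$.

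I would first invoke the equivalent formulation of the flat norm recalled in Subsection~\ref{Subs:flat-chains}, namely
\[
 \F_k(T) = \inf\bigl\{ \M_k(\tilde P) + \M_k(\tilde Q) : T = \tilde P + \partial\tilde Q,\ \tilde P\in\M_{n}(\R^{n+k};\GN),\ \tilde Q\in\M_{n+1}(\R^{n+k};\GN)\bigr\}.
\]
Applied to $T = S_i - S$, since $\F_k(S_i - S)\to 0$, this yields chains $\tilde P_i\in\M_n(\R^{n+k};\GN)$ and $\tilde Q_i\in\M_{n+1}(\R^{n+k};\GN)$ satisfying $S_i - S = \tilde P_i + \partial\tilde Q_i$ and $\M_k(\tilde P_i)+\M_k(\tilde Q_i)\to 0$. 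A priori, the supports of $\tilde P_i$ and $\tilde Q_i$ need not be contained in $\overline{U}$.

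Next, I would fix a small $\rho_0>0$ admitting a Lipschitz retraction $\pi_U\colon\overline{U_{\rho_0}}\to\overline{U}$, as recalled in the excerpt from~\cite[Proposition~8.1]{ABO2}. The coarea inequality for flat chains applied to the $1$-Lipschitz function $d_U(x):=\dist(x, U)$ produces, for each $i$, a level $\rho_i\in(0,\rho_0)$ with
\[
 \M_k\bigl(\langle\tilde Q_i, d_U, \rho_i\rangle\bigr)\leq \frac{2}{\rho_0}\M_k(\tilde Q_i) \to 0,
\]
together with the boundary identity $\partial(\tilde Q_i\mres U_{\rho_i}) = (\partial\tilde Q_i)\mres U_{\rho_i} - \langle\tilde Q_i, d_U, \rho_i\rangle$. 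Because $\spt(S_i - S)\subseteq\overline{U}\subseteq U_{\rho_i}$, restricting $S_i - S = \tilde P_i + \partial\tilde Q_i$ to $U_{\rho_i}$ gives
\[
 S_i - S = \tilde P_i\mres U_{\rho_i} - \langle\tilde Q_i, d_U, \rho_i\rangle + \partial\bigl(\tilde Q_i\mres U_{\rho_i}\bigr).
\]
All three chains on the right-hand side are supported in $\overline{U_{\rho_0}}$, so I would apply $\pi_{U,*}$ (which fixes $S_i - S$ since $\pi_U$ is the identity on $\overline{U}$, and which commutes with $\partial$) and set
\[
 P_i := \pi_{U,*}(\tilde P_i\mres U_{\rho_i}) - \pi_{U,*}\langle\tilde Q_i, d_U, \rho_i\rangle, \qquad Q_i := \pi_{U,*}(\tilde Q_i\mres U_{\rho_i}).
\]
These chains are supported in $\overline{U}$ and satisfy $S_i - S = P_i + \partial Q_i$; the Lipschitz push-forward bound $\M_k(\pi_{U,*}R)\lesssim\M_k(R)$ immediately gives $\M_k(P_i)+\M_k(Q_i)\to 0$, as required.

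The main obstacle I anticipate is that no finite-mass bound is imposed on $S_i$ or $S$ individually, so the decomposition of $S_i - S$ given by the definition of $\F_k$ lives only in the ambient space and cannot simply be restricted to $\overline{U}$ without producing an uncontrolled boundary term. The slicing-plus-retraction mechanism above is precisely what trades the ambient-space decomposition for one supported in $\overline{U}$, at the cost of a slice contribution whose mass is kept small by the classical coarea inequality for flat chains with coefficients in a normed abelian group (see~\cite{Fleming, White-Rectifiability}).
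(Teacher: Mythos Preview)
Your proposal is correct and follows essentially the same approach as the paper: both start from an ambient finite-mass decomposition, restrict to a sublevel set~$U_\rho$ of the distance function, control the boundary-commutator term (your slice~$\langle\tilde Q_i,d_U,\rho_i\rangle$ is exactly the paper's~$\Gamma_i(t)$), and push forward via the Lipschitz retraction~$\pi_U$. The one minor difference is that you pick a good level~$\rho_i$ for each~$i$ directly from the coarea inequality, whereas the paper integrates in~$t$ and then passes to a subsequence to fix a single~$t$; your choice is slightly more direct and avoids the subsequence. The paper is a bit more explicit than you are about why~$(\partial\tilde Q_i)\mres U_{\rho_i}$ makes sense when~$S_i-S$ has infinite mass, invoking the continuity of the restriction map~$\F_n\to L^1((0,\rho_0);\F_n)$ from~\cite{DePauwHardt}; your slicing identity implicitly relies on the same fact.
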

\begin{proof}
The statement would be an immediate consequence of the definition of the flat norm, were it not that we must ensure that the chains~$P_i$, $Q_i$ are supported in~$\overline{U}$. We obtain this by an argument based on truncations and projections.
 By definition of~$\F_k$, there exist sequences~$(\widetilde{P}_i)_{i\in\N}$ and $(\widetilde{Q}_i)_{i\in\N}$ in $\F_n(\R^{n+k}; \, \pi_{k-1}(\NN))$ and $\F_{n+1}(\R^{n+k}; \, \pi_{k-1}(\NN))$ respectively such that
 \begin{equation} \label{flatsupp1}
  S_i - S = \widetilde{P} + \partial \widetilde{Q}_i \quad \textrm{for any } i\in\N, \qquad 
  \M_k(\widetilde{P}_i) + \M_k(\widetilde{Q}_i) \to 0 \quad \textrm{as } i\to\infty.
 \end{equation}
 Let~$\rho > 0$ be small enough that the projection~$\pi_U\colon\overline{U_\rho}\to\overline{U}$ is well-defined and Lipschitz-continuous. For any~$i\in\N$ and any~$t\in (0, \, \rho)$, we define 
 \[
  \Gamma_i(t) := \partial\left(\widetilde{Q}_i\mres U_t\right) - \left(\partial\widetilde{Q}_i\right)\mres U_t.
 \]
We claim that~$\Gamma_i(t)$ is well-defined, with the possible exception of a negligible set of~$t$'s. Indeed, the restriction~$\widetilde{Q}_i\mres U_t$ is always well-defined, because~$\widetilde{Q}_i$ has finite mass. If~$S_i - S$ has finite mass, then~$\partial Q_i$, too, has finite mass and~$(\partial\widetilde{Q}_i)\mres U_t$ is well-defined for any~$t\in (0, \, \rho)$. Moreover, in this case we have~\cite[Theorem~5.3]{Fleming}
 \begin{equation} \label{flatsupp2}
  \int_0^\rho \M_k\left(\Gamma_i(t)\right) \d t 
  \leq \M_k\!\left(\widetilde{Q}_i\mres\left(\overline{U_\rho}\setminus U\right)\right) 
  \leq \M_k(\widetilde{Q}_i).
 \end{equation} 
 In case~$S_i - S$ has infinite mass, the restriction~$(\partial\widetilde{Q}_i)\mres U_t$ is still well-defined for a.e.~$t\in(0, \, \rho)$. Indeed, the restriction operator induces a well-defined and continuous map
 \[
  R\colon\F_n(\R^{n+k}; \, \pi_{k-1}(\NN)) \to L^1((0, \, \rho); \, \F_{n}(\R^{n+k}; \, \pi_{k-1}(\NN)))
 \]
 where~$R(T) (t) := T\mres U_t$ for any chain~$T$ and a.e.~$t\in(0, \, \rho)$ (see e.g.~\cite[Theorem~5.2.3.(2)]{DePauwHardt}). Then, a density argument shows that~\eqref{flatsupp2} remains valid even when~$S_i - S$ has infinite mass. The inequality~\eqref{flatsupp2}, combined with~\eqref{flatsupp1}, implies
 \begin{equation} \label{flatsupp3}
  \int_0^\rho \M_k\left(\Gamma_i(t)\right) \d t \to 0 \qquad \textrm{as } i\to\infty.
 \end{equation}
 Therefore, we can extract a (non-relabelled) subsequence in such a way that~$\M_k(\Gamma_i(t))\to 0$ as~$i\to\infty$ for a.e.~$t\in (0, \, \rho)$. 
 
 Now, since~$S_i - S$ is supported in~$\overline{U}\subseteq U_t$, we have
 \[
  S_i - S = \left(S_i - S\right)\mres U_t 
  = \widetilde{P}_i\mres U_t - \Gamma_i(t) + \partial\left(\widetilde{Q}_i\mres U_t\right) \!.
 \]
 By construction, the chains~$S_i - S$, $\widetilde{P}_i\mres U_t$, $\widetilde{Q}_i\mres U_t$, $\Gamma_i(t)$ are supported in~$\overline{U_\rho}$. Therefore, their push-forwards via~$\pi_{U,*}$ are well-defined and satisfy
 \begin{gather} 
  \M_k\!\left(\pi_{U,*}\left(\widetilde{P}_i\mres U_t\right)\right) \leq (\mathrm{Lip}\, \pi_U)^n \, \M_k(\widetilde{P}_i), \qquad 
  \M_k\!\left(\pi_{U,*}\Gamma_i(t)\right) \leq (\mathrm{Lip}\, \pi_U)^n \, \M_k\!\left(\Gamma_i(t)\right)\label{flatsupp3.5} \\
  \M_k\!\left(\pi_{U,*}\left(\widetilde{Q}_i\mres U_t\right)\right) \leq (\mathrm{Lip}\, \pi_U)^{n+1} \, \M_k(\widetilde{Q}_i)\label{flatsupp3.6}
 \end{gather}
 because of the area formula (see e.~g.~\cite[(5.1)]{Fleming}). Moreover, we have
 \begin{equation} \label{flatsupp4}
  S_i - S = \pi_{U,*}\left(S_i - S\right) 
  = \pi_{U,*}\left(\widetilde{P}_i\mres U_t - \Gamma_i(t)\right) 
  + \partial \pi_{U,*} \left(\widetilde{Q}_i\mres U_t\right) \! .
 \end{equation}
 The equality~$S_i - S = \pi_{U,*}(S_i - S)$ follows because~$S_i$, $S$ are supported in~$\overline{U}$ and~$\pi_{U,*}$ coincides with the identity there. We define
 \[
  P_i := \pi_{U,*}\left(\widetilde{P}_i\mres U_t - \Gamma_i(t)\right), \qquad
  Q_i := \pi_{U,*} \left(\widetilde{Q}_i\mres U_t\right) \!.
 \]
 The equality~\eqref{flatsuppPQ} is an immediate consequence of~\eqref{flatsupp4}, while~\eqref{flatsuppmass} follows from~\eqref{flatsupp1}, \eqref{flatsupp3.5}, and~\eqref{flatsupp3.6}.
\end{proof}

\begin{lemma} \label{lemma:finitemass}
Let~$U\subseteq\R^{n+k}$ be a bounded domain with Lipschitz boundary. For any chain~$R\in\F_{n+1}(\overline{U}; \, \pi_{k-1}(\NN))$ there exists a \emph{finite-mass} chain-$R^\prime\in\M_{n+1}(\overline{U}; \, \pi_{k-1}(\NN))$ such that $\partial R^\prime = \partial R$.
\end{lemma}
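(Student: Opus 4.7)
The plan is to mimic the proof of Lemma~\ref{lemma:flatsupport}, one dimension higher. Concretely, I first produce a polyhedral (hence finite-mass) chain~$R_i$ that is supported in~$\overline{U}$ and close to~$R$ in flat norm; next I extract from the definition of the flat norm a finite-mass chain~$P_i$ supported in~$\overline{U}$ such that~$R - R_i - P_i$ is a boundary; and finally I take~$R^\prime := R_i + P_i$.

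For the approximation step, the very definition of~$\F_{n+1}(\overline{U}; \, \pi_{k-1}(\NN))$ yields polyhedral chains~$\widetilde{R}_i\in\P_{n+1}(\R^{n+k}; \, \pi_{k-1}(\NN))$ supported in~$\overline{U_\rho}$, for some fixed~$\rho>0$ small enough that the projection~$\pi_U\colon\overline{U_\rho}\to\overline{U}$ is well-defined and Lipschitz, with~$\F_k(\widetilde{R}_i - R)\to 0$. Pushing forward, $R_i := \pi_{U,*}\widetilde{R}_i$ lies in~$\M_{n+1}(\overline{U}; \, \pi_{k-1}(\NN))$ by the area formula, and~$\F_k(R_i - R)\to 0$ because~$\pi_{U,*}R = R$ and~$\pi_{U,*}$ is flat-norm continuous.

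For the correction step, the raw definition of the flat norm provides~$\widetilde{P}_i\in\M_{n+1}(\R^{n+k}; \, \pi_{k-1}(\NN))$ and~$\widetilde{Q}_i\in\M_{n+2}(\R^{n+k}; \, \pi_{k-1}(\NN))$ with~$R - R_i = \widetilde{P}_i + \partial\widetilde{Q}_i$ and~$\M_k(\widetilde{P}_i) + \M_k(\widetilde{Q}_i)\to 0$, but these chains are only supported in~$\R^{n+k}$. To bring them into~$\overline{U}$ I argue exactly as in Lemma~\ref{lemma:flatsupport}: I slice~$\widetilde{Q}_i$ along level sets of~$\dist(\,\cdot\,,\, U)$ inside~$U_\rho$ and invoke the coarea-type inequality for flat chains to select, along a subsequence, a level~$t\in(0, \, \rho)$ at which the commutator~$\partial(\widetilde{Q}_i\mres U_t) - (\partial\widetilde{Q}_i)\mres U_t$ has mass going to zero; pushing the resulting truncations forward by~$\pi_U$ then yields~$P_i\in\M_{n+1}(\overline{U}; \, \pi_{k-1}(\NN))$ and~$Q_i\in\M_{n+2}(\overline{U}; \, \pi_{k-1}(\NN))$ satisfying~$R - R_i = P_i + \partial Q_i$ and~$\M_k(P_i) + \M_k(Q_i)\to 0$.

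Fixing any~$i$ and setting~$R^\prime := R_i + P_i = R - \partial Q_i$, I obtain a finite-mass chain in~$\M_{n+1}(\overline{U}; \, \pi_{k-1}(\NN))$ with~$\partial R^\prime = \partial R - \partial\partial Q_i = \partial R$, as required. The main (and essentially only) technical obstacle is the support issue already faced in Lemma~\ref{lemma:flatsupport}: the decomposition handed out by the definition of the flat norm lives in~$\R^{n+k}$, and one must use the slicing-plus-push-forward device to relocate everything back into~$\overline{U}$ without inflating masses. Since that device is insensitive to the dimension of the chain, the dimension-$(n+1)$ version transfers verbatim.
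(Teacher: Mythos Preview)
Your proof is correct and follows essentially the same approach as the paper's: approximate~$R$ by polyhedral chains in~$\overline{U_\rho}$, use the slicing-plus-push-forward device of Lemma~\ref{lemma:flatsupport} (one dimension up) to upgrade the flat-norm decomposition to one supported in~$\overline{U}$, and read off a finite-mass representative of the boundary. The only organizational difference is that the paper invokes Lemma~\ref{lemma:flatsupport} directly (applied on the enlarged domain~$U_\rho$, with the dimension shifted) and then pushes everything forward at once, whereas you push forward the polyhedral approximation first and reprove the slicing step inline; the content is the same.
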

\begin{proof}
Let~$\rho > 0$ be small enough that the projection~$\pi_U\colon \overline{U_\rho}\to\overline{U}$ is well-defined and Lipschitz. By definition of support, there exists a sequence of polyhedral $n$-chains~$R_i$, supported in~$\overline{U_\rho}$, such that $\F(R_i -R) \to 0$ as~$i\to\infty$. By Lemma~\ref{lemma:flatsupport}, there exists sequences~$(P_i)_{i\in\N}\subset\M_n(\overline{U_\rho}; \, \pi_{k-1}(\NN))$, $(Q_i)_{i\in\N}\subset\M_{n+1}(\overline{U_\rho}; \, \pi_{k-1}(\NN))$ such that $R_i - R = P_i + \partial Q_i$ and~$\M_k(P_i) + \M_k(Q_i)\to 0$. 
 By taking the push-forward under~$\pi_U$, we obtain
 \[
  R = \pi_{U,*}R = \pi_{U,*} R_i + \pi_{U,*} P_i +\partial \left(\pi_{U,*} Q_i\right) \! .
 \]
 Letting~$R^\prime := \pi_{U,*} R_i + \pi_{U,*} P_i$ for an arbitrary~$i$, we see that~$R^\prime$ is supported in~$\overline{U}$, has finite mass and satisfies~$\partial R = \partial R^\prime$.
\end{proof}

\begin{lemma} \label{lemma:smallcycles}
For any bounded domain~$U\subseteq\R^{n+k}$ with Lipschitz boundary, there exists a number~$\delta_U > 0$ with the following property: if~$P\in\mathrm{M}_n(\overline{U}; \, \pi_{k-1}(\NN))$ is such that~$\partial P = 0$ and~$\M_k(P) \leq \delta_U$, then there exists~$T\in\M_{n+1}(\overline{U}; \, \pi_{k-1}(\NN))$ such that~$\partial T = P$.
\end{lemma}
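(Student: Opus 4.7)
The plan is to combine the deformation theorem for flat chains with a Lipschitz projection onto~$\overline{U}$. Since~$U$ is a bounded Lipschitz domain, I will fix $\rho > 0$ and a Lipschitz retraction $\pi_U\colon \overline{U_\rho}\to\overline{U}$ equal to the identity on $\overline{U}$, as recalled just before Lemma~\ref{lemma:flatsupport}. I then choose a scale $\epsilon_0\in (0, \, \rho/\sqrt{n+k})$ small enough that any closed cube of side $\epsilon_0$ meeting $\overline{U}$ is contained in $\overline{U_\rho}$.

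Next, I apply the deformation theorem for flat chains with coefficients in a normed Abelian group (see~\cite{Fleming} and~\cite{White-Rectifiability}) to the cycle~$P$ at scale $\epsilon_0$. Since $\partial P = 0$, this yields a polyhedral chain $P_{\epsilon_0}$ supported on the $n$-skeleton of the $\epsilon_0$-grid and a chain $Q_{\epsilon_0}\in\M_{n+1}(\R^{n+k}; \, \pi_{k-1}(\NN))$ such that
\[
P - P_{\epsilon_0} = \partial Q_{\epsilon_0}, \qquad
\M_k(P_{\epsilon_0}) \leq C_{n,k}\, \M_k(P), \qquad
\M_k(Q_{\epsilon_0}) \leq C_{n,k}\, \epsilon_0\, \M_k(P),
\]
for some dimensional constant $C_{n,k}>0$. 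Moreover, both $P_{\epsilon_0}$ and $Q_{\epsilon_0}$ are supported in the $\sqrt{n+k}\,\epsilon_0$-neighborhood of $\spt P \subseteq \overline{U}$, which lies in $\overline{U_\rho}$ by the choice of $\epsilon_0$.

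The decisive step is then to exploit the discreteness of the norm. By~\eqref{alpha_p}, each nonzero $n$-cell of the $\epsilon_0$-grid appearing in $P_{\epsilon_0}$ has $\H^n$-measure $\epsilon_0^n$ and a coefficient of norm at least $\alpha_k > 0$. Setting $\delta_U := \alpha_k\epsilon_0^n/(2C_{n,k})$, the assumption $\M_k(P)\leq\delta_U$ gives $\M_k(P_{\epsilon_0}) < \alpha_k\epsilon_0^n$, which forces $P_{\epsilon_0}=0$. Hence $P = \partial Q_{\epsilon_0}$ with $Q_{\epsilon_0}\in\M_{n+1}(\overline{U_\rho}; \, \pi_{k-1}(\NN))$. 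Defining $T := \pi_{U,*}Q_{\epsilon_0}$, the area formula yields $T\in\M_{n+1}(\overline{U}; \, \pi_{k-1}(\NN))$, and since $\pi_U$ is the identity on $\spt P\subseteq\overline{U}$ and commutes with~$\partial$, we obtain $\partial T = \pi_{U,*} P = P$, as required.

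The main technical obstacle I anticipate is invoking a version of the deformation theorem that simultaneously delivers the stated mass bounds and the localization of supports near $\spt P$ in the framework of flat chains with coefficients in the normed group~$(\pi_{k-1}(\NN), \, |\cdot|_k)$. These properties are all present in~\cite{Fleming, White-Rectifiability}, but their combination must be checked with some care, in particular to ensure that the polyhedral approximation~$P_{\epsilon_0}$ is really a polyhedral chain whose coefficients belong to $\pi_{k-1}(\NN)$, so that the cell-by-cell lower bound $\M_k(P_{\epsilon_0})\geq\alpha_k\epsilon_0^n$ for~$P_{\epsilon_0}\neq 0$ is legitimate.
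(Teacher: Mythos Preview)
Your proof is correct and follows essentially the same route as the paper: apply the deformation theorem to the cycle~$P$, use the discreteness bound~$\alpha_k>0$ to force the polyhedral part to vanish when~$\M_k(P)$ is small, and push the resulting filling onto~$\overline{U}$ via the Lipschitz retraction~$\pi_U$. The only cosmetic difference is that you fix the deformation scale~$\epsilon_0$ once (depending on~$U$) and then choose~$\delta_U$, whereas the paper lets the scale depend on~$\M_k(P)$ and then argues that it is small enough for the projection to apply; both variants work, and yours is arguably the cleaner bookkeeping.
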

\begin{proof}
This lemma is a variant of the isoperimetric inequality for flat chains, see e.~g.~\cite[(7.6)]{Fleming}. Let~$\eps > 0$. According to the deformation theorem~\cite[Theorem~7.3]{Fleming} (see also~\cite{White-Deformation}), any finite-mass $n$-chain~$P$ can be written in the form 
 \begin{equation} \label{deformationthm}
  P = Q + A + \partial B,
 \end{equation}
 Here~$Q$ is a polyhedral $n$-chain of the form~$Q = \sum_{j=1}^N\sigma_j\llbracket K_j\rrbracket$, where~$\sigma_j\in\pi_{k-1}(\NN)$ and each~$K_j$ is a~$n$-cube of size~$\eps$, while~$A$, $B$ are finite-mass chains that satisfy the following estimates:
 \begin{gather}
  \M_k(Q) \leq C_{\mathrm{def}} \left(\M_k(P) + \M_k(\partial P)\right), \qquad \M_k(\partial Q) \leq C_{\mathrm{def}} \M_{k}(\partial P) \label{defth1} \\
  \M_k(A) \leq C_{\mathrm{def}} \eps\, \M_k(\partial P), \qquad \M_k(B) \leq C_{\mathrm{def}} \eps \,\M_k(P) \label{defth2}. 
 \end{gather}
 The constant~$C_{\mathrm{def}}$ can be chosen in a way that depends only on~$n + k$.
 Moreover, the supports of~$Q$, $A$, $B$ are contained in the~$2(n+k)\eps$-neighbourhood of the support of~$P$.
 Suppose that~$\partial P = 0$ and $\M(P) \leq \delta_U$, for some~$\delta_U > 0$ to be chosen later. We define~$\eps$ in such a way that
 \begin{equation} \label{smallcycles0}
  \alpha_k \eps^k = 2 C_{\mathrm{def}} \M(P),
 \end{equation}
 where~$\alpha_k$ is given by~\eqref{alpha_p}, and apply the deformation theorem with this choice of~$\eps$.
 The estimate~\eqref{defth2} implies that~$A = 0$, while~\eqref{defth1} gives
 \begin{equation} \label{smallcycles1}
  \M_k(Q) \leq C_{\mathrm{def}} \, \M(P)
 \end{equation}
 On the other hand, since~$Q$ is a finite combinations of $n$-cubes of size~$\eps$, if~$Q\neq 0$ then we must have
 \begin{equation} \label{smallcycles2}
  \M_k(Q) \geq \alpha_k \eps = 2C_{\mathrm{def}} \, \M(P)
 \end{equation}
 because of~\eqref{smallcycles0}. The inequality~\eqref{smallcycles2} contradicts~\eqref{smallcycles1}, so we must have~$Q = 0$. Then, \eqref{deformationthm} reduces to the equality~$P = \partial B$. If~$\delta_U$ is taken small enough, then the projection~$\pi_U$ onto~$U$ is well-defined and Lipschitz-continuous in a neighbourhood of~$\spt B$, so we can write
 \[
  P = \pi_{U,*} P = \partial\left(\pi_{U,*} B\right) \!.
 \]
 The lemma follows with~$T := \pi_{U,*}B$. 
\end{proof}

\begin{proof}[Proof of Lemma~\ref{lemma:cobordism}]
Let~$(S_i)_{i\in\N}\subset\F_{n}(\overline{U}; \, \pi_{k-1}(\NN))$, $S\in \F_{n}(\overline{U}; \, \pi_{k-1}(\NN))$ be such that~$\F_{k}(S_i - S)\to 0$ as~$i\to\infty$. By Lemma~\ref{lemma:flatsupport}, there exist~$P_i\in\M_{n}(\overline{U}; \, \pi_{k-1}(\NN))$ and~$Q_i\in\M_{n+1}(\overline{U}; \, \pi_{k-1}(\NN))$ such that
 \begin{equation} \label{cobord1}
  S_i - S = P_i + \partial Q_i \qquad \textrm{for any } i\in\N
 \end{equation}
 and 
 \begin{equation} \label{cobord2}
  \M_k(P_i) + \M_k(Q_i) \to 0 \qquad \textrm{as } i\to\infty.
 \end{equation}
 Suppose now that each~$S_i$ takes the form~$S_i = \partial R_i$ for some~$R_i\in\M_{n+1}(\overline{U}; \, \pi_{k-1}(\NN))$. Then, for each~$i\in\N$ we have~$\partial S_i = 0$ and hence, $\partial S = 0$, because the boundary operator is continuous with respect to~$\F$-convergence. By taking the boundary of both sides of~\eqref{cobord1}, we deduce that~$\partial P_i = 0$ for any~$i\in\N$. 
 By Lemma~\ref{lemma:smallcycles} and~\eqref{cobord2}, for any~$i$ small enough there exists~$T_i\in\M_{n+1}(\overline{U}; \, \pi_{k-1}(\NN))$ such that~$P_i = \partial T_i$. Then, we can write
 \[
  S = S_i - P_i - \partial Q_i = \partial\widetilde{R}_i,
 \]
 where~$\widetilde{R}_i := R_i - T_i - Q_i$ is supported in~$\overline{U}$. By Lemma~\ref{lemma:finitemass}, we can assume without loss of generality that~$\widetilde{R}_i$ has finite mass. This completes the proof.
\end{proof}

Now, we state a consequence of Lemma~\ref{lemma:cobordism} that will be useful in the analysis of Section~\ref{Section:lower_bound}.

\begin{lemma} \label{lemma:CUv}
Let~$U$, $U^\prime$ be bounded domains with Lipschitz boundary in~$\R^{n+k}$, with~$U\csubset U^\prime$. Let $v\in W^{1-1/k,k}(\partial U, \, \NN)$ and let~$u\in W^{1,k}(U^\prime, \, \R^{n+k})\cap L^\infty(U^\prime, \, \R^{n+k})$ be a map with trace~$u$ on~$\partial U$. Let~$(u_i)_{i\in\N}$ be a sequence in~$W^{1,k-1}(U^\prime, \, \R^{n+k})\cap L^\infty(U^\prime, \, \R^{n+k})$ such that
 \[
  u_i(x) = u(x) \quad \textrm{for a.e. } x\in U^\prime \setminus U 
  \textrm{ and any } i, \qquad
  u_i(x)\in\NN \quad \textrm{for a.e. } x\in U \textrm{and any } i.
 \]
 Let~$S\in\F_n(U^\prime; \, \pi_{k-1}(\NN))$ be such that~$\M_{U,k}(S) < +\infty$ and
 \begin{equation} \label{hp:CUv}
  \F_{U^\prime,k}\left(\S_y(u_i) - S\right) \to 0
 \end{equation}
 as~$i\to+\infty$, for any~$y$ in a subset~$A\subset B^*$ of positive measure. Then, $S\mres\overline{U}\in\mathscr{C}(U, \, v)$.
\end{lemma}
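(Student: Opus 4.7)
The plan is to reduce the problem to the smaller domain~$U$, where each~$u_i$ is already~$\NN$-valued, to approximate the associated topological singular sets by those of genuine~$W^{1,k}$ maps with trace~$v$, and then to close the cobordism class using Lemma~\ref{lemma:cobordism}.

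First I would set $\tilde u_i := u_i|_U \in W^{1,k-1}(U, \NN)\cap L^\infty$. Since $u_i\in W^{1,k-1}(U', \R^m)$ agrees a.e.\ with $u$ on $U'\setminus U$, the inner and outer traces of $u_i$ on~$\partial U$ coincide, and they both equal~$v$; hence the trace of~$\tilde u_i$ is~$v$. By part~\ref{S_Nvalued} of Proposition~\ref{prop:operator_S}, $T_i := \S_y(\tilde u_i)$ is independent of~$y\in B^*$, and by part~\ref{S_rel_boundary} it can be identified with an element of $\M_n(\overline U; \pi_{k-1}(\NN))$.

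Second I would prove $T_i\in\mathscr{C}(U, v)$ for each~$i$. A standard extension--mollification--cutoff argument (extending~$v$ to some $\bar v \in W^{1,k}(U, \R^m)\cap L^\infty$, subtracting it from~$\tilde u_i$, mollifying the resulting zero-trace map and cutting it off to recover the boundary value) yields a sequence $(\tilde u_i^\eta)_{\eta > 0} \subset W^{1,k}(U, \R^m)\cap L^\infty$ with trace~$v$, uniformly bounded in~$L^\infty$, and converging to~$\tilde u_i$ in~$W^{1,k-1}(U, \R^m)$. Since $\tilde u_i^\eta \in W^{1,k}_v(U, \R^m)\cap L^\infty$, property~\eqref{C_Omega_v} gives $\S_y(\tilde u_i^\eta)\in\mathscr{C}(U, v)$ for a.e.~$y$. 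Estimate~\eqref{basic_estimate_flat_norm_0} together with Vitali's theorem (the integrand $(\lvert\nabla \tilde u_i^\eta\rvert^{k-1} + \lvert\nabla \tilde u_i\rvert^{k-1})\lvert \tilde u_i^\eta - \tilde u_i\rvert$ is equi-integrable thanks to the $W^{1,k-1}$-convergence, and converges a.e.\ to zero thanks to the uniform $L^\infty$-bound) then yields $\S_y(\tilde u_i^\eta)\to T_i$ in~$\F_{U,k}$ for a.e.~$y$; Lemma~\ref{lemma:cobordism} forces $T_i\in\mathscr{C}(U, v)$.

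Third I would show $T_i\to S\mres\overline U$ in the flat norm. Picking $y\in A$ for which~\eqref{hp:CUv} holds, the locality of~$\S$ (the restriction property of~\cite{CO1} recalled before Proposition~\ref{prop:operator_S}) implies that the image of $\S_y(u_i)$ under the canonical quotient~$\F_n(U'; \pi_{k-1}(\NN))\to\F_n(U; \pi_{k-1}(\NN))$ is exactly $T_i$, because $u_i$ coincides with~$\tilde u_i$ on~$U$. Since quotient maps are contractions, one has
\[
\F_{U,k}(T_i - S\mres U) \leq \F_{U',k}\bigl(\S_y(u_i) - S\bigr) \longrightarrow 0.
\]
Using $\M_{U,k}(S) < +\infty$ and the finite mass of each $T_i$, this upgrades to $\F_k(T_i - S\mres\overline U)\to 0$, and the closure property from the second statement of Lemma~\ref{lemma:cobordism} delivers $S\mres\overline U\in\mathscr{C}(U, v)$.

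The main obstacle I foresee is in the second paragraph: one must work entirely at the~$W^{1,k-1}$ level while matching the boundary datum~$v$ exactly in order to land in~$W^{1,k}_v(U, \R^m)\cap L^\infty$, and one must ensure the convergence of~$\S$ on the approximants despite losing~$W^{1,k}$ control in the limit. A secondary technical point is the identification in the third paragraph, for which one has to verify that the restriction map between relative flat norms is well-behaved and that the passage from~$S\mres U$ to~$S\mres\overline U$ in the flat norm is indeed free, thanks to the finite-mass hypothesis.
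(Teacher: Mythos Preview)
Your overall strategy is reasonable, but the difficulty you flag as ``secondary'' is in fact the main obstacle, and as written the argument does not close.

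The gap is the passage from the relative norm~$\F_{U,k}$ to the absolute norm~$\F_k$. In your second paragraph, estimate~\eqref{basic_estimate_flat_norm_0} only controls~$\F_{U,k}(\S_y(\tilde u_i^\eta)-T_i)$, whereas Lemma~\ref{lemma:cobordism} is stated for~$\F_k$-convergence. These are not equivalent for chains supported in~$\overline U$: the relative norm is a quotient norm, blind to pieces accumulating on~$\partial U$. The same issue recurs in the third paragraph, where your ``upgrade'' from~$\F_{U,k}$ to~$\F_k$ via finite mass is not valid --- and the claim that each~$T_i$ has finite mass is itself unjustified (the paper explicitly notes that~$\S_y(u_i)$ may have infinite mass, so~$T_i$ cannot lie in~$\mathscr{C}(U,v)$ in general).

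The paper resolves this by working on~$U'$ rather than~$U$. Instead of approximating~$u_i$ by~$W^{1,k}$-maps, it applies Remark~\ref{rk:flatcontinuity} on~$U'$ directly to the pair~$(u_i,u)$: since~$u_i=u$ on~$U'\setminus U$, the connection chain~$R_{y,i}$ is supported in~$\overline U$ with finite mass, yielding the exact identity~$\S_y(u_i|_U)-\S_y(u|_U)=\partial R_{y,i}$ in~$\overline U$. The hypothesis then gives~$\F_{U',k}$-convergence of~$\S_y(u_i|_U)$ to~$S\mres\overline U$, which upgrades to~$\F_k$-convergence because~$\overline U\csubset U'$ (this is where the room in~$U'$ is essential; see~\cite[Remark~2.2]{CO1}). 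Lemma~\ref{lemma:cobordism} then applies. Your approximation step could be salvaged by extending the~$\tilde u_i^\eta$ to~$U'$ by~$u$ outside~$U$, but once you do that the detour is unnecessary: $u|_U\in W^{1,k}_v(U,\R^m)\cap L^\infty$ already, so~$\S_y(u|_U)\in\mathscr{C}(U,v)$ by~\eqref{C_Omega_v}, and Remark~\ref{rk:flatcontinuity} connects~$u_i$ to~$u$ directly.
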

\begin{proof}
Heuristically, we would like to apply Statement~\ref{S:cobord-app} in Proposition~\ref{prop:operator_S} to conclude that~$\S_y(u_i)$ and~$\S_y(u)$ are cobordant in~$\overline{U^\prime}$ for a.e.~$y$, hence~$\S_y(u_i)\mres\overline{U}\in\mathscr{C}(U, \, v)$, then pass to the limit as~$i\to\infty$. Unfortunately, this is not possible because Statement~\ref{S:cobord-app} does not apply to the maps~$u_i\in W^{1,k-1}(U^\prime, \, \NN)$. (In fact, $\S_y(u_i)$ may not have finite mass and its restriction to~$\overline{U}$ may not be well-defined.) Nevertheless, by Remark~\ref{rk:flatcontinuity}, for every~$i\in\N$ and almost every~$y\in B^*$ there exists a chain of finite mass~$R_{y,i}\in\M_{n+1}(\overline{U}; \, \pi_{k-1}(\NN))$ (defined as in~\eqref{connection}) that satisfies
 \begin{equation} \label{CUv1}
  \S_y(u_i) - \S_y(u) = \partial R_{y,i}.
 \end{equation}
 Actually, Remark~\ref{rk:flatcontinuity} only implies that~$\spt(\S_y(u_i) - \S_y(u) - \partial R_{y,i})\subseteq\R^{n+k}\setminus U^\prime$ and that~$R_{y,i}$ has finite mass inside~$U^\prime$. However, since~$u = u_i$ out of~$U$, we have that both~$\S_y(u_i) - \S_y(u)$ and~$R_{y,i}$ are supported in~$\overline{U}\subseteq U^\prime$ and hence, \eqref{CUv1} follows.
 In fact, we can even write~\eqref{CUv1} as
 \begin{equation} \label{CUv2}
  \S_y(u_i|_{U}) - \S_y(u|_{U}) = \partial R_{y,i}.
 \end{equation}
 On the other hand, since~$\S_y(u_i|_{U^\prime\setminus \overline{U}) = \S_y(u|_{U^\prime\setminus\overline{U}}})$ does not depend on~$i$, the assumption~\eqref{hp:CUv} implies that~$\F_{U^\prime,k}(\S_y(u_i|_{U}) - S\mres\overline{U})\to 0$ as~$i\to+\infty$, for a.~e.~$y\in A$. Since~$\S_y(u_i|_U)$, $S\mres\overline{U}$ are supported in the compact set~$\overline{U}\subseteq U^\prime$, convergence with respect to the~$\F_{U^\prime,k}$-norm implies convergence with respect to the~$\F_k$-norm~\cite[Remark~2.2, Equation~(2.10)]{CO1}. In other words, we have
 \begin{equation} \label{CUv3}
  \F_{k}\left(\S_y(u_i|_U) - S\mres\overline{U}\right) \to 0
  \qquad \textrm{as } i\to\infty, \textrm{ for any } y\in A.
 \end{equation}
 Combining~\eqref{CUv2} and~\eqref{CUv3} with Lemma~\ref{lemma:cobordism}, we conclude that for a.~e.~$y\in B^*$, the chain~$S\mres\overline{U} - \S_y(u|_{U})$ is the boundary of a finite-mass chain supported in~$\overline{U}$. Therefore, $S\mres\overline{U}\in\mathscr{C}(U, \, v)$.
\end{proof}
For further properties of the operator~$\S$, the reader is referred to~\cite{CO1,CO2}.

\section{Lower \texorpdfstring{$p$}{p}-energy bounds via the ball construction}\label{sect_lower_p_energy_bounds}
Our aim in this section is to obtain lower bounds on the $p$-Dirichlet 
energy of maps from a bounded domain $D \subseteq \R^k$ to $\NN$ in terms of 
the homotopy class of their boundary datum, 
along the lines of~\cite{Jerrard, Sandier}. Notice that we are now focusing on the case $n=0$, meaning that we are lead to the study of point-like singularities, see Figure~\ref{sincriticaldim}. The results of this section will be essential for the proof of Statement~(i) in Theorem~\ref{MainThm}, given  
in Section~\ref{Section:lower_bound} below.

\subsection{Lower bounds on the \texorpdfstring{$p$}{p}-energy of maps in the critical dimension}

Let~$D\subseteq\R^k$ be a given domain,
which we assume to be bounded. We also assume that~$\overline{D}$ is homeomorphic 
to the closed ball~$\overline{B}^k$, so that~$\partial D$
is homeomorphic and the homotopy class of
any continuous map~$\partial D\to\NN$ 
is well-defined as an element of~$\pi_{k-1}(\NN)$.
For any~$r>0$, we define
\begin{equation} \label{Gamma_r}
  \Gamma_r := \left\{x\in\overline{D}\colon 
   \dist(x, \, \partial D) \leq r\right\}.
\end{equation}
The main result of this section is the following one:
\begin{prop} \label{prop:lowerbound-p}
Let~$r\in (0, \, 1/2]$, $p\in (k-1, \, k)$,
 let~$u\in W^{1,p}(D, \, \NN)$ be
 a map that is continuous in~$\Gamma_r$,
 and let~$\sigma\in\pi_{k-1}(\NN)$
 be the homotopy class of~$u$ on~$\partial D$.
 Then, there holds
 \begin{equation} \label{bound_JS}
  \int_{D} \abs{\nabla u}^p  \mathrm{d}x
  \geq \frac{\abs{\sigma}_p}{k - p}
   - C\abs{\sigma}_p\log\left(\frac{\abs{\sigma}_p}{\alpha_p \, r}\right) \!,
 \end{equation}
 where~$\alpha_p$ is the number given by~\eqref{alpha_p}
 and~$C$ is a universal constant --- for instance, $C = 5/(\log 2)$.
\end{prop}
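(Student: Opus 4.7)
I would argue by a ball-growth construction in the spirit of Jerrard~\cite{Jerrard} and Sandier~\cite{Sandier}, adapted to the norm~$|\cdot|_p$ on~$\pi_{k-1}(\NN)$. The cornerstone is the following \emph{sphere lower bound}, which I would obtain directly from the definitions~\eqref{E_p}--\eqref{groupnorm} by rescaling the unit sphere~$\SS^{k-1}$ to one of radius~$\rho$: for any closed ball~$\overline{B(y,\rho)}\subseteq\R^k$ and any continuous map~$\varphi\in W^{1,p}(\partial B(y,\rho),\NN)$ of homotopy class~$\tau\in\pi_{k-1}(\NN)$,
\[
\int_{\partial B(y,\rho)}|\nabla_\top\varphi|^p\,\d\H^{k-1} \geq \rho^{k-1-p}\,|\tau|_p.
\]

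Next, I would reduce by approximation to the case in which $u$ is smooth outside a finite set of singular points $\{x_i\}_{i=1}^N\subseteq D\setminus\Gamma_r$, each carrying a well-defined local homotopy class $\sigma_i\in\pi_{k-1}(\NN)$ with $\sum_{i=1}^N\sigma_i=\sigma$. This relies on the continuity of $u$ on the collar~$\Gamma_r$ and the embedding $W^{1,p}\hookrightarrow C$ on $(k-1)$-dimensional slices (valid because $p>k-1$), followed by a smoothing/density step and weak lower semicontinuity to transfer the estimate back to the original~$u$.

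I would then run a Sandier-style ball construction. Fix an initial radius $\rho_0\sim \alpha_p/|\sigma|_p$ small enough that the balls $B(x_i,\rho_0)$ are pairwise disjoint and contained in $D\setminus\Gamma_{r/2}$ (up to an initial merging step), and grow all balls at a common multiplicative rate; whenever two balls become tangent, merge them into a single ball of radius equal to the sum of the two radii and class equal to the sum of the two classes. Halt the growth of a ball once its radius reaches~$r$ or once further growth would take it outside the admissible region. For each growing ball $B(y_j,\rho)$ carrying current class~$\tau_j$, the coarea formula combined with the sphere bound yields the annular estimate
\[
\int_{B(y_j,\rho_1)\setminus B(y_j,\rho_0)}|\nabla u|^p\,\d x \geq |\tau_j|_p\int_{\rho_0}^{\rho_1}\rho^{k-1-p}\,\d\rho = \frac{|\tau_j|_p}{k-p}\bigl(\rho_1^{k-p}-\rho_0^{k-p}\bigr).
\]
Summing these contributions over the terminal collection of balls, invoking the triangle inequality $\sum_j|\tau_j|_p\geq|\sigma|_p$, and expanding $(r^{k-p}-\rho_0^{k-p})/(k-p)=\log(r/\rho_0)+O(k-p)$ with $\rho_0\sim\alpha_p/|\sigma|_p$, I would recover~\eqref{bound_JS} once the merging corrections are absorbed into the logarithmic remainder.

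The main obstacle is the combinatorial bookkeeping at each merger: the sphere bound on a merged ball only controls the norm of the sum of classes, which by subadditivity of~$|\cdot|_p$ can be strictly smaller than the sum of the individual norms, so each merger may lose a controlled amount of energy relative to what the unmerged balls would have contributed. Together with the forced early termination of the growth at scale~$r$ (rather than at the diameter of~$D$), this deficit is precisely what produces the logarithmic correction $C|\sigma|_p\log(|\sigma|_p/(\alpha_p r))$ in~\eqref{bound_JS}; keeping track of it carefully, and ensuring that the balls never exit~$D$ during the construction, are the key technical hurdles.
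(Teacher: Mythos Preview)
Your overall strategy --- reduce to maps smooth away from finitely many singular points, use the sphere lower bound $\int_{\partial B_\rho}|\nabla_\top u|^p\geq \rho^{k-1-p}|\tau|_p$, and run a Jerrard--Sandier ball growth --- is correct and matches the paper's approach. The paper follows Jerrard's formulation (tracking the quantity $s=\min_B \rad B/|\hc(u,\partial B)|_p$ and the accumulated energy $\Lambda_p(s)=(\alpha_p s)^{k-p}/(k-p)$), but the underlying mechanism is the same.

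There is, however, a genuine error in your final step. The expansion you wrote,
\[
\frac{r^{k-p}-\rho_0^{k-p}}{k-p}=\log(r/\rho_0)+O(k-p),
\]
is the Taylor expansion of the left-hand side as $p\to k$; it stays \emph{bounded} in that limit and therefore cannot produce the leading term $|\sigma|_p/(k-p)$ in~\eqref{bound_JS}. Relatedly, your choice $\rho_0\sim\alpha_p/|\sigma|_p$ for the \emph{initial} radius is misplaced: in the ball construction the initial radii must be taken arbitrarily small (depending on the singularity configuration of~$u$), so that $\rho_0^{k-p}/(k-p)\to 0$ and the annular estimate yields $\frac{|\tau_j|_p}{k-p}\rho_1^{k-p}$ from the terminal radii alone.

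The correct algebra, and what the paper does, is the following. One stops the growth not when the radii reach~$r$, but when the scale $s=\rad B/|\hc|_p$ reaches $\tau\sim r/|\sigma|_p$ (this is also what guarantees the balls stay inside~$D$, under the harmless assumption $\int_D|\nabla u|^p\leq|\sigma|_p/(k-p)$). Summing the ball estimates then gives
\[
\int_D|\nabla u|^p \;\geq\; |\sigma|_p\,\Lambda_p(\tau/2)
\;=\;\frac{|\sigma|_p}{k-p}\Bigl(\tfrac{\alpha_p\tau}{2}\Bigr)^{k-p}
\;=\;\frac{|\sigma|_p}{k-p}-\frac{|\sigma|_p}{k-p}\Bigl(1-\bigl(\tfrac{\alpha_p\tau}{2}\bigr)^{k-p}\Bigr),
\]
and the elementary inequality $\frac{1}{y}(1-z^y)\leq\log(1/z)$ for $y,z\in(0,1)$ bounds the remainder by $|\sigma|_p\log(16|\sigma|_p/(\alpha_p r))$. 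So the scale $\alpha_p r/|\sigma|_p$ enters as the \emph{terminal} scale $\alpha_p\tau$, not as the initial ball radius, and the $1/(k-p)$ term is extracted by writing $t^{k-p}/(k-p)=1/(k-p)-(1-t^{k-p})/(k-p)$ rather than by Taylor-expanding in $k-p$.
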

The proof of Proposition~\ref{prop:lowerbound-p}
proceeds along the lines of~\cite[Theorem~1.2]{Jerrard}.
We will use the notation
\begin{equation} \label{Lambda_p}
 \Lambda_p(s) := \frac{\left(\alpha_p \, s\right)^{k-p}}{k-p}
 \qquad \textrm{for } s> 0,
\end{equation}
where~$\alpha_p$ is the number given by~\eqref{alpha_p}.
The first main ingredient in the proof of 
Proposition~\ref{prop:lowerbound-p} is the following
estimate, which bounds the energy on a map on a spherical 
shell in terms of its homotopy class.
Let~$B^k_b\setminus\overline{B}^k_a\subseteq\R^k$
be a spherical shell, with~$0 \leq a < b$, and
let~$u\in W^{1, \, p}(B^k_b\setminus\overline{B}^k_a, \, \NN)$.
If~$p > k-1$, then the homotopy class of~$u$
on the sphere~$\partial B^k_\rho$ is well-defined
for almost any value of the radius~$\rho\in (a, \, b)$,
by Fubini's theorem and Sobolev embeddings.
\begin{lemma} \label{lemma:lowerbound-annulus}
 Let~$a$, $b$ be real numbers with~$0 \leq a < b$,
 let~$p > k-1$, let~$u\in W^{1,p}(B_b^k\setminus\overline{B}_a^k, \, \NN)$
 and let~$\sigma\in\pi_{k-1}(\NN)$.
 Assume that, for almost any~$\rho\in (a, \, b)$, 
 the homotopy class of~$u$ restricted to~$\partial B^k_\rho$ is~$\sigma$. Then, there holds
 \begin{equation} \label{lowerbound-annulus}
  \int_{B^k_b\setminus\overline{B}^k_a} \abs{\nabla u}^p \mathrm{d}x
  \geq \abs{\sigma}_p \left(
   \Lambda_p\left(\frac{b}{\abs{\sigma}_p}\right) 
   - \Lambda_p\left(\frac{a}{\abs{\sigma}_p}\right) \right) \! ,
 \end{equation}
 under the agreement that the right-hand side
 of~\eqref{lowerbound-annulus} is zero if~$\sigma = 0$.
\end{lemma}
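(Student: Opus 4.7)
The plan is a standard slicing argument: I fiber the annulus into concentric spheres, use the definition of $E_p$ on each sphere after rescaling, and integrate. Assume $\sigma\neq 0$ since otherwise the right-hand side vanishes and the inequality is trivial.

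First I would invoke Fubini's theorem (equivalently, the coarea formula applied to $x\mapsto\abs{x}$) together with the pointwise inequality $\abs{\nabla u}^p\geq \abs{\nablaT u}^p$, where $\nablaT$ denotes the tangential component along the sphere $\partial B^k_\rho$. This gives
\[
\int_{B^k_b\setminus\overline{B}^k_a}\abs{\nabla u}^p\d x
\geq \int_a^b\!\int_{\partial B^k_\rho}\abs{\nablaT u}^p\d\H^{k-1}\d\rho.
\]
By the assumption on~$u$, for a.e.~$\rho\in(a, \, b)$ the restriction $u|_{\partial B^k_\rho}$ lies in $W^{1,p}(\partial B^k_\rho, \, \NN)$ and has homotopy class $\sigma$.

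Next, for such $\rho$ I would rescale: setting $\tilde u_\rho(\theta) := u(\rho\theta)$ on $\SS^{k-1}$, one has $\tilde u_\rho\in W^{1,p}(\SS^{k-1}, \, \NN)\cap\sigma$ and
\[
\int_{\partial B^k_\rho}\abs{\nablaT u}^p\d\H^{k-1}
=\rho^{k-1-p}\int_{\SS^{k-1}}\abs{\nablaT\tilde u_\rho}^p\d\H^{k-1}
\geq \rho^{k-1-p}\,E_p(\sigma),
\]
by the definition~\eqref{E_p} of~$E_p(\sigma)$. The remaining step is to compare $E_p(\sigma)$ with the ingredients that appear in the stated bound. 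From the single-term decomposition in~\eqref{groupnorm} we have $E_p(\sigma)\geq\abs{\sigma}_p$, and since $k-p>0$ and $\abs{\sigma}_p\geq \alpha_p$ by~\eqref{alpha_p}, we can factor
\[
E_p(\sigma)\;\geq\;\abs{\sigma}_p
\;=\;\abs{\sigma}_p^{p-k+1}\,\abs{\sigma}_p^{k-p}
\;\geq\;\alpha_p^{k-p}\,\abs{\sigma}_p^{p-k+1}.
\]

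To conclude, I integrate in $\rho$, using $\int_a^b \rho^{k-1-p}\d\rho = (b^{k-p}-a^{k-p})/(k-p)$ (finite even when $a=0$, since $p<k$ implies $k-1-p>-1$), and combine with the previous display to obtain
\[
\int_{B^k_b\setminus\overline{B}^k_a}\abs{\nabla u}^p\d x
\;\geq\;\frac{\alpha_p^{k-p}\,\abs{\sigma}_p^{p-k+1}\,(b^{k-p}-a^{k-p})}{k-p}.
\]
A short algebraic rearrangement using the definition~\eqref{Lambda_p} of $\Lambda_p$ identifies the right-hand side with $\abs{\sigma}_p(\Lambda_p(b/\abs{\sigma}_p)-\Lambda_p(a/\abs{\sigma}_p))$, which is~\eqref{lowerbound-annulus}.

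There is no serious obstacle: the only points requiring care are justifying that the hypothesis on the homotopy class of~$u|_{\partial B^k_\rho}$ holds on a set of full measure (so that the per-sphere lower bound is applicable under the integral), and the trivial but important observation $E_p(\sigma)\geq \alpha_p^{k-p}\abs{\sigma}_p^{p-k+1}$, which is what produces the exact constants appearing in the statement.
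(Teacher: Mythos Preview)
Your proof is correct and follows essentially the same approach as the paper's: slice the annulus into spheres, use the scaling identity together with $E_p(\sigma)\geq\abs{\sigma}_p$ to get a per-sphere lower bound, integrate in~$\rho$, and then use $\abs{\sigma}_p\geq\alpha_p$ to reach the form involving~$\Lambda_p$. The only cosmetic difference is that the paper integrates first and performs the algebra $\abs{\sigma}_p^{k-p}\geq\alpha_p^{k-p}$ afterward, whereas you absorb this inequality into the bound on~$E_p(\sigma)$ before integrating.
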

\begin{figure}
	\centering
	\includegraphics[width=0.5\textwidth]{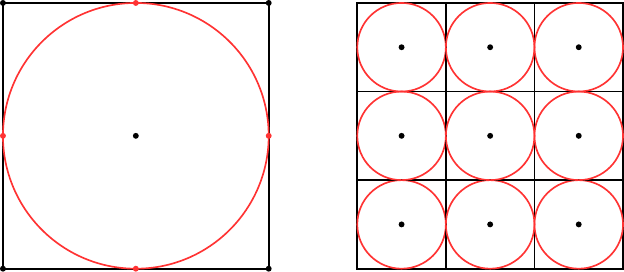}
	\caption{\label{sincriticaldim} Maps have singularities that are uniformly distributed in the critical dimension with $k=2$.
	This figure serves as an illustration of how to obtain a lower bound for the $p$-Dirichlet energy of maps with uniformly distributed singularities, via a suitable rescaling. In the general case, one must use ball constructions to prove the lower bound.}
\end{figure}
\begin{proof}
 If~$\sigma = 0$ there is nothing to prove, 
 so we assume without loss of generality that~$\sigma\neq 0$.
 The definition of~$E_p$ (Equation~\eqref{E_p}),
 combined with a scaling argument, immediately implies that
 \begin{equation} \label{boundsphere}
  \int_{\partial B^k_\rho} \abs{\nablaT u}^p \,\d\H^{k-1}
  \geq E_p(\sigma) \, \rho^{k - p - 1}
 \end{equation}
 for almost any~$\rho\in (a, \, b)$. The definition 
 of~$\abs{\, \,}_p$, Equation~\ref{groupnorm},
 immediately implies that~$E_p(\sigma) \geq \abs{\sigma}_p$.
 Then, by integrating both sides of~\eqref{boundsphere}
 with respect to~$\rho\in (a, \, b)$, we obtain
 \[
  \begin{split}
   \int_{B^k_b\setminus\overline{B}^k_a} \abs{\nabla u}^p \mathrm{d}x
   \geq \frac{\abs{\sigma}_p}{k - p}\left(b^{k-p} - a^{k-p}\right) 
   &= \frac{\abs{\sigma}_p^{1 + k - p}}{k - p} 
    \left(\left(\frac{b}{\abs{\sigma}_p}\right)^{k-p} 
   - \left(\frac{a}{\abs{\sigma}_p}\right)^{k-p}\right) \\
   &\geq \frac{\abs{\sigma}_p \, \alpha_p^{k-p}}{k - p} 
    \left(\left(\frac{b}{\abs{\sigma}_p}\right)^{k-p} 
   - \left(\frac{a}{\abs{\sigma}_p}\right)^{k-p}\right) \! ,
  \end{split}
 \]
 where the last inequality follows by the definition of~$\alpha_p$,
 Equation~\eqref{alpha_p}.
\end{proof}
The main step in the proof of Proposition~ ~\ref{prop:lowerbound-p}
is the so-called ``ball construction'' (see~\cite{Jerrard, Sandier}). We follow ~\cite[Proposition~4.1]{Jerrard}.
Let~$k - 1 < p < k$, $r > 0$
and~$u\in W^{1,p}(D, \, \NN)$ be given,
such that~$u$ is continuous in
the set~$\Gamma_r$ defined by~\eqref{Gamma_r}.
It is not restrictive to assume that~$u$ satisfies
\begin{equation} \label{upper_bd}
 \int_{\Omega} \abs{\nabla u}^p \mathrm{d}x \leq 
  \frac{\abs{\sigma}_p}{k - p},
\end{equation}
where~$\sigma\in\pi_{k-1}(\NN)$ is the homotopy
class of~$u$ on~$\partial\Omega$ ---
for if~\eqref{upper_bd} does not hold, then
\eqref{bound_JS} follows immediately.
Moreover, it suffices to prove the estimate~\eqref{bound_JS}
in case~$u$ is smooth except for a finite number of 
point singularities, for the class of such maps 
is (strongly) dense in~$W^{1,p}(D, \, \NN)$
\cite[Theorem~2]{bethuel-Density}. Therefore, 
we assume without loss of generality that there exists 
a finite set~$S\subseteq D\setminus\Gamma_r$
such that~$u$ is smooth in~$D\setminus S$.

After these reductions, we set some notation.
For each~$a\in S$, we define~$\sigma_a\in\pi_{k-1}(\NN)$
as the homotopy class of~$u$ on a sphere~$\partial B^k_\rho(a)$,
where the radius~$\rho$ is small enough 
that~$\overline{B}^k_\rho(a)\subseteq D$
and~$S\cap \overline{B}_\rho(a) = \{a\}$.
We write~$S_{\mathrm{top}}$ for the set of 
``topologically non-trivial'' singularities, that is,
\begin{equation} \label{S_top}
 S_{\mathrm{top}} := \left\{a\in S\colon \sigma_a\neq 0\right\} \! .
\end{equation}
If~$B$ is a ball (or a set homeomorphic to a ball, 
such as~$D$ itself), the closure of~$B$ is contained in~$D$
and~$\partial B\cap S_{\mathrm{top}} = \emptyset$, 
then we define~$\hc(u, \, \partial B)\in \pi_{k-1}(\NN)$ as
\begin{equation} \label{hc}
 \hc(u, \, \partial B) 
 := \sum_{a\in S_{\mathrm{top}}\cap B} \sigma_a \, .
\end{equation}
If~$u$ is continuous on~$\partial B$, then~$\hc(u, \, \partial B)$
is precisely the homotopy class of~$u$ restricted to~$\partial B$.
However, $u$ is still well-defined if~$\partial B$
contains a point of~$S\setminus S_{\mathrm{top}}$.
(Essentially, defining~$\hc(u,\, \partial B)$
as in~\eqref{hc} allows us to neglect
``topologically trivial'' singularities,
i.e.~elements of~$S\setminus S_{\mathrm{top}}$.) Finally, we will write~$\rad B$ for the radius of a ball~$B$.

\begin{lemma} \label{lemma:ballconstruction}
 Let~$p\in (k-1, \, k)$, $r \in (0, \, 1/2]$, $\tau > 0$
 and~$u\in W^{1,p}(D, \, \NN)$ be given.
 Suppose that~$u$ is smooth in~$D\setminus S$
 for some finite set~$S\subseteq D\setminus\Gamma_r$,
 that~$u$ satisfies~\eqref{upper_bd}, and that
 \begin{equation} \label{tau-k-p}
  4\tau\abs{\hc(u, \, \partial D)}_* \leq r, \qquad 
  \left(\frac{\alpha_p \tau}{2}\right)^{k-p} > \frac{1}{2}
 \end{equation}
 Then, there exists a finite collection~$\mathscr{B}$ of closed balls
 that satisfy the following properties:
 \begin{enumerate}[label=(\roman*)]
  \item $S_{\mathrm{top}}\subseteq \cup_{B\in\mathscr{B}} B$ 
  and~$B\cap S_{\mathrm{top}}\neq\emptyset$ for any~$B\in\mathscr{B}$;
  \item the balls in~$\mathscr{B}$ are contained in~$D$
  and their interiors are pairwise disjoint;
  \item letting~$s:= \min_{B\in\mathscr{B}} 
  \left(\rad B/\abs{\hc(u, \, \partial B)}_p\right)$, we have
  \[
   \int_{B} \abs{\nabla u}^p \mathrm{d}x \geq \frac{\rad B}{s} \Lambda_p(s)
  \]
  for any~$B\in\mathscr{B}$;
  \item $\tau/2 \leq s \leq\tau$;
  \item $\sum_{B\in\mathscr{B}}\rad B 
  \leq 2\tau\abs{\hc(u, \, \partial D)}_p$.
 \end{enumerate}
\end{lemma}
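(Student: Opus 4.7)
The plan is to adapt the Jerrard--Sandier ball construction (following the model of~\cite[Proposition~4.1]{Jerrard}) to coefficients in the normed group $(\pi_{k-1}(\NN),\abs{\,\cdot\,}_p)$. We produce a one-parameter family $t\mapsto\mathscr{B}(t)$ of finite collections of pairwise disjoint closed balls contained in $D$, each containing at least one point of $S_{\mathrm{top}}$, by alternating two elementary moves: \emph{growing}, during which all radii are dilated at the same exponential rate with centres fixed, and \emph{merging}, during which a maximal cluster $B_1,\dots,B_m$ of mutually tangent balls is instantaneously replaced by a single closed ball $B\supseteq\bigcup_i B_i$ with $\rad B=\sum_i\rad B_i$. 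The initial collection $\mathscr{B}(0)$ consists of small pairwise disjoint balls centred at the points of $S_{\mathrm{top}}$, so that properties (i) and (ii) are satisfied at $t=0$ and easily persist along the evolution.

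During growing, no singularity of $u$ crosses any sphere $\partial B$, hence the charges $\hc(u,\partial B)$ are constant and each ratio $\rad B/\abs{\hc(u,\partial B)}_p$ grows like $e^{t-t_0}$. At a merging event, additivity of the topological degree together with subadditivity of $\abs{\,\cdot\,}_p$ gives $\abs{\hc(u,\partial B)}_p\leq\sum_i\abs{\hc(u,\partial B_i)}_p$, and therefore
\begin{equation*}
\frac{\rad B}{\abs{\hc(u,\partial B)}_p}\;\geq\;\frac{\sum_i\rad B_i}{\sum_i\abs{\hc(u,\partial B_i)}_p}\;\geq\;\min_i\frac{\rad B_i}{\abs{\hc(u,\partial B_i)}_p}.
\end{equation*}
Consequently $t\mapsto s(\mathscr{B}(t))$ is continuous during growing phases and non-decreasing across mergings; a careful bookkeeping (in which the second condition in~\eqref{tau-k-p} guarantees that $\Lambda_p$ is bounded away from zero on the relevant range) shows that at the first time $t_*$ at which $s(\mathscr{B}(t_*))\geq\tau/2$ one automatically has $s(\mathscr{B}(t_*))\leq\tau$, which is~(iv).

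The energy bound (iii) then follows from Lemma~\ref{lemma:lowerbound-annulus} applied to each spherical shell swept out by each $B\in\mathscr{B}(t_*)$ between consecutive ancestor-mergings: combining these contributions with the subadditivity inequality above yields
\begin{equation*}
\int_B\abs{\nabla u}^p\,\d x\;\geq\;\abs{\hc(u,\partial B)}_p\,\Lambda_p\!\left(\frac{\rad B}{\abs{\hc(u,\partial B)}_p}\right)\;\geq\;\frac{\rad B}{s}\,\Lambda_p(s),
\end{equation*}
where the last inequality uses that $\Lambda_p$ is increasing together with $\rad B/\abs{\hc(u,\partial B)}_p\geq s$. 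Summing (iii) over $\mathscr{B}(t_*)$, using~\eqref{upper_bd}, $s\leq\tau$ and the definition of $\Lambda_p$, gives
\begin{equation*}
\sum_{B\in\mathscr{B}(t_*)}\rad B\;\leq\;\frac{s}{\Lambda_p(s)}\int_D\abs{\nabla u}^p\,\d x\;\leq\;2\tau\,\abs{\sigma}_p\;=\;2\tau\,\abs{\hc(u,\partial D)}_p,
\end{equation*}
the last equality being additivity of $\hc$ over the whole singular set; this is (v).

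The main obstacle is an apparent circularity: bounding $\sum\rad B$ as in~(v) by $2\tau\abs{\hc(u,\partial D)}_p\leq r/2$ via the first part of~\eqref{tau-k-p} is what prevents any ball from touching $\partial D$ during the evolution, and thus from blocking a growing phase before $s$ reaches $\tau/2$; but~(v) itself is proved along the way using~(iii). The resolution is a standard bootstrap: run the construction only as long as no ball touches $\partial D$, derive (iii) and (v) in this regime, and conclude that the stopping criterion of~(iv) is met strictly before any ball can reach $\partial D$. Subadditivity of $\abs{\,\cdot\,}_p$ on $\pi_{k-1}(\NN)$ is the key algebraic ingredient that closes this bootstrap.
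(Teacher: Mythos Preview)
There is a genuine gap in your derivation of property~(iii). The displayed chain
\[
 \int_B\abs{\nabla u}^p\,\d x\;\geq\;\abs{\hc(u,\partial B)}_p\,\Lambda_p\!\left(\frac{\rad B}{\abs{\hc(u,\partial B)}_p}\right)\;\geq\;\frac{\rad B}{s}\,\Lambda_p(s)
\]
is incorrect at the second inequality. Writing $d_B := \abs{\hc(u,\partial B)}_p$ and $t_B := \rad B/d_B \geq s$, that inequality reads $\Lambda_p(t_B)/t_B \geq \Lambda_p(s)/s$. But $\Lambda_p(t)/t = \alpha_p^{k-p} t^{\,k-p-1}/(k-p)$ is strictly \emph{decreasing} in~$t$ (since $p>k-1$), so the inequality goes the wrong way whenever $t_B > s$. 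Your stated justification (``$\Lambda_p$ is increasing together with $\rad B/d_B \geq s$'') yields only $d_B\,\Lambda_p(t_B) \geq d_B\,\Lambda_p(s)$, and since $d_B = \rad B/t_B \leq \rad B/s$ this is again the wrong direction.

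Neither is the first inequality an invariant of the construction you describe. If \emph{all} radii are dilated at the same rate, then after a merging with charge cancellation the merged ball has $t_B > s$; in the subsequent growth phase the annular energy it accrues is $d_B\bigl(\Lambda_p(\lambda t_B)-\Lambda_p(t_B)\bigr)$, and because $\Lambda_p'(t) = \alpha_p^{k-p}\, t^{\,k-p-1}$ is decreasing this is strictly less than the increment $\tfrac{\rad B}{s}\bigl(\Lambda_p(\lambda s)-\Lambda_p(s)\bigr)$ that (iii) would require. Concretely, take three singularities with $\abs{\sigma_a}_p=\abs{\sigma_b}_p=\abs{\sigma_c}_p=1$ and $\abs{\sigma_a+\sigma_b}_p=1$; after the first merging ($a$ with $b$) and a long second growth phase, (iii) fails for the ball produced by the second merging. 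The same computation shows the intermediate bound $\int_B \geq d_B\,\Lambda_p(t_B)$ can fail at a second-generation merging.

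The paper's proof sidesteps these issues by citing Jerrard's Proposition~4.1 verbatim for (i), (ii$^\prime$)--(iv$^\prime$) and~\eqref{finalballs}: Jerrard's expansion step is not ``grow all balls at the same rate'' but is tailored so that (iii$^\prime$) is preserved, and the second condition in~\eqref{tau-k-p} is used specifically to show that $s_n\leq\tau$ survives an amalgamation step. The paper then supplies explicit arguments only for containment in~$D$ (a posteriori, by contradiction: a ball not contained in~$D$ would have $\rad B \geq r/2 \geq 2\tau\abs{\sigma}_p$, and (iii$^\prime$) on $D\cap B$ together with~\eqref{upper_bd} then forces $2(\alpha_p\tau)^{k-p}\leq 1$, contradicting~\eqref{tau-k-p}) and for~(v). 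Your bootstrap scheme for containment in~$D$ is sound in principle, but it rests on~(iii), whose derivation must be repaired first.
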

\begin{proof}
 The proof proceeds along the lines of~\cite[Proposition~4.1]{Jerrard}.
 We summarize the argument, focusing on the points that require some adaptations, and refer the reader to~\cite{Jerrard} for the details.
 Let~$s_0$ be a small parameter, with~$0 < s_0 < \tau/2$.
 We write~$B_a$ for the closed ball of centre~$a$
 and radius~$s_0\abs{\sigma_a}_p$. By choosing~$s_0$ 
 small enough (depending on~$u$),
 we can make sure that the balls~$B_a$ are pairwise
 disjoint and contained in~$D$.
 Moreover, Lemma~\ref{lemma:lowerbound-annulus} implies that
 \[
  \int_{B_a} \abs{\nabla u}^p \mathrm{d}x
  \geq \abs{\sigma_a}_p \Lambda_p(s_0)
  = \frac{\rad B_a}{s_0} \Lambda_p(s_0)
 \]
 for any~$a\in S_{\mathrm{top}}$.
 Now, we consider finite collections of closed balls,
 $\mathscr{B}_0$, $\mathscr{B}_1$, \ldots, $\mathscr{B}_N$,
 with suitable properties. The initial collection
 is defined as $\mathscr{B}_0 := \{B_a\colon a\in S_{\mathrm{top}}\}$.
 Each collection, $\mathscr{B}_n$ for~$n\geq 1$, is obtained with
 from the previous one, $\mathscr{B}_{n-1}$,
 by proceeding exactly as in~\cite[proof of the Proposition~4.1]{Jerrard}.
 At each step~$n$, the collection~$\mathscr{B}_n$ 
 satisfies the property~(i) above, as well as
 \begin{itemize}
  \item[(ii$^\prime$)] the interiors of the balls in~$\mathscr{B}_n$
  are pairwise disjoint;
  \item[(iii$^\prime$)] for any~$B\in\mathscr{B}_n$,
  we have
  \[
   \int_{D\cap B} \abs{\nabla u}^p \geq \frac{\rad B}{s_n} \Lambda_p(s_n)
  \]
  where~$s_n:= \min_{B\in\mathscr{B}_n} 
  \left(\rad B/\abs{\hc(u, \, \partial B)}_p\right)$;
  \item[(iv$^\prime$)] $s_n\leq \tau$.
 \end{itemize}
 Moreover, the process terminates after a finite number~$N$
 of steps, and the final collection~$\mathscr{B}_{N}$ satisfies
 \begin{equation} \label{finalballs}
  \frac{\tau}{2} \leq s_N \leq \tau.
 \end{equation}
 The construction of~$(\mathscr{B}_n)_{n=1}^N$ 
 combines ``expansion steps'' and ``amalgamation steps'', 
 which are defined in a rather technical way.
 However, since the arguments in~\cite{Jerrard} carry over
 with no significant difference, we do not provide the proof 
 of properties~(i), (ii$^\prime$)--(iv$^\prime$), \eqref{finalballs} above.
 (The main tool is the estimate provided by 
 Lemma~\ref{lemma:lowerbound-annulus}, which is
 analogous to the one in~\cite[Proposition~3.2]{Jerrard}.)
 Instead, we prove that each ball~$B\in\mathscr{B}_N$
 satisfies~$B\subseteq D$ and that~$\mathscr{B}_N$ satisfies~(v); 
 then, it will follow that the collection~$\mathscr{B} := \mathscr{B}_N$ 
 has all the desired properties.
 
 \smallskip
 \noindent
 \textit{Each ball in~$\mathscr{B}_N$ is contained in~$D$.}
 Suppose, towards a contradiction, that~$B\in\mathscr{B}_N$
 is a ball not entirely contained in~$D$.
 We know that~$B$ contains a singularity of~$u$
 (by Property~(i) above) and that~$u$ is continuous in~$\Gamma_r$,
 by assumption. Therefore, we must have~$\rad B \geq r/2$.
 Then, the assumption~\eqref{tau-k-p} implies
 $\rad B \geq 2\tau\abs{\sigma}_p$, where~$\sigma:=\hc(u, \, \partial D)$.
 From the upper bound~\eqref{upper_bd},
 Property~(iii$^\prime$) above and~\eqref{finalballs},
 we deduce
 \[
  \frac{\abs{\sigma}_p}{k-p}
  \stackrel{\eqref{upper_bd}}{\geq} \int_{B\cap D} \abs{\nabla u}^p \mathrm{d}x 
  \stackrel{(\mathrm{iii}^\prime)}{\geq}
   \frac{\rad B}{s_N} \Lambda_p(s_N)
  \geq \frac{2\tau \abs{\sigma}_p}{k - p} 
   \alpha_p^{k-p} s_N^{k-p-1}
  \stackrel{\eqref{finalballs}}{\geq} 
   \frac{2\abs{\sigma}_p}{k - p} 
   \alpha_p^{k-p} \tau^{k-p}
 \]
 (the function~$s\mapsto s^{k-p-1}$ is decreasing, because~$p > k-1$).
 As a consequence, we obtain the inequality
 \[
  2\alpha_p^{k-p} \tau^{k-p} \leq 1,
 \]
 which contradicts\footnote{To obtain a contradiction here,
 it would be enough to assume that~$(\alpha_p\tau)^{k-p} > 1/2$,
 which is a slightly weaker condition than~\eqref{tau-k-p}.
 However, the assumption~\eqref{tau-k-p}
 is used in other points of the proof 
 --- most importantly, to prove that~(iv$^\prime$)
 persists after an amalgamation step, 
 see~\cite[Proposition~4.1, Step~7]{Jerrard}
 --- and there, we do need the slightly stronger condition 
 $(\alpha_p\tau/2)^{k-p} > 1/2$.
 }
 the assumption~\eqref{tau-k-p}.
 Therefore, we must have~$B\subseteq D$,
 as claimed.
 
 \smallskip
 \noindent
 \textit{$\mathscr{B}_N$ satisfies~(v).}
 The upper bound~\eqref{upper_bd}, Properties~(ii$^\prime$) 
 and~(iii$^\prime$) above and~\eqref{finalballs} imply
 \[
  \begin{split}
   \frac{\abs{\sigma}_p}{k-p}
    \stackrel{\eqref{upper_bd}}{\geq} \int_{D} \abs{\nabla u}^p \mathrm{d}x
    \stackrel{(\mathrm{ii}^{\prime})-(\mathrm{iii}^\prime)}{\geq}
    \sum_{B\in\mathscr{B}_N} 
    \frac{\rad B}{s_N} \Lambda_p(s_N)
   \stackrel{\eqref{finalballs}}{\geq}
    \sum_{B\in\mathscr{B}_N} 
    \frac{\rad B}{s_N} \Lambda_p\left(\frac{\tau}{2}\right) \! .
  \end{split}
 \]
 By writing down explicitely the definition of~$\Lambda_p(\tau/2)$
 (see~\eqref{Lambda_p}), we obtain
 \[
  \left(\frac{\alpha_p\,\tau}{2}\right)^{k-p} 
   \sum_{B\in\mathscr{B}_N} 
   \frac{\rad B}{s_N} \leq \abs{\sigma}_p, 
 \]
 and~(v) follows because of~\eqref{tau-k-p} and~\eqref{finalballs}.
\end{proof}

\begin{proof}[Proof of Proposition~\ref{prop:lowerbound-p}]
 Let~$p\in (k-1, \, k)$, $r\in (0, \, 1/2]$
 and~$u\in W^{1,p}(D, \, \NN)$ be given.
 As explained before, there is no loss of generality 
 in assuming that~$u$ is smooth away from a
 finite set~$S\subseteq D\setminus\Gamma_r$ 
 and that it satisfies~\eqref{upper_bd}. 
 Let~$\sigma := \hc(u, \, \partial D)\in\pi_{k-1}(\NN)$.
 We will prove that~$u$ satisfies
 \begin{equation} \label{bound_JS0}
  \int_{D} \abs{\nabla u}^p \mathrm{d}x
  \geq \frac{\abs{\sigma}_p}{k - p} - \frac{\abs{\sigma}_p}{\log 2} \log\left(\frac{16\abs{\sigma}_p}{\alpha_p \, r}\right) \! .
 \end{equation}
 Keeping in mind that~$\abs{\sigma}_p/\alpha_p\geq 1$
 because of~\eqref{alpha_p} and that~$r\leq 1/2$, 
 from~\eqref{bound_JS0} we obtain the desired 
 estimate~\eqref{bound_JS}, with a constant~$C = 5/\log 2$.
 In order to prove~\eqref{bound_JS0}, we distinguish two cases.
 
 \setcounter{case}{0}
 \begin{case}
  Assume first that
  \begin{equation} \label{bound_JS-case1}
   \left(\frac{16 \abs{\sigma}_p}{\alpha_p\,r}\right)^{k-p}
   \geq 2 .
  \end{equation}
  Then, taking the logarithm of both sides of~\eqref{bound_JS-case1},
  we deduce that the right-hand side of~\eqref{bound_JS0}
  is non-positive, so~\eqref{bound_JS0} is satisfied trivially.
 \end{case}
 
 \begin{case}
  Assume now that
  \begin{equation} \label{bound_JS-case2}
   \left(\frac{16 \abs{\sigma}_p}{\alpha_p\,r}\right)^{k-p}
   < 2 .
  \end{equation}
  In this case, we apply Lemma~\ref{lemma:ballconstruction} with the choice
  \begin{equation} \label{tau}
   \tau := \frac{r}{8\abs{\sigma}_p} \! .
  \end{equation}
  This value of~$\tau$ satisfies the condition~\eqref{tau-k-p},
  because of~\eqref{bound_JS-case2}.
  Then, keeping in mind that 
  \[
   \abs{\hc(u, \, \partial B)}_p\leq \frac{\rad B}{s}
   \qquad \textrm{for any } B\in\mathscr{B},
  \]
  we obtain
  \[
   \begin{split}
   \int_{D} \abs{\nabla u}^p \mathrm{d}x
   &\geq
    \sum_{B\in\mathscr{B}} \frac{\rad B}{s} \Lambda_p(s) 
   \geq \sum_{B\in\mathscr{B}}
    \abs{\hc(u, \, \partial B)}_p \Lambda_p(s) 
   \geq \abs{\sigma}_p 
   \Lambda_p\left(\frac{\tau}{2}\right) \! .
   \end{split}
  \]
  Recalling the definition of~$\Lambda_p$,
  Equation~\eqref{Lambda_p}, we obtain
  \begin{equation} \label{bound_JS1}
   \int_{D} \abs{\nabla u}^p \mathrm{d}x
   \geq \frac{\abs{\sigma}_p}{k-p} 
    - \abs{\sigma}_p \, X,
  \end{equation}
  where
  \[
   X := \frac{1}{k-p}\left(1 - 
    \left(\frac{\alpha_p \tau}{2}\right)^{k-p}\right)
   \stackrel{\eqref{tau}}{=}
    \frac{1}{k-p}\left(1 - 
    \left(\frac{\alpha_p\, r}{16 \abs{\sigma}_p}\right)^{k-p}\right) \! .
  \]
  The quantity~$X$ can be estimated from above
  by making use of the inequality
  \begin{equation} \label{LagrangeMVT}
   \frac{1}{y}\left(1 - z^y\right) < \log\left(\frac{1}{z}\right) 
   \qquad \textrm{for any } y\in (0, \, 1), \ z\in (0, \, 1),
  \end{equation}
  which can be proved by applying Lagrange's mean value theorem to the 
  function~$x\in [0, \, y]\mapsto z^x$.
  We obtain
  \begin{equation} \label{bound_JS2}
   X \leq \log\left(\frac{16 \abs{\sigma}_p}{\alpha_p \, r}\right) \! .
  \end{equation}
  Together,~\eqref{bound_JS1} and~\eqref{bound_JS2}
  imply~\eqref{bound_JS0}.
  \qedhere
 \end{case}

\end{proof}
\subsection{Dependence on the parameter~\texorpdfstring{$p$}{p}}

The goal of this section is to study
how~$E_p$ and~$\abs{\, \,}_p$ behave
as a function of~$p$. Given $k-1 \leq p \leq q$, we set 
\begin{equation}\label{f_function}
 f(p, \, q):=\frac{p-k+1}{q-k+1}.
\end{equation}

\begin{prop} \label{prop:pqnorm}
 For any numbers~$p$, $q$ such that~$q > p > k-1$, there exists
 a constant~$\lambda(p, \, q)\in (0, \, 1)$ that satisfies
 \begin{equation} \label{pqnorm}
  \lambda(p, \, q) \, \abs{\sigma}_q^{f(p, \,q)}
  \leq \abs{\sigma}_p
  \leq \lambda(p,\, q)^{-1} \, \abs{\sigma}_q 
 \end{equation}
 for any~$\sigma\in\pi_{k-1}(\NN)$. Moreover, 
 we can choose~$\lambda(p, \, q)$ in such a way that $\lambda(p, \, q) \to 1$ as $p\to q^-$ and
\begin{equation} \label{pqnorm-lambda}
\limsup_{p\to q^-} \frac{1 - \lambda(p, \, q)}{q - p} < +\infty.
 \end{equation}
 In particular, $\abs{\sigma}_p\to\abs{\sigma}_q$
 as~$p\to q$, for any~$\sigma\in\pi_{k-1}(\NN)$.
\end{prop}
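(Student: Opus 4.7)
The plan is to establish the two inequalities in~\eqref{pqnorm} by parallel decomposition arguments. For the upper bound~$|\sigma|_p \leq \lambda(p,q)^{-1}|\sigma|_q$, I would start from a near-optimal decomposition~$(\sigma_i)_{i=1}^N$ of~$\sigma$ for~$|\sigma|_q$, with maps~$u_i\in\sigma_i$ realizing~$E_q(\sigma_i)$, and apply Hölder's inequality on~$\SS^{k-1}$ to obtain~$\int_{\SS^{k-1}}|\nabla_T u_i|^p \leq \mathscr{H}^{k-1}(\SS^{k-1})^{(q-p)/q}E_q(\sigma_i)^{p/q}$. The concavity bound~$\sum a_i^{p/q}\leq N^{1-p/q}(\sum a_i)^{p/q}$, combined with~$N\leq|\sigma|_q/\alpha_q$ (each nonzero~$\sigma_i$ forces~$E_q(\sigma_i)\geq\alpha_q$ by~\eqref{alpha_p}), then yields~$|\sigma|_p \leq (\mathscr{H}^{k-1}(\SS^{k-1})/\alpha_q)^{(q-p)/q}|\sigma|_q$, so~$\lambda(p,q)$ can be taken of the form~$A^{-(q-p)/q}$ for a positive constant~$A$ depending only on~$q$ and~$\NN$, giving~$\lambda\to 1$ at a linear rate in~$q-p$.

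The lower bound~$\lambda(p,q)|\sigma|_q^{f(p,q)}\leq|\sigma|_p$ will follow from the pointwise estimate
\[
E_q(\sigma) \leq C(p,q)\,E_p(\sigma)^{(q-k+1)/(p-k+1)} \qquad \textrm{for every } \sigma\in\pi_{k-1}(\NN),
\]
applied term by term to a near-optimal decomposition~$(\sigma_i)$ for~$|\sigma|_p$: the convexity inequality~$\sum a_i^{1/f(p,q)}\leq(\sum a_i)^{1/f(p,q)}$, valid since~$1/f(p,q)>1$, then gives~$|\sigma|_q \leq C(p,q)|\sigma|_p^{1/f(p,q)}$, equivalent to~$|\sigma|_p\geq C(p,q)^{-f(p,q)}|\sigma|_q^{f(p,q)}$. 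To produce the pointwise estimate, I would take a near-minimizer~$u\in W^{1,p}(\SS^{k-1},\NN)\cap\sigma$ and mollify at scale~$\eps$. Since~$p>k-1=\dim\SS^{k-1}$, Morrey's embedding makes~$u$ Hölder continuous of exponent~$\alpha=(p-k+1)/p$ with seminorm controlled by~$E_p(\sigma)^{1/p}$, so choosing~$\eps\asymp E_p(\sigma)^{-1/(p-k+1)}$ keeps~$\|u_\eps-u\|_\infty$ below half the width of a tubular neighborhood of~$\NN$. One can then project~$u_\eps$ onto~$\NN$ by the nearest-point map and obtain a smooth~$\NN$-valued~$v$ lying in the same homotopy class~$\sigma$ (since~$v$ is~$C^0$-close to~$u$). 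The standard convolution bound~$\|\nabla u_\eps\|_\infty\lesssim\eps^{-(k-1)/p}E_p(\sigma)^{1/p}$, upon substituting the chosen~$\eps$, collapses to~$\|\nabla v\|_\infty\lesssim E_p(\sigma)^{1/(p-k+1)}$; combining with~$\int|\nabla v|^q\leq\|\nabla v\|_\infty^{q-p}\int|\nabla v|^p$ and~$\int|\nabla v|^p\lesssim E_p(\sigma)$ (via Young's inequality and the Lipschitz character of the projection) closes the estimate.

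The main obstacle will be the quantitative rate~\eqref{pqnorm-lambda}. The above construction yields a constant~$C(p,q)$ whose value at~$p=q$ is controlled by a fixed positive power of the Lipschitz constant of the nearest-point projection onto~$\NN$, which is~$>1$, so the naive choice~$\lambda(p,q)=C(p,q)^{-f(p,q)}$ does not tend to~$1$ at~$p=q$. To cure this, I would shrink the projection tube radius~$\eta$ together with~$q-p$: since the projection is the identity on~$\NN$, its Lipschitz constant on a tube of radius~$\eta$ is~$1+O(\eta)$, and choosing~$\eta\to 0$ at a rate balanced against the enlarged convolution loss should yield~$C(p,q)\to 1$ at the required linear rate. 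The continuity~$|\sigma|_p\to|\sigma|_q$ as~$p\to q^-$ then follows from the two-sided sandwich provided by the two inequalities.
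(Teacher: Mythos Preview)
Your upper bound is essentially the paper's: optimal decomposition for~$|\sigma|_q$, H\"older on each piece, Jensen for the concave map~$t\mapsto t^{p/q}$, and the cardinality bound~$h\le|\sigma|_q/\alpha_q$. This yields~$|\sigma|_p\le(\omega_{k-1}/\alpha_q)^{1-p/q}|\sigma|_q$, exactly as in the paper.

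For the lower bound your mollification route is genuinely different and more elementary than the paper's, which instead invokes a uniform~$C^1$ estimate for minimising $p$-harmonic maps from~$\SS^{k-1}$ to~$\NN$ (Theorem~\ref{th:p-harmonic}, proved in the Appendix). Both routes lead to an inequality of the form~$E_q(\sigma)\le C(p,q)\,E_p(\sigma)^{1/f(p,q)}$, and both then pass to~$|\cdot|_q$ by convexity on an optimal~$|\cdot|_p$-decomposition.

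The gap is in the quantitative rate~\eqref{pqnorm-lambda}. Tracking your constants, with tube radius~$\eta$ and~$\eps=\eta^{1/\alpha}E_p(\sigma)^{-1/(p-k+1)}$ one gets
\[
\log C(p,q)\;\approx\; qC\eta \;+\;(q-p)\log C_\rho\;-\;\frac{(k-1)(q-p)}{p-k+1}\log\eta.
\]
To make the first term~$O(q-p)$ you must take~$\eta=O(q-p)$; but then the last term is~$\frac{(k-1)(q-p)}{p-k+1}|\log(q-p)|$, which is only~$O((q-p)|\log(q-p)|)$. Optimising over~$\eta$ does not help: the two terms cannot both be made~$O(q-p)$, since bounded~$|\log\eta|$ forces~$\eta$ bounded away from~$0$. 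Hence your construction gives at best~$1-\lambda(p,q)=O((q-p)|\log(q-p)|)$, so that~$(1-\lambda(p,q))/(q-p)\to\infty$, contradicting~\eqref{pqnorm-lambda}.

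The paper avoids this loss because the $p$-harmonic minimiser~$v_{p,\sigma}$ is \emph{already} $\NN$-valued and~$C^1$, with~$\|\nabla_\top v_{p,\sigma}\|_{L^\infty}\le C\,E_p(\sigma)^{1/(p-k+1)}$ for a constant~$C$ independent of~$p$ (Theorem~\ref{th:p-harmonic}). Using~$v_{p,\sigma}$ directly as a~$q$-competitor gives~$E_q(\sigma)\le C^{q-p}E_p(\sigma)^{1/f(p,q)}$ with no projection step, hence~$\lambda(p,q)=C^{-(q-p)f(p,q)}$ and the linear rate follows. So the missing ingredient in your plan is precisely the uniform gradient bound for the actual minimiser; the mollified-and-projected surrogate cannot substitute for it at the required rate.
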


Proposition~\ref{prop:pqnorm}, combined with
Proposition~\ref{prop:lowerbound-p},
implies the following result.

\begin{prop} \label{prop:lowerbound-k}
 There exists a number~$p_0 = p_0(k, \, \NN) > k-1$
 such that the following statement holds.
 Let~$D\subseteq\R^k$ be a bounded domain,
 such that~$\overline{D}$ is homeomorphic 
 to~$\overline{B}^k$. Let~$r\in (0, \, 1/2]$, $p\in [p_0, \, k)$,
 let~$u\in W^{1,p}(D, \, \NN)$ be
 a map that is continuous in the set~$\Gamma_r$
 (defined by~\eqref{Gamma_r}),
 and let~$\sigma\in\pi_{k-1}(\NN)$
 be the homotopy class of~$u$ on~$\partial D$.
 Then, we have
 \begin{equation} \label{bound_JS-k}
  \int_{D} \abs{\nabla u}^p \mathrm{d}x
  \geq \frac{\abs{\sigma}_k}{k - p}
   - C\abs{\sigma}_k
   \left(\log\left(\frac{\abs{\sigma}_k}{r}\right) + 1\right)\!,
 \end{equation}
 where~$C$ is a constant that depends only on~$k$, $\NN$.
\end{prop}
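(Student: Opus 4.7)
The strategy is to apply Proposition~\ref{prop:lowerbound-p} to obtain a bound involving~$\abs{\sigma}_p$ and then invoke Proposition~\ref{prop:pqnorm} (with~$q = k$) to convert this into a bound involving~$\abs{\sigma}_k$. I will choose~$p_0\in (k-1, \, k)$ close enough to~$k$ so that~$\lambda(p, \, k)\geq 1/2$ and~$\alpha_p \geq \alpha_k/2$ for all $p\in [p_0, \, k)$. The latter is available because Proposition~\ref{prop:pqnorm} yields $\alpha_p \geq \lambda(p, \, k) \alpha_k^{p-k+1}$ (the map $x\mapsto x^{p-k+1}$ is increasing on $(0, \, +\infty)$ since $p > k-1$), and this bound converges to $\alpha_k$ as~$p\to k^-$.

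First, for the leading term, set $M := \abs{\sigma}_k$ and $\lambda := \lambda(p, \, k)$. Proposition~\ref{prop:pqnorm} gives $\abs{\sigma}_p \geq \lambda M^{p-k+1}$, whence
\[
\abs{\sigma}_k - \abs{\sigma}_p \leq M - \lambda M^{p-k+1} = M\bigl((1-\lambda) + \lambda\bigl(1 - M^{p-k}\bigr)\bigr).
\]
Using the elementary inequality $1 - \mathrm{e}^{-x} \leq x$ (valid for $x\geq 0$), I estimate $1 - M^{p-k} = 1 - \mathrm{e}^{-(k-p)\log M} \leq (k-p)\max(\log M, \, 0)$ when $M\geq 1$, and note that the left-hand side is non-positive when $M < 1$. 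Combined with~\eqref{pqnorm-lambda}, which ensures $(1 - \lambda)/(k-p)$ stays bounded as~$p\to k^-$, this yields
\[
\frac{\abs{\sigma}_k - \abs{\sigma}_p}{k - p} \leq C_1 M \bigl(1 + \max(\log M, \, 0)\bigr) \leq C_2 \abs{\sigma}_k \!\left(1 + \log\!\left(\frac{\abs{\sigma}_k}{r}\right)\!\right)\!,
\]
where I use that $r\leq 1/2$ implies $\log(M/r) \geq \log M$ and that $M\geq \alpha_k$ is bounded below by a positive constant.

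Next, for the correction term in Proposition~\ref{prop:lowerbound-p}, the upper bound~$\abs{\sigma}_p \leq \lambda^{-1} M \leq 2M$ together with $\alpha_p \geq \alpha_k/2$ yields
\[
\abs{\sigma}_p \log\!\left(\frac{\abs{\sigma}_p}{\alpha_p \, r}\right) \leq 2\abs{\sigma}_k \log\!\left(\frac{4\abs{\sigma}_k}{\alpha_k \, r}\right) \leq C_3 \abs{\sigma}_k \!\left(1 + \log\!\left(\frac{\abs{\sigma}_k}{r}\right) \!\right)\!.
\]
Writing $\abs{\sigma}_p/(k-p) = \abs{\sigma}_k/(k-p) - (\abs{\sigma}_k - \abs{\sigma}_p)/(k-p)$ in Proposition~\ref{prop:lowerbound-p} and combining the two estimates above yields~\eqref{bound_JS-k}, with a constant depending only on~$k$ and~$\NN$.

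The main technical point is the step comparing~$\abs{\sigma}_p$ with~$\abs{\sigma}_k$ in the leading term: a naive Taylor expansion $M^{p-k} = 1 - (k-p)\log M + O((k-p)^2 (\log M)^2)$ would introduce an uncontrolled quadratic-in-$\log M$ error. The one-sided inequality $1 - \mathrm{e}^{-x} \leq x$ with no higher-order term, paired with the estimate~\eqref{pqnorm-lambda} on how close $\lambda(p, \, k)$ is to~$1$, is precisely what allows all errors to be absorbed into the logarithmic term on the right-hand side of~\eqref{bound_JS-k}.
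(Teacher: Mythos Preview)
Your proposal is correct and follows essentially the same approach as the paper's own proof. The paper likewise splits the defect into three pieces (there called~$X_1$, $X_2$, $X_3$): the contribution from~$1 - \lambda(p,\,k)$, the contribution from~$\abs{\sigma}_k - \abs{\sigma}_k^{p-k+1}$, and the correction term coming from Proposition~\ref{prop:lowerbound-p}. The only cosmetic difference is that for the second piece the paper invokes Lagrange's mean value theorem applied to~$x\mapsto \abs{\sigma}_k^{f(x,\,k)}$, whereas you use the equivalent elementary inequality~$1 - \mathrm{e}^{-x}\leq x$; and for the lower bound on~$\alpha_p$ the paper appeals directly to a compactness argument rather than deducing it from Proposition~\ref{prop:pqnorm} as you do.
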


Now, we give the proofs of Propositions~\ref{prop:pqnorm}
and~\ref{prop:lowerbound-k}.
Proposition~\ref{prop:pqnorm} ultimately relies upon
an elliptic regularity estimate for minimising $p$-harmonic
maps. We recall that, by the direct method of the Calculus
of Variations, for each homotopy class~$\sigma\in\pi_{k-1}(\NN)$
there exists a map~$v_{p,\sigma}\in W^{1,p}(\SS^{k-1}, \, \NN)\cap\sigma$
that attains the minimum in~\eqref{E_p}, that is
\begin{equation*} 
 E_p(\sigma) = \int_{\SS^{k-1}} \abs{\nabla_\top v_{p,\sigma}}^p \,\d\H^{k-1}.
\end{equation*}
We will say that~$v_{p,\sigma}$ is a minimising
$p$-harmonic map. Since $p>k-1$, $v_{p,\sigma}$ is Hölder continuous and classical elliptic estimates (e. g.~\cite{Uhlenbeck, DiBenedetto, HardtLin-Minimizing}) imply that $v_{p,\sigma}$ belongs to $C^{1,\alpha}(\SS^{k-1}, \, \NN)$ for a certain $\alpha >0$. 

\begin{theorem} \label{th:p-harmonic}
 Let~$p_0> k-1$ be a given parameter.
 If~$q > p\geq p_0$, then every minimising~$p$-harmonic 
 map~$v\in W^{1,p}(\SS^{k-1}, \, \NN)$ is of class~$C^1$
 and satisfies
 \begin{equation} \label{p-harmonic}
  \norm{\nabla_\top v}_{L^\infty(\SS^{k-1})}
  \leq C \norm{\nabla_\top v}_{L^p(\SS^{k-1})}^{1/\alpha} \! ,
 \end{equation}
 where $\alpha:=1-p/(k-1)$ and ~$C \geq 1$ depends on $k$, $\NN$ and~$p_0$,
 but can be chosen uniformly with respect to~$p\geq p_0$ and $\sigma \in \pi_{k-1}(\NN)$.
\end{theorem}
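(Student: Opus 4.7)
The plan is to argue by contradiction via a blow-up argument that exploits the scale invariance of the claimed estimate. Writing $\alpha(p):=1-(k-1)/p$ for the Morrey exponent of $W^{1,p}(\SS^{k-1})$, one checks that under a Euclidean dilation $u_\lambda(x):=u(\lambda x)$ one has $\norm{\nabla u_\lambda}_{L^\infty}=\lambda\norm{\nabla u}_{L^\infty}$ and $\norm{\nabla u_\lambda}_{L^p}=\lambda^{\alpha(p)}\norm{\nabla u}_{L^p}$, so the ratio $\norm{\nabla u}_{L^\infty}/\norm{\nabla u}_{L^p}^{1/\alpha(p)}$ is scale invariant. The central ingredient is a uniform-in-$p$ interior $C^{1,\beta}$ estimate for minimising $p$-harmonic maps into $\NN$, which is precisely what Theorem~\ref{th:p_harmonic_appendix} will provide.

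Suppose the conclusion fails. Then there exist $p_n\in [p_0,\, q]$, classes $\sigma_n\in\pi_{k-1}(\NN)\setminus\{0\}$ (the trivial class is uninteresting since the minimiser is constant), and corresponding minimisers $v_n:=v_{p_n,\sigma_n}$ for which $M_n:=\norm{\nabla_\top v_n}_{L^\infty(\SS^{k-1})}$ and $\alpha_n:=\alpha(p_n)$ satisfy $M_n/\norm{\nabla_\top v_n}_{L^{p_n}}^{1/\alpha_n}\to+\infty$. Up to extraction, I may assume $p_n\to p_\infty\in [p_0,\, q]$ and that the maximum is attained at some $x_n\to x_\infty\in\SS^{k-1}$. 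The lower bound $\norm{\nabla_\top v_n}_{L^{p_n}}^{p_n}=E_{p_n}(\sigma_n)\geq|\sigma_n|_{p_n}\geq\alpha_{p_n}$, combined with $\alpha_{p_n}\to\alpha_{p_\infty}>0$ from Proposition~\ref{prop:pqnorm}, keeps $M_n$ bounded away from zero.

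I distinguish two cases according to whether $(M_n)$ is bounded. If it is, Theorem~\ref{th:p_harmonic_appendix} yields a uniform $C^{1,\beta}(\SS^{k-1})$ bound on the family and Arzel\`a--Ascoli gives $v_n\to v_\infty$ in $C^1$ along a subsequence; since the contradiction hypothesis forces $\norm{\nabla_\top v_n}_{L^{p_n}}\to 0$, the uniform convergence $|\nabla_\top v_n|^{p_n}\to|\nabla_\top v_\infty|^{p_\infty}$ gives $\nabla_\top v_\infty\equiv 0$, contradicting $|\nabla_\top v_\infty(x_\infty)|=\lim M_n>0$. Hence $M_n\to+\infty$, and I perform a blow-up: in geodesic normal coordinates centred at $x_n$, define $w_n(y):=v_n(\exp_{x_n}(y/M_n))$ on $B^{k-1}_{r_0M_n}\subset\R^{k-1}$ for a small fixed $r_0>0$. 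A direct change of variables yields $|\nabla w_n(0)|=1$, $\norm{\nabla w_n}_{L^\infty}\leq 1+o(1)$, and
\[
\norm{\nabla w_n}_{L^{p_n}(B^{k-1}_{r_0M_n})}\;\leq\;M_n^{-\alpha_n}\norm{\nabla_\top v_n}_{L^{p_n}(\SS^{k-1})}(1+o(1))\;\leq\;n^{-\alpha_n}(1+o(1))\longrightarrow 0.
\]
The pullback metric converges to the Euclidean one in $C^k_{\mathrm{loc}}(\R^{k-1})$ and each $w_n$ is a local minimiser of the associated $p_n$-Dirichlet energy (any compactly supported competitor lies in the same null-homotopy class of maps from a ball into $\NN$). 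Theorem~\ref{th:p_harmonic_appendix} then delivers uniform $C^{1,\beta}_{\mathrm{loc}}$ bounds, and a diagonal extraction produces a limit $w_\infty$ with $w_n\to w_\infty$ in $C^1_{\mathrm{loc}}(\R^{k-1})$. On every ball $B_R\subset\R^{k-1}$, uniform convergence combined with the vanishing of the $L^{p_n}$ norm forces $\nabla w_\infty\equiv 0$ on $B_R$; hence $w_\infty$ is constant, which is incompatible with $|\nabla w_\infty(0)|=1$.

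The principal difficulty is the uniform-in-$p$ interior $C^{1,\beta}$ estimate for minimising $p$-harmonic maps into $\NN$: the classical results of Uhlenbeck, DiBenedetto and Hardt--Lin give $C^{1,\beta}$ regularity, but the constants a priori depend on $p$ and on $\NN$. To make the above blow-up work one must track this dependence carefully and show that, for $p$ in a compact subinterval of $(k-1,\,+\infty)$ and with fixed target, these constants can be chosen uniformly. This uniform estimate is exactly the content of Theorem~\ref{th:p_harmonic_appendix} in the appendix, on which the present argument rests.
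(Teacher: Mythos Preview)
Your argument is much more elaborate than needed and, as written, contains a genuine gap. The paper's proof of Theorem~\ref{th:p-harmonic} is a one-line specialization: Theorem~\ref{th:p_harmonic_appendix} already asserts, for any closed $\MM$ and any weakly $p$-harmonic $v\in W^{1,p}(\MM,\NN)$ with $p\geq p_0>M$, the inequality $\norm{\nabla v}_{L^\infty(\MM)}\leq C\norm{\nabla v}_{L^p(\MM)}^{1/\alpha}$ with $\alpha=1-M/p$ and $C=C(\MM,\NN,p_0)$. Taking $\MM=\SS^{k-1}$ (so $M=k-1$) and $v$ a minimiser in its homotopy class gives exactly \eqref{p-harmonic}. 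No blow-up, no contradiction, no compactness is required.

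The gap in your route is that you repeatedly invoke Theorem~\ref{th:p_harmonic_appendix} as a source of \emph{uniform $C^{1,\beta}$ bounds}, both on $\SS^{k-1}$ in the bounded case and locally after rescaling. Theorem~\ref{th:p_harmonic_appendix} does not provide this: it yields only an $L^\infty$ bound on the gradient (a Lipschitz bound), and the paper explicitly remarks that uniform $\mathscr{C}^{1,\alpha}$ estimates were \emph{not} established. With only a uniform Lipschitz bound, Arzel\`a--Ascoli gives convergence in $C^{0,\gamma}$ for $\gamma<1$, not in $C^1$, so you cannot conclude $|\nabla v_\infty(x_\infty)|=\lim M_n>0$ in the bounded case, nor $|\nabla w_\infty(0)|=1$ after blow-up. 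Your contradiction therefore does not close. A secondary issue is the appeal to Proposition~\ref{prop:pqnorm} to control $\alpha_{p_n}$: that proposition is proved in the paper \emph{using} Theorem~\ref{th:p-harmonic} (via Corollary~\ref{cor:p-harmonic}), so citing it here is circular; one should instead quote Corollary~\ref{cor:p_harmonic_lowerbound} or a direct compactness argument. But all of this is moot once you observe that Theorem~\ref{th:p_harmonic_appendix} already \emph{is} the desired estimate.
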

The main point of Theorem~\ref{th:p-harmonic} (proven in Appendix~\ref{appendix}) is that the constant $C_q$ does not depend on $p$. Such a fact does not follow directly from classical regularity estimates, as they involve constants depending also on $p$. However, if $p$ varies away from $k-1$ (as it does in Theorem~\ref{th:p-harmonic}) there is no reason why such constants should blow up. Hence, as suggested in~\cite[Lemma 3.1]{HardtChen}, one can repeat the proofs in~\cite[Section 3]{HardtLin-Minimizing} and show that the constants in the regularity estimates might be taken uniformly on $p$, which we do in Appendix~\ref{appendix}. Let us also point out that a related result was proven Battacharya, DiBenedetto and Manfredi~\cite[Part III]{BattDiBenedettoManfredi}. Such a result does not apply to our setting, as it deals with a standard $p$-Laplace equation on a Euclidean domain, with a growth assumption on the source term which does not apply for harmonic maps.
\begin{corollary} \label{cor:p-harmonic}
 For any~$p$, $q$, $p_0$ with~$q > p \geq p_0> k-1$ 
 and any~$\sigma\in\pi_{k-1}(\NN)$, we have
 \[ 
  C^{(p-q)f(p, \, q)} E_q(\sigma)^{f(p, \, q)}
   \leq E_p(\sigma) \leq \omega^{1 - p/q}_{k-1} E_q(\sigma)^{p/q} ,
 \]
 where~$\omega_{k-1} := \H^{k-1}(\SS^{k-1})$, $C$ and $\alpha$
as in~\eqref{p-harmonic} in Theorem~\ref{th:p-harmonic} and $f(p, \, q)$ is as in \eqref{f_function}. In particular, $E_p(\sigma)\to E_q(\sigma)$ as~$p\to q$,
 for any~$\sigma\in\pi_{k-1}(\NN)$.
\end{corollary}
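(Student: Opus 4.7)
The plan is to prove the two inequalities of the corollary separately. For the upper bound, I would take a minimiser $v_q \in W^{1,q}(\SS^{k-1},\NN)\cap\sigma$ of $E_q(\sigma)$ (which exists by the direct method, as noted in the discussion preceding Theorem~\ref{th:p-harmonic}) and use it directly as a competitor for $E_p(\sigma)$, exploiting the fact that $W^{1,q}\subseteq W^{1,p}$ on the compact sphere and that the homotopy class is preserved under this inclusion. H\"older's inequality with exponents $q/p$ and $q/(q-p)$ then immediately produces the bound $E_p(\sigma) \leq \omega_{k-1}^{1-p/q}\,E_q(\sigma)^{p/q}$.

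The lower bound is the non-trivial direction, and here I would symmetrically take a minimiser $v_p$ of $E_p(\sigma)$ and try to feed it as a competitor for $E_q(\sigma)$. The obstruction is that \emph{a priori} $v_p$ need not lie in $W^{1,q}$: to overcome this, I would invoke Theorem~\ref{th:p-harmonic} to conclude that $v_p \in C^1(\SS^{k-1}, \NN)$ with the quantitative estimate $\|\nabla_\top v_p\|_{L^\infty} \leq C\,\|\nabla_\top v_p\|_{L^p}^{1/\alpha} = C\, E_p(\sigma)^{1/(p\alpha)}$. Then $v_p$ is admissible for $E_q(\sigma)$ and the pointwise interpolation $|\nabla_\top v_p|^q \leq \|\nabla_\top v_p\|_{L^\infty}^{q-p}\,|\nabla_\top v_p|^p$ combined with the above gradient estimate gives
\begin{equation*}
E_q(\sigma) \leq C^{q-p}\, E_p(\sigma)^{1 + (q-p)/(p\alpha)}.
\end{equation*}
Raising both sides to $f(p,q)$ and checking the algebraic identity $f(p,q)\bigl(1+(q-p)/(p\alpha)\bigr)=1$ --- a direct computation made possible by the precise value of $\alpha$ in Theorem~\ref{th:p-harmonic} --- yields the claimed lower bound $C^{(p-q)f(p,q)}E_q(\sigma)^{f(p,q)}\leq E_p(\sigma)$.

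For the final convergence $E_p(\sigma)\to E_q(\sigma)$ as $p\to q$, I would combine the two bounds and note that all the $p$-dependent factors on the right-hand sides ($\omega_{k-1}^{1-p/q}$, $C^{(p-q)f(p,q)}$, and the exponents $p/q$, $f(p,q)$) tend to the trivial value as $p\to q^-$, producing a squeeze. Convergence from the right ($p\to q^+$) follows by applying the two inequalities with the roles of $p$ and $q$ swapped. The main conceptual difficulty of the whole argument is thus concentrated in the lower bound, and specifically in the need for a gradient estimate whose constant is \emph{uniform} in $p\geq p_0$: this uniformity is precisely what is asserted by Theorem~\ref{th:p-harmonic} and proved in Appendix~\ref{appendix}, and without it the exponents in the resulting estimate would not close as $p\to q$.
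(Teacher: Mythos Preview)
Your proposal is correct and follows essentially the same approach as the paper: H\"older's inequality for the upper bound, and for the lower bound taking a minimising $p$-harmonic map $v_p$, using the $L^\infty$--$L^p$ interpolation together with the uniform gradient estimate of Theorem~\ref{th:p-harmonic}, and then verifying the algebraic identity $f(p,q)\bigl(1+(q-p)/(p\alpha)\bigr)=1$. Your treatment is slightly more explicit than the paper's in spelling out why $v_p$ is admissible for $E_q$ and in handling the two-sided limit $p\to q$, but the underlying argument is the same.
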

\begin{proof}
 The upper bound~$E_p(\sigma) 
  \leq \omega^{1 - p/q}_{k-1} E_q(\sigma)^{p/q}$
 is an immediate consequence of the H\"older inequality.
 To prove the opposite inequality, we consider a
 minimising~$p$-harmonic map~$v_{p,\sigma}$ in the homotopy
 class~$\sigma$. By interpolation, we find that
 \begin{equation*}
  \norm{\nabla_\top v_{p,\sigma}}_{L^q(\SS^{k-1})}^q
  \leq \norm{\nabla_\top v_{p,\sigma}}_{L^{\infty}(\SS^{k-1})}^{q-p}
   \norm{\nabla_\top v_{p,\sigma}}_{L^p(\SS^{k-1})}^{p},
 \end{equation*}
so that by \eqref{p-harmonic} in Theorem \ref{th:p-harmonic}
\begin{equation*}
 \norm{\nabla_\top v_{p,\sigma}}_{L^q(\SS^{k-1})}^q
  \leq C^{q-p} \norm{\nabla_\top v_{p,\sigma}}_{L^p(\SS^{k-1})}^{p(1/\alpha(q/p-1)+1)}
\end{equation*}
and then the result follows since
\begin{equation*}
(1/\alpha(q/p-1)+1)^{-1}=\frac{\alpha p}{q-p+\alpha p}= \frac{p-k+1}{q-k+1}.
\qedhere
\end{equation*}
\end{proof}

\begin{proof}[Proof of Proposition~\ref{prop:pqnorm}]
 \setcounter{step}{0}
 Let~$p$, $q$ and~$\sigma\in\pi_{k-1}(\NN)$ be given, with~$q > p > k-1$.
 We can assume that~$\sigma\neq 0$, for otherwise there is nothing to prove.
 
 \begin{step}[Proof of the upper bound in~\eqref{pqnorm}]
  Since the infimum in \eqref{E_p} is attained, there exist finitely many
  homotopy classes~$\sigma_1$, \ldots, $\sigma_h$ 
  in~$\pi_{k-1}(\NN)$ such that
  \begin{equation} \label{pqnorm11}
   \sigma=\sum_{i=1}^h\sigma_i, \qquad
   \abs{\sigma}_q = \sum_{i=1}^h E_q(\sigma_i).
  \end{equation}
  We can certainly suppose that each~$\sigma_i$ is nonzero.
  Then, each~$\sigma_i$ satisfies~$E_q(\sigma_i)\geq \alpha_q$
  (where~$\alpha_q$ is given by~\eqref{alpha_p}) and hence,
  \begin{equation} \label{pqnorm12}
   h \leq \frac{\abs{\sigma}_q}{\alpha_q} .
  \end{equation}
  The definition of~$E_p$ and Corollary~\ref{cor:p-harmonic}
  imply
  \[
   \begin{split}
    \abs{\sigma}_p 
    \leq \sum_{i=1}^h E_p(\sigma_i)
    \leq \omega^{1 - p/q}_{k-1} \sum_{i=1}^h E_q(\sigma_i)^{p/q} \, ,
   \end{split}
  \]
  where~$\omega_{k-1} := \H^{k-1}(\SS^{k-1})$.
  Since~$p < q$, Jensen's inequality for \emph{concave} functions gives
  \[
   \frac{1}{h}\sum_{i=1}^h E_q(\sigma_i)^{p/q}
   \leq \left(\frac{1}{h}\sum_{i=1}^h E_q(\sigma_i)\right)^{p/q}
  \]
  and, hence,
  \begin{equation} \label{pqnorm1}
   \begin{split}
    \abs{\sigma}_p
    \leq \omega^{1 - p/q}_{k-1} h^{1 - p/q} 
     \left(\sum_{i=1}^h E_q(\sigma_i)\right)^{p/q} 
    \leq \omega^{1 - p/q}_{k-1} 
     \left(\frac{\abs{\sigma}_q}{\alpha_q}\right)^{1 - p/q} 
     \abs{\sigma}_q^{p/q}
     = \left(\frac{\omega_{k-1}}{\alpha_q}\right)^{1-p/q} \abs{\sigma}_q \! .
   \end{split}
  \end{equation}
  Here, we have applied~\eqref{pqnorm11} and~\eqref{pqnorm12}.
  This proves the upper bound in~\eqref{pqnorm}.
 \end{step}

 \begin{step}[Proof of the lower bound in~\eqref{pqnorm}]
  We proceed in a similar way as above
  and find~$\tau_1$, \ldots, $\tau_\ell$ in~$\pi_{k-1}(\NN)$
  such that
  \begin{equation*}
   \sigma=\sum_{i=1}^\ell \tau_i, \qquad
   \abs{\sigma}_p = \sum_{i=1}^\ell E_p(\tau_i).
  \end{equation*}
  Let~$p_0 > k-1$ be chosen in such a way that~$p_0\leq p$, $p_0\leq q$. Let $f(p, \, q)$ be as in \eqref{f_function}, notice that $f(p, \, q)\leq 1$. Thus, by definition of~$E_q$ and by Corollary~\ref{cor:p-harmonic},
  we obtain
  \begin{align} \label{pqnorm2}
   \begin{split}
    C^{(p-q)f(p, \, q)}\abs{\sigma}_q^{f(p, \, q)}
    &\leq C^{(p-q)f(p, \, q)} \left( \sum_{i=1}^\ell E_q(\tau_i) \right)^{f(p, \, q)}
    \leq \sum_{i=1}^\ell C^{(p-q)f(p, \, q)}  E_q(\tau_i)^{f(p, \, q)} \\ &\leq \sum_{i=1}^\ell E_p(\tau_i)=\abs{\sigma}_p,
   \end{split}
  \end{align}
  where~$C$ is the same constant as in~\eqref{p-harmonic},
  which depends on $k$, $\NN$ and~$p_0$ only and $f(p,\,q)$ is as in~\eqref{f_function}. Together,~\eqref{pqnorm1} and~\eqref{pqnorm2}
  imply~\eqref{pqnorm}, with
  \[
   \lambda(p, \, q) := \min\!\left(\left(
   \frac{\alpha_q}{\omega_{k-1}}\right)^{1 - p/q}\!, \, C^{(p-q)f(p, \, q)}\right)<1 \!,
  \]
so that $\lambda(p, \, q)$ satisfies the conditions listed in the statement of Proposition~\ref{prop:pqnorm}.
  \qedhere
 \end{step}
\end{proof}

\begin{proof}[Proof of Proposition~\ref{prop:lowerbound-k}]
 Let~$r\in (0, \, 1/2]$, $p\in (k-1, \, k)$,
 let~$u\in W^{1,p}(D, \, \NN)$ be
 a map that is continuous in~$\Gamma_r$,
 and let~$\sigma\in\pi_{k-1}(\NN)$
 be the homotopy class of~$u$ on~$\partial D$.
 We assume that~$\sigma\neq 0$, otherwise there 
 is nothing to prove.
 Combining Proposition~\ref{prop:lowerbound-p}
 with Proposition~\ref{prop:pqnorm} for $q=k$, we obtain
 \begin{equation*}
  \begin{split}
   \int_{D} \abs{\nabla u}^p \mathrm{d}x
   &\geq \frac{\abs{\sigma}_p}{k - p}
    - C\abs{\sigma}_p\log\left(\frac{\abs{\sigma}_p}{\alpha_p \, r}\right) \\
   &\geq \frac{\lambda(p,\,k) \abs{\sigma}_k^{f(p, \, k)}}{k - p}
    - \frac{C\abs{\sigma}_k}{\lambda(p, \, k)}
     \log\left(\frac{\abs{\sigma}_k}{\alpha_p \, r \,
     \lambda(p, \, k)}\right) \\
   &= \frac{\abs{\sigma}_k}{k - p}
    - X_1 - X_2-X_3, 
  \end{split}  
 \end{equation*}
 where
 \begin{align*}
  X_1   &:= \frac{1 - \lambda(p, \, k)}{k - p} \abs{\sigma}_k, \\
  X_2 &:= \lambda(p, \, k) \frac{\abs{\sigma}_k-\abs{\sigma}_k^{f(p, \, k)}}{k-p}\\
  X_3 &:= \frac{C\abs{\sigma}_k}{\lambda(p, \, k)}
     \log\left(\frac{\abs{\sigma}_k}{\alpha_p \, r \,
     \lambda(p, \, k)}\right) \! .
 \end{align*}
 We estimate~$X_1$ and $X_2$
 separately. For the first term, we apply the estimate~\eqref{pqnorm-lambda},
 which implies 
 \begin{equation*}
  X_1 \leq C\abs{\sigma}_k
 \end{equation*}
 for any~$p$ in a left neighbourhood of~$k$
 and some constant~$C$ that does not depend on~$\sigma$, $p$. Next, we estimate $X_2$. If $\abs{\sigma}_k < 1$, then $X_2 <0$. Otherwise, we recall that $\lambda(p, \, k)<1$ and apply Lagrange's mean value theorem, which implies
 \begin{equation*}
 X_2 \leq \sup_{p \leq x \leq k} \frac{\mathrm{d}}{\mathrm{d}x}\left( \abs{\sigma}_k^{f(x, \, k)} \right) \leq C \abs{\sigma}_k \log \abs{\sigma}_k.
 \end{equation*}
 as $f(p, \, k) \leq 1$ and the derivative of $x \to f(x, \, k)$ is bounded in $[p,k]$. Either way, we have
 \begin{equation*}
 X_2 \leq C \abs{\sigma}_k(\log \abs{\sigma}_k + 1),
 \end{equation*}
 for some constant $C$ that depends only on $k$ and $\NN$. Finally, we estimate~$X_3$. Since~$\lambda(p, \, k)\to 1$ as~$p\to k$,
 for~$p$ close enough to~$k$ we have
 \begin{equation*}
  \begin{split}
   X_3 \leq C\abs{\sigma}_k
     \left(\log\left(\frac{\abs{\sigma}_k}{r}\right) 
     - \log\left(\alpha_p \, \lambda(p, \, k)\right)\right) \! .
  \end{split}
 \end{equation*}
 By a compactness argument, one checks that $\inf_{p \in [p_0,k]}\alpha_p$ is positive for all $p_0 \in [k-1,k]$, see also Corollary \ref{cor:p_harmonic_lowerbound} below. Therefore,
 we obtain
 \begin{equation*}
  \begin{split}
   X_3 \leq C\abs{\sigma}_k
     \left(\log\left(\frac{\abs{\sigma}_k}{r}\right) 
     + 1\right)
  \end{split}
 \end{equation*}
 for some possibly different constant~$C$,
 that still depends only on~$k$, $\NN$.
 This completes the proof.
\end{proof}

\section{Proof of the compactness and lower bound statement in Theorem~\ref{MainThm}}\label{Section:lower_bound}
The scope of this section is to provide the proof of~\ref{MainThm1} in Theorem~\ref{MainThm}, which will require the use of the results in section~\ref{sect_lower_p_energy_bounds}. 
\subsection{A local statement}
Let us begin by fixing $\Omega'$ a bounded domain in $\R^{n+k}$ such that $\Omega \csubset \Omega'$.  Take $U, U^\prime$ arbitrary bounded smooth domains in~$\R^{n+k}$ such that $U\csubset U^\prime \subseteq \Omega'$, to be fixed for the rest of this subsection. We also fix $(u_p)_{p \in (k-1, \, k)}$ a family of maps in~$W^{1,p}(U^\prime, \, \NN)$, such that
\begin{equation}\label{H}
\sup_{p \in (k-1, \, k)} (k-p)D_p(u_p, \, U^\prime) < +\infty.
\end{equation}
As in~\cite{ABO2,CO2}, the main step is to prove the following localized version of the statement~\ref{MainThm1} in Theorem~\ref{MainThm}:
\begin{prop}\label{prop:localcompactness}
There exist a sequence $(p_i)_{i\in \N}$ in $(k-1, \, k)$ such that $p_i \to k$ as $i \to \infty$
and $S\in\M_n(\overline{U}^\prime; \GN)$ such that
\begin{gather}
\lim_{i \to \infty}\F_{U,k}(\mathbf{S}(u_{p_i})-S) = 0,
\label{liminf:flat} \\
\M_{U,k}(S) \leq \liminf_{i \to \infty} 
(k-p_i)D_{p_i}(u_{p_i}, \, U^\prime). \label{liminf:mass}
\end{gather}
\end{prop}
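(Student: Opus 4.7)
The plan is to adapt the scheme of~\cite{CO2} for the analogous Ginzburg--Landau problem, with the key technical change that we must contend with the possibly infinite mass of the singular chain $\S(u_p)$. The approach rests on three pillars: a Fubini-type slicing that converts the critical-dimensional lower bound of Proposition~\ref{prop:lowerbound-k} into a global bound for $n$-dimensional polyhedral chains; a modification procedure that replaces $u_p$ by a map $\tilde u_p$ whose singular chain has finite mass controlled by $(k-p)D_p(u_p, U')$; and Fleming's compactness theorem applied to the modified chains.

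First, I would fix a splitting $\R^{n+k} = \R^n_{x'} \times \R^k_{x''}$ and cover a neighbourhood of $\overline U$ by a grid of cubes $Q_\alpha = Q'_\alpha \times Q''_\alpha$ of mesh size $\delta$. For a.e.\ $x' \in Q'_\alpha$ the slice $u_p(x', \cdot) \in W^{1,p}(Q''_\alpha, \NN)$ is continuous in a collar of width $r$ around $\partial Q''_\alpha$ for a generic choice of $r$, so Proposition~\ref{prop:lowerbound-k} gives
\[
\int_{Q''_\alpha} |\nabla u_p(x', x'')|^p \, \d x'' \geq \frac{|\sigma_{p,\alpha}(x')|_k}{k-p} - C|\sigma_{p,\alpha}(x')|_k \bigl(\log(|\sigma_{p,\alpha}(x')|_k/r) + 1\bigr),
\]
where $\sigma_{p,\alpha}(x') \in \pi_{k-1}(\NN)$ is the homotopy class carried by $u_p$ on the boundary sphere of the collar. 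Integrating in $x'$ and summing over $\alpha$ yields, up to a logarithmic error of order $(k-p)\log(1/r)$ that vanishes for fixed $r$ as $p \to k$, a uniform bound $\M_k(T_{p,\delta}) \lesssim (k-p)D_p(u_p, U')$ for the polyhedral $n$-chain $T_{p,\delta}$ whose restriction to $Q'_\alpha$ carries the charge $\sigma_{p,\alpha}(x')$ at a fixed reference point of $Q''_\alpha$.

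Next, I would modify $u_p$ slicewise into $\tilde u_p \in W^{1,p}(U', \NN)$, leaving $u_p$ unchanged in a neighbourhood of $\partial U'$, by applying the ball construction of Lemma~\ref{lemma:ballconstruction} within each slice to localise the singularities in small balls, and then extending outside these balls by an $\NN$-valued map obtained via the retractions $\RR_y$ as in Lemma~\ref{lemma:average_projection}. This produces $\S(\tilde u_p)$ of finite mass bounded by $C(k-p)D_p(u_p, U')$; the continuity estimate~\eqref{basic_estimate_flat_norm_0}, combined with the explicit boundary representation in Remark~\ref{rk:flatcontinuity}, ensures that $\F_{U,k}(\S(u_p) - \S(\tilde u_p)) \to 0$ as $p \to k$. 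Fleming's compactness theorem then yields a subsequence $p_i \to k$ and a chain $S \in \M_n(\overline{U}^\prime; \GN)$ with $\F_{U,k}(\S(\tilde u_{p_i}) - S) \to 0$, which together with the previous flat-norm estimate gives~\eqref{liminf:flat}. The lower bound~\eqref{liminf:mass} then follows from lower semicontinuity of the mass together with the slicewise bound, by letting first $p \to k$ and then $\delta \to 0$.

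The main obstacle I anticipate is the modification step: engineering $\tilde u_p$ that is genuinely $\NN$-valued, has $p$-energy controlled by that of $u_p$, and approximates $u_p$ closely enough in flat norm at the level of singular chains, while preserving the homotopy-class bookkeeping of the slicing argument. The difficulty is that the naive extension of $u_p$ outside the concentration balls need not take values in $\NN$, so one must compose with a retraction $\RR_y$ for a suitable shift $y \in B^m_{r_\RR}$, chosen by averaging via Lemma~\ref{lemma:average_projection}. A further delicate point is the interplay between the parameters in the double limit: one must balance the collar width $r$, the mesh size $\delta$, and the gap $k-p$, so that all error terms vanish in the correct order and yield the sharp bound~\eqref{liminf:mass}.
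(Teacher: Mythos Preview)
Your overall architecture matches the paper: build a polyhedral $n$-chain $T_p$ on a grid using the homotopy classes of $u_p$ on boundaries of $k$-cells, bound its mass via Proposition~\ref{prop:lowerbound-k}, show $\F_{U,k}(\S(u_p)-T_p)\to 0$, and apply compactness. The genuine gap is your modification step producing $\tilde u_p$.

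Your description---apply the ball construction slicewise to ``localise the singularities in small balls'' and then ``extend outside these balls by an $\NN$-valued map obtained via the retractions $\RR_y$''---does not cohere. The ball construction of Lemma~\ref{lemma:ballconstruction} is an energy-bookkeeping device: it outputs a family of balls with energy lower bounds, not a modified map. Outside those balls $u_p$ is already $\NN$-valued, so composing with $\RR_y$ is the identity there; the retractions $\RR_y$ project $\R^m$-valued maps onto $\NN$ and do nothing to reduce the mass of $\S(u_p)$ for a map that is already $\NN$-valued. More seriously, any slicewise modification must be glued across $x'$-slices into a genuine $W^{1,p}(U',\NN)$ map whose singular chain you can compute, and you give no mechanism for this. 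Finally, to get the sharp constant in~\eqref{liminf:mass} a single fixed splitting $\R^n\times\R^k$ is not enough: one needs either to optimise over $n$-planes (as in the paper's Lemma~\ref{lemma:co2planes}) or to use the refined grid estimate~\eqref{grid_ktilde}.

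The paper resolves the modification step differently and more directly, following Stern and Hang--Lin. Instead of trying to tame $\S(u_p)$, it builds a comparison map $\bar u_p:=u_p\circ\phi_k$, where $\phi_k$ is the composition of the radial retractions $\Phi_{n+k}\circ\cdots\circ\Phi_{k+1}$ from each $j$-cell onto its boundary (see~\eqref{upj}). This $\bar u_p$ coincides with $u_p$ on the $(k-1)$-skeleton, is defined on all of $U$ with singularities exactly along the dual $n$-skeleton, and satisfies $\S(\bar u_p)=T_p$ \emph{by construction}. Elementary scaling (Lemmas~\ref{lemma:compositionofradialmaps}--\ref{lemma:compositionofradialmaps2}) gives $D_p(\bar u_p,U)\lesssim(k-p)^{-1}D_p(u_p,U')$ and $\|u_p-\bar u_p\|_{L^p}\lesssim h_p\,(k-p)^{-1}D_p(u_p,U')^{1/p}$; plugging these into the continuity estimate~\eqref{basic_estimate_flat_norm} and choosing $h_p=(k-p)^3$ yields $\F_{U,k}(\S(u_p)-T_p)\to 0$. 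This sidesteps entirely the need to produce a map with finite-mass singular chain close to $u_p$: one instead produces a map whose singular chain is \emph{exactly} the polyhedral candidate $T_p$.
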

In order to prove Proposition~\ref{prop:localcompactness}, we shall adapt to our setting the building blocks approach developed in~\cite{ABO2}. First, let us fix some notations following~\cite{ABO2}. The grid centered at $a\in \R^{n+k}$ with size $h>0$, denoted by $\GG=\GG(a, \, h)$, is defined as the collection of closed cubes
\begin{equation*}
\GG = \GG(a, \, h) := \left\{a + h z + [0, \ h]^{n+k} \colon z\in\Z^{n+k}\right\} \!.
\end{equation*}
For each $0\leq j \leq n+k$, we denote the collection of the (closed) $j$-cells by $\GG_j=\GG_j(a, \, h)$, and define the $j$-skeleton by $R_j=R_j(a, \, h) := \cup_{K\in\GG_j} K$. Given $\GG(a, \, h)$, we define the \textit{dual grid} of $\GG(a, \, h)$ by 
\[
\GG^\prime=\GG^\prime_k(a, \, h):=\GG(a + (h/2, \, h/2, \, \ldots, \, h/2), \, h)
\]
In addition, we also denote by $\GG^\prime_k=\GG^\prime_k(a, \, h)$, $R^\prime_k=R^\prime_k(a, \, h)$ the collection of 
the closed $k$-cells and the $k$-skeleton of $\GG^\prime$ respectively. In the sequel, we assume also that $h$ is taken small enough so that for any center $a$, $j$ between $0$ and $n+k$ and any $j$-cell $K$, whenever $K \cap U$ is nonempty one has that $K \subseteq U'$. This assumption is not restrictive, as $h$ will be sent to zero for fixed $U$ and $U'$. For any $p \in (k-1,k)$, any $j$ between $0$ and $n+k$, any $j$-cell $K$ and any $u \in W^{1,p}(K \cap U', \,\NN)$, we set
\begin{equation*}
D_p(u, \, K \cap U'):= \int_{K \cap U'}\lvert \nabla u(x) \rvert^p \mathrm{d}x,
\end{equation*}
so that
\begin{equation*}
D_p(u, \, R_j)= \int_{R_j \cap U'}\lvert \nabla u(x) \rvert^p \mathrm{d}x.
\end{equation*}
The result below (analogous to~\cite[Lemma 3.11]{ABO2} and~\cite[Lemma 8]{CO2}) essentially shows that given a length one can choose an associated grid so as to satisfy certain suitable properties:
\begin{lemma}\label{lemma:grid}
	Let $h > 0$, $\delta \in (0, \, 1)$, $p \in (k-1, \, k)$, $L$ be an $n$-plane in $\R^{n+k}$ and $u \in W^{1,p}(U',\,\NN)$  be given. There exists $a \in \mathbb{R}^{n+k}$ independent on $\delta$ and $L$ such that the corresponding grid~$\GG=\GG(a, \, h)$ satisfies:
\begin{enumerate}
\item For any $K \in \GG_k$ such that $K \cap U \not= \emptyset$ one has that $u$ belongs to $W^{1,p}(\partial K,\, \NN)$. In particular, $u$ is continuous on $\partial K$ and, as a consequence, its homotopy class on $\partial K$ is well-defined.
\item The partial integral $D_p(u,\,R_j \cap U')$ is well-defined for all $0 \leq j \leq n+k$ and the following inequalities hold:
	\begin{gather}
		h^n \, D_p(u, \, R_{k,L}\cap U^\prime) \leq (1 +  \delta) 
	D_p(u, \, U^\prime) \label{grid_ktilde}, \\
	h^{n+k-j} \, D_p(u, \, R_{j}\cap U^\prime) 
	\lesssim \delta^{-1} D_p(u, \, U^\prime), \mbox{ for all } 0 \leq j \leq n+k \label{grid_k},
	\end{gather}
	where $R_{k,L}=R_{k,L}(a, \, h)$ is the union of the $k$-cells of $\GG(a,\, h)$ such that their dual $n$-cells are parallel to $L$. We also have set $R_j=R_j(a,\,h)$.
  \end{enumerate}
\end{lemma}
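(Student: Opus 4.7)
The plan is to carry out a standard Fubini/Chebyshev averaging argument over grid translations, in the spirit of~\cite[Lemma~3.11]{ABO2} and~\cite[Lemma~8]{CO2}. Let $a$ range uniformly over the fundamental domain $[0, \, h)^{n+k}$. For each $j \in \{0, \dots, n+k\}$, the $j$-skeleton $R_j(a, \, h)$ decomposes into $\binom{n+k}{j}$ families of parallel $j$-cells; inside a single family, as $a$ varies, the corresponding cells sweep out $\R^{n+k}$ uniformly in the $n+k-j$ transversal directions. Fubini's theorem therefore yields, up to a negligible overlap on lower-dimensional strata,
\begin{equation*}
\fint_{[0, \, h)^{n+k}} D_p(u, \, R_j(a, \, h)\cap U^\prime)\,\d a
\, \lesssim \, h^{-(n+k-j)}\, D_p(u, \, U^\prime),
\end{equation*}
and, for the single family of $k$-cells whose duals are parallel to $L$,
\begin{equation*}
\fint_{[0, \, h)^{n+k}} D_p(u, \, R_{k,L}(a, \, h)\cap U^\prime)\,\d a
\, = \, h^{-n}\, D_p(u, \, U^\prime).
\end{equation*}

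Next, I would apply Chebyshev's inequality to control the measure of bad parameters. The tight bound~\eqref{grid_ktilde} is the delicate one: since the average of $h^{n} D_p(u, \, R_{k,L})$ equals $D_p(u, \, U^\prime)$, the set of $a$ for which $h^{n} D_p(u, \, R_{k,L}) > (1+\delta) D_p(u, \, U^\prime)$ has relative measure at most $1/(1+\delta)$, so the good set has relative measure at least $\delta/(1+\delta)$. For~\eqref{grid_k}, taking the implicit constant to be a suitably large multiple of $\binom{n+k}{j}$ makes the bad set for each $j$ have relative measure $O(\delta)$. Summing over $j \in \{0, \dots, n+k\}$ and choosing the universal constant in~\eqref{grid_k} large enough (depending only on $n$ and $k$), the total bad measure is strictly less than $1$, so the intersection of all good sets has positive measure.

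The assertion in part~(1), that $u \in W^{1,p}(\partial K, \, \NN)$ for every $k$-cell $K$ meeting $U$, holds for almost every $a$ by a separate application of Fubini's theorem to $|\nabla u|^p$ on $(k-1)$-dimensional slices; since $p > k-1$, the Sobolev embedding $W^{1,p}(\partial K, \, \NN)\hookrightarrow C^0(\partial K, \, \NN)$ then guarantees continuity on $\partial K$ and a well-defined homotopy class in $\pi_{k-1}(\NN)$. Because the bad set for part~(1) has measure zero, intersecting with the positive-measure good set from the previous step still yields a nonempty collection of admissible $a$'s; picking any one completes the construction.

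The only real obstacle is the tension between the sharp bound~\eqref{grid_ktilde}, whose good set shrinks to measure zero as $\delta\to 0$, and the need to intersect it with the good sets for~\eqref{grid_k}; this is resolved by noticing that the latter have complementary measure $O(\delta)$, so taking the implicit constant in~\eqref{grid_k} sufficiently large (depending only on $n$, $k$) balances the estimates. I do not expect any genuine difficulty beyond bookkeeping; the remark in the statement that $a$ can be chosen "independent of $\delta$ and $L$" should be read as saying that $a$ is selected from a common good set, rather than that the good set itself is independent of these parameters.
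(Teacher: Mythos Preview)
Your proposal is correct and follows essentially the same approach as the paper: a Fubini averaging over grid translations to compute the means of $h^n D_p(u, R_{k,L})$ and $h^{n+k-j}D_p(u, R_j)$, followed by Chebyshev to bound the bad sets (the tight bound~\eqref{grid_ktilde} leaves a good set of relative measure $\delta/(1+\delta)$, while the loose bounds~\eqref{grid_k} have bad sets of total measure $O(\delta)$), and a final intersection with the full-measure set where property~(1) holds. Your caveat about the phrase ``independent of $\delta$ and $L$'' is well-placed; the paper's own proof treats only a single coordinate direction for $L$ and the good set visibly depends on $\delta$.
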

\begin{proof}
The proof is the one provided~\cite[Lemma 3.11]{ABO2} with some slight modifications due to the fact that here we require the property 1., which is not an issue in~\cite{ABO2}. Notice first that by Fubini's Theorem, there exists a measurable set $D \subseteq [0, \, h]^{n+k}$ of full measure such that for any grid $\GG(a, \, h)$ with $a\in D$, we have $D_p(u,\,R_j \cap U')$ is well-defined for all $0 \leq j \leq n+k$. If~$L$ is not one of the coordinate $k$-planes, then~$R_{k,L}$ is empty and~\eqref{grid_ktilde} is trivially satisfied. Therefore, without loss of generality, $R_{k,L}$ can be taken equal to $\tilde{R}_k$, the union of $k$-cells of $\GG$ which are parallel to the $k$-plane spanned by $\{\e_{n+1},\ldots,\e_{n+k}\}$. Hence, the quantity $f(a):=h^nD_p(u,\,\tilde{R}_k \cap U')$ does not change when one modifies the last $k$ components of $a \in D$. Notice also that $f$ is well-defined almost everywhere in $[0, \, h]^{n+k}$ as $D$ has full measure on $[0, \, h]^{n+k}$. Therefore, by invoking again Fubini's Theorem, we deduce that the mean of $f$ on $[0, \, h]^{n+k}$ is equal to $D_p(u,\,U')$, that is
\begin{equation}\label{mean_f}
\frac{1}{h^{n+k}}\int_{[0, \, h]^{n+k}} f(a)\mathrm{d}a=D_p(u,\,U').
\end{equation}
For any $j$ an integer between $0$ and $n+k$ define the measurable function
\begin{equation*}
f_j: a \in D \to f_j(a):=h^{n+k-j} \, D_p(u, \, R_{j}\cap U^\prime) \in \mathbb{R},
\end{equation*}
and, again, observe that~$f_j$ is defined almost everywhere in $[0, \, h]^{n+k}$. Notice that $R_j$ is the union of families of planes parallel to those generated by $j$ elements of the canonical basis $\{\e_1,\ldots,\e_{n+k}\}$. For each such family we can argue as above, and since we have $\binom{n+k}{j}$ of these families, it follows that
\begin{equation}\label{mean_fj}
\frac{1}{h^{n+k}}\int_{[0, \, h]^{n+k}}f_j(a)\mathrm{d}a=\binom{n+k}{j}D_p(u,\,U').
\end{equation}
Generalizing the statement of~\cite[Lemma 8.4]{ABO2}, we claim that there exists a set $F \subseteq [0, \, h]^{n+k}$ of positive measure such that for all $a \in F$
\begin{equation}\label{ineq_f}
f(a)\leq (1+\delta) \frac{1}{h^{n+k}}\int_{[0, \, h]^{n+k}} f(a)\mathrm{d}a
\end{equation}
and
\begin{equation}\label{ineq_f_j}
f_j(a) \leq (1+\delta)\frac{n+k}{\delta}\frac{1}{h^{n+k}}\int_{[0, \, h]^{n+k}}f_j(a)\mathrm{d}a, \mbox{ for all } 0 \leq j \leq n+k.
\end{equation}
Indeed, let $E \subseteq [0, \, h]^{n+k}$ be the set of $a \in [0, \, h]^{n+k}$ such that~\eqref{ineq_f} fails, and for each $j$ an integer between $0$ and $n+k$ define $E_j$ as the set of $a \in [0, \, h]^{n+k}$ such that the inequality in~\eqref{ineq_f_j} fails. A computation shows that $\mathrm{meas}(E)<h^{n+k}/(1+\delta)$ and $\mathrm{meas}(E_j) < h^{n+k}\delta/(m(1+\delta))$. Therefore, if we set $\tilde{E}:= E \cup E_0 \cup \ldots \cup E_{n+k}$ we have that $\mathrm{meas}(\tilde{E})<h^{n+k}$. As a consequence, the claim follows by taking $F:=[0, \, h]^{n+k}\setminus \tilde{E}$. 

Notice now that for a. e. $a$ in $[0, \, h]^{n+k}$, there will only be a finite number $N$ of sets in $\GG_k$ (which varies with $a$) which intersect $U$. Let $K_1,\ldots,K_N$ be those sets. Recall now that by Fubini's Theorem, for all $i$ between $1$ and $N$, $u$ belongs to $W^{1,p}(\partial K_i,\,\NN)$ for a.e. $a \in [0, \, h]^{n+k}$. As a consequence, property 1. holds for a. e. $a \in [0, \, h]^{n+k}$ as the number of relevant sets under consideration is finite. In other words, if we call $\tilde{F}$ the set $a \in [0, \, h]^{n+k}$ such that 1. holds, then $\mathrm{meas}(\tilde{F})=h^{n+k}$. As the set $F$ considered above has positive measure, so does $D \cap F \cap \tilde{F}$. Therefore, it suffices to select $a \in D \cap  F \cap \tilde{F}$ and the result follows. Indeed, property 1. holds for such $a$ by the considerations above. Regarding property 2., inequality~\eqref{grid_ktilde} is obtained by combining identity~\eqref{mean_f} with~\eqref{ineq_f} and~\eqref{grid_k} is obtained by combining identity~\eqref{mean_fj} with~\eqref{ineq_f_j}.
\end{proof}
Lemma~\ref{lemma:grid} will be applied to the elements of the family $(u_p)_{p \in (k-1, \, k)}$. Fix $\delta \in (0, \, 1)$ until the end of this section. For each $p \in (k-1, \, k)$, we fix $\GG^p$ the grid of size $h_p$ and with parameter $\delta$ provided by Lemma~\ref{lemma:grid}, where $h_p$ will be chosen later. The idea is to define a polyhedral chain $T_p$ supported on the dual grid of $\GG^p$ which approximates the singular set $\S(u_p)$ in the flat norm as $p\to k$ and then, relying on the results of Section~\ref{sect_lower_p_energy_bounds}, prove that $T_p$ enjoys suitable properties (see Figure~\ref{singularsetanddual}). The polyhedral chain $T_p$ is defined as
\begin{equation}\label{approxiamtsequenceT_p}
T_p := \sum_{K\in\GG^p_{k}, \ K\cap U\neq\emptyset}
\gamma^p(K) \, \llbracket K^\prime \rrbracket
\in \M_n(\overline{U^\prime}; \, \GN),
\end{equation}
where~$K^\prime\in(\GG^p)^\prime_n$ is the dual cell to~$K$, and $\gamma^p(K)$ is the homotopy class of $u_p$ on $\partial K$. Notice also that the sum in~\eqref{approxiamtsequenceT_p} is finite due to the fact that $U$ is bounded. In order to prove that $T_p$ approximates $\S(u_p)$, the approaches of~\cite{ABO2} and~\cite{CO2} rely on a use of deformation theorems for currents and flat chains respectively in the spirit of Federer and Fleming~\cite{FedererFleming} and White~\cite{White-Deformation}. However, while in the setting of~\cite{ABO2} and~\cite{CO2} the singular sets have finite mass, this is not necessarily true in the present setting: arbitrary maps in $W^{1,p}(U',\,\NN)$ might have singular sets with infinite mass. This represents an obstacle when it comes to applying the deformation theorem to our setting. In order to circumvent this issue, we shall use an argument by Stern~\cite{DanielS2} (with roots on the work by Hang and Lin~\cite{HangLin}) which consists on finding a map $\bar{u}_p$,  that satisfies $\S(\bar{u}_p)=T_p$ along with suitable bounds on~$D_p(\bar{u}_p)$. Subsequently, using property~\eqref{basic_estimate_flat_norm} we can bound $\F_{U,k}(\S(u_{p}) - \S(\bar{u}_p))$ in terms of the $p$-Dirichlet energies $D_p(u_{p}), D_p(\bar{u}_p)$ and the length size $h_p$, which allows to conclude in Proposition~\ref{prop:flat_norm_convergence} due to the properties of~$\bar{u}_p$, summarized in Lemma~\ref{lemma:compositionofradialmaps2} below. 
\begin{figure}
	\centering
	\includegraphics[width=0.4\textwidth]{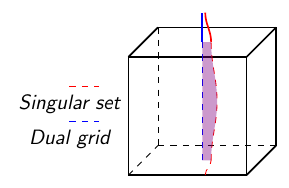}
	\caption{\label{singularsetanddual} An illustration of a singular set and a dual grid within a cube.}
\end{figure}
Let us introduce further notation, which will be used for defining the maps $\bar{u}_p$. Given a $j$-dimensional cell $I_j$ of $\GG^p_j=\GG^p_j(a, \, h_p)$, for some $a \in \mathbb{R}^{n+k}$, $h>0$ and $0\leq j \leq n+k$, and supposing without loss of generality that $I_j=[-\ h_p, \ h_p]^{j}$, we define the projection map corresponding to the cell $I_j$, $\Phi_{j}\,: I_j\setminus\{0\} \to \partial I_j$ as
\begin{equation*}
\Phi_{j}(x)=h_p\displaystyle x/\vert x \vert_{\infty},
\end{equation*}
for any $x\in I_j$. As~$\Phi_j$ coincides with the identity on~$\partial I_j$, we can paste together all these maps to define a continuous map~$R_j^p\setminus (R^{p}_{n + k - j})^\prime\to R_{j-1}^p$, which we also denote as~$\Phi_{j}$. Here~$(R^{p}_{n+k-j})^\prime$ is the $(n+k-j)$-skeleton of the dual grid~$(\GG^{p})^\prime$. Let 
\[
 R_j^p(U) := \bigcup_{K\in\GG_{j}^p, \, K\cap U\neq\emptyset} K.
\]
We have~$U\subseteq R_j^p(U)$ by construction, and~$R_j^p(U)\subseteq U^\prime$ if~$h$ is small enough.
For any $p \in (k-1, \, k)$ and $j$ an integer in $[k,n+k]$, consider the map
\begin{equation}\label{upj}
\bar{u}_{p,j}: x \in R_j^p(U)\setminus R_{n+k-j}^{p\prime} \to \bar{u}_{p,j}(x):=u_p(\phi_j(x)) \in \NN, \mbox{ where } \phi_j:=\Phi_j \circ \ldots \circ \Phi_{n+k}.
\end{equation}
The map $\bar{u}_p$ will be taken equal to $\bar{u}_{p,n+k}$. The next result is a modification of~\cite[Lemma 2.3]{DanielS2}:
\begin{lemma}\label{lemma:compositionofradialmaps} 
For any $p \in (k-1, \, k)$ and $j$ an integer such that $k \leq j\leq n+k$, one has
\begin{equation}\label{inequalityenergy1}
D_p(u_p \circ \Phi_j,\, R_j^p \cap U) \lesssim  \frac{h_p}{j-p} D_p(u_p,\,R^p_{j-1}\cap U')
\end{equation}
and
\begin{equation}\label{inequalityenergy2}
\int_{R^p_j \cap U}\vert u_p- u_p \circ \Phi_{j}\vert^p \mathrm{d}x \lesssim h^p_p\left( \frac{h_p}{j-p} D_p(u_p,\,R_{j-1}^p \cap U')+D_p(u_p,\,R_j^p \cap U')\right).
\end{equation}
\begin{proof}
Notice first that since the projection maps $\Phi_j$ are Lipschitz and $U \subseteq U'$, the integrals appearing in~\eqref{inequalityenergy1} and~\eqref{inequalityenergy2} are well defined due to the selection of a suitable grid made in Lemma~\ref{lemma:grid}. Consider a $j$-dimensional cube with the size $h_p$ which intersects $U$  with $k \leq j\leq n+k$. Recall then that $K$ is contained in $U'$. Without loss of generality, we can assume that $I_j=[-h_p, \, h_p]^j$. Moreover, up to a Lipschitz change of coordinates we can identify $I_j$ with $B^j_{h_p}$. Then, we shall prove the inequality $\eqref{inequalityenergy1}$ for $B^j_{h_p}$, which writes as
\begin{equation}\label{inequalityenergy5} 
D_p(u_p \circ \Phi_j,\, B^j_{h_p} \cap U) \lesssim  \frac{h_p}{j-p} D_p(u_p,\,\partial B^j_{h_p})
\end{equation}
and~\eqref{inequalityenergy2} writes as
\begin{equation}\label{inequalityenergy6}
\int_{B^j_{h_p} \cap U}\vert u_p- u_p \circ \Phi_j\vert^p \mathrm{d}x \lesssim h^p_p\left( \frac{h_p}{j-p} D_p(u_p,\,\partial B^j_{h_p})+D_p(u_p,\,B^j_{h_p})\right),
\end{equation}
where $\bar{\Phi}(x):=h_p\frac{x}{\abs{x}}$ for all $x\in B^j_{h_p}$. One has,
\begin{equation*}
\vert \nabla u_p \circ\Phi_j(x) \vert^p = \frac{h^p_p}{\lvert x \rvert^p}\left\vert \nabla u_p \left(h_p\frac{x}{\abs{x}}\right) \right\vert^p
\end{equation*}
for any $x\in B^j_{h_p}$. Therefore, by applying Fubini's Theorem
\begin{align*}
D_p(u_p \circ \Phi_j, \, B^j_{h_p}) &=  \int_{B^j_{h_p}} \frac{h^p_p}{\lvert x \rvert^p} \left\vert \nabla u_p \left(h_p\frac{x}{\abs{x}}\right) \right\vert^p \mathrm{d}x=  \int_{0}^{h_p} \int_{\partial B^j_r} \frac{h^p_p}{s^p} \left\vert \nabla u_p \left(h_p\frac{x}{s}\right) \right\vert^p \mathrm{d}x \mathrm{d}s\\
&=\int_{0}^{h_p} \int_{\partial B^j_{h_p}}\frac{h^p_p}{s^p} \vert \nabla u_p (x) \vert^p \left(\frac{s}{h_p}\right)^{j-1}\mathrm{d}x \mathrm{d}s=\frac{1}{j-p}h_pD_p(u_p,\partial B^j_{h_p}),
\end{align*}
which establishes~\eqref{inequalityenergy5}. In order to prove~\eqref{inequalityenergy6}, observe first that $u_p$ and $u_p \circ \Phi_j$ agree on $\partial B^j_{h_p}$. Therefore, we can apply the Poincaré inequality to $u_p-u_p \circ \Phi_j$, which yields after a scaling procedure
\begin{equation*}
\int_{B^j_{h_p}}\vert u_p- u_p \circ \Phi_j\vert^p \mathrm{d}x \lesssim h^p_p\int_{B^j_{h_p}}\vert \nabla \left(u_p-u_p \circ \Phi_j   \right)\vert^p \mathrm{d}x.
\end{equation*}
Inequality~\eqref{inequalityenergy6} then follows by~\eqref{inequalityenergy5}.
\end{proof}
\end{lemma}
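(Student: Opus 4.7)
The plan is to reduce both bounds to single-cell estimates and then sum. Fix a $j$-cell $I_j\in\GG^p_j$ meeting $U$; by the standing assumption on $h_p$, one has $I_j\subseteq U'$. After a translation and a bi-Lipschitz change of coordinates with constants depending only on $j$, I may replace $I_j$ by the ball $B_{h_p}^j$ and $\Phi_j$ by the radial projection $\bar{\Phi}(x) := h_p\,x/|x|$; the corresponding cube and ball estimates differ only by universal constants absorbed in $\lesssim$. Since only finitely many $j$-cells meet $U$, summing the per-cell estimates will yield the global inequalities, using $U\subseteq R_j^p(U)\subseteq U'$.

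For (\ref{inequalityenergy1}), by the chain rule and the bound $|\nabla\bar{\Phi}(x)| \lesssim h_p/|x|$, I obtain $|\nabla(u_p\circ\bar{\Phi})(x)|^p \lesssim (h_p/|x|)^p \, |\nabla u_p(h_p x/|x|)|^p$. Writing $x = s\omega$ in polar coordinates with $s\in(0,h_p)$, $\omega\in\partial B_1^j$, and substituting $y = h_p\omega$ on the sphere, the volume integral over $B_{h_p}^j$ splits into the product
\[
h_p^{\,p-(j-1)}\!\int_0^{h_p}\! s^{j-1-p}\,ds \,\cdot\, \int_{\partial B_{h_p}^j} |\nabla u_p(y)|^p \, d\mathscr{H}^{j-1}(y).
\]
The radial integral converges since $j \geq k > p$ and equals $h_p^{j-p}/(j-p)$, so the $h_p$-powers collapse to the single factor $h_p/(j-p)$---the origin of the divergence as $p\to k$ in the critical case $j=k$. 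Summing over cells gives (\ref{inequalityenergy1}).

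For (\ref{inequalityenergy2}), the key point I will exploit is that $\bar{\Phi}$ restricts to the identity on $\partial B_{h_p}^j$, so the difference $u_p - u_p\circ\bar{\Phi}$ has vanishing trace on the cell boundary. Poincar\'e's inequality on the unit ball, rescaled to $B_{h_p}^j$, then gives
\[
\int_{B_{h_p}^j} |u_p - u_p\circ\bar{\Phi}|^p \, dx \lesssim h_p^p \int_{B_{h_p}^j} |\nabla(u_p - u_p\circ\bar{\Phi})|^p \, dx,
\]
with a Poincar\'e constant depending only on $j$. Using the triangle inequality $|\nabla(u_p - u_p\circ\bar{\Phi})|^p \lesssim |\nabla u_p|^p + |\nabla(u_p\circ\bar{\Phi})|^p$ and summing over cells, the two terms become $D_p(u_p, R_j^p\cap U')$ and, by (\ref{inequalityenergy1}) already proved, $\tfrac{h_p}{j-p}D_p(u_p, R_{j-1}^p\cap U')$, yielding the claim. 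The only point requiring care is the uniformity of the implicit constants in $p\in(k-1,k)$ and $h_p$: only $1/(j-p)$ is genuinely singular, and it is tracked explicitly; the remaining constants come from bi-Lipschitz identifications and a Poincar\'e inequality on a unit domain, both depending only on $j$ and $n+k$. Lemma~\ref{lemma:grid} has been set up precisely so that $u_p$ has the requisite $W^{1,p}$ trace on each $(j-1)$-face, legitimizing the cell-by-cell computation.
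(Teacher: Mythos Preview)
Your argument is correct and follows essentially the same route as the paper's proof: reduction to a single cell via a bi-Lipschitz identification with a ball, the polar-coordinate computation giving the factor $h_p/(j-p)$ for~\eqref{inequalityenergy1}, and then the Poincar\'e inequality (exploiting that $u_p$ and $u_p\circ\bar{\Phi}$ agree on the boundary) combined with~\eqref{inequalityenergy1} for~\eqref{inequalityenergy2}. Your additional remarks on the uniformity of constants in~$p$ are accurate and welcome, though not strictly needed for the statement as written.
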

From Lemma~\ref{lemma:compositionofradialmaps} and Lemma~\ref{lemma:grid}, we deduce the following property, analogous to~\cite[Lemma 2.5]{DanielS2}:
\begin{lemma}\label{lemma:compositionofradialmaps2} 
	Let $p \in (k-1, \, k)$, and $j$ an integer in $[k, \, n+k]$ and $\bar{u}_{p,j}$ be as in~\eqref{upj}. Then, the following inequalities hold:
	\begin{gather}
	D_p(\bar{u}_{p,j},\,U)   \lesssim  \frac{\delta^{-1}}{j-p}D_p(u_p,\,U') , \label{inequalityenergy3}\\
	\int_{U}\vert u_p- \bar{u}_p \vert^p \mathrm{d}x \lesssim  h^p_p \frac{\delta^{-1}}{j-p} 	D_p(u_p,\,U'). \label{inequalityenergy4}
	\end{gather}
Moreover, if $j=k$ then we have $\S(\bar{u}_{p,k})=T_p$ and $T_p$ is a relative boundary in~$U$.
\begin{proof}
By applying~\eqref{inequalityenergy1} in Lemma~\ref{lemma:compositionofradialmaps}, we obtain
\begin{equation*}
D_p(\bar{u}_{p,j},\,U) \lesssim h^{n+k-j}D_p(u,R_{j}^p\cap U'),
\end{equation*}
so that~\eqref{inequalityenergy3} follows by applying inequality~\eqref{grid_k} in Lemma~\ref{lemma:grid}. Inequality~\eqref{inequalityenergy4} is obtained in a similar manner. Finally, we check that for all $K \in \GG_k^p$ such that $K \cap U$ is non-empty, $u$ and $\bar{u}_{p,k}$ coincide on $\partial K$. Moreover, by definition, the map $\bar{u}_{p,k}$ is continuous away from $(R_k^p)' \cap U'$, which is a polyhedral $n$-complex. Hence, by the results in~\cite[Subsection 3.4]{CO1} we obtain that the singular set of $\bar{u}_{p,k}$ is $T_p$ as defined in~\eqref{approxiamtsequenceT_p}. At this point, the fact that $T_p$ is a relative boundary in~$U$ follows from~\ref{S_rel_boundary} in Proposition~\ref{prop:operator_S} as the domain of $\bar{u}_{p,k}$ is $R_k^p(U)$, which contains $U$.
\end{proof}
\end{lemma}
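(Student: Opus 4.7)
The plan is to establish the two quantitative estimates~\eqref{inequalityenergy3}--\eqref{inequalityenergy4} by iterating the single-step inequalities of Lemma~\ref{lemma:compositionofradialmaps} and then converting skeleton energies into $D_p(u_p, U')$ by means of Lemma~\ref{lemma:grid}. The topological identification $\S(\bar{u}_{p,k}) = T_p$, and the fact that $T_p$ is a relative boundary in~$U$, would then come from the explicit construction of the operator~$\S$ in~\cite[Subsection~3.4]{CO1} together with Property~\ref{S_rel_boundary} of Proposition~\ref{prop:operator_S}.

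For~\eqref{inequalityenergy3}, I would unwrap $\bar{u}_{p,j} = u_p\circ\Phi_j\circ\cdots\circ\Phi_{n+k}$ and apply~\eqref{inequalityenergy1} once per projection, peeling off one radial map at a time. Each peeling transforms the current $D_p$-norm on the $\ell$-skeleton into one on the $(\ell-1)$-skeleton, paying a multiplicative factor $h_p/(\ell-p)$, so after $n+k-j+1$ iterations one arrives at
\[
D_p(\bar{u}_{p,j}, U) \;\lesssim\; \prod_{\ell=j}^{n+k}\frac{h_p}{\ell-p}\;D_p(u_p,\, R_{j-1}^p\cap U').
\]
The decisive observation is that, for $p$ sufficiently close to $k$ and $\ell>k$, one has $\ell-p\geq 1/2$, so every factor except possibly $\ell=j$ stays uniformly bounded; consequently the product is controlled by $h_p^{n+k-j+1}/(j-p)$, with constants depending only on $n$ and $k$. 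A final application of~\eqref{grid_k} (with $j-1$ in place of $j$) absorbs the $h_p$-factor at the cost of $\delta^{-1}$ and yields~\eqref{inequalityenergy3}. For the closeness bound~\eqref{inequalityenergy4}, I note that $\bar{u}_p=\bar{u}_{p,n+k}$ differs from $u_p$ by the single projection $\Phi_{n+k}$, so~\eqref{inequalityenergy2} applied with $j=n+k$ gives the claim directly after absorbing the two skeleton $D_p$-norms via~\eqref{grid_k}.

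For the case $j=k$, each $\Phi_\ell$ is the identity on $R_{\ell-1}^p$, and since $R_{k-1}^p\subseteq R_{\ell-1}^p$ for every $\ell\geq k$, the composition $\phi_k$ is the identity on $R_{k-1}^p$. Hence $\bar{u}_{p,k}$ coincides with $u_p$ on each cell boundary $\partial K$, $K\in\GG^p_k$, and its homotopy class there is exactly $\gamma^p(K)$. Since $\bar{u}_{p,k}$ is $\NN$-valued and continuous outside the polyhedral $n$-complex $R_n^{p\prime}$, the construction of $\S$ in~\cite[Subsection~3.4]{CO1} identifies $\S(\bar{u}_{p,k})$ with the polyhedral $n$-chain supported on $R_n^{p\prime}\cap U$ whose coefficient on each dual $n$-cell $K'$ equals the homotopy class of the map around a point of $K'$, namely $\gamma^p(K)$; this is precisely the chain $T_p$ from~\eqref{approxiamtsequenceT_p}. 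That $T_p$ is a relative boundary in~$U$ then follows immediately from Property~\ref{S_rel_boundary} of Proposition~\ref{prop:operator_S}. The main obstacle I foresee is keeping track of the iterated factors $\prod(\ell-p)^{-1}$ as $p\to k^-$: only the level $\ell=j$ is actually critical, while levels $\ell>k$ remain supercritical and contribute constants independent of $p$; the constants coming from Lemma~\ref{lemma:grid} and Lemma~\ref{lemma:compositionofradialmaps} must likewise be verified to be uniform in $p$, which they are since they depend only on $n$, $k$, and~$\NN$.
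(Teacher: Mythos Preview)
Your approach to~\eqref{inequalityenergy3} and to the topological identification $\S(\bar u_{p,k})=T_p$ is correct and is exactly what the paper does, only spelled out in more detail: iterate~\eqref{inequalityenergy1} across the successive radial projections, observe that all factors~$1/(\ell-p)$ with~$\ell>k$ are uniformly bounded so that only the bottom level contributes the~$1/(j-p)$, and then convert the skeleton energy via~\eqref{grid_k}. Likewise, your argument that $\phi_k$ fixes $R_{k-1}^p$ pointwise, so that $\bar u_{p,k}$ agrees with $u_p$ on every $\partial K$, and your appeal to~\cite[Subsection~3.4]{CO1} and Property~\ref{S_rel_boundary} are precisely the paper's route.

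There is, however, a gap in your treatment of~\eqref{inequalityenergy4}. You read $\bar u_p$ as $\bar u_{p,n+k}=u_p\circ\Phi_{n+k}$, following the paper's sentence ``$\bar u_p$ will be taken equal to $\bar u_{p,n+k}$,'' and then invoke~\eqref{inequalityenergy2} once at level $n+k$. But the presence of~$j$ on the right-hand side of~\eqref{inequalityenergy4}, together with the way the estimate is used in Proposition~\ref{prop:flat_norm_convergence} (where one takes $j=k$ and needs $\S(\bar u_p)=T_p$), shows that the intended map is $\bar u_{p,j}$, and in particular $\bar u_{p,k}$ in the application --- the ``$n+k$'' is a typo for ``$k$''. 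For $j<n+k$ the difference $u_p-\bar u_{p,j}$ is \emph{not} a single-step projection, so a single use of~\eqref{inequalityenergy2} does not suffice. The fix is to telescope
\[
u_p-\bar u_{p,j}=\sum_{\ell=j}^{n+k}\bigl(\bar u_{p,\ell+1}-\bar u_{p,\ell}\bigr),\qquad \bar u_{p,n+k+1}:=u_p,
\]
note that each summand equals $[u_p-u_p\circ\Phi_\ell]\circ\phi_{\ell+1}$, and then combine the change of variables for $\phi_{\ell+1}$ (which contributes a factor $h_p^{n+k-\ell}$, with no singular denominator since no gradient is involved) with~\eqref{inequalityenergy2} at level~$\ell$ and~\eqref{grid_k}. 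Summing over the finitely many levels $\ell$ and observing, as you did for~\eqref{inequalityenergy3}, that only the term $\ell=j$ is possibly critical, one recovers the bound $h_p^p\,\delta^{-1}(j-p)^{-1}D_p(u_p,U')$. This is presumably what the paper's ``obtained in a similar manner'' is pointing to.
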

\begin{prop}\label{prop:flat_norm_convergence}
Assume that $(h_p)_{p \in (k-1, \, k)}$ the family of grid sizes is chosen such that $h_p=(k-p)^3$ for all $p \in (k-1, \, k)$. Then it follows that
\begin{equation}\label{flat_norm_convergence}
\F_{U,k}(\S(u_{p}) - T_p) \to 0
\end{equation}
along any subsequence $(p_i)_{i \in \mathbb{N}}$ in $(k-1, \, k)$ converging to $k$ as $i \to \infty$.
\end{prop}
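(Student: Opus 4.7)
The plan is to bound $\F_{U,k}(\S(u_p)-T_p)$ by exploiting the continuity-type estimate~\eqref{basic_estimate_flat_norm} for the pair $(u_p,\bar{u}_p)$, where $\bar{u}_p:=\bar{u}_{p,k}$ is the map provided by Lemma~\ref{lemma:compositionofradialmaps2}, whose singular set is precisely $T_p$. Since $u_p\in W^{1,p}(U,\NN)$ and, by Lemma~\ref{lemma:compositionofradialmaps2}, $\bar{u}_p\in W^{1,p}(U,\NN)$, both lie in $W^{1,k-1}(U,\NN)$, and~\eqref{basic_estimate_flat_norm} yields
\[
\F_{U,k}(\S(u_p)-T_p)\lesssim\int_U\bigl(\abs{\nabla u_p}^{k-1}+\abs{\nabla\bar{u}_p}^{k-1}\bigr)\abs{u_p-\bar{u}_p}\,dx.
\]

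Next, since both $u_p$ and $\bar{u}_p$ take values in the compact manifold $\NN$, the difference $\abs{u_p-\bar{u}_p}$ is uniformly bounded, so $\abs{u_p-\bar{u}_p}\lesssim\abs{u_p-\bar{u}_p}^{p-k+1}$ (note $p-k+1\in(0,1)$). Hölder's inequality with conjugate exponents $p/(k-1)$ and $p/(p-k+1)$ then gives
\[
\F_{U,k}(\S(u_p)-T_p)\lesssim\bigl(\norm{\nabla u_p}_{L^p(U)}^{k-1}+\norm{\nabla\bar{u}_p}_{L^p(U)}^{k-1}\bigr)\,\norm{u_p-\bar{u}_p}_{L^p(U)}^{p-k+1}.
\]
From hypothesis~\eqref{H}, $\norm{\nabla u_p}_{L^p(U')}^p\lesssim(k-p)^{-1}$; inserting this bound into~\eqref{inequalityenergy3} and~\eqref{inequalityenergy4} of Lemma~\ref{lemma:compositionofradialmaps2} with $j=k$ produces $\norm{\nabla\bar{u}_p}_{L^p(U)}^p\lesssim\delta^{-1}(k-p)^{-2}$ and $\norm{u_p-\bar{u}_p}_{L^p(U)}^p\lesssim h_p^p\,\delta^{-1}(k-p)^{-2}$. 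Combining these (the $\bar{u}_p$-gradient term dominates for $p$ near $k$), the exponents on $\delta^{-1}$ and $(k-p)^{-1}$ add up neatly and the bound collapses to
\[
\F_{U,k}(\S(u_p)-T_p)\lesssim\delta^{-1}(k-p)^{-2}\,h_p^{\,p-k+1}.
\]
With the prescribed choice $h_p=(k-p)^3$, the right-hand side equals $\delta^{-1}(k-p)^{3(p-k+1)-2}$, whose exponent converges to $3-2=1>0$ as $p\to k^-$; hence the right-hand side vanishes, establishing~\eqref{flat_norm_convergence} along every subsequence $p_i\to k$.

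The main obstacle is the exponent bookkeeping. One pays a factor $(k-p)^{-1}$ when controlling $\bar{u}_p$ in terms of $u_p$ via~\eqref{inequalityenergy3}, together with a further $(k-p)^{-1}$ when raising the gradient to the power $k-1$ through Hölder's inequality, so the total divergence is $(k-p)^{-2}$ and must be defeated by $h_p^{p-k+1}\sim h_p$. The pointwise replacement $\abs{u_p-\bar{u}_p}\lesssim\abs{u_p-\bar{u}_p}^{p-k+1}$, which relies crucially on the $L^\infty$ bound coming from the manifold constraint, is what allows the $L^p$-deviation estimate~\eqref{inequalityenergy4} to be used at all. The cubic scaling $h_p=(k-p)^3$ is then the simplest polynomial choice that beats the $(k-p)^2$ barrier, leaving a comfortable margin of $(k-p)^1$ at the end.
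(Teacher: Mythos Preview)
Your proof is correct and follows essentially the same route as the paper: compare $u_p$ with the projected map $\bar{u}_p=\bar{u}_{p,k}$ via the continuity estimate~\eqref{basic_estimate_flat_norm}, then control each factor through H\"older, the $L^\infty$-bound from the manifold constraint, Lemma~\ref{lemma:compositionofradialmaps2}, and hypothesis~\eqref{H}, arriving at the same bound $\delta^{-1}(k-p)^{-2}h_p^{p-k+1}$. The only cosmetic difference is that the paper first applies H\"older to land in $L^{p/(p-k+1)}$ and then uses boundedness to drop to $L^p$, whereas you use the pointwise replacement $|u_p-\bar{u}_p|\lesssim|u_p-\bar{u}_p|^{p-k+1}$ before H\"older; the two are equivalent.
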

\begin{proof}
As in Lemma~\ref{lemma:compositionofradialmaps2}, for any $p \in (k-1, \, k)$ let $\bar{u}_p:=\bar{u}_{p,n+k}$ as in~\eqref{upj}. According to~\eqref{basic_estimate_flat_norm} we have
\begin{equation*}
\F_{U,k}(\S(u_{p}) - \S(\bar{u}_{p}))\lesssim \int_{U} \left( \abs{\nabla u_p}^{k-1} + \abs{\nabla \bar{u}_{p}}^{k-1}\right)\lvert u_p-\bar{u}_{p} \rvert \mathrm{d}x ,
\end{equation*}
so that, by Hölder's inequality
\begin{equation}\label{flat_norm_convergence_ineq1}
\F_{U,k}(\S(u_{p}) - \S(\bar{u}_{p}))\lesssim  \left( \norm{\nabla u_p}^{k-1}_{L^p(U)}+\norm{\nabla \bar{u}_p}^{k-1}_{L^p(U)} \right)\norm{u_p-\bar{u}_p}_{L^{\frac{p}{p-k+1}}(U)}.
\end{equation}
Notice that $\frac{p}{p-k+1} \geq p$ and therefore~\eqref{flat_norm_convergence_ineq1} becomes
\begin{equation*}
\F_{U,k}(\S(u_{p}) - \S(\bar{u}_{p}))\lesssim \left( \norm{\nabla u_p}^{k-1}_{L^p(U)}+\norm{\nabla \bar{u}_p}^{k-1}_{L^p(U)} \right) \left(\int_U \lvert u_p-\bar{u}_p \rvert^p \mathrm{d}x \right)^{\frac{p-k+1}{p}},
\end{equation*}
where we have used that the family $(u_p)_{p \in (k-1, \, k)}$ is uniformly bounded on $L^\infty(U)$ due to the manifold constraint. Applying now Lemma~\ref{lemma:compositionofradialmaps2} and assumption~\eqref{H} on $(u_p)_{p \in (k-1, \, k)}$, it follows
\begin{equation*}
\F_{U,k}(\S(u_{p}) - \S(\bar{u}_{p}))\lesssim \left( \left( \frac{\delta^{-1}}{(k-p)} \right)^{\frac{k-1}{p}}+\left( \frac{\delta^{-1}}{(k-p)^2} \right)^{\frac{k-1}{p}}  \right)\left( h_p^p \frac{\delta^{-1}}{(k-p)^2}  \right)^{\frac{p-k+1}{p}},
\end{equation*}
so that
\begin{equation}\label{flat_norm_convergence_ineq2}
\F_{U,k}(\S(u_{p}) - \S(\bar{u}_{p}))\lesssim  \frac{\delta^{-1}h_p^{p-k+1}}{(k-p)^2}.
\end{equation}
At this point, one readily checks that if $h_p=(k-p)^3$ then the right-hand side in~\eqref{flat_norm_convergence_ineq2} tends to $0$ as $p \to k$. This proves the convergence property~\eqref{flat_norm_convergence}, as $\S(\bar{u}_p)=T_p$ by Lemma~\ref{lemma:compositionofradialmaps2}.
\end{proof}
Once Proposition~\ref{prop:flat_norm_convergence} has been established, the goal is to obtain uniform bounds on the mass of $T_p$ on $U$. As a first step, we prove result in the spirit of~\cite[Lemma 3.10]{ABO2} and~\cite[Lemma 11]{CO2}, which follows from the lower bounds for maps in cubes of dimension $k$ (Proposition~\ref{prop:lowerbound-k}).  
\begin{lemma}\label{lemma:lowerbounds}
Let $p_0>k-1$ be the constant given by Proposition~\ref{prop:lowerbound-k}. Let $r$ be a positive constant, $Q^k_h := [0, \, h]^k$ be a cube of edge length~$h>0$ and $p \in [p_0, \, k)$. Let~$u\in W^{1,p}(Q^k_h, \, \NN)$ have continuous trace on $\partial Q^k_h$ and let~$\gamma\in\GN$ be the homotopy class of~$u$ on~$\partial Q^k_h$.
Then,
	\[
\frac{\abs{\gamma}_k}{k - p}
- C\abs{\gamma}_k \left( \log\left(\frac{\abs{\gamma}_k}{r}\right)+1 \right)	\leq h^{p-k}D_p(u, \, Q_h^k) + \, h \, r D_p(u, \, \partial Q_{h}^k) 
	\]
where $C$ is a universal constant.
\end{lemma}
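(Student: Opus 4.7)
The strategy is to reduce to Proposition~\ref{prop:lowerbound-k} by first rescaling $Q^k_h$ to unit size and then extending $u$ across $\partial Q^k_h$ by a collar, so as to supply the continuity near the outer boundary required by the hypothesis of Proposition~\ref{prop:lowerbound-k} on the set $\Gamma_\rho$. Concretely, I would set $v(x) := u(hx)$ on $Q^k_1$; then $v\in W^{1,p}(Q^k_1, \, \NN)$ has continuous trace $\gamma$ on $\partial Q^k_1$, and a change of variables yields
\begin{equation*}
 \int_{Q^k_1} |\nabla v|^p \, \mathrm{d}x = h^{p-k} D_p(u, \, Q^k_h), \qquad \int_{\partial Q^k_1} |\nablaT v|^p \, \mathrm{d}\H^{k-1} = h^{p-k+1} D_p(u, \, \partial Q^k_h).
\end{equation*}

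For a parameter $\rho \in (0, 1/2]$ to be fixed later, I would build an extension $\tilde v \in W^{1,p}(Q^k_{1+2\rho}, \, \NN)$ of $v$ to the enlarged cube by setting $\tilde v(x) := v(\pi(x))$ in the outer collar, where $\pi$ denotes the nearest-point projection onto $\partial Q^k_1$ (well-defined face-by-face, with a standard dimensional bookkeeping at the edges and corners). Because the extension is constant along the normal to each face, one obtains
\begin{equation*}
 \int_{Q^k_{1+2\rho}\setminus Q^k_1} |\nabla \tilde v|^p \, \mathrm{d}x \leq C_k \, \rho \int_{\partial Q^k_1} |\nablaT v|^p \, \mathrm{d}\H^{k-1}
\end{equation*}
for a purely dimensional constant $C_k$; moreover $\tilde v$ is continuous throughout the collar (hence in the set $\Gamma_\rho$ associated to $Q^k_{1+2\rho}$), and its homotopy class on $\partial Q^k_{1+2\rho}$ equals $\gamma$.

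Since $\overline{Q^k_{1+2\rho}}$ is homeomorphic to $\overline{B}^k$, applying Proposition~\ref{prop:lowerbound-k} to $\tilde v$ with parameter $\rho$, combined with the two bounds above, gives
\begin{equation*}
 h^{p-k} D_p(u, \, Q^k_h) + C_k \, \rho \, h^{p-k+1} D_p(u, \, \partial Q^k_h) \geq \frac{|\gamma|_k}{k-p} - C|\gamma|_k \left( \log \frac{|\gamma|_k}{\rho} + 1 \right).
\end{equation*}
Choosing $\rho := \min\{1/2, \, C_k^{-1} r h^{k-p}\}$ makes $C_k \rho h^{p-k+1} \leq hr$ in either regime, while $\log(|\gamma|_k/\rho)$ differs from $\log(|\gamma|_k/r)$ by a quantity that is absorbed into the final constant $C$; the stated inequality follows. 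The only technical point requiring care is the collar construction at the edges and corners of the cube, where one must paste together the face-by-face normal extensions into a single $W^{1,p}$ map whose collar energy is controlled by a dimensional multiple of the boundary tangential energy; once this is in place, the remainder of the argument is routine bookkeeping in the rescaling and in the choice of $\rho$.
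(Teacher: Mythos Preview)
Your approach is essentially the paper's: rescale to the unit cube, attach a collar across the boundary, apply Proposition~\ref{prop:lowerbound-k}, and undo the scaling. The paper avoids the edge/corner difficulty you flag by using the $\ell^\infty$-radial extension $\tilde v(x):=v(x/|x|_\infty)$ on $Q_{1+r}^k\setminus Q_1^k$, which is defined in one stroke on the whole collar and yields directly
\[
\int_{Q_{1+r}^k\setminus Q_1^k}|\nabla\tilde v|^p\,\mathrm{d}x
\;\le\;\int_1^{1+r}s^{k-1-p}\,\mathrm{d}s\cdot D_p(v,\partial Q_1^k)
\;\le\; r\,D_p(v,\partial Q_1^k),
\]
using $p>k-1$. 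With this extension there is no dimensional constant $C_k$ in front of the boundary term, so the paper simply takes the collar width equal to $r$ and reads off the inequality, then scales; no auxiliary parameter $\rho$ and no endgame optimization is needed. Note also that your final choice $\rho=\min\{1/2,\,C_k^{-1} r h^{k-p}\}$ is problematic: when $h$ is small, $\log(r/\rho)=\log(C_k h^{p-k})$ blows up and cannot be absorbed into a universal constant as you claim.
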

\begin{proof}
The proof follows the lines of~\cite[Lemma 3.10]{ABO2}. Assume first that $h=1$ and define
\begin{equation*}
\tilde{u}(x):= \begin{cases}
u\left( \frac{x}{\lvert x \rvert_{\infty}} \right) &\mbox{ if } x \in Q_{1+r}^k \setminus Q_1^k,\\
u(x) &\mbox{ if } x \in Q_1^k.
\end{cases}
\end{equation*}
Notice that $\tilde{u} \in W^{1,p}(Q_h^k, \, \NN)$ and $\tilde{u}$ is continuous on $Q_{1+r}^k \setminus Q_1^k$ because the trace of $u$ on $Q_1^k$ is continuous. Applying Proposition~\ref{prop:lowerbound-k} to $\tilde{u}$ on $Q_{1+r}$, we obtain
\begin{equation}\label{ineq_1_lowerbounds_cube}
\int_{Q_{1+r}^k \setminus Q_1^k}\frac{1}{\lvert x \rvert_{\infty}^p}\left\lvert \nabla u\left( \frac{x}{\lvert x \rvert_{\infty}} \right)\right\rvert^p \mathrm{d}x+D_p(u,Q_1^k) \geq \frac{\abs{\gamma}_k}{k - p}
- C\abs{\gamma}_k \left( \log\left(\frac{\abs{\gamma}_k}{r}\right)+1 \right).
\end{equation}
Subsequently, notice that
\begin{align*}
\int_{Q_{1+r}^k \setminus Q_1^k}\frac{1}{\lvert x \rvert_{\infty}^p}\left\lvert \nabla u\left( \frac{x}{\lvert x \rvert_{\infty}} \right)\right\rvert^p \mathrm{d}x &=\int_{1}^{1+r}\int_{\partial Q_{s}^k} \frac{1}{s^p}\lvert\nabla u ( s^{-1}x) \rvert^p\mathrm{d}x \mathrm{d}s=\int_1^{1+r}s^{k-1-p}\mathrm{d}sD_p(u,\partial Q_1^k)\\
&\leq rD_p(u,\,\partial Q_1^k)
\end{align*}
since $p>k-1$. Plugging this last inequality on~\eqref{ineq_1_lowerbounds_cube} we obtain the result for $h=1$. The inequality for arbitrary $h$ follows by scaling.
\end{proof}
From Lemma~\ref{lemma:lowerbounds} together with Lemma~\ref{lemma:grid} and Proposition~\ref{prop:pqnorm}, we get a result which is analogous to~\cite[Lemma 12]{CO2}:
\begin{lemma}\label{lemma:mass_T}
Let $r$ be a positive constant and $\delta \in (0, \, 1)$. Assume that $(h_p)_{p \in (k-1, \, k)}$ is the family of grid sizes chosen in Proposition~\ref{prop:flat_norm_convergence}. There exists $p_1 \in (k-1, \, k)$ depending on $\delta$ and $r$ such that for all $p \in (p_1, \, k)$ it holds:
	\begin{equation} \label{mass_T}
	\begin{split}
	\left(1 - c_{r,p}(p)\right)\M_k(T_p\mres U) 
	&\lesssim \delta^{-1}\left(1 + r\right) 
	(k-p)^{-3(k-p)+1} D_p(u_p, \, U^\prime),
	\end{split}
	\end{equation}
	where~$c_{r,\delta}(p)>0$ is such that 
	$c_{r,\delta}(p)\to 0$ as~$p\to k$. Moreover, if~$L$ is a $n$-plane, then there holds
	\begin{equation} \label{mass_Tproj}      
	\begin{split}
	\left(1 - c_{r,\delta}(p)\right)\M_k(\pi_{L,*}(T_p\mres U))
	&\leq \left(1 + \delta + C r \, \delta^{-1}\right)(k-p)^{-3(k-p)}
	(k-p)D_p(u_p, \, U^\prime).
	\end{split}
	\end{equation}
\end{lemma}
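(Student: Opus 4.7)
The plan is to follow the scheme of~\cite[Lemma 12]{CO2}: apply Lemma~\ref{lemma:lowerbounds} to $u_p$ on each $k$-cell $K\in\GG^p_k$ meeting $U$, sum the resulting inequalities over all such $K$, and then control the total energies via the grid estimates in Lemma~\ref{lemma:grid}. Keeping in mind that $\mathscr{H}^n(K^\prime) = h_p^n$, so that $\M_k(T_p\mres U) \leq h_p^n\sum_K |\gamma^p(K)|_k$ (the sum being finite), and that $h_p = (k-p)^3$ makes $h_p^{p-k} = (k-p)^{-3(k-p)}\to 1^+$ as $p\to k$, the inequality~\eqref{mass_T} should drop out of this scheme once all terms are collected. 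The only serious obstacle I anticipate is that the local bound of Lemma~\ref{lemma:lowerbounds} carries a term $-C|\gamma^p(K)|_k\log(|\gamma^p(K)|_k/r)$ that must be absorbed uniformly in~$K$ into a common factor $(1 - c_{r,\delta}(p))$ with $c_{r,\delta}(p)\to 0$.

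To handle this, I first plan to establish a uniform upper bound $|\gamma^p(K)|_k \leq M_p$ that grows at most polynomially in $1/(k-p)$. Scaling $\partial K$ onto $\SS^{k-1}$ in the definition~\eqref{E_p} of $E_p$ gives $|\gamma^p(K)|_p \leq E_p(\gamma^p(K)) \lesssim h_p^{p-k+1}D_p(u_p,\partial K)$. The per-cell bound $D_p(u_p, \partial K) \leq D_p(u_p, R_{k-1}^p\cap U^\prime) \lesssim \delta^{-1}h_p^{-(n+1)}D_p(u_p, U^\prime)$, coming from~\eqref{grid_k} with $j = k-1$ together with the global bound~\eqref{H}, and then the passage from $|\cdot|_p$ to $|\cdot|_k$ via Proposition~\ref{prop:pqnorm} (whose exponent $1/(p-k+1)$ and constant $\lambda(p,k)^{-1}$ both tend to $1$), produces such an $M_p$. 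As a result $(k-p)\log M_p = O((k-p)|\log(k-p)|)\to 0$.

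With this uniform bound, the monotonicity of $x\mapsto -\log(x/r)$ allows me to estimate the left-hand side of Lemma~\ref{lemma:lowerbounds} from below by $\frac{|\gamma^p(K)|_k}{k-p}(1 - c_{r,\delta}(p))$ with $c_{r,\delta}(p) := C(k-p)(\log(M_p/r) + 1)$. Summing the local inequalities over all $K\in\GG^p_k$ with $K\cap U\neq\emptyset$, multiplying by $h_p^n$, and using~\eqref{grid_k} with $j = k$ and $j = k-1$ to bound $\sum_K D_p(u_p, K) \leq D_p(u_p, R_k^p\cap U^\prime)$ and $\sum_K D_p(u_p, \partial K) \leq D_p(u_p, R_{k-1}^p\cap U^\prime)$ respectively, yields
\[
 (1 - c_{r,\delta}(p))\,\M_k(T_p\mres U) \lesssim \delta^{-1}\left((k-p)h_p^{p-k} + r(k-p)\right)D_p(u_p, U^\prime),
\]
from which~\eqref{mass_T} follows because $(k-p) \leq (k-p)^{1 - 3(k-p)}$ for $p$ close enough to~$k$.

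Finally, I would prove~\eqref{mass_Tproj} by the same summation, but restricted to those $K\in\GG^p_k$ whose dual $K^\prime$ is parallel to~$L$, since only these contribute to $\pi_{L,*}(T_p\mres U)$; this is precisely the sub-skeleton $R_{k,L}$ in Lemma~\ref{lemma:grid}. Using the sharper estimate~\eqref{grid_ktilde} on the bulk term replaces the prefactor $\delta^{-1}$ in front of $h_p^{p-k}$ by $(1+\delta)$, while the boundary term, whose faces still live in the full $R_{k-1}^p$, is still controlled via~\eqref{grid_k} and produces the $r\delta^{-1}$ contribution. Together with $(k-p)^{-3(k-p)} \geq 1$ for $p$ close to $k$, this yields the stated form of~\eqref{mass_Tproj}.
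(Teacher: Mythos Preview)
Your proposal is correct and follows essentially the same approach as the paper: bound each $|\gamma^p(K)|_k$ by a quantity $M_p$ with $(k-p)\log M_p\to 0$ via the per-cell boundary energy, the grid estimate~\eqref{grid_k} with $j=k-1$, and Proposition~\ref{prop:pqnorm}; then apply Lemma~\ref{lemma:lowerbounds} cell by cell, sum, and invoke~\eqref{grid_k} (respectively~\eqref{grid_ktilde} for the projected estimate). The only cosmetic difference is that the paper passes from $|\cdot|_p$ to $|\cdot|_k$ with exponent~$k/p$ rather than your~$1/(p-k+1)$, but both exponents tend to~$1$ and yield the same conclusion $(k-p)\log M_p = O((k-p)|\log(k-p)|)$.
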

\begin{proof}
By the definition of $T_p$ in~\eqref{approxiamtsequenceT_p} we get
\begin{equation} \label{mass-1}
\M_k(T_p\mres U) \leq h_p^n 
\sum_{K\in\GG^p_k, \ K\cap U\neq\emptyset}
|\gamma^p(K)|_k.
\end{equation}
Fix $K=a+hz_K+[0, \, h]^{k} \in \GG_k^p$ (here $z_K \in \Z^k$) such that $K \cap U \not = \emptyset$, and recall that~$K \subseteq U'$. Define the function 
\begin{equation*}
w_{p,k}: x \in \partial((0, \, 1)^{k}) \to \tilde{u}_{p,k}(x):=u_p(a+hz_K+hx) \in \NN,
\end{equation*}
which is well-defined and belongs to $W^{1,p}(\partial((0,\,1)^k), \, \NN)$ by 1. in Lemma~\ref{lemma:grid}. By~\eqref{pqnorm} in Proposition~\ref{prop:pqnorm} along with the definition of $\lvert \cdot \rvert_p$ in~\eqref{groupnorm}, we get 
\begin{equation*}
|\gamma^p(K)|_k \lesssim |\gamma^p(K)|_p^{\frac{k}{p}} \lesssim D_p(w_{p,k},\,\partial ((0, \, 1)^{k-1}))^{\frac{k}{p}}.
\end{equation*}
After a change of variables, we deduce
\begin{equation*}
|\gamma^p(K)|_k  \lesssim (h_p^{p-k+1}D_p(u_p,\,\partial K))^{\frac{k}{p}}.
\end{equation*}
Using now~\eqref{grid_k} in Lemma~\ref{lemma:grid} with $j=k-1$ and the assumption~\eqref{H}, the inequality above becomes
\begin{equation*}
|\gamma^p(K)|_k  \lesssim (h_p^{p-k-n}\delta^{-1}D_p(u_p,\,U'))^{\frac{k}{p}}\lesssim  (h_p^{p-k-n}\delta^{-1}(k-p)^{-1})^\frac{k}{p}
\end{equation*}
Since~$h_p=(k-p)^3$, one finally gets
\begin{equation}\label{ap_ineq}
|\gamma^p(K)|_k  \lesssim \delta^{-\frac{k}{p}}h_p^{a(p)},
\end{equation}
where $a(p)=\frac{k}{p}(3(p-k-n)-1)$. Notice now that, by a change of variables, Lemma~\ref{lemma:lowerbounds} holds when one replaces $[0, \, h_p]^{k}$ by $K$. Applying it to $u_p$ (which we can by 1. in Lemma~\ref{lemma:grid}) for $p \geq p_0$ and then using~\eqref{ap_ineq}, we get 
\begin{equation*}
\left( 1-C(k-p)\left( \log\left(\frac{\delta^{-\frac{k}{p}}(k-p)^{a(p)} }{r}\right)+1 \right)\right)\abs{\gamma^p(K)}_k
 	\leq
(k-p)(h_p^{p-k}D_p(u_p,\, K) + h_p r D_p(u_p,\, \partial K)),
\end{equation*}
for some universal constant $C>0$. After some simplifications, we obtain
\begin{equation}\label{crdelta_ineq}
(1- c_{r,\delta}(p))\abs{\gamma^p(K)}_k \leq (k-p)^{-3(k-p)}(k-p)(D_p(u_p, \,K) + h_p r D_p(u_p, \,\partial K)),
\end{equation}
where
\begin{equation*}
c_{r,\delta}(p)=C(k-p) \left(a(p)\log(k-p)+\log \frac{\delta^{-\frac{k}{p}}}{r} +1\right).
\end{equation*}
Since $\sup_{p \in (k-1, \, k)}a(p) <0$, $\lim_{p \to k}c_{r,\delta}(p)=0$ and there exists $\hat{p} \in (k-1, \, k)$ depending on $\delta$ and $r$ such that $1/2>c_{r,\delta}(p)>0$ for all $p \in [\hat{p}, \, k)$. At this point, set $p_1:= \max\{p_0,\hat{p}\} \in (k-1, \, k)$. Going back to~\eqref{crdelta_ineq} and using~\eqref{mass-1}, we obtain
\begin{equation*}
(1- c_{r,\delta}(p))\M_k(T_p\mres U) \leq (k-p)^{-3(k-p)+1}(h_p^n D_p(u, \,R_k^p \cap U') + h_p^{n+1}rD_p(u_p,\,R_{k-1}^p \cap U')),
\end{equation*}
so that~\eqref{mass_T} follows by~\eqref{grid_k} in Lemma~\ref{lemma:grid}. In order to get~\eqref{mass_Tproj}, notice that
\begin{equation*}
\M_k(\pi_{L,*}(T_p\mres U)) \leq h_p^n 
\sum_{K\in\tilde{\GG}^p_k, \ K\cap U\neq\emptyset}
|\gamma^p(K)|_k,
\end{equation*}
where $\tilde{\GG}_k^p$ is the collection of $k$-cells such that their dual $n$-cells are parallel to $L$. Therefore, if we add in~\eqref{crdelta_ineq} for the $k$-cells $\tilde{\GG}_k^p$ and recall the definition of $T_p$ in~\eqref{approxiamtsequenceT_p}, we get
\begin{equation*}
\M_k(\pi_{L,*}(T_p\mres U)) \leq (k-p)^{-3(k-p)+1}(h_p^n D_p(u,\,\tilde{R}_k^p \cap U') + h_p^{n+1}rD_p(u_p,\,R_{k-1}^p \cap U'))
\end{equation*}
where $\tilde{R}_k^p=\cup_{K \in \tilde{\GG}_k^p}K$ is as in Lemma~\ref{lemma:grid}. At this point,~\eqref{mass_Tproj} follows by~\eqref{grid_ktilde} and~\eqref{grid_k} in Lemma~\ref{lemma:grid}.
\end{proof}
We can now proceed with the end of the proof of Proposition~\ref{prop:localcompactness} by repeating the arguments in~\cite{ABO2}. Before that, let us recall a localization property for the mass of a flat chain:
\begin{lemma}\label{lemma:co2planes}
Let $S \in \M_n(\R^{n+k};\pi_{k-1}(\NN))$. Then,
\begin{equation*}
\M_k(S)=\sup_{(U_i,L_i)_{i \in \N}}\sum_{i=0}^{+\infty}\M(\pi_{L_i,*}(S \mres U_i)),
\end{equation*}
where the supremum is taken among sequences $(U_i,L_i)_{i \in \mathbb{N}}$, where $(U_i)_{i \in \mathbb{N}}$ is a sequence of pairwise disjoint sets in $\R^{n+k}$ and $L_i$ is an $n$-plane for all $i \in \mathbb{N}$.
\end{lemma}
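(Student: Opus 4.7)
The plan is to establish the two inequalities separately. The easy direction~$\leq$ is immediate: since each projection $\pi_{L_i}$ is $1$-Lipschitz, the area formula for push-forwards of flat chains~\cite[Section~5]{Fleming} gives $\M(\pi_{L_i,*}(S\mres U_i))\leq\M(S\mres U_i)$, and since $A\mapsto\M(S\mres A)$ is a Radon measure (Subsection~\ref{Subs:flat-chains}) and the $U_i$ are pairwise disjoint, summing yields $\sum_i\M(\pi_{L_i,*}(S\mres U_i))\leq\sum_i\M(S\mres U_i)\leq\M(S)$.

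For the substantive direction~$\geq$, the key tool is rectifiability. Since $|\cdot|_k$ satisfies the discrete-norm condition~\eqref{discrete_norm} (in view of $\alpha_k>0$, see~\eqref{alpha_p}), White's rectifiability theorem~\cite{White-Rectifiability} applies to the finite-mass chain $S$, providing a representation $S=\llbracket E,\theta,\xi\rrbracket$ with $E\subseteq\R^{n+k}$ countably $n$-rectifiable, $\theta\colon E\to\pi_{k-1}(\NN)\setminus\{0\}$ and $\xi$ Borel, and $\M(S)=\int_E|\theta|_k\,\d\H^n$.

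Fix $\eps>0$. I would then apply the standard Lusin-type decomposition to write $E$, up to an $\H^n$-null set, as a disjoint countable union of pieces $E_j$, each contained in an embedded $C^1$ $n$-submanifold $M_j\subseteq\R^{n+k}$. At $\H^n$-a.e. point $x$ of $E_j$, the tangent plane $L_x:=T_xM_j$ coincides with the approximate tangent plane of $E$, and $x$ is a density-$1$ point of $E_j$ inside $E$; by the inverse function theorem there exists $r_x>0$ such that on $M_j\cap B(x,r_x)$ the projection $\pi_{L_x}$ is injective and $|\det(\pi_{L_x}|_{T_yM_j})|\geq 1-\eps$ for all $y\in M_j\cap B(x,r_x)$. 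A Vitali-type extraction from this fine cover of $E$, applied to the Radon measure $|\theta|_k\H^n\mres E$, then produces a countable pairwise disjoint family $\{U_i\}_{i\in\N}$ of such balls covering $\H^n$-almost all of $E$, together with associated $n$-planes $L_i$.

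On each ball $U_i$, split $E\cap U_i$ into the ``good'' subset $A_i:=E_{j_i}\cap U_i$ (where injectivity of $\pi_{L_i}$ and the Jacobian lower bound both hold) and a residual subset satisfying $\M(S\mres(U_i\setminus A_i))\leq\eps\,\M(S\mres U_i)$ by density one. The area formula for push-forwards, together with the triangle inequality $\M(\pi_{L_i,*}(T_1+T_2))\geq\M(\pi_{L_i,*}T_1)-\M(T_2)$, then yields $\M(\pi_{L_i,*}(S\mres U_i))\geq(1-C\eps)\M(S\mres U_i)$. Summing over $i$, using that the $U_i$ cover $\H^n$-almost all of $E$, and letting $\eps\to 0$, one concludes $\sup_{(U_i,L_i)}\sum_i\M(\pi_{L_i,*}(S\mres U_i))\geq\M(S)$. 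The principal technical step is the simultaneous control of injectivity, near-unit Jacobian and pairwise disjointness on the $U_i$, but this is a standard combination of the Lusin decomposition for rectifiable sets with the Vitali covering theorem; no genuine obstacle arises once White's rectifiability theorem has been invoked.
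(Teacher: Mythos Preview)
The paper does not supply its own proof of this lemma; it simply refers the reader to \cite[Lemma~14]{CO2}. Your argument is the standard one --- invoke White's rectifiability theorem (applicable here because $\abs{\,\cdot\,}_k$ satisfies the discreteness condition~\eqref{discrete_norm}, cf.~\eqref{alpha_p}), then cover the rectifiable support by small pairwise disjoint balls on which the orthogonal projection to the approximate tangent plane is injective with near-unit Jacobian, and sum --- and this is almost certainly the route taken in~\cite{CO2} as well. One cosmetic point: what you call ``the easy direction~$\leq$'' is actually the inequality $\sup\{\ldots\}\leq\M_k(S)$, i.e.\ the direction~$\geq$ in the displayed identity; the mathematics is correct but the labels are swapped.
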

For a proof of Lemma~\ref{lemma:co2planes}, see e.g.~Lemma 14 in~\cite{CO2}.
\begin{proof}[Proof of Proposition~\ref{prop:localcompactness} completed]
Fix a positive $r$ and $\delta \in (0, \, 1)$. Let $p_1 \in (k-1, \, k)$ be as in Lemma~\ref{lemma:mass_T}. By the assumption~\eqref{H} and~\eqref{mass_T} in Lemma~\ref{lemma:mass_T} one gets that $\sup_{p \in (p_1, \, k)}\M(T_p \mres U)<+\infty$. Moreover, Lemma~\ref{lemma:compositionofradialmaps2} implies that $\partial T_p \mres U=0$ for all $p \in (p_1, \, k)$. By compactness (see  e.~g.~\cite[Lemma 6]{CO1} for a compactness statement that is tailored to flat chains relative to an open set), there exists $S \in \M_n(\overline{U},\,\pi_{k-1}(\NN))$ such that $\F_U(T_{p_i}-S) \to 0$ along a subsequence $(p_i)_{n \in \mathbb{N}}$ in $(k-1, \, k)$ such that $p_i \to k$ as $i \to \infty$. Therefore, by Proposition~\ref{prop:flat_norm_convergence}
\begin{equation*}
\lim_{i\to \infty}\F_{U,k}(\S(u_{p_i})-S)=0.
\end{equation*} 
Applying now~\eqref{mass_Tproj} and using the lower semicontinuity of the mass with respect to the flat norm convergence, we get
\begin{equation*}
\M_k(\pi_{L,*}(S)) \leq \left(1+\delta+Cr\delta^{-1} \right)\liminf_{i \to \infty}(k-p_i)D_{p_i}(u_{p_i},\,U')
\end{equation*}
where $L$ is any $n$-plane. We have used that $\lim_{p \to k}(k-p)^{-3(k-p)}=1$. Sending now first $r$ to $0$ and then $\delta$ to $0$, we obtain
\begin{equation*}
\M_k(\pi_{L,*}(S)) \leq \liminf_{i \to \infty}(k-p_i)D_{p_i}(u_{p_i},\,U').
\end{equation*}
As $U$ and $L$ are arbitrary, a localization argument along with
Lemma~\ref{lemma:co2planes} gives
\begin{equation*}
\M_{k,U}(S) \leq  \liminf_{i \to \infty}(k-p_i)D_{p_i}(u_{p_i},\,U'),
\end{equation*}
which completes the proof of Proposition~\ref{prop:localcompactness}.
\end{proof}
\subsection{Proof of the statement~\ref{MainThm1} in Theorem~\ref{MainThm} completed}
We now use the local statement provided by Proposition~\ref{prop:localcompactness}, in combination with an averaging argument, in order to deduce~\ref{MainThm1} in Theorem~\ref{MainThm}.

\begin{lemma}\label{lemma:goodlemma}
Let $\eta \in (0, \, 1]$ and $V$ be an open bounded set such that $\partial \Omega \subseteq \partial V$. Let $u \in W^{1,k}_v(V,\,\mathbb{R}^m) \cap L^\infty(V,\,\mathbb{R}^m)$. There exists $\nu  \in (0,\, 1]$ such that, for any open set $E \subseteq V$ satisfying~$\partial \Omega \subseteq \partial E$ and $\mathcal{L}^{n+k}(E) \leq \nu$ and any sequence~$(p_i)_{i\in\N}\subseteq (k-1, \, k)$, there exists a positive measure set $A_{E}$ contained in $B^m_{r_{\RR}}$ such that for any $y \in A_{E}$ and~$i\in\N$ the map $w_y:= \RR_{y} \circ u$ belongs to $W^{1,p_i}(V,\NN)$ and satisfies
\begin{equation}\label{extensionniceproperty_boundV}
\liminf_{i\in\N} \ (k-p_i)\int_V \abs{\nabla w_y}^{p_i} \mathrm{d}x \leq C
\end{equation}
and
\begin{equation}\label{extensionniceproperty}
\liminf_{i\in\N} \ (k-p_i)\int_{E}\abs{\nabla w_y}^{p_i} \mathrm{d}x \leq C \eta,
\end{equation}
where $C>0$ is a constant depending on $\lVert u\rVert_{L^\infty(V)}, v, k$ and $\NN$.
\end{lemma}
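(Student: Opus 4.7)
The plan is to combine the averaged retraction estimate of Lemma~\ref{lemma:average_projection}, applied separately on $V$ and on $E$, with the absolute continuity of the $L^k$-integral of $|\nabla u|$, and then to extract a positive measure set of admissible $y$'s via Fatou's lemma and Chebyshev's inequality. The key point to secure is a bound on $E$ that is genuinely small with $\eta$, while the bound on $V$ only needs to be uniform in $p$.

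First, since $u\in W^{1,k}(V,\R^m)$, the set function $F\mapsto\int_F|\nabla u|^k\,\d x$ is absolutely continuous with respect to $\mathcal{L}^{n+k}$, so I fix $\nu\in(0,1]$, depending only on $u$ and $\eta$, such that $\int_F|\nabla u|^k\,\d x\leq\eta^k$ whenever $F\subseteq V$ is measurable with $\mathcal{L}^{n+k}(F)\leq\nu$. Given then an admissible $E$ and any sequence $(p_i)\subseteq(k-1,k)$, I repeat the Fubini-plus-pointwise argument from the proof of Lemma~\ref{lemma:average_projection}: from~\eqref{retractionmap_y_est} applied with $R:=\|u\|_{L^\infty(V)}$, for any measurable $F\subseteq V$,
\begin{equation*}
(k-p)\int_{B^m_{r_\RR}}\!\!\int_F|\nabla w_y|^p\,\d x\,\d y \;\leq\; C_R\int_F|\nabla u|^p\,\d x.
\end{equation*}
Taking $F=V$ and applying H\"older gives a uniform bound $C_R|V|^{1-p/k}\|\nabla u\|_{L^k(V)}^p\leq C_1$, while taking $F=E$ and applying H\"older together with the choice of $\nu$ gives $C_R|E|^{1-p/k}\eta^p\leq C_R\eta$, since $|E|\leq\nu\leq 1$, $\eta\leq 1$ and $p\geq 1$ imply both $|E|^{1-p/k}\leq 1$ and $\eta^p\leq\eta$.

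Next, Fatou's lemma applied to the sequences $y\mapsto(k-p_i)\int_V|\nabla w_y|^{p_i}\,\d x$ and $y\mapsto(k-p_i)\int_E|\nabla w_y|^{p_i}\,\d x$ yields
\begin{equation*}
\int_{B^m_{r_\RR}}\!\liminf_{i}(k-p_i)\!\!\int_V|\nabla w_y|^{p_i}\,\d x\,\d y \leq C_1, \quad \int_{B^m_{r_\RR}}\!\liminf_{i}(k-p_i)\!\!\int_E|\nabla w_y|^{p_i}\,\d x\,\d y \leq C_R\eta.
\end{equation*}
By Chebyshev's inequality the set of $y\in B^m_{r_\RR}$ where at least one of these pointwise liminfs exceeds, respectively, $4C_1/|B^m_{r_\RR}|$ or $4C_R\eta/|B^m_{r_\RR}|$ has measure at most $|B^m_{r_\RR}|/2$; the complement $A_E$ then has measure at least $|B^m_{r_\RR}|/2$ and on it both inequalities~\eqref{extensionniceproperty_boundV}--\eqref{extensionniceproperty} hold with a common constant $C$.

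Finally, to make sure that $w_y\in W^{1,p_i}(V,\NN)$ for \emph{every} $i$, I observe that for each fixed $i$ the averaged integral $\int_{B^m_{r_\RR}}\int_V|\nabla w_y|^{p_i}\,\d x\,\d y$ is finite, so $w_y\in W^{1,p_i}(V,\NN)$ outside a $p_i$-dependent null set; removing the countable union of these null sets from $A_E$ preserves its positive measure. The $\NN$-valuedness of $w_y$ and its $L^\infty$ bound are automatic from the compactness of $\NN$ and the definition of $\RR_y$. The main obstacle is to secure a constant independent of $p$ in the $E$-bound: this relies on the uniform-in-$p$ retraction estimate~\eqref{retractionmap_y_est} (through the factor $C_R$), combined with the H\"older step that converts the $W^{1,k}$-smallness of $\int_E|\nabla u|^k\,\d x$ into the desired $L^p$-smallness uniformly as $p\to k^-$.
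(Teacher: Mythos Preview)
Your proof is correct and follows essentially the same approach as the paper's: choose $\nu$ by absolute continuity of the $L^k$-integral of $|\nabla u|$, apply the averaged retraction estimate of Lemma~\ref{lemma:average_projection} on both $V$ and $E$, then combine Fatou's lemma with a Chebyshev-type selection to produce the positive-measure set $A_E$. Your explicit removal of the countably many null sets to ensure $w_y\in W^{1,p_i}(V,\NN)$ for every $i$ is a point the paper leaves implicit.
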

\begin{proof}
Along the proof will denote by $C$ a positive constant depending on $\lVert u\rVert_{L^\infty(V)}, v, k$ and $\NN$ which might change from line to line. There exists $\nu >0$ such that for any open set $E \subseteq V$ satisfying~$\mathcal{L}^{n+k}(E) \leq \nu$ we have
\begin{equation}\label{w_delta_eps}
\rVert \nabla u \rVert_{L^{k}(E)} \leq \eta.
\end{equation}
Let us take a sequence~$(p_i)_{i\in\N} \subset (k-1, \, k)$. By combining~\eqref{average_projection_inequality} in Lemma~\ref{lemma:average_projection} with~\eqref{w_delta_eps} and Fatou lemma we obtain
\begin{equation}\label{averageextension1_def}
 \begin{split}
  \int_{B^m_{r_\RR}} \liminf_{i\to\infty} (k-p_i)\lVert \nabla w_y \rVert^{p_i}_{L^{p_i}(E)} \, \mathrm{d}y
  \leq \liminf_{i\to\infty} (k-p_i)\int_{B^m_{r_\RR}}\lVert \nabla w_y \rVert^{p_i}_{L^{p_i}(E)} \, \mathrm{d}y \leq  C\eta
 \end{split}
\end{equation}
and
\begin{equation}\label{averageextensionU_def}
\int_{B^m_{r_\RR}} \liminf_{i\to\infty} (k-p_i)\lVert \nabla w_y \rVert^{p_i}_{L^{p_i}(V)} \, \mathrm{d}y
  \leq \liminf_{i\to\infty} (k-p_i)\int_{B^m_{r_\RR}}\lVert \nabla w_y \rVert^{p_i}_{L^{p_i}(V)} \, \mathrm{d}y \leq  C.
\end{equation}
We claim that there exists a set~$A_{E}$ of positive measure such that for all $y$ in such set one has
\begin{equation}\label{ineq_averageextension1}
 \liminf_{i\to\infty} (k-p_i) \lVert \nabla w_y \rVert^{p_i}_{L^{p_i}(E)} \leq \frac{2}{\mathcal{L}^m(B^m_{r_\RR})}\int_{B^m_{r_\RR}} \liminf_{i\to\infty} (k-p_i) \lVert \nabla w_y \rVert^{p_i}_{L^{p_i}(E)} \, \mathrm{d}y
\end{equation}
and
\begin{equation}\label{ineq_averageextensionU}
\liminf_{i\to\infty} (k-p_i) \lVert \nabla w_y \rVert^{p_i}_{L^{p_i}(V)} \leq \frac{2}{\mathcal{L}^m(B^m_{r_\RR})}\int_{B^m_{r_\RR}} \liminf_{i\to\infty} (k-p_i) \lVert \nabla w_y \rVert^{p_i}_{L^{p_i}(V)} \, \mathrm{d}y
\end{equation}
Indeed, let $A_1 \subseteq B^m_{r_\RR}$ be the set where~\eqref{ineq_averageextension1} fails and define $A_2$ in the same way for~\eqref{ineq_averageextensionU}. That is, $A_{E}=B^m_{r_{\RR}} \setminus (A_1 \cup A_2)$. From~\eqref{ineq_averageextension1} and~\eqref{ineq_averageextensionU} one infers by a direct computation that 
\begin{equation*}
\mathcal{L}^m(A_1) < \frac{1}{2}\mathcal{L}^m(B^m_{r_{\RR}}) \quad \mbox{ and } \quad \mathcal{L}^m(A_2) < \frac{1}{2}\mathcal{L}^m(B^m_{r_{\RR}})
\end{equation*}
which means that $A_{E}$ has positive measure, as we wanted to show. At this point, inequality~\eqref{extensionniceproperty_boundV} follows by combining~\eqref{averageextension1_def} with~\eqref{ineq_averageextension1} and~\eqref{extensionniceproperty} is obtained as a combination of~\eqref{averageextensionU_def} and~\eqref{ineq_averageextensionU}.
\end{proof} 
\begin{remark} \label{rk:badremark}
For any~$y\in A_E$, the map~$w_y = \RR_y\circ u$ given by Lemma~\ref{lemma:goodlemma} satisfies~$\S(w_y) = \S_y(u)$. In case~$u$ is smooth, this claim follows from~\cite[Lemma~16]{CO1}. The general case follows from a density argument, using the continuity of~$\S$ given by~\eqref{basic_estimate_flat_norm}.
\end{remark}

At this point, one gets the result by extending $u_p$ outside $\Omega$ using 
Lemma~\ref{lemma:goodlemma} and then applying Proposition~\ref{prop:localcompactness} with $\Omega=U$. 
\begin{proof}[\textbf{Proof of the statement (i) in Theorem~\ref{MainThm} completed}]
Let~$(u_p)_{p\in (k-1,  k)}$ be a family of maps such that~$u_p\in W^{1,p}(\Omega, \, \NN)$ and~$\sup_{p\in (k-1, \, k)} (k - p) D_p(u_p, \, \Omega) < +\infty$. We extract a countable subsequence~$p_i\to k$ in such a way that
\[
 \lim_{i\to\infty} (k-p_i) D_{p_i}(u_{p_i}, \, \Omega)
 = \liminf_{p\to k} (k-p) D_p(u_{p}, \, \Omega)
 < +\infty.
\]
Then, we will be able to extract further subsequences, without changing the value of the inferior limit.
Fix a function $u \in W^{1,k}(\mathbb{R}^{n+k},\,\mathbb{R}^m) \cap L^\infty(\mathbb{R}^{n+k},\,\mathbb{R}^m)$ such that $u=v$ in the sense of traces on $\partial \Omega$. Let $\eta \in (0, \, 1]$, consider $\nu>0$ be the quantity given by Lemma~\ref{lemma:goodlemma}, applied with $V=\Omega' \setminus \Omega$. For any $m \in \N^*$, define 
\begin{equation*}
\Omega'_m :=\{x \in \Omega'\setminus \Omega \colon \dist(x,\, \partial \Omega)< 1/m\}.
\end{equation*}
There exists $m_0 \in \N^*$ be such that $\mathcal{L}^{n+k}(\Omega'_{m_0}) \leq \nu$. We fix an arbitrary value~$y\in A_{\Omega_{m_0}}$, where~$A_{\Omega_{m_0}}\subset B^m_{r_{\RR}}$ is given by Lemma~\ref{lemma:goodlemma}, and let $w_{m_0} := \RR_{y}\circ u \colon\Omega_{m_0}'\to\NN$.
Set $\Omega_{m}:=\Omega \cup \Omega'_{m}$ for~$m\geq m_0$. Consider now the map  $u_{p_i, m_0} \in W^{1,p_i}(\Omega_{m_0}, \,\NN)$ obtained by setting $u_{p_i, m_0}=u_{p_i}$ in $\Omega$ and $u_{p_i,m_0}=w_{m_0}$ in $\Omega_{m_0} \setminus \Omega$. Notice that, at least along a (non-relabelled) subsequence, $(u_{p_i,m_0})_{i\in\N}$ satisfies the boundedness assumption~\eqref{H} with $U'=\Omega_{m_0}$ due to~\eqref{extensionniceproperty} in Lemma~\ref{lemma:goodlemma}. Therefore, we can apply Proposition~\ref{prop:localcompactness} to $(u_{p_i,m_0})_{i\in\N}$ for $U= \Omega_{2m_0}$ and $U'=\Omega_{m_0}$. This implies the existence of $S'\in\M_n(\overline{\Omega_{2m_0}}; \, \GN)$ and a further subsequence such that 
\begin{equation*}
\lim_{i \to \infty}\F_{\Omega_{2m_0},k}(\S(u_{p_i,m_0}) - S') = 0.
\end{equation*}
Next, we claim that the chain~$S := S^\prime\mres\overline{\Omega}$ belongs to~$\mathscr{C}(\Omega, \, v)$. To this end, we consider the auxiliary map~$\zeta_{p_i,m_0}$ defined by~$\zeta_{p_i,m_0} := u_{p_i}$ in~$\Omega$ and~$\zeta_{p_i,m_0} := u$ in~$\Omega_{2m_0}\setminus\Omega$. By Remark~\ref{rk:badremark} we have
\[
 \S(u_{p_i,m_0})\mres\left(\Omega_{2m_0}\setminus\Omega\right)
 = \S(w_{m_0})\mres\left(\Omega_{2m_0}\setminus\Omega\right) 
 = \S_{y}(u)\mres\left(\Omega_{2m_0}\setminus\Omega\right) 
 = \S_{y}(\zeta_{p_i,m_0})\mres\left(\Omega_{2m_0}\setminus\Omega\right) 
\]
for any~$y$ in the (non-null) set~$A_{\Omega_{m_0}'}$  given by Lemma~\ref{lemma:goodlemma}. Since~$u_{p_i, m_0}$ coincide with~$\zeta_{p_i, m_0}$ in~$\Omega$, we have
\[
 \S_y(\zeta_{p_i, m_0}) = \S(u_{p_i,m_0}) \qquad \textrm{for any } y\in A_{\Omega_{m_0}^\prime}.
\]
We can now apply Lemma~\ref{lemma:CUv} to the maps~$\zeta_{p_i, m_0}$ and conclude that~$S = S^\prime\mres\overline{\Omega}\in\mathscr{C}(\Omega, \, v)$, as claimed.

Regarding the lower bounds, fix $A \subseteq \R^{n+k}$ an open set and apply Proposition~\ref{prop:localcompactness} to $(u_{p_i,m_0})_{i\in\N}$ for $U=\Omega \cap A$ and $U'=\Omega_{2m_0} \cap A$. By uniqueness of the limit, we obtain that
\begin{align*}
\M_k(S\mres A) &\leq \liminf_{i \to \infty} 
(k-p_i)D_{p_i}(u_{p_i,m_0}, \,\Omega_{2m_0} \cap A)\\
&=\liminf_{i \to \infty} \left(
(k-p_i)D_{p_i}(u_{p_i},\,  \Omega\cap A)+(k-p_i)D_{p_i}(w_{m_0}, \,\Omega_{m_0} \cap A)\right),
\end{align*}
so that, by Lemma~\ref{lemma:goodlemma}
\begin{equation*}
\M_k(S \mres A) \leq \liminf_{i \to \infty}(k-p_i)D_{p_i}(u_{p_i}, \, \Omega\cap A)+C\eta,
\end{equation*}
for some constant $C>0$. Since $\eta$ is an arbitrary number in $(0, \, 1]$, the proof is completed.
\end{proof}

\section{Proof of the upper bound statement in Theorem~\ref{MainThm}}\label{Section:upper_bound}

The goal of this Section is to establish the upper bound statement in Theorem~\ref{MainThm}. The proof of this statement follows by adapting the arguments of~\cite[Section~3]{CO2}, to which we refer the reader for further details.

\subsection{Notations and preliminary results}
Let us now introduce some of the notations and results from
\cite{ABO1,ABO2}. Let~$K\subseteq \R^{n+k}$ be a $n$-dimensional
simplex, and let $K^\perp$ be 
the $k$-plane orthogonal to~$K$ passing through the origin. For any $\delta$, $\gamma$ two positive parameters, the  diamond neighborhood of $K$ corresponding to $\delta$ and $\gamma$ is defined as
\begin{equation}\label{U-diamond}
U(K, \, \delta, \, \gamma) := 
\left\{x^\prime + x^{\prime\prime}\colon
x^\prime\in K, \ x^{\prime\prime}\in K^\perp, \ 
|x^{\prime\prime}|\leq \min\left(\delta, \,
\gamma\dist(x^\prime, \, \partial K)\right)\right\}
\end{equation} 
Let $M$ and $A$ be polyhedral sets in $\mathbb{R}^{n+k}$ of dimensions $n$ and $n-1$ respectively.
\begin{definition}[\cite{ABO1, ABO2}]\label{def:nice_singularity}
	We say that~$u$ has a \emph{nice singularity at~$M$} if~$u$ is locally Lipschitz
	on~$\overline{\Omega}\setminus M$ and
	\[
	\abs{\nabla u(x)} \lesssim \dist^{-1}(x, \, M)
	\qquad \textrm{for a.e. } x\in\Omega\setminus M.
	\]
	We say that~$u$ has a \emph{nice singularity at~$(M, \, A)$} if~$u$ is locally
	Lipschitz on~$\overline{\Omega}\setminus(M\cup A)$ and, for any~$r>1$,
	there is a constant~$C_r$ depending on $r$, $n$, $k$, $\Omega$ and $\NN$ only such that
	\[
	\abs{\nabla u(x)} \leq C_r\left(\dist^{-1}(x, \, M)
	+ \dist^{-r}(x, \, A)\right)
	\quad \textrm{for a.e. } x\in\Omega\setminus (M\cup A).
	\]
	We say that~$u$ has a \emph{locally nice singularity at~$M$}
	(respectively, at $(M, \, A)$)
	if, for any open subset~$W$ whose closure~$\overline{W}$ 
	is contained in~$\Omega$, the restriction~$u_{|W}$
	has a nice singularity at~$M$ (respectively, at~$(M, \, A)$).
\end{definition}
\begin{definition}[\cite{ABO2, CO2}] \label{def:minimal}
	Let~$u\colon\Omega\to\NN$ be a map with nice 
	singularity at~$(M, \, A)$, and let~$\eta>0$.
	We say that~$u$ is \emph{$\eta$-minimal} 
	if there exist positive numbers~$\delta$, $\gamma$, 
	a triangulation of~$M$ and, for any~$n$-simplex~$K$ of the triangulation, 
	a Lipschitz map~$\phi_K\colon\SS^{k-1}\to\NN$
	that satisfy the following properties.
	\begin{enumerate}[label=(\roman*)]	
		\item If~$K\subseteq M$, $K^\prime\subseteq M$ 
		are $n$-simplices with~$K\neq K^\prime$,
		then $U(K, \, \delta, \, \gamma)$ and~$U(K^\prime, \, \delta, \, \gamma)$
		have disjoint interiors.
		\item For any $n$-dimensional simplex $K\subseteq M$
		and a.e.~$x = (x^\prime, \, x^{\prime\prime})\in U(K, \, \delta, \, \gamma)$,
		we have $u(x) = \phi_K(x^{\prime\prime}/|x^{\prime\prime}|)$.
		
		\item For any $n$-dimensional simplex $K\subseteq M$
		and any map~$\zeta\in W^{1,k}(\SS^{k-1}, \, \NN)$
		that is homotopic to~$\phi_K$, we have
		\begin{equation*} 
		\int_{\SS^{k-1}} \abs{\nablaT\phi_K}^k \d\H^{k-1}
		\leq \int_{\SS^{k-1}} \abs{\nablaT\zeta}^k \d\H^{k-1} + \eta.
		\end{equation*}
	\end{enumerate}
\end{definition}
We say chain $S$ in $\M_n(\overline{\Omega}; \, \pi_{k-1}(\NN))$ is \emph{locally polyhedral} if for any $K \subseteq \Omega$ compact there exists $T$ a polyhedral chain such that $(S-T) \mres K =0$. The following was proved in~\cite{CO2}. 
\begin{lemma}[{\cite[Lemma 3]{CO2}}]
\label{lemma:3_co2}
There exists $u_* \in W^{1,k}_v(\Omega,\,\mathbb{R}^m) \cap L^\infty_v(\Omega,\,\mathbb{R}^m)$ bounded which satisfies the following properties for a. e. $y \in \R^m$:
\begin{enumerate}
\item $\mathbb{M}(\mathbf{S}_y(u_*))<+\infty$ and $\mathbf{S}_y(u_*) \mres \partial \Omega =0$.
\item The chain $\mathbf{S}_y(u_*)$ is locally polyhedral.
\item The chain $\mathbf{S}_y(u_*)$ takes its multiplicities in a finite subset of $\pi_{k-1}(\NN)$, which depends only on $k$ and $\NN$.
\item There exists a locally $(n-1)$-polyhedral set $P_y$ such that $\RR \circ (u_*-y)$ has a locally nice singularity at $\spt \mathbf{S}_y(u_*) \cup P_y$.
\end{enumerate}
\end{lemma}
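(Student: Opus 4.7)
The plan is to build $u_*$ as a piecewise linear extension of the boundary datum and to deduce all four properties from a transversality argument between $u_* - y$ and the polyhedral set~$\X$ from Proposition~\ref{prop:retractionmap}. First I would pick an arbitrary extension $\bar v \in W^{1,k}(\Omega, \, \R^m) \cap L^\infty(\Omega, \, \R^m)$ of $v$ (standard trace extension, composed if necessary with a truncation), fix a locally finite simplicial complex~$\mathscr{T}$ of~$\overline\Omega$ whose restriction to~$\partial\Omega$ is compatible with a polyhedral approximation of~$v$, and let~$u_*$ be a piecewise linear approximation of~$\bar v$ on~$\mathscr{T}$ that agrees with a polyhedral approximation of~$v$ on~$\partial\Omega$. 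By construction, $u_* \in W^{1,k}_v(\Omega, \, \R^m) \cap L^\infty(\Omega, \, \R^m)$. Since~$B^* = B^m_{\delta^*}$ has radius~$\delta^* < \dist(\NN, \, \X)$ and~$v$ takes values in~$\NN$, the map~$u_* - y$ stays uniformly away from~$\X$ in a neighbourhood of~$\partial\Omega$ for every~$y \in B^*$, which already yields $\S_y(u_*) \mres \partial\Omega = 0$.

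The crucial step is that, for a.e.~$y \in \R^m$, the affine restriction~$u_*|_T$ on each top-dimensional simplex~$T \in \mathscr{T}$ is transverse to every $(m-k)$-dimensional face of~$\X + y$, so that $(u_*|_T)^{-1}(\X + y)$ is a finite union of $n$-dimensional convex polyhedra contained in~$\overline T$. Gluing these pieces across common faces of~$\mathscr{T}$ produces a locally polyhedral set~$\Sigma_y$ of dimension~$n$ in~$\overline\Omega$, and the construction of the operator~$\S$ in~\cite{CO1} identifies $\spt\S_y(u_*)$ with the closure of~$\Sigma_y \cap \Omega$; locally the~$\H^n$-measure of~$\Sigma_y$ is bounded in terms of the Lipschitz constants of the finitely many relevant pieces~$u_*|_T$ and the $(m-k)$-volume of~$\X$, which gives properties~(1) and~(2). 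The multiplicity assigned to each $n$-cell~$C$ of~$\Sigma_y$ is the homotopy class in~$\pi_{k-1}(\NN)$ of~$\RR \circ (u_* - y)$ restricted to a small $(k-1)$-sphere transverse to~$C$; this class is entirely determined by the combinatorial data of the pair (affine piece~$u_*|_T$, crossed $(m-k)$-face of~$\X + y$), of which there are only finitely many possibilities up to homotopy. Hence the multiplicities take values in a finite subset of~$\pi_{k-1}(\NN)$ depending only on~$\X$ and~$\RR$, i.e.~only on~$k$ and~$\NN$, which is property~(3).

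Property~(4) follows by combining the retraction bound $|\nabla\RR(z)| \lesssim \dist(z, \, \X)^{-1}$ from Proposition~\ref{prop:retractionmap} with the chain rule: on the interior of each top-dimensional simplex~$T \in \mathscr{T}$ we obtain $|\nabla (\RR \circ (u_* - y))(x)| \lesssim \dist(x, \, \Sigma_y)^{-1}$, which is the nice-singularity estimate at~$\spt \S_y(u_*)$. The exceptional locus~$P_y$ must absorb the points where this first-order estimate fails, namely the intersection of~$\Sigma_y$ with the $(n+k-1)$-skeleton of~$\mathscr{T}$ (where~$u_*$ is not~$C^1$) together with the preimages under~$u_* - y$ of the lower-dimensional strata of~$\X$; a generic transversality argument shows that, for a.e.~$y$, this locus is a locally $(n-1)$-dimensional polyhedral set, and standard blow-up estimates near a piecewise linear crease produce the weaker $\dist^{-r}(\cdot, \, P_y)$ control for any~$r > 1$ required by Definition~\ref{def:nice_singularity}. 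The main technical obstacle I anticipate is the careful bookkeeping in the transversality argument that must simultaneously yield polyhedrality of~$\Sigma_y$, finiteness of the multiplicity set, and $(n-1)$-polyhedrality of~$P_y$; this can be handled by applying Sard's theorem stratum by stratum to the explicit affine maps~$u_*|_T$, using that~$\X$ is a finite union of compact polyhedra.
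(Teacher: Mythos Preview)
The paper does not prove this lemma; it is quoted verbatim from~\cite{CO2} and used as a black box. So there is no ``paper's own proof'' to compare against, and your proposal must be judged on its own.

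Your overall strategy---take a piecewise-affine map and read off the structure of $\S_y(u_*)$ from the transversality of the affine pieces with the translated polyhedron $\X+y$---is the right idea and is essentially what is done in~\cite{CO2}. However, there is a genuine gap in your treatment of the boundary condition. You define $u_*$ to be globally piecewise linear on a triangulation of~$\overline\Omega$, ``agreeing with a polyhedral approximation of~$v$ on~$\partial\Omega$'', and then assert $u_*\in W^{1,k}_v(\Omega,\R^m)$. These two claims are incompatible: if the trace of $u_*$ is a polyhedral approximation of~$v$, it is not~$v$, and $u_*\notin W^{1,k}_v$. Relatedly, your argument that $u_*-y$ stays away from~$\X$ near~$\partial\Omega$ uses that $v$ is $\NN$-valued, but a piecewise-linear interpolant of $\NN$-valued vertex data need not take values near~$\NN$ at all, so the conclusion $\S_y(u_*)\mres\partial\Omega=0$ does not follow from your construction as written.

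The fix, which is what~\cite{CO2} actually does, is to keep $u_*$ equal to a fixed $W^{1,k}\cap L^\infty$ extension of~$v$ in a neighbourhood of~$\partial\Omega$ (where it automatically takes values close to~$\NN$, hence $u_*-y$ avoids~$\X$ for $y\in B^*$), and to make $u_*$ piecewise-linear only on compact subsets of~$\Omega$, gluing the two regimes by a transition layer. This is precisely why properties~(2)--(4) are stated only \emph{locally} in~$\Omega$: the polyhedral structure of $\S_y(u_*)$ and the nice-singularity estimate are available on every $W\csubset\Omega$ but not up to the boundary. Once you make this modification, property~(1) follows because $\S_y(u_*)$ is supported in the compact set where $u_*$ is piecewise linear, and your transversality argument for~(2)--(4) goes through there.
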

For the rest of this Section, the map $u_*$ will be fixed and $C$ will always denote a (possibly different) constant depending only on $\lVert u_* \rVert_{L^\infty(\Omega)}, v, k$ and $\NN$.

\subsection{Reduction to a dense class of chains}
Following \cite{CO2}, in this Subsection we introduce a $\F$-dense set of chains in which chains can be decomposed as in \eqref{decomposition_S} below. The main idea is to define a map $w_*$ as $w_*:= \RR_{y^*}\circ u_* \in W^{1,k-1}_v(\Omega,\,\NN)$ for some  $y_* \in B^m_{r_{\RR}}$ so that the properties in Lemma~\ref{lemma:3_co2} are satisfied (recall the notations in Subsection~\ref{Subsection:retraction}). Here the choice of $y_*$ will be made according to Lemma~\ref{lemma:Fatou_ub} below, a result analogous to~\cite[Proposition 6.4 (iv)]{ABO2}. Let us first recall that by~\cite[Lemma 2]{CO2} we have that
\begin{equation}\label{property_projection_ub}
\mbox{for a. e. } y \in B^m_{r_{\RR}} \mbox{ the map } w_y:= \RR_y \circ u_* \mbox{ belongs to } W^{1,k-1}_v(\Omega,\NN) \mbox{ and } \mathbf{S}(w_y)=\mathbf{S}_y(u_*).
\end{equation}
We denote by $B$ the set of $y$ in $B^m_{r_{\RR}}$ such that the properties in Lemma~\ref{lemma:3_co2} and~\eqref{property_projection_ub} hold. The set~$B$ has full measure in $B^m_{r_\RR}$. We also fix $\eta \in (0, \, 1]$ until the end of the proof, as well as $\nu$ a small enough positive quantity so that $\lVert \nabla u_* \rVert_{L^k(E_\nu)} \leq \eta$, where $E_\nu:= \{ x \in \Omega: \mathrm{dist}(x,\partial \Omega)< \nu\}$.
\begin{lemma}\label{lemma:Fatou_ub}
Let $(p_i)_{i \in \N}$ be a sequence in $(k-1, \, k)$ such that $p_i \to k$ as $i \to \infty$. There exists $y_* \in B$ such that the map $w_*:= w_{y_*}$ satisfies
\begin{equation}\label{liminf_limsup}
\liminf_{i \to \infty}(k-p_i)D_{p_i}(w_*,\,E_\nu) \leq C\eta.
\end{equation}
\end{lemma}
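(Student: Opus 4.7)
The plan is to apply an averaging argument over $y \in B^m_{r_\RR}$, along the same lines as Lemma~\ref{lemma:average_projection} and Lemma~\ref{lemma:goodlemma}. Specifically, I will show that the integral of $(k-p_i) D_{p_i}(w_y, E_\nu)$ over $y \in B^m_{r_\RR}$ is at most $C\eta$ uniformly in $i$, then apply Fatou's lemma and Chebyshev's inequality to extract a good value $y_* \in B$.

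First I would use Fubini's theorem together with the retraction estimate~\eqref{retractionmap_y_est} to deduce, for each $p_i \in (k-1, k)$, the averaged inequality
\begin{equation*}
(k-p_i) \int_{B^m_{r_\RR}} \norm{\nabla w_y}_{L^{p_i}(E_\nu)}^{p_i} \, \d y \leq C \norm{\nabla u_*}_{L^{p_i}(E_\nu)}^{p_i},
\end{equation*}
exactly as in the proof of Lemma~\ref{lemma:average_projection} (the $L^\infty$ bound on $u_*$ is what allows the inner integral in $y$ to be controlled uniformly in $x$). By Hölder's inequality, using that $|E_\nu|$ is bounded, together with the choice of $\nu$ which ensures $\norm{\nabla u_*}_{L^k(E_\nu)} \leq \eta$, the right-hand side is bounded by $C \eta^{p_i} \leq C \eta$ since $\eta \in (0, 1]$ and $p_i \geq 1$.

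Next, applying Fatou's lemma yields
\begin{equation*}
\int_{B^m_{r_\RR}} \liminf_{i \to \infty} (k-p_i) D_{p_i}(w_y, E_\nu) \, \d y \leq C \eta.
\end{equation*}
By Chebyshev's inequality, the set $A$ of points $y \in B^m_{r_\RR}$ where $\liminf_{i \to \infty}(k-p_i) D_{p_i}(w_y, E_\nu) \leq 2 C \eta / \mathcal{L}^m(B^m_{r_\RR})$ has measure at least $\mathcal{L}^m(B^m_{r_\RR})/2 > 0$. Since $B$ has full measure in $B^m_{r_\RR}$, the intersection $A \cap B$ has positive measure, and any choice $y_* \in A \cap B$ produces a map $w_* = w_{y_*} = \RR_{y_*} \circ u_*$ satisfying both the structural properties inherited from Lemma~\ref{lemma:3_co2} (via the definition of $B$) and the bound~\eqref{liminf_limsup} after relabeling the constant. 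I do not expect any serious obstacle here: the argument is a routine combination of Fubini, Hölder, Fatou and Chebyshev, and the essential mechanism is simply that the choice of $\nu$ provides the $\eta$-smallness that gets transferred to the averaged $p_i$-Dirichlet energy via~\eqref{retractionmap_y_est}.
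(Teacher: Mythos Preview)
Your proposal is correct and follows essentially the same approach as the paper's proof: apply the averaging inequality~\eqref{average_projection_inequality} on~$E_\nu$ (which yields the bound $C\eta$ via the choice of~$\nu$ and~$\eta\in(0,1]$), then Fatou's lemma, then use that~$B$ has full measure to select~$y_*$. The paper's write-up is slightly terser, simply citing Lemma~\ref{lemma:average_projection} and concluding ``the result follows since $B$ has full measure'', whereas you make the Chebyshev step explicit; the content is the same.
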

\begin{proof}
By inequality~\eqref{average_projection_inequality} in Lemma~\ref{lemma:average_projection} we obtain for all $i \in \N$
\begin{equation*}
\int_{B^m_{r_{\RR}}}(k-p_i)D_{p_i}(w_y,\,E_\nu)\mathrm{d}y\leq C\eta,
\end{equation*} 
so that, by Fatou's Lemma
\begin{equation*}
\int_{B^m_{r_{\RR}}}\liminf_{i \to \infty}(k-p_i)D_{p_i}(w_y,\,E_\nu)\mathrm{d}y \leq \liminf_{i \to \infty}\int_{B^m_{r_{\RR}}}(k-p_i)D_{p_i}(w_y,\,E_\nu)\mathrm{d}y\leq C\eta
\end{equation*}
and the result follows since $B$ has full measure in $B^m_{r_{\RR}}$.
\end{proof}
We fix an arbitrary sequence $(p_i)_{i \in \N}$ as in Lemma~\ref{lemma:Fatou_ub} as well as the corresponding map $w_*$. We now restrict our attention to chains $S$ in $S\in\mathscr{C}(\Omega, \, v)$ which can be written as
\begin{equation}\label{decomposition_S}
S=\mathbf{S}(w_*)+\partial R
\end{equation}
where $R$ is a polyhedral $(n+1)$-chain, compactly supported on $\Omega$. This is possible because any chain in $\mathscr{C}(\Omega, \, v)$ can be approximated in the~$\F$-norm by chains of the form~\eqref{decomposition_S}, in such a way that the mass is preserved in the limit. See \cite[Proposition 5]{CO2} for a proof.

\subsection{Insertion of dipoles}
If a chain $S$ can be written as in \eqref{decomposition_S}, then the map $w_*$ can be modified in a way such that its singular set coincides with $S$. This was proven in \cite{CO2} by employing the so-called \emph{insertion of dipoles} procedure (see the references \cite{Bethuel1990,BethuelBrezisCoron,BrezisCoronLieb,GiaquintaModicaSoucek-I,PakzadRiviere} for earlier variants). The purpose of this section is to recall the main steps of such a procedure as carried out in \cite{CO2}. We begin by introducing some notation, slightly modifying from~\cite{CO2}. Consider the set
\begin{equation*}
\mathfrak{S}:= \inf\{ \sigma \in \pi_{k-1}: \lvert \sigma \rvert_k=E_k(\sigma) \}.
\end{equation*}
By~\cite[Proposition 1]{CO2}, the set $\mathfrak{S}$ is finite and for any $\sigma \in \pi_{k-1}(\NN)$ there exists a decomposition $\sigma= \sum_{l=1}^q \sigma_l$ such that $\lvert \sigma \rvert_k = \sum_{l=1}^q\lvert \sigma_l \rvert_k$ and $\sigma_l \in \mathfrak{S}$ for all $l \in \{1,\ldots,q\}$. In the sequel, $W_{\mathfrak{S}}$ will denote an open set such that:
\begin{itemize}
\item $W_{\mathfrak{S}} \csubset \Omega$.
\item $W_{\mathfrak{S}}$ has polyhedral boundary.
\item $\spt R \subseteq \overline{W_{\mathfrak{S}}}$.
\item $S \mres  W_{\mathfrak{S}}$ takes its multiplicities in $\mathfrak{S}$.
As before, this property does not hold in general, but we can assume that it is satisfied with no loss of generality, by an approsmation argument~\cite[Proposition~6]{CO2}.
\item There exist triangulations of $\partial W_{\mathfrak{S}}$ and $\spt S$ such that any simplex of the triangulation of $\partial W_{\mathfrak{S}}$ is transverse to any simplex of the triangulation of $\spt S$. We will simply write that $\partial W_{\mathfrak{S}}$ is transverse to $\spt S$.
\end{itemize}
The last condition implies that $\spt S \cap \partial W_{\mathfrak{S}}$ has dimension $(n-1)$ at most, which means that $S \mres \partial W_{\mathfrak{S}}=0$. Let now $W$ be an open set such that
\begin{itemize}
\item $W_{\mathfrak{S}} \csubset W \csubset \Omega$.
\item $\Omega \setminus W \subseteq E_\nu$.
\item $\mathbb{M}(S \mres (\overline{W} \setminus W_{\mathfrak{S}})) \leq \eta$.  This is possible because $S \mres \partial \Omega = \mathbf{S}(w_*) \mres \partial \Omega= \mathbf{S}_{y_*}(u_*)=0$ where the latter equality is due to 1. in Lemma~\ref{lemma:3_co2}.
\item $W$ has polyhedral boundary.
\item There exist triangulations of $\partial W$ and $\spt S$ such that any simplex of the triangulation of $\partial W$ is transverse to any simplex of the triangulation of $\spt S$. In the sequel we simply write that $\partial W$ is transverse to $\spt S$.
\end{itemize}
The following is then proven in \cite{CO2}.
\begin{lemma}[{\cite[Lemma 5]{CO2}}]\label{lemma:5_co2}
Let $w_*$, $W_{\mathfrak{S}}$ and $W$ be as above. There exists $w$ in $W^{1,k-1}_v(\Omega,\,\NN)$ such that
\begin{enumerate}
\item $w=w_*$  in $\Omega \setminus W$.
\item $w$ has a locally nice singularity at $(\spt S, Q_{*})$, where $Q_{*}$ is a locally $(n-1)$-polyhedral set containing $(\spt S)_{n-1}$. 
\item $\mathbf{S}(w)=S$.
\item $w|_{W}$ is $\eta$-minimal in the sense of Definition~\ref{def:minimal}.
\end{enumerate}
\end{lemma}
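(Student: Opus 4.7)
The plan is to implement an insertion-of-dipoles procedure in the spirit of~\cite{Bethuel1990, BrezisCoronLieb, PakzadRiviere} so as to modify $w_*$ only inside $W_{\mathfrak{S}}$, changing its topological singular set by exactly $\partial R$ while preserving its boundary trace. The first step is to refine the decomposition of~$R$. I would triangulate $\spt R\subseteq\overline{W_{\mathfrak{S}}}$ into non-overlapping oriented $(n+1)$-simplices $K_i$ carrying coefficients $\tau_i\in\pi_{k-1}(\NN)$, and then invoke the decomposition property of~$\mathfrak{S}$ established in \cite[Proposition~1]{CO2} to split each $\tau_i = \sum_\ell\sigma_{i,\ell}$ in a mass-preserving way with $\sigma_{i,\ell}\in\mathfrak{S}$. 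Replacing each simplex by a family of parallel thin translates, one per summand, yields $R = \sum_j\sigma_j\llbracket K_j\rrbracket$ with $\sigma_j\in\mathfrak{S}$. A small perturbation of the triangulation ensures that the $K_j$ are transverse to $\partial W_{\mathfrak{S}}$, have pairwise disjoint interiors, and are disjoint from $\spt\mathbf{S}(w_*)$.

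Next, for each $K_j$ I choose a diamond neighborhood $U(K_j,\delta,\gamma)$ as in~(\ref{U-diamond}) with $\delta,\gamma$ so small that the diamonds are pairwise disjoint, contained in $W_{\mathfrak{S}}$, and disjoint from $\spt\mathbf{S}(w_*)$. Since $\pi_{k-2}(\NN) = 0$ by~(\ref{hp:N}), the obstruction to continuity lives only on $\partial K_j$, not on the interior of~$K_j$ itself. For each $j$ I pick a Lipschitz minimizer $\phi_j\in W^{1,k}(\SS^{k-1},\NN)\cap\sigma_j$ attaining $E_k(\sigma_j) = |\sigma_j|_k$, which exists precisely because $\sigma_j\in\mathfrak{S}$. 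Using a standard pyramid/cone construction over $K_j$ (as carried out in \cite[Section~3]{CO2} and \cite{PakzadRiviere}), I then build a map $\psi_j$ on $U(K_j,\delta,\gamma)$ whose transverse profile on each $n$-face $K'$ of $\partial K_j$ is $\phi_j$ (with sign determined by the orientation of $K'$ as a face of $\partial K_j$), whose interior on $K_j\setminus\partial K_j$ is a continuous extension, and whose values on $\partial U(K_j,\delta,\gamma)$ agree with $w_*$ after a short interpolation shell (an affine convex combination followed by composition with the retraction map~$\RR$ of Proposition~\ref{prop:retractionmap}). I then define $w := \psi_j$ on $U(K_j,\delta,\gamma)$ for each $j$ and $w := w_*$ elsewhere. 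Property~(1) is automatic. Property~(2) uses that $w_*$ already has a locally nice singularity on $(\spt\mathbf{S}(w_*),P_{y_*})$ by Lemma~\ref{lemma:3_co2}(4), that the pyramid construction satisfies $|\nabla\psi_j|\lesssim\dist^{-1}(\cdot,\partial K_j)$ away from a locally $(n-1)$-polyhedral set of lower strata, and that the outer-shell interpolation contributes only a locally $(n-1)$-polyhedral singular set to be absorbed into~$Q_*$. Property~(4) is built in on the new simplices: each $\phi_j$ attains the minimum in its homotopy class, so clause~(iii) of Definition~\ref{def:minimal} holds with $\eta = 0$ on the $n$-faces of $\partial K_j$; on the preexisting simplices of $\spt\mathbf{S}(w_*)$, which carry multiplicities in a fixed finite set by Lemma~\ref{lemma:3_co2}(3), I would perform a second analogous local modification to replace each existing profile by an $\eta$-minimizer in its homotopy class.

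The main obstacle is verifying property~(3), $\mathbf{S}(w) = S$. The strategy is to use that $\mathbf{S}$ is essentially additive with respect to the local dipole modifications: on each diamond $U(K_j,\delta,\gamma)$, the homotopy class of the slice of $w$ on a small $(k-1)$-sphere transverse to a generic interior point of any $n$-face $K'\subseteq\partial K_j$ is exactly $\sigma_j$ (up to sign), which identifies $\mathbf{S}(w)\mres U(K_j,\delta,\gamma) - \mathbf{S}(w_*)\mres U(K_j,\delta,\gamma)$ with $\sigma_j\partial\llbracket K_j\rrbracket$ up to a relative boundary supported in $\partial U(K_j,\delta,\gamma)$. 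Summing over~$j$ and using $\partial R = \sum_j\sigma_j\partial\llbracket K_j\rrbracket$ yields $\mathbf{S}(w) - \mathbf{S}(w_*) = \partial R$, hence $\mathbf{S}(w) = S$. The delicate point is controlling the interpolation in the outer shell so that no spurious singularities are introduced in the interior of the diamonds and so that the gluing along $\partial U(K_j,\delta,\gamma)$ is consistent with $w_*$ up to homotopy; this is precisely where transversality of $\partial W_{\mathfrak{S}}$ with $\spt S$, the disjointness of the diamonds, and a careful tracking of homotopy classes through the retraction~$\RR$ are used crucially, following the scheme of \cite[Section~3]{CO2}.
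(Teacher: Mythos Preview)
The paper does not give its own proof of this lemma; it is quoted directly from \cite[Lemma~5]{CO2}, with the sentence ``The following is then proven in \cite{CO2}'' serving as the entire justification. Your outline is a faithful sketch of the dipole-insertion argument actually carried out in \cite[Section~3]{CO2}: decomposition of~$R$ into simplices carrying $\mathfrak{S}$-multiplicities, construction of minimising profiles~$\phi_j$ on diamond neighbourhoods, interpolation with~$w_*$ via the retraction~$\RR$, and a second local replacement along the pre-existing part of~$\spt\mathbf{S}(w_*)$ to secure $\eta$-minimality on all of~$W$.

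Two small points. First, the step ``replacing each simplex by a family of parallel thin translates'' alters~$R$ and hence~$\partial R$; in the setting of the lemma this is unnecessary, because the bullet list defining~$W_{\mathfrak{S}}$ already assumes that $S\mres W_{\mathfrak{S}}$ takes multiplicities in~$\mathfrak{S}$ (this reduction is handled separately via \cite[Proposition~6]{CO2}). Second, property~(4) is required on~$W$, not only on~$W_{\mathfrak{S}}$, so your ``second analogous local modification'' of the pre-existing profiles must also be performed on the simplices of~$\spt\mathbf{S}(w_*)$ lying in~$\overline{W}\setminus W_{\mathfrak{S}}$; this is consistent with item~(1), since those simplices are still contained in~$W$. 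With these adjustments your plan matches the cited construction.
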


\subsection{Local upper bounds}
We now give an upper bound on the $\limsup$ of $(k-p_i)D_{p_i}(w_*,\,W)$, following the lines of~\cite[Lemma 7 Step 3]{CO2}.
\begin{lemma}\label{lemma:limsup_local}
Under the previous notations, it holds
\begin{equation}\label{limsup_local}
\limsup_{i \to \infty}(k-p_i)D_{p_i}(w_{*},\,W) \leq (1+C\eta)\mathbb{M}_k(S)+C\eta.
\end{equation}
\end{lemma}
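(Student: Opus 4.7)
The plan is to first apply Lemma~\ref{lemma:5_co2} to replace $w_*$ with the modified map $w\in W^{1,k-1}_v(\Omega, \, \NN)$ that coincides with~$w_*$ outside~$W$, has singular set $\mathbf{S}(w)=S$, is $\eta$-minimal on~$W$, and exhibits a locally nice singularity at $(\spt S, \, Q_*)$ for some locally $(n-1)$-polyhedral set~$Q_*$. We then partition~$W$ into three regions: the diamond neighbourhoods $U(K, \, \delta, \, \gamma)$ around each $n$-simplex~$K$ of the triangulation of~$\spt S$; a thin tubular neighbourhood~$T_\rho$ of~$Q_*$ disjoint from the diamonds; and the remaining ``good'' region $G := W\setminus(T_\rho\cup\bigcup_K U(K, \, \delta, \, \gamma))$, where~$w$ is locally Lipschitz. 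On~$G$, the energy $D_{p_i}(w, \, G)$ is uniformly bounded, so its rescaled contribution vanishes in the limit. On~$T_\rho$, the nice-singularity bound $|\nabla w(x)|\leq C_r\dist(x, \, Q_*)^{-r}$ with~$r>1$ close to~$1$ keeps $D_{p_i}(w, \, T_\rho)$ bounded, since $Q_*$ has normal dimension~$k+1 > p_i$ for~$p_i$ close to~$k$; multiplying by $(k-p_i)$, this contribution also vanishes.

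The leading term comes from the diamond neighbourhoods. On $U(K, \, \delta, \, \gamma)$, property~(ii) of Definition~\ref{def:minimal} gives $w(x) = \phi_K(x^{\prime\prime}/|x^{\prime\prime}|)$ with $x = x' + x''$, $x'\in K$, $x''\in K^\perp$. A Fubini argument and polar coordinates on $K^\perp\simeq\R^k$ yield
\begin{equation*}
 (k - p_i)\int_{U(K,\,\delta,\,\gamma)}|\nabla w|^{p_i}\,\d x
 = \left(\int_K r_K(x')^{k-p_i}\,\d\H^n(x')\right)\int_{\SS^{k-1}}|\nablaT\phi_K|^{p_i}\,\d\H^{k-1},
\end{equation*}
with $r_K(x') := \min(\delta, \, \gamma\dist(x', \, \partial K))$. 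Since $r_K\leq \delta\leq 1$, dominated convergence gives $\int_K r_K^{k-p_i}\,\d\H^n\to\H^n(K)$; since $\phi_K$ is smooth by standard regularity for minimising $k$-harmonic maps into~$\NN$, dominated convergence also gives $\int_{\SS^{k-1}}|\nablaT\phi_K|^{p_i}\,\d\H^{k-1}\to \int_{\SS^{k-1}}|\nablaT\phi_K|^{k}\,\d\H^{k-1}$, which by~$\eta$-minimality is bounded by $E_k(\sigma_K) + \eta$, where $\sigma_K\in\pi_{k-1}(\NN)$ is the multiplicity of~$S$ on~$K$.

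Summing over simplices, we distinguish two cases. For simplices contained in~$W_\mathfrak{S}$, by construction $\sigma_K\in\mathfrak{S}$ so that $E_k(\sigma_K) = |\sigma_K|_k$, giving a contribution of at most $\M_k(S\mres W_\mathfrak{S}) + \eta\,\H^n(\spt S\cap W_\mathfrak{S})$. Since every nonzero multiplicity satisfies $|\sigma_K|_k\geq\alpha_k$ by~\eqref{alpha_p}, one has $\H^n(\spt S\cap W_\mathfrak{S})\leq \alpha_k^{-1}\M_k(S\mres W_\mathfrak{S})$, so this error is $\leq C\eta\,\M_k(S)$. For simplices in $W\setminus W_\mathfrak{S}$, the total mass $\M_k(S\mres(\overline{W}\setminus W_\mathfrak{S}))\leq \eta$ by choice of~$W$, and the bound $E_k(\sigma_K)\leq C|\sigma_K|_k$ (obtained by refining the ansatz through splitting $K$ into parallel copies with $\mathfrak{S}$-valued multiplicities, in the spirit of the insertion of dipoles and using the finiteness of~$\mathfrak{S}$ from~\cite[Proposition~1]{CO2}) gives a contribution~$\leq C\eta$. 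Combining yields $\limsup_{i\to\infty}(k-p_i)D_{p_i}(w, \, W) \leq (1+C\eta)\M_k(S) + C\eta$, as claimed.

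The main obstacle is handling the region near~$Q_*$: one needs the nice-singularity exponent~$r$ sufficiently close to~$1$ so that $rp_i < k+1$ uniformly in~$i$ (while the constant~$C_r$ in the gradient estimate deteriorates as~$r\searrow 1$), and one must verify that the bounded $p_i$-energy contribution is suppressed when multiplied by~$(k-p_i)$. A secondary technical point is obtaining a uniform comparison $E_k\leq C|\cdot|_k$ needed outside~$W_\mathfrak{S}$, which is precisely why the preliminary reduction to $\mathfrak{S}$-valued multiplicities via Lemma~\ref{lemma:5_co2} is essential: it confines the potentially suboptimal multiplicities to a region of arbitrarily small mass.
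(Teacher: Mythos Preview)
Your overall strategy matches the paper's: apply Lemma~\ref{lemma:5_co2} to pass to~$w$, split~$W$ into the diamond neighbourhoods and a remainder, use the explicit profile on the diamonds, and use the nice-singularity bound on the remainder. The diamond computation via Fubini and polar coordinates, and the observation that both the remainder contributions vanish after multiplication by~$(k-p_i)$, are correct. One small inaccuracy: your reason for passing to the limit in $\int_{\SS^{k-1}}|\nablaT\phi_K|^{p_i}$ is wrong --- $\phi_K$ is only $\eta$-minimal, not minimal, so no regularity theory applies --- but it does not matter, since Definition~\ref{def:minimal} already gives~$\phi_K$ Lipschitz, which is all dominated convergence needs. (The paper uses H\"older's inequality here instead, which works just as well.)

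There is, however, a genuine gap in your treatment of the simplices contained in~$\overline{W}\setminus W_{\mathfrak{S}}$. You invoke a bound~$E_k(\sigma_K)\leq C\,|\sigma_K|_k$, justified by ``refining the ansatz through splitting~$K$ into parallel copies with~$\mathfrak{S}$-valued multiplicities''. No such inequality is available for general~$\sigma\in\pi_{k-1}(\NN)$: the $k$-energy on~$\SS^{k-1}$ is \emph{not} scale-invariant (shrinking a bubble increases its $k$-energy), so a na\"ive gluing of $\mathfrak{S}$-valued pieces does not give $E_k(\sigma)\lesssim|\sigma|_k$. Your ``splitting into parallel copies'' would instead amount to modifying the construction of~$w$ itself --- replacing each exterior simplex by several parallel sheets with~$\mathfrak{S}$-multiplicities --- which goes beyond what Lemma~\ref{lemma:5_co2} provides and would require reopening the dipole construction, with nontrivial control on the singular set and on flat-norm convergence.

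The paper avoids this entirely by exploiting the decomposition~\eqref{decomposition_S}: since~$\spt R\subseteq\overline{W_{\mathfrak{S}}}$, on~$\overline{W}\setminus W_{\mathfrak{S}}$ one has $S=\mathbf{S}(w_*)=\mathbf{S}_{y_*}(u_*)$, and property~3 of Lemma~\ref{lemma:3_co2} guarantees that the multiplicities~$\sigma_K$ there lie in a \emph{finite} subset of~$\pi_{k-1}(\NN)$ depending only on~$k$ and~$\NN$. Hence~$E_k(\sigma_K)\leq C$ uniformly, and together with $\H^n(K)\leq\alpha_k^{-1}\M_k(S\mres K)$ and $\M_k(S\mres(\overline{W}\setminus W_{\mathfrak{S}}))\leq\eta$ this gives the contribution~$\leq C\eta$ directly. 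You should replace your argument for the exterior simplices by this one.
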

\begin{proof}
We argue as in~\cite[Lemma 7, Step 3]{CO2}. By Lemma~\ref{lemma:5_co2}, $w_{*}|_{W}$ is $\eta$-minimal and with nice singularity at $((\spt S) \cap W,\,Q_* \cap W)$. Therefore, there exists a triangulation of $(\spt S) \cap W$ such that for any $K$ a $n$-simplex of the triangulation there exists a Lipschitz map $\phi_{K}: \mathbb{S}^{k-1} \to \NN$ satisfying the properties listed in Definition~\ref{def:minimal} with $u=w$, $M=(\spt S) \cap W$ and $A=Q_{*} \cap W$. Up to decreasing $\delta$ and $\gamma$, we can assume that the interior of  $U(K,\delta,\gamma)$ is included in $W$ for any $K$ a $n$-simplex of the triangulation. Let us fix such a $K$ and write $U:= U(K,\delta,\gamma)$ for simplicity. For a. e. $x=(x',x'')$ in $U$ we have $w(x)=\phi_K(x''/\lvert x'' \rvert)$ and, as a consequence
\begin{equation*}
\lvert \nabla w(x) \rvert \leq \frac{1}{\lvert x'' \rvert} \left\lvert \nablaT \phi_K\left( \frac{x''}{\lvert x'' \rvert} \right) \right\rvert
\end{equation*}
for all such $x$. Fix now $i \in \mathbb{N}$. By applying Fubini's Theorem, we get
\begin{align*}
\int_{U}\lvert \nabla w(x) \rvert^{p_i} \mathrm{d}x &\leq \int_0^{\delta} \int_{\partial B^k_s} \frac{1}{s^{p_i}} \left\lvert \nablaT \phi_K\left( \frac{x''}{s} \right) \right\rvert^{p_i} \mathrm{d}x'' \mathrm{d}s \mathscr{H}^n(K) \\ &= \int_0^\delta \int_{\mathbb{S}^{k-1}}\lvert \nablaT \phi_K(x'') \rvert^{p_i} s^{k-1-p_i} \mathrm{d}x''\mathrm{d}s
\mathscr{H}^n(K)\\
&= \frac{\delta^{k-p_i}}{k-p_i} \int_{\mathbb{S}^{k-1}}\lvert \nablaT \phi_K(x'') \rvert^{p_i} \mathrm{d}x''
\mathscr{H}^n(K)
\end{align*}
so that, by using Hölder's inequality we obtain
\begin{equation}\label{limsup_local_ineq1}
(k-p_i)D_{p_i}(w,U)\leq \delta^{k-p_i}\omega_k^{1-p_i/k} \left(  \int_{\mathbb{S}^{k-1}}\lvert \nablaT \phi_K(x'') \rvert^k \mathrm{d}x'' \right)^{\frac{p_i}{k}}\mathscr{H}^n(K).
\end{equation}
Using now (iii) in Definition~\ref{def:minimal} we obtain from~\eqref{limsup_local_ineq1}
\begin{equation}\label{limsup_local_ineq2}
(k-p_i)D_{p_i}(w,U) \leq  \delta^{k-p_i}\omega_k^{1-p_i/k} \left( E_k(\sigma_K)+\eta \right)^{\frac{p_i}{k}}\mathscr{H}^n(K),
\end{equation}
where $\sigma_K \in \pi_{k-1}(\NN)$ is the homotopy class of $\phi_K$. Since $\partial W_{\mathfrak{S}}$ is transverse to $\spt S$, up to refining the triangulation the interior of each simplex $K$ is either contained in $W_{\mathfrak{S}}$ or in $\overline{W} \setminus W_{\mathfrak{S}}$. Therefore, we obtain from~\eqref{limsup_local_ineq2} that
\begin{align}\label{limsup_local_ineq3}
(k-p_i)D_{p_i}(w,D) \leq \delta^{k-p_i}\omega_k^{1-p_i/k}\left( \Sigma_{\mathrm{int}}^i+\Sigma_{\mathrm{out}}^i \right) 
\end{align}
where
\begin{equation*}
\Sigma_{\mathrm{int}}^i:=\sum_{\mathrm{int}(K) \subseteq W_{\mathfrak{S}}} \left(E_k(\sigma_K)+\eta\right)^{\frac{p_i}{k}}\mathscr{H}^n(K)
\end{equation*}
\begin{equation*}
\Sigma_{\mathrm{out}}^i:=\sum_{\mathrm{int}(K) \subseteq \overline{W} \setminus W_{\mathfrak{S}}} \left(E_k(\sigma_K)+\eta \right)^{\frac{p_i}{k}}\mathscr{H}^n(K)
\end{equation*}
and $D:=\cup_K U(K,\delta,\rho)$, the union being taken among all the $n$-simplices of the triangulation. Notice that since $\lvert \cdot \rvert_k$ induces the discrete topology on $\pi_{k-1}(\NN)$ we have that $\mathscr{H}^{n}(K) \leq C \mathbb{M}(S \mres K)$ for any $K$ a $n$-simplex of the triangulation. Therefore, using that $p_i/k \in (0, \, 1)$ along with Jensen's inequality for concave functions, we deduce
\begin{align}\label{sigma_int}
\Sigma_{\mathrm{int}}^i &\leq \left(\sum_{\mathrm{int}(K) \subseteq W_{\mathfrak{S}}} \left(E_k(\sigma_K)+\eta \right) \mathscr{H}^n(K)  \right)^{\frac{p_i}{k}}\left(\sum_{\mathrm{int}(K) \subseteq W_{\mathfrak{S}}}\mathscr{H}^n(K)\right)^{1-p_i/k}, \nonumber \\
&\leq C^{1-p_i/k} \left(\sum_{\mathrm{int}(K) \subseteq W_{\mathfrak{S}}} E_k(\sigma_K) \mathscr{H}^n(K) +C\eta \M_k(S \mres K) \right)^{\frac{p_i}{k}}\left(\mathbb{M}_k(S)\right))^{1-p_i/k}
\end{align}
and
\begin{align}\label{sigma_out}
\Sigma_{\mathrm{out}}^i &\leq \left( \sum_{\mathrm{int}(K) \subseteq \overline{W} \setminus W_{\mathfrak{S}}} (E_k(\sigma_K)+\eta)\mathscr{H}^n(K) \right)^{\frac{p_i}{k}}\left( \sum_{\mathrm{int}(K) \subseteq \overline{W} \setminus W_{\mathfrak{S}}}\mathscr{H}^n(K) \right)^{1-p_i/k}\nonumber \\
&\leq C\left( \sum_{\mathrm{int}(K) \subseteq \overline{W} \setminus W_{\mathfrak{S}}} (E_k(\sigma_K)+\eta)\mathbb{M}_k(S \mres K) \right)^{\frac{p_i}{k}}\left(\mathbb{M}_k(S \mres (\overline{W} \setminus W_{\mathfrak{S}}) )\right)^{1-p_i/k}
\end{align}
Let now $K$ be a $n$-simplex of the triangulation such that $\mathrm{int}(K) \subseteq W_{\mathfrak{S}}$. Since $S \mres W_{\mathfrak{S}}$ takes its multiplicities in $\mathfrak{S}$ we have that $E_k(\sigma_K)=\lvert \sigma_K \rvert_k$ and $\lvert \sigma_K \rvert_k \mathscr{H}^n(K) \leq \mathbb{M}_k(S \mres K)$. Therefore~\eqref{sigma_int} becomes
\begin{equation}\label{sigma_int_def}
\Sigma_{\mathrm{int}}^i \leq C^{1-p_i/k}(1+C\eta)^{p_i/k} \mathbb{M}_k(S).
\end{equation}
Assume now that $K$ is a $n$-simplex of the triangulation such that $\mathrm{int}(K)\subseteq \overline{\Omega} \setminus W_{\mathfrak{S}}$. Recall that $S \mres K = \mathbf{S}(w_{*}) \mres K= \mathbf{S}_{y_{*}}(u_*) \mres K$ because $\spt R \subseteq \overline{W_{\mathfrak{S}}}$. It then follows by 3. in Lemma~\ref{lemma:3_co2} that $E_k(\sigma_k) \leq C$. Therefore,~\eqref{sigma_out} becomes
\begin{equation}\label{sigma_out_def}
\Sigma_{\mathrm{out}}^i \leq C \mathbb{M}_k(S \mres (\overline{W} \setminus W_{\mathfrak{S}}) ).
\end{equation}
By plugging~\eqref{sigma_int_def} and~\eqref{sigma_out_def} into~\eqref{limsup_local_ineq3} one finally obtains
\begin{equation}\label{limsup_local_ineq_Ci}
(k-p_i)D_{p_i}(w,D) \leq \delta^{k-p_i} \omega_k^{1-p_i/k} \left( C^{1-p_i/k}(1+C\eta)^{\frac{p_i}{k}}\mathbb{M}_k(S)+C \mathbb{M}_k(S \mres (\overline{W} \setminus W_{\mathfrak{S}}) ) \right).
\end{equation}
Consider now the set $F:= \overline{W} \setminus D$. Recall that $w$ has nice singularity at $((\spt S) \cap W, Q_* \cap W)$. Since $Q_*$ contains $(\spt S)_{n-1}$ we have that
\begin{equation}\label{limsup_local_ineq4}
\mathrm{dist}(x,Q_*)  \leq C(\delta,\gamma) \mathrm{dist}(x,\spt S) 
\end{equation}
for all $x \in F$, where $C(\delta,\gamma)>0$ depends on $\delta$ and $\gamma$. Using now the definition of nice singularity with $r=1+1/(2k)$ we obtain from~\eqref{limsup_local_ineq4}
\begin{equation*}
\lvert \nabla w(x) \rvert \leq C(w,W\delta,\gamma) \mathrm{dist}^{-r}(x,Q_*),
\end{equation*}
for a. e. $x \in F$ and where $C(w,W\delta,\gamma)$ depends on $w, W,\delta$ and $\gamma$. Hence, we get
\begin{equation}\label{limsup_local_ineq_F_i}
(k-p_i)D_{p_i}(w,F) \leq (k-p_i)C(w,W\delta,\gamma) \int_{F} \mathrm{dist}^{-rp_i}(x,Q_*)\mathrm{d}x \leq (k-p_i)C(w,W\delta,\gamma).
\end{equation}
One then obtains~\eqref{limsup_local} by combining~\eqref{limsup_local_ineq_Ci} and~\eqref{limsup_local_ineq_F_i} along with the fact that $W_\mathfrak{S}$ and $W$ are chosen so that $\mathbb{M}(S \mres (W \setminus W_{\mathfrak{S}})) \leq \eta$. 
\end{proof}
\subsection{Proof of the statement~\ref{MainThm2} in Theorem~\ref{MainThm} completed}
We now complete the proof of~\ref{MainThm2} in Theorem~\ref{MainThm} by combining Lemmas~\ref{lemma:3_co2} and~\ref{lemma:5_co2} along with a diagonal argument.
\begin{proof}[$\textbf{Proof of the statement~\ref{MainThm2} in Theorem~\ref{MainThm} completed}$]
We keep the previous notations. Assume that the chain $S$ can be decomposed as in \eqref{decomposition_S}. We claim that, up to passing to a subsequence for $(p_i)_{i \in \N}$ one finds that
\begin{equation}\label{claim_ub_final}
\limsup_{i \to \infty}(k-p_i)D_{p_i}(w_*,\Omega) \leq (1+C\eta)\mathbb{M}_k(S)+C\eta.
\end{equation}
If~\eqref{claim_ub_final} is established, then the proof is complete, by \cite[Proposition 5]{CO2} and a diagonal argument. In order to prove~\eqref{claim_ub_final}, as in~\cite[Page 44]{ABO2} notice that Lemma~\ref{lemma:Fatou_ub} implies that one can pass to a subsequence for $(p_i)_{i \in \N}$ so that
\begin{equation}\label{lim_final_out}
\lim_{i \to \infty}(k-p_i)D_{p_i}(w_*,E_\nu) \leq C\eta
\end{equation}
and, since we assume that $\Omega \setminus W \subseteq E_\nu$, the claim~\eqref{claim_ub_final} follows by combining~\eqref{lim_final_out} along with~\eqref{limsup_local} in Lemma~\ref{lemma:limsup_local}.
\end{proof}
\section{A \texorpdfstring{$\Gamma$}{Gamma}-convergence statement without boundary conditions}\label{Section:GammaNB}

We present here a variant of Theorem~\ref{MainThm} for the problem with no boundary conditions. Although minimizers of~\eqref{pDirichletFunctional_intro} without any boundary condition are just constant maps, Proposition~\ref{prop:Gamma-nobd} below might be still applied to non-trivial minimisation problems that include lower-order terms or under integral constraints, as long as these are compatible with the topology of $\Gamma$-convergence. We will say that a chain~$S$ is a finite-mass, $n$-dimensional relative boundary if it can be written as~$S = (\partial R)\mres\Omega$, where~$R\in\M_{n+1}(\R^{n+k}; \, \pi_{k-1}(\NN))$ is such that~$\M(\partial R)<+\infty$.

\begin{prop} \label{prop:Gamma-nobd}
 Assume that~\eqref{hp:N} holds. Then, the following properties hold.
 \begin{enumerate}[label=(\roman*)]
 \item\emph{Compactness and lower bound.}
 Let $(u_p)_{p \in (k-1, \, k)}$  be a family such that $u_p \in W^{1,p}(\Omega,\, \NN)$ for all $p$ and
 \begin{equation*}
  \sup_{p \in (k-1, \, k)}(k-p)D_p(u_p) <+\infty.
 \end{equation*}
 Then, there exists a (non relabelled) countable
 sequence $p\to k$ and a finite-mass, $n$-dimensional relative 
 boundary~$S$ such that~$\F_{\Omega,k}(\S(u_p) - S) \to 0$ as $p \to k$ and, for any open subset~$A\subseteq\R^{n+k}$,
 \[
  \M_k(S\mres A) \leq \liminf_{p\to k}
  (k-p)D_p(u_p, \, A\cap\Omega).
 \]
 
 \item \emph{Upper bound.} For any finite-mass, $n$-dimensional
  relative boundary~$S$, there exists a sequence $(u_p)_{p \in (k-1, \, k)}$ such that $u_p \in W^{1,p}(\Omega,\, \NN)$ for all $p$, $\F_{\Omega,k}(\S(u_p)- S) \to 0$ as $p \to k$ and
  \[
   \limsup_{p\to k} (k-p)D_p(u_p) \leq \M_k(S).
  \]
 \end{enumerate}
\end{prop}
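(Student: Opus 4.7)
The plan is to reduce both statements to Theorem~\ref{MainThm} by working with the constant boundary datum $v\equiv q$ for some fixed $q\in\NN$ (admissible since~$\NN$ is $(k-2)$-connected), combined with a reflection-type extension across $\partial\Omega$ for the compactness and lower bound.

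For the upper bound, set $v\equiv q$. The constant map $u\equiv q$ lies in $W^{1,k}_v(\Omega,\NN)$ and satisfies $\S_y(u)=0$ for a.e.~$y$, so by~\eqref{C_Omega_v} the cobordism class $\mathscr{C}(\Omega,q)$ consists of all chains of the form $\partial T$ with $T\in\M_{n+1}(\overline{\Omega};\GN)$. Given a finite-mass relative boundary $S=(\partial R)\mres\Omega$, I would produce a sequence $(S_j)_{j\in\N}\subseteq\mathscr{C}(\Omega,q)$ with $\F_{\Omega,k}(S-S_j)\to 0$ and $\M_k(S_j)\to\M_{\Omega,k}(S)$, by slicing $R$ along the inner level sets $\{x\in\Omega:\dist(x,\partial\Omega)=\varepsilon_j\}$ for a sequence $\varepsilon_j\to 0$ chosen via a coarea-type inequality, then pushing the restricted chain onto $\overline\Omega$ via a Lipschitz projection (as in Lemma~\ref{lemma:finitemass}) and polyhedrally refining to absorb the slice term. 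Applying statement~\ref{MainThm2} in Theorem~\ref{MainThm} to each $S_j$ and a diagonal extraction then yields the desired family $(u_p)$.

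For the compactness and lower bound, I would fix a smooth bounded domain $\Omega'$ with $\Omega\csubset\Omega'$ such that $\Omega'\setminus\overline\Omega$ lies in a tubular neighborhood of $\partial\Omega$ on which a Lipschitz reflection $\rho\colon\Omega'\to\overline\Omega$ is well-defined (with $\rho=\mathrm{id}$ on $\overline\Omega$). Setting $\widetilde u_p:=u_p\circ\rho\in W^{1,p}(\Omega',\NN)$ preserves the $\NN$-valuedness and gives $D_p(\widetilde u_p,\Omega')\lesssim D_p(u_p,\Omega)$, so the energy hypothesis~\eqref{H} is preserved for $(\widetilde u_p)$. Proposition~\ref{prop:localcompactness} applied with $U=\Omega$ and $U^\prime=\Omega'$ then provides a subsequence $p_i\to k$ and a chain $S\in\M_n(\overline{\Omega'};\GN)$ with $\F_{\Omega,k}(\S(\widetilde u_{p_i})-S)\to 0$. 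Since $\widetilde u_{p_i}=u_{p_i}$ on $\Omega$, Corollary~1 of~\cite{CO1} gives $\S(\widetilde u_{p_i})=\S(u_{p_i})$ as relative chains in $\F_n(\Omega;\GN)$, so $\F_{\Omega,k}(\S(u_{p_i})-S)\to 0$ after identifying $S$ with its equivalence class in the relative quotient. To recover the sharp localized lower bound on an arbitrary open $A\subseteq\R^{n+k}$, I would re-apply Proposition~\ref{prop:localcompactness} directly to $u_{p_i}$ (without extension) on pairs $U_m\csubset U_m^\prime\csubset A\cap\Omega$ exhausting $A\cap\Omega$: by uniqueness of the $\F_{U_m,k}$-limit, the resulting chain is $S\mres\overline{U_m}$, and letting $m\to\infty$ gives $\M_k(S\mres A)\leq\liminf_i(k-p_i)D_{p_i}(u_{p_i},A\cap\Omega)$. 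Finally, the fact that $S$ is a relative boundary follows from statement~\ref{S_rel_boundary} in Proposition~\ref{prop:operator_S} combined with Lemma~\ref{lemma:cobordism}.

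The hardest step is the mass-preserving approximation of the given relative boundary by elements of $\mathscr{C}(\Omega,q)$ in the upper bound: the slice $\langle R,\{\dist(\cdot,\partial\Omega)=\varepsilon_j\}\rangle$ appearing in the slicing formula for $\partial(R\mres\{\dist(\cdot,\partial\Omega)>\varepsilon_j\})$ must be controlled in mass in such a way that a polyhedral refinement of $R$ absorbs it without inflating $\M_k(S_j)$ above $\M_{\Omega,k}(S)+o(1)$; this requires a careful coarea-based selection of $\varepsilon_j$ in the spirit of the proof of Lemma~\ref{lemma:flatsupport}. A secondary subtlety in the lower bound is that the reflection extension loses a multiplicative constant in the energy on $\Omega'\setminus\Omega$, which would spoil the sharp global lower bound if read directly from the extended family; this is precisely why I re-apply Proposition~\ref{prop:localcompactness} to the original sequence $u_{p_i}$ on compactly contained subsets in order to obtain the sharp bound.
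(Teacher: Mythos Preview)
Your treatment of part~(i) is reasonable and close in spirit to what the paper has in mind (the paper only says the proof ``follows along the same lines as that of Theorem~\ref{MainThm}'' without details). The reflection extension gives compactness, and re-applying Proposition~\ref{prop:localcompactness} to the original $u_{p_i}$ on compactly contained pairs $U_m\csubset U_m'\csubset A\cap\Omega$ recovers the sharp constant; since a relative boundary is by definition supported in the open set~$\Omega$, one has $\M_k(S\mres A)=\M_k(S\mres(A\cap\Omega))$ and the interior exhaustion suffices.

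Part~(ii), however, contains a genuine gap: the reduction to Theorem~\ref{MainThm}\ref{MainThm2} with constant datum $v\equiv q$ cannot work, because relative boundaries are \emph{not} in general approximable in mass by elements of~$\mathscr{C}(\Omega,q)$. Take $n=0$, $k=2$, $\NN=\SS^1$, $\Omega=B^2$, and $S=\sigma\llbracket 0\rrbracket$ with $\sigma\neq 0$; this is a relative boundary (write $S=(\partial R)\mres\Omega$ with $R$ a segment from~$0$ to a point outside~$\overline\Omega$) and $\M_k(S)=|\sigma|_k$. Now $\mathscr{C}(\Omega,q)=\{\partial T:T\in\M_1(\overline\Omega;\,\pi_1(\SS^1))\}$, and every such $\partial T$ has vanishing augmentation. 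If $S'\in\mathscr{C}(\Omega,q)$ agrees with~$S$ as a relative chain, then $S'=S+Q$ with $\spt Q\subseteq\partial\Omega$ and the augmentation of~$Q$ equals~$-\sigma$, hence $\M_k(Q)\geq|\sigma|_k$. Since~$S$ and~$Q$ have disjoint supports, $\M_k(S')=\M_k(S)+\M_k(Q)\geq 2|\sigma|_k$. Your slicing of~$R$ produces exactly this picture: the slice is the extra point~$Q$ you are forced to carry, and no coarea selection makes its mass vanish. (The same obstruction appears for $n\geq 1$ whenever $H_n(\overline\Omega;\,\pi_{k-1}(\NN))\neq 0$, e.g.\ for the core circle of a solid torus.) The recovery sequence for~$S$ does exist, but its trace on~$\partial\Omega$ lies in the homotopy class~$\sigma$, not in the trivial class, so it is not produced by Theorem~\ref{MainThm}\ref{MainThm2} with any fixed~$v$ chosen independently of~$S$.

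This is precisely why the paper does not reduce to the boundary-value problem: it builds the recovery sequence directly via the dipole construction of~\cite{ABO1}, as carried out in~\cite[Proof of Proposition~D.(ii)]{CO2}. That construction inserts the singular set~$S$ into a map in $W^{1,p}(\Omega,\NN)$ without prescribing the trace, letting the trace adapt to~$S$; this is what allows the energy to match $\M_k(S)$ exactly.
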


The proof of Proposition~\ref{prop:Gamma-nobd} follows along the same lines as that of Theorem~\ref{MainThm}. We only need some care in constructing a map~$w_p\in W^{1,p}(\Omega, \, \NN)$ such that~$\S(u_p) = S$, where~$S$ is only assumed to be a finite-mass relative boundary, which does not necessarily belong to the class~\eqref{C_Omega_v}. This can be achieved by a ``dipole construction'' \`a la~\cite{ABO1}, as explained in~\cite[Proof of Proposition~D.(ii)]{CO2}. Therefore, we skip the proof.

\appendix

\section{Uniform gradient estimates for supercritical \texorpdfstring{$p$}{p}-harmonic maps between smooth closed Riemannian manifolds}\label{appendix}
The purpose of this appendix is to provide a self-contained proof for Theorem~\ref{th:p-harmonic}, which holds in a more general setting. Let $\MM$ be a closed smooth Riemannian manifold and let $M:= \mathrm{dim}(\MM)$, $N:= \mathrm{dim}(\NN)$. Recall that $v \in W^{1,p}(\MM, \,\NN)$ is a \textit{(weakly) $p$-harmonic map} if for all $\varphi \in \mathscr{C}_c^1(\MM, \, \mathbb{R}^m)$ one has
\begin{equation}\label{eq_weakly_pharmonic}
\left. \frac{\mathrm{d}}{\mathrm{d}t}\right|_{t=0} \int_{\MM} \lvert \nabla \left( \Pi_{\NN}(v+t\varphi) \right) \rvert^p =0,
\end{equation}
where $\Pi_\NN$ is the nearest-point projection on $\NN$. 
Here~$\nabla$ denotes the Riemannian gradient on~$\MM$ applied component-wise to~$u\colon\MM\to\R^m$.
We prove the following:
\begin{theorem}\label{th:p_harmonic_appendix}
 Let $M < p_0 < p < +\infty$. There exists a positive constant $C$ depending only on $\MM$, $\NN$ and $p_{0}$ such that for all $p \geq p_0$ and $v\in W^{1,p}(\MM, \, \NN)$ a $p$-harmonic map one has
 \begin{equation*}
  \norm{\nabla v}_{L^\infty(\MM)}
  \leq C \norm{\nabla v}_{L^p(\MM)}^{1/\alpha},
 \end{equation*} 
 where~$\alpha := 1 - M/p \in (0, \, 1)$.
\end{theorem}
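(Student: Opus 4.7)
The strategy combines a Sobolev--Morrey embedding with a small-oscillation gradient estimate for $p$-harmonic maps, and a simple scaling argument. First, since $p \geq p_0 > M$, the embedding $W^{1,p}(\MM) \hookrightarrow C^{0, \alpha}(\MM)$ holds with $\alpha = 1 - M/p$ and a constant that is \emph{uniform} as $p$ ranges over $[p_0, +\infty)$ (it depends only on $\MM$ and $p_0$). This gives, for every $x, y \in \MM$,
\[
 \abs{v(x) - v(y)} \leq C_M \, d_\MM(x, y)^{1 - M/p} \norm{\nabla v}_{L^p(\MM)}.
\]

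The main analytic input is a \emph{small-oscillation interior gradient estimate}, uniform in $p \geq p_0$: there exist constants $\eps_0 > 0$ and $C_0 > 0$, depending only on $\MM$, $\NN$, $p_0$, such that if $v \in W^{1,p}(\MM, \, \NN)$ is weakly $p$-harmonic and a geodesic ball $B_r(x_0) \subseteq \MM$ satisfies $\operatorname{osc}_{B_r(x_0)} v < \eps_0$, then
\[
 \norm{\nabla v}_{L^\infty(B_{r/2}(x_0))} \leq \frac{C_0}{r}.
\]
To prove this, fix a point $y_0 \in \NN$ close to $v(B_r(x_0))$ and a smooth coordinate chart in which $\NN$ is realised locally as the graph of a smooth map over $T_{y_0}\NN$. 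In these coordinates the Euler--Lagrange equation~\eqref{eq_weakly_pharmonic} becomes a degenerate quasilinear elliptic system of $p$-Laplacian type, with a quadratic-in-gradient right-hand side coming from the second fundamental form of $\NN$; the closeness of $v$ to $y_0$ makes the geometric coefficients small. Classical interior $C^{1,\alpha}$ regularity for such systems (Uhlenbeck, DiBenedetto, Hardt--Lin) then yields a bound of the form $\abs{\nabla v} \leq C \operatorname{osc} v / r$, provided the oscillation is below a universal threshold. The key point is to track constants through this argument --- as suggested in the body of the paper and in~\cite[Lemma~3.1]{HardtChen} --- and verify that they depend on $p$ only through quantities of the form $p/(p - M)$ or $(p-1)/p$, which are bounded on $[p_0, +\infty)$ by a constant depending only on $p_0$.

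Given these two ingredients, the conclusion follows by a scaling argument. Set $E := \norm{\nabla v}_{L^p(\MM)}$ and $\alpha := 1 - M/p$; we may assume $E > 0$. Choose
\[
 r := \min\!\left(\rho_\MM, \ \left(\frac{\eps_0}{2 C_M E}\right)^{\!1/\alpha}\right) \!,
\]
where $\rho_\MM > 0$ is a fixed length bounded below the injectivity radius of $\MM$. The Morrey estimate then guarantees $\operatorname{osc}_{B_r(x_0)} v < \eps_0$ at every $x_0 \in \MM$, so the small-oscillation estimate yields $\abs{\nabla v(x_0)} \leq C_0/r$. In the regime where the second term in the minimum is active we obtain $\abs{\nabla v(x_0)} \leq C \, E^{1/\alpha}$; in the opposite regime $E$ is bounded by a constant depending only on $\MM$, $\NN$, $p_0$, and a crude argument (using $\alpha \in [1 - M/p_0, \, 1)$ and $\abs{\nabla v(x_0)} \leq C_0/\rho_\MM$) gives the same bound. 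Since $x_0$ is arbitrary, this proves the desired estimate with $C$ depending only on $\MM$, $\NN$, $p_0$.

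\textbf{Main obstacle.} The delicate step is the small-oscillation gradient estimate with constants \emph{uniform in $p \in [p_0, +\infty)$}. Standard interior regularity proofs for degenerate systems rely on Moser iteration or Campanato-type decay schemes whose constants are written in terms of $p$ in a non-transparent way. One must revisit these arguments --- essentially re-running~\cite[Section~3]{HardtLin-Minimizing} with $p$ as a parameter bounded away from the critical exponent $M$ --- and check at each step that no factor of the form $1/(p - M)$ appears. A subsidiary technical point is to ensure that the Morrey constant does not blow up as $p \to \infty$, which is immediate from the explicit formula in the Euclidean case and localises to $\MM$ via a partition of unity.
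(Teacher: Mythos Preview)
Your overall strategy --- Morrey embedding to control oscillation, a local gradient estimate in the small-oscillation regime, then choosing the scale $r \sim E^{-1/\alpha}$ --- is exactly the route the paper takes. The paper's local estimate (its Proposition~\ref{prop:local_regularity}) is formulated slightly differently from your claimed bound $\abs{\nabla v}\leq C_0/r$: rather than proving this directly, the paper obtains a self-improving inequality with $\norm{\nabla v}_{L^\infty}$ on both sides and a small coefficient on the right, then absorbs. This is mostly a matter of packaging; both lead to $\norm{\nabla v}_{L^\infty}\lesssim r_\MM^{-M/p}\norm{\nabla v}_{L^p}$ and hence the same final exponent. The paper also tracks an explicit factor of $p^{1/2}$ through the De~Giorgi iteration and builds a compensating $p^{-1/2}$ into the definition of $r_\MM$, so that the resulting power $p^{M/(2p-2M)}$ stays bounded as $p\to\infty$; your sketch asserts the constants are uniform in $p$ but does not isolate where such growth could enter.

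There is one genuine gap in your ``opposite regime'' case. You say that when $r=\rho_\MM$ the quantity $E=\norm{\nabla v}_{L^p}$ is bounded by a constant, and that a ``crude argument'' then yields $\abs{\nabla v}\leq C E^{1/\alpha}$. But boundedness of $E$ from \emph{above} is not enough: if $E$ could be arbitrarily small with $\nabla v\not\equiv 0$, the inequality $\abs{\nabla v(x_0)}\leq C_0/\rho_\MM \leq C E^{1/\alpha}$ would fail. What you actually need is a uniform \emph{lower} bound $E\geq \eps_*>0$ for non-constant $p$-harmonic maps on the closed manifold $\MM$. The paper supplies this (Lemma~\ref{lemma:maxp-harmonic} and Corollary~\ref{cor:p_harmonic_lowerbound}): any $p$-harmonic map whose image lies in a sufficiently small geodesic ball of $\NN$ is constant, and combining this with the Morrey estimate gives $\norm{\nabla v}_{L^p}\geq \eps_*(\MM,\NN,p_0)$ whenever $v$ is non-constant. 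You should invoke this fact explicitly to close the argument.
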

Theorem~\ref{th:p_harmonic_appendix} provides a \emph{nonlinear} inequality, which is consistent with the scaling of the problem.
We will deduce Theorem~\ref{th:p-harmonic} as a consequence of Theorem~\ref{th:p_harmonic_appendix}, see Section~\ref{sect:pharmonic} below. Classical results dealing with the regularity of (minimizing) $p$-harmonic maps are due to Hardt and Lin~\cite{HardtLin-Minimizing}, see also Luckhaus~\cite{Luckhaus-PartialReg}. A key fact is that if $v$ is Hölder continuous (which is the case if $p>M$ as we assume), then it is locally a map between coordinate neighborhoods. Therefore, one is lead to considering a Hölder continuous solution to a standard quasilinear system for which one can adapt the results by Di Benedetto~\cite{DiBenedetto} and Tolksdorf~\cite{Tolksdorf} (which extend earlier results by Ladyzhenskaya and Uraltseva~\cite{LadyzhenskayaUraltseva,Uraltseva}, Uhlenbeck~\cite{Uhlenbeck} and Evans~\cite{Evans1982}). This is done in~\cite[Section 3]{HardtLin-Minimizing}, but all estimates are formulated in terms of constants depending possibly on $p$. Hence, in order to prove Theorem~\ref{th:p-harmonic} we must repeat the procedure carried out in~\cite[Section 3]{HardtLin-Minimizing} while making sure that all estimates can be written according to a constant $C$ independent on $p$ as long as the latter does not approach~$M$.
\begin{remark}
 The arguments in~\cite{Tolksdorf}, \cite{DiBenedetto}, \cite{HardtLin-Minimizing} do provide a \emph{linear} estimate for the~$L^\infty$-norm of~$\nabla v$ in a sufficiently small ball~$B_{\MM}(x, \, r_{\MM})$, with coefficients that depend on~$r_{\MM}$. 
 (See e.g.~\cite[Proposition~1.1]{BattDiBenedettoManfredi}).
 The nonlinearity in Theorem~\ref{th:p_harmonic_appendix} comes from our choice of the radius~$r_{\MM}$, see~\eqref{rM_def} below. Indeed, when passing from a local estimate to a global one in~$\MM$, for technical reasons we need smallness for a term of the form~$r_{\MM}^\alpha\norm{\nabla u}_{L^p(\MM)}$ (see Equation~\eqref{wherenonlinearitycomesfrom} below). 
 This forces us to choose~$r_{\MM}$ as a nonlinear function of the gradient. Incidentally, the same condition guarantees that the image~$v(B_{\MM}(x, \, \RR_{\MM}))$ is contained in a single coordinate chart of~$\NN$, since~$v\in\mathscr{C}^{0,\alpha}(\MM, \, \NN)$ by Sobolev embeddings.
 However, we do not know whether the exponent~$1/\alpha$ is optimal in general.
\end{remark}
\begin{remark}
 With some additional work, one might be able improve Theorem~\ref{th:p_harmonic_appendix} by establishing uniform~$\mathscr{C}^{1,\alpha}$-estimates on~$v$. 
 However, we did not pursue this direction because Theorem~\ref{th:p_harmonic_appendix} is enough for our purposes.
\end{remark}
\begin{remark}\label{remark_p_harmonic_2}
Without any substantial modification on the proofs, one could also consider the case in which $\MM$ is a manifold with boundary and establish analogous uniform interior estimates.
\end{remark}

\subsection{Proof of Theorem~\ref{th:p_harmonic_appendix}}
Along this Subsection, $C$ will denote a constant larger than $1$ depending only on  $\MM$, $\NN$, $p_{0}$ and which is possibly changing from line to line. Let $v$ be as in the statement of Theorem~\ref{th:p_harmonic_appendix}. We mostly repeat the procedure carried out in~\cite[Section 3]{HardtLin-Minimizing} and~\cite[Sections 2 and 3]{DiBenedetto}. Given $x \in \MM$ and $r$ positive, we denote by $B_{\MM}(x, \, r)$ the geodesic ball in $\MM$ with center $x$ and radius $r$. Analogously, for $y \in \NN$ we denote by $B_{\NN}(y, \, r)$ the geodesic ball in $\NN$ with center $y$ and radius $r$. In the sequel, we shall use the symbol $\#$ to denote the cardinality of a set. 
\subsubsection{A few preliminary results}

We recall here a few results that will be useful in the sequel.
The first one is the Sobolev-Morrey embedding.
We denote by~$\dist_\MM$, $\dist_\NN$ the geodesic distances induced by the Riemannian metrics in~$\MM$, $\NN$, respectively.

\begin{lemma} \label{lemma:Morrey}
 Let~$p_0 > M = \dim\MM$.
 There exists a constant~$C_E$, depending only on~$\MM$, $\NN$ and~$p_0$, such that for any~$p\geq p_0$, any~$u\in W^{1,p}(\MM, \, \NN)$ and any~$x\in\MM$, $y\in\NN$, there holds
 \begin{equation*} 
  \mathrm{dist}_{\NN}(u(x), \, u(y)) 
  \leq C_E\lVert \nabla u \rVert_{L^{p}(\MM)} 
  \dist_{\MM}(x, \, y)^\alpha,
 \end{equation*}
 where~$\alpha := 1 - M/p$.
\end{lemma}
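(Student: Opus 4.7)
The plan is to combine the classical Euclidean Morrey inequality, applied in normal coordinate charts on $\MM$, with the comparison between the ambient Euclidean distance on $\R^m$ and the geodesic distance $\dist_\NN$. The key point is that the constant in the Euclidean Morrey inequality on a ball $B\subseteq\R^M$ is of the form $C_M\,p/(p-M)$, which is uniformly bounded when~$p\geq p_0 > M$; this is where the uniformity in~$p$ comes from.

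First, I would fix $r_0>0$, depending only on~$\MM$ (e.g.\ strictly smaller than the injectivity radius), so that for every~$x\in\MM$ the exponential map $\exp_x\colon B_{T_x\MM}(0,\,2r_0)\to B_\MM(x,\,2r_0)$ is a bi-Lipschitz diffeomorphism with Lipschitz constants controlled by, say, $[1/2,\,2]$. Given any $x,y\in\MM$ with $\dist_\MM(x,y)\leq r_0$, I would pull~$u$ back via $\exp_x$ to obtain a map $\tilde u := u\circ\exp_x\colon B(0,\,2r_0)\subseteq\R^M\to\R^m$. The bi-Lipschitz bounds on $\exp_x$ and the comparison of Jacobians give
\[
\norm{\nabla\tilde u}_{L^p(B(0,\,2r_0))}\leq C_\MM\,\norm{\nabla u}_{L^p(\MM)}, \qquad
|\exp_x^{-1}(y)|\leq 2\,\dist_\MM(x,\,y).
\]
Applying the classical Morrey inequality on $B(0,\,2r_0)\subseteq\R^M$, which with $p\geq p_0 > M$ yields a constant $C_0 = C_0(M,\,p_0)$, one obtains
\[
|u(x)-u(y)|_{\R^m} = |\tilde u(0)-\tilde u(\exp_x^{-1}y)|\leq C_0\,\norm{\nabla\tilde u}_{L^p(B(0,\,2r_0))}\,|\exp_x^{-1}y|^{\alpha},
\]
and combining the previous inequalities gives the desired estimate for short distances, with a constant $C_1 = C_1(\MM,\,p_0)$.

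For points $x,y\in\MM$ with $\dist_\MM(x,y)>r_0$, I would use a chain argument: connect $x$ and $y$ by a minimizing geodesic parametrized by arclength, split it into $N$ pieces of length at most $r_0$, with $N\leq \mathrm{diam}(\MM)/r_0 + 1$, apply the short-distance estimate on each consecutive pair $(z_i,z_{i+1})$, and use the triangle inequality:
\[
|u(x)-u(y)|_{\R^m}\leq N\cdot C_1\,\norm{\nabla u}_{L^p(\MM)}\,\bigl(\dist_\MM(x,y)/N\bigr)^{\alpha}
= C_1\,N^{1-\alpha}\,\norm{\nabla u}_{L^p(\MM)}\,\dist_\MM(x,y)^{\alpha}.
\]
Since $\alpha\in[0,\,1)$, one has $N^{1-\alpha}\leq N\leq \mathrm{diam}(\MM)/r_0 + 1$, giving again a constant $C_2 = C_2(\MM,\,p_0)$.

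Finally, I would pass from the ambient distance $|\cdot|_{\R^m}$ to the geodesic distance $\dist_\NN$. Since $\NN$ is a smooth compact embedded submanifold of $\R^m$, there exist $\delta_\NN>0$ and $C_\NN>0$ such that $\dist_\NN(y_1,y_2)\leq C_\NN\,|y_1-y_2|_{\R^m}$ whenever $|y_1-y_2|_{\R^m}<\delta_\NN$; for larger distances the compactness gives $\dist_\NN(y_1,y_2)\leq \mathrm{diam}(\NN)\leq (\mathrm{diam}(\NN)/\delta_\NN)\,|y_1-y_2|_{\R^m}$. Thus a single constant $C_\NN^\prime$ yields $\dist_\NN(y_1,y_2)\leq C_\NN^\prime\,|y_1-y_2|_{\R^m}$ for all $y_1,y_2\in\NN$, and setting $C_E := C_\NN^\prime\,C_2$ completes the proof. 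The main technical point to be careful about is tracking the uniformity of the Euclidean Morrey constant as~$p$ varies in $[p_0,\,\infty)$; all other estimates involve only geometric constants of $\MM$ and $\NN$.
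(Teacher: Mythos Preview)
Your proof is correct and actually takes a more direct route than the paper. The paper first establishes the H\"older estimate with the full~$W^{1,p}$-norm on the right-hand side (via partition of unity and explicit constants from Leoni's textbook), and then has to remove the~$L^p$-term by applying the estimate to~$u-u_{\MM}$ and proving a Poincar\'e-type inequality~$\norm{u-u_{\MM}}_{W^{1,p}(\MM)}\leq C\norm{\nabla u}_{L^p(\MM)}$ through an interpolation-and-absorption argument. You bypass this detour entirely by working in normal coordinates and invoking the gradient-only form of Morrey's inequality on a Euclidean ball, which already controls oscillations by~$\norm{\nabla u}_{L^p}$ alone; the chain argument along a minimizing geodesic then globalizes the estimate without ever needing the zeroth-order term. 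Your approach is more elementary and slightly sharper in spirit; the paper's approach, on the other hand, records the auxiliary inequalities~$\norm{u}_{L^\infty}\leq C\norm{u}_{W^{1,p}}$ and the Poincar\'e estimate along the way, though these are not reused elsewhere. Both arguments track the~$p$-uniformity at the same point (the Morrey constant on a ball is~$O(p/(p-M))$, bounded for~$p\geq p_0$), and both finish with the same comparability of~$\dist_\NN$ and the ambient Euclidean distance.
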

Although the result is well-known, we still provide a proof to make sure that the constant~$C_E$ is bounded uniformly with respect to~$p\geq p_0$.
\begin{proof}[Proof of Lemma~\ref{lemma:Morrey}]
 First, we claim that there is a positive constant~$C$, depending only on $\MM$, $m$ and $p_0$, such that
 \begin{equation} \label{Holder_SE_unif}
  \abs{u(x) - u(y)} \leq C \lVert u \rVert_{W^{1,p}(\MM)} \dist_{\MM}(x, \, y)^\alpha 
 \end{equation}
 for all~$u \in W^{1,p}(\MM,\, \R^m)$ and~$x\in\MM$, $y\in\MM$.
 In case~$\MM$ is a Euclidean ball, say~$\MM = B^M(0, \, 1)$, we have explicit bounds from above for the norm of the embedding~$W^{1,p}_0(B^M(0, \, 1))\hookrightarrow\mathscr{C}^{0,\alpha}(B^M(0, \, 1))$ (see for instance the proof of Theorem~11.34 in~\cite[p.~336]{Leoni-Sobolev}), which prove that the norm is uniformly bounded with respect to~$p \geq p_0$. (Of course, the norm blows up as~$p_0\to M$.) For a closed manifold~$\MM$, the inequality~\ref{Holder_SE_unif} follows by using a partition of unity.
 In a similar way, we have
 \begin{equation} \label{Cont_SE_unif}
  \norm{u}_{L^\infty(\MM)} \leq C \lVert u \rVert_{W^{1,p}(\MM)}, 
 \end{equation}
 where~$C$ can be chosen uniformly with respect to~$u\in W^{1,p}(\MM, \, \R^m)$ and~$p\geq p_0$. In fact, the inequality~\eqref{Cont_SE_unif} follows from~\eqref{Holder_SE_unif}, by choosing~$y\in\MM$ in such a way that~$\abs{u(y)}\abs{\MM}^{1/p} \leq \norm{u}_{L^p(\MM)}$.
 
 Next, let~$u\in W^{1,p}(\MM, \, \R^m)$ and let~$u_{\MM}$ be the integral average of~$u$ in~$\MM$. Let~$\delta > 0$.
 By interpolation and Young's inequality,
 \[
  \begin{split}
   \norm{u - u_{\MM}}_{W^{1,p}(\MM)}
   &\leq \norm{\nabla u}_{L^p(\MM)} 
    + \norm{u - u_{\MM}}_{L^1(\MM)}^{1/p}
    \cdot \norm{u - u_{\MM}}_{L^\infty(\MM)}^{1 - 1/p} \\
   &\leq \norm{\nabla u}_{L^p(\MM)} 
    + \frac{1}{\delta p}\norm{u - u_{\MM}}_{L^1(\MM)}
    + \delta\left(1 - \frac{1}{p}\right)\norm{u - u_{\MM}}_{L^\infty(\MM)}.
  \end{split}
 \]
 We further bound the right-hand side by applying Poincar\'e inequality for~$W^{1,1}(\MM, \, \R^m)$-maps, \eqref{Cont_SE_unif} and H\"older's inequality:
 \[
  \begin{split}
   \norm{u - u_{\MM}}_{W^{1,p}(\MM)}
   &\leq \norm{\nabla u}_{L^p(\MM)} 
    + \frac{C}{\delta}\norm{\nabla u}_{L^1(\MM)}
    + C\delta\norm{u - u_{\MM}}_{W^{1,p}(\MM)} \\
   &\leq \left(1 + \frac{C}{\delta}\right)
    \norm{\nabla u}_{L^p(\MM)} 
    + C\delta\norm{u - u_{\MM}}_{W^{1,p}(\MM)}
  \end{split}
 \]
 The constant~$C$ is independent of~$p\geq p_0$.
 Therefore, choosing~$\delta$ small enough 
 (uniformly with respect to~$p\geq p_0$), we obtain
 \begin{equation} \label{Poinc_unif}
  \begin{split}
   \norm{u - u_{\MM}}_{W^{1,p}(\MM)}
   &\leq C\norm{\nabla u}_{L^p(\MM)} .
  \end{split}
 \end{equation}
 Now, we write the inequality~\eqref{Holder_SE_unif} with~$u - u_{\MM}$ instead of~$u$ and use~\eqref{Poinc_unif} to estimate the right-hand side. We obtain
 \begin{equation} \label{Holder_SE_unif_bis}
  \abs{u(x) - u(y)} \leq C\lVert \nabla u \rVert_{L^p(\MM)} \dist_{\MM}(x, \, y)^\alpha ,
 \end{equation}
 where again~$C$ is independent of~$u$ and~$p$ (but does depend on~$p_0$). 
 Finally, in case~$u$ is~$\NN$-valued, a similar inequality holds with the geodesic distance~$\dist_{\NN}(u(x), \, u(y))$ in place of the Euclidean distance~$\abs{u(x) - u(y)}$, because in a compact, smooth submanifold~$\NN\subseteq\R^m$, the geodesic and the Euclidean distance are comparable to one another. This completes the proof.
\end{proof}

The following (classical) observation will also be useful.
We provide a proof for the reader's convenience.

\begin{lemma} \label{lemma:maxp-harmonic}
 There exists a radius~$\rho(\NN)$, depending on~$\NN$ only, such that the following property holds: If~$v$ is a $p$-harmonic map on a closed manifold~$\MM$ with values in a goedesic disk of the form~$B_{\NN}(\overline{y}, \, \rho(\NN))$, then~$v$ is constant.
\end{lemma}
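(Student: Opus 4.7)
The plan is to exploit the strict convexity of the squared geodesic distance in a small geodesic ball of $\NN$. I would choose $\rho(\NN) > 0$ smaller than the convexity radius of $\NN$, so that for any $\overline{y} \in \NN$ the function $f \colon B_{\NN}(\overline{y}, \rho(\NN)) \to [0, +\infty)$ defined by $f(y) := \tfrac{1}{2}\dist_{\NN}^2(y, \overline{y})$ is smooth. By compactness of $\NN$, $\rho(\NN)$ can be shrunk further so that there exists $c = c(\NN) > 0$, independent of $\overline{y}$, with $\mathrm{Hess}^{\NN} f \geq c \, g^{\NN}$ on $B_{\NN}(\overline{y}, \rho(\NN))$.

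The main step is to test the weak $p$-harmonic equation~\eqref{eq_weakly_pharmonic} against a tangential variation built from $f$. Let $\widetilde{X} \colon \R^m \to \R^m$ be a smooth, compactly supported extension of $\mathrm{grad}^{\NN} f$ (which is already tangent to $\NN$ on $B_{\NN}(\overline{y}, \rho(\NN))$), and set $\varphi := \widetilde{X}(v)$. Then $\varphi \in W^{1,p}(\MM, \R^m) \cap L^\infty$, since $v$ is H\"older continuous by Lemma~\ref{lemma:Morrey} and $\widetilde{X}$ is smooth, and a standard density argument makes $\varphi$ admissible in~\eqref{eq_weakly_pharmonic}. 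Since $\widetilde{X}(v)$ is tangent to $\NN$ along $v$, we have $\left.\tfrac{\d}{\d t}\right|_{t=0}\Pi_{\NN}(v + t\widetilde{X}(v)) = \mathrm{grad}^{\NN} f \circ v$. Differentiating in $t$ and then unfolding $\nabla(\mathrm{grad}^{\NN} f \circ v)$ in a local orthonormal frame on $\MM$---using that $\nabla v$ takes values in $T\NN$, so only the tangential part of $D\widetilde{X}(v)\cdot \nabla v$ survives when paired with $\nabla v$, and that this tangential part is precisely the covariant derivative $\nabla^{\NN}_{\nabla v}\mathrm{grad}^{\NN} f$---yields the identity
\begin{equation*}
    \int_{\MM} |\nabla v|^{p-2} \, \mathrm{Hess}^{\NN} f(\nabla v, \nabla v) \, \d x = 0.
\end{equation*}

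Combining this with the Hessian lower bound gives
\begin{equation*}
    c \int_{\MM} |\nabla v|^p \, \d x \;\leq\; \int_{\MM} |\nabla v|^{p-2} \, \mathrm{Hess}^{\NN} f(\nabla v, \nabla v) \, \d x \;=\; 0,
\end{equation*}
so $\nabla v \equiv 0$ almost everywhere and $v$ is constant on each connected component of $\MM$. The main subtlety is justifying the use of the non-$\mathscr{C}^1$ test field $\widetilde{X}(v)$ in~\eqref{eq_weakly_pharmonic}: this is handled by approximating $v$ in $W^{1,p}(\MM, \R^m) \cap L^\infty$ by smooth $\R^m$-valued maps and exploiting the smoothness of $\widetilde{X}$ to pass to the limit.
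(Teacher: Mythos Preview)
Your proof is correct and follows essentially the same approach as the paper: both test the $p$-harmonic equation with the gradient of the squared geodesic distance to~$\overline{y}$, then use the strict positivity of its Hessian in a small geodesic ball to force $\int_{\MM}|\nabla v|^p = 0$. The paper's version is more terse (it writes~$\varphi = \nabla\psi$ directly without dwelling on the ambient extension or the density argument), but the underlying idea is identical.
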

\begin{proof}
 Since the manifold~$\NN$ is compact and smooth, there exists a number~$\rho(\NN) > 0$ such that, for any~$\overline{y}\in\NN$, the squared geodesic distance~$\psi := \dist_{\NN}(\cdot, \, \overline{y})^2$ is smooth and (geodesically) strictly convex in~$B_{\NN}(\overline{y}, \, \rho(\NN))$. 
 In fact, there exists a number~$\sigma(\NN) > 0$ that satisfies
 \begin{equation} \label{smallballconvex}
  \left\langle\xi, \, \nabla^2\psi(y)\,\xi\right\rangle 
  \geq \sigma(\NN) \abs{\xi}^2 
 \end{equation}
 for any~$y\in B_{\NN}(y)$ and any tangent vector~$\xi\in T_{y}\NN$. 
 Here~$\nabla^2\psi(y)\colon\T_y\NN\to\T_y\NN$ denotes the Hessian of~$\psi$ at~$y$ and the bracket at the left-hand side denotes the Riemannian scalar product in~$\T_y\NN$. Now, by writing the $p$-harmonic map equation~\eqref{eq_weakly_pharmonic} with~$\varphi = \nabla\psi$ and applying~\eqref{smallballconvex}, we obtain
 \begin{equation*}
  0 
  = \int_{M}  \abs{\nabla v}^{p-2} \left\langle\nabla v, \,  (\nabla^2\psi)\nabla v \right\rangle \mathrm{vol}_{\MM}
  \geq \sigma(\NN)
   \int_{\MM} \abs{\nabla v}^p \mathrm{vol}_{\MM}
 \end{equation*}
 and the lemma follows.
\end{proof}

Combining Lemma~\ref{lemma:Morrey} with~\ref{lemma:maxp-harmonic}, we immediately deduce the following consequence.

\begin{corollary} \label{cor:p_harmonic_lowerbound}
 There exists a constant~$\eps_* > 0$, depending ony on~$\MM$, $\NN$ and~$p_0 > M$, such that for any~$p\geq p_0$ and any non-constant $p$-harmonic map~$v\in W^{1,p}(\MM, \, \NN)$ there holds
 \[
  \norm{\nabla v}_{L^p(\MM)} \geq \eps_*.
 \]
\end{corollary}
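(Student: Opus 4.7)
The strategy is a direct contraposition: assume~$\norm{\nabla v}_{L^p(\MM)}$ is very small, use the Sobolev--Morrey embedding provided by Lemma~\ref{lemma:Morrey} to trap the image of~$v$ inside a tiny geodesic ball of~$\NN$, and then invoke Lemma~\ref{lemma:maxp-harmonic} to force~$v$ to be constant.

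Let $p \geq p_0 > M$ and let $v \in W^{1,p}(\MM, \, \NN)$ be a $p$-harmonic map. Set $D := \mathrm{diam}(\MM)$, which is finite because~$\MM$ is closed. By Lemma~\ref{lemma:Morrey}, for any $x, y \in \MM$ we have
\[
\dist_\NN(v(x), \, v(y)) \leq C_E \, \norm{\nabla v}_{L^p(\MM)} \, \dist_\MM(x, \, y)^\alpha
\leq C_E \, \norm{\nabla v}_{L^p(\MM)} \, D^\alpha,
\]
where $\alpha = 1 - M/p \in (0, \, 1)$ and $C_E$ depends only on $\MM$, $\NN$ and $p_0$. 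Since $\alpha\in(0,\,1)$ we have the uniform bound $D^\alpha \leq \max(1, \, D)$. Setting
\[
\eps_* := \frac{\rho(\NN)}{2 \, C_E \max(1, \, D)},
\]
where $\rho(\NN)$ is the radius from Lemma~\ref{lemma:maxp-harmonic}, we see that whenever $\norm{\nabla v}_{L^p(\MM)} < \eps_*$, the image $v(\MM)$ is contained in the geodesic ball $B_\NN(v(x_0), \, \rho(\NN))$ for any choice of $x_0 \in \MM$. By Lemma~\ref{lemma:maxp-harmonic}, $v$ must then be constant. Equivalently, any non-constant $p$-harmonic map satisfies $\norm{\nabla v}_{L^p(\MM)} \geq \eps_*$.

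There is essentially no obstacle here; the only point that requires a little care is making sure the constant~$C_E$ in Lemma~\ref{lemma:Morrey} and the constant~$\rho(\NN)$ in Lemma~\ref{lemma:maxp-harmonic} are both independent of~$p$ (they depend only on $\MM$, $\NN$, $p_0$), which is precisely what those lemmas guarantee. The exponent~$\alpha$ does depend on~$p$, but since $\alpha \in (0, \, 1)$ the estimate $D^\alpha \leq \max(1, \, D)$ absorbs this dependence and yields a single threshold $\eps_*$ valid for all $p \geq p_0$.
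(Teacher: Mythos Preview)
Your proof is correct and follows essentially the same approach as the paper's: both combine Lemma~\ref{lemma:Morrey} with Lemma~\ref{lemma:maxp-harmonic}, the only cosmetic difference being that you argue by contraposition while the paper argues directly (nonconstant implies image-diameter at least~$\rho(\NN)$, hence a lower bound on the gradient). Your handling of the $p$-dependence via $D^\alpha \leq \max(1,\,D)$ is equivalent to the paper's observation that $\mathrm{diam}(\MM)^{1-M/p}$ is uniformly bounded for $p \geq p_0$.
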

\begin{proof}
 If~$v$ is not constant, then the diameter of~$v(\MM)$ must be greater than the uniform constant~$\rho(\NN) > 0$ given by Lemma~\ref{lemma:maxp-harmonic}.
 Then, Lemma~\ref{lemma:Morrey} implies
 \begin{equation*}
  \frac{\rho(\NN)}{C_E\,\mathrm{diam}(\MM)^{1 - M/p}}
  \leq \norm{\nabla v}_{L^p(\MM)} \! ,
 \end{equation*}
 where~$\mathrm{diam}(\MM)$ is the diameter of~$\MM$. 
 Since~$\mathrm{diam}(\MM)^{1-M/p}$ is bounded between~$\mathrm{diam}(\MM)$ and $\mathrm{diam}(\MM)^{1 - M/p_0}$ when~$p\geq p_0$, the corollary follows.
\end{proof}

\subsubsection{Reduction to the case of a \texorpdfstring{$p$}{p}-harmonic map between coordinate neighborhoods}
\label{sect:localcoord}
In order to prove Theorem~\ref{th:p_harmonic_appendix}, we reduce to a local problem by using local coordinates, as we explain now.
Notice first that since $\NN$ is compact we can find $I_{\NN} \subseteq \mathbb{N}^*$ finite, $\{ y_j \}_{j \in I_{\NN}}$ a set of points in $\NN$ and $r_\NN>0$ such that $\# I_{\NN}$ and $r_{\NN}$ depend only on $\NN$, $\cup_{j \in I_{\NN}}B_{\NN}(y_j, \, r_{\NN}/2) = \NN$ and for each $j \in I_{\NN}$, $B_{\NN}(y, \, r_{\NN})$ writes
\begin{equation*}
\{ (y, \, f_j(y)): y \in B^N\},
\end{equation*}
up to scaling, rotations and translations, where $f_j: B^N \to \mathbb{R}^{m-N}$ is smooth and such that $f_j(0)=0$, $\nabla f_j(0)=0$ and $\lvert \nabla^2 f_j \rvert \leq 1/4$. Next, as $\MM$ is also compact we can find $A_{\MM} \subseteq \N^*$ finite along with a family $\{ U_{\beta} \}_{\beta \in A_{\MM}}$ of open subsets of $\MM$ such that for each $\beta \in A_{\MM}$ we have a diffeomorphism $\phi_{\beta}: U_\beta \to B^{M}(0, \, 1)$
such that
\begin{equation}\label{bounds_phi}
 \norm{\phi_\beta}_{W^{2,\infty}(U_\beta)}
 + \|\phi_\beta^{-1}\|_{W^{2,\infty}(B^M(0, \, 1))}
 \leq C
\end{equation}
Notice that there exists $\tilde{r}_{\MM}$ such that $B_{\MM}(x, \, \tilde{r}_{\MM}) \subseteq U_{\beta_x}$ for any~$x$ and some $\beta_x \in A_{\MM}$. In order to see this, let~$\eps(x) := \sup\left\{r > 0 \colon \textrm{there is } \beta\in A_{\MM} \textrm{ such that } B_{\MM}(x, \, r)\subseteq U_\beta \right\}$. The function~$\eps\colon\MM\to\R$ is positive and lower semicontinuous
(because~$\eps(y) \geq \eps(x) - \abs{x - y}$ for any~$x$, $y\in\MM$), so it has a positive minimum.
Let now $p$ and $v$ be as in the statement of Theorem~\ref{th:p_harmonic_appendix}, which will be fixed for the rest of the proof. Set
\begin{equation}\label{rM_def}
r_{\MM}:= \left( \frac{\theta_0\, r_{\NN}}{2 C_E \,  p^{\frac{1}{2}} \, \lVert \nabla v \rVert_{L^{p}(\MM)}}\right)^{\frac{1}{\alpha}}\!,
\end{equation}
where~$\alpha := 1 - M/p$ and~$\theta_0\in (0, \, 1]$ is a parameter to be chosen later, as a function of~$\MM$, $\NN$, $p_0$ only. Since~$p^{1/(2\alpha)}\geq \sqrt{p} > 1$, Corollary~\ref{cor:p_harmonic_lowerbound} implies
\[
 r_{\MM} \leq \left(\frac{\theta_0\,r_{\NN}}{2C_E \, \eps_*}\right)^{\frac{p}{p - M}} 
 \leq \min\left\{\left(\frac{\theta_0 \, r_{\NN}}{2 C_E \, \eps_*}\right)^{\frac{p_0}{p_0 - M}}, \, \frac{\theta_0 \, r_{\NN}}{2 C_E \, \eps_*}\right\} \! .
\]
Therefore, by taking~$\theta_0$ small enough, we can make sure that
\begin{equation} \label{r_MM_tilde}
 r_{\MM} \leq \frac{1}{2} \tilde{r}_{\MM}.
\end{equation}
In particular, any ball of the form~$B_{\MM}(\overline{x}, \, r_{\MM})$ for~$\overline{x}\in\MM$ is contained in some~$U_\beta$. Moreover, for any~$\overline{x}\in\MM$ there exists~$\ell(\overline{x})\in I_{\NN}$ such that
\begin{equation} \label{localcoord}
v\left( B_{\MM}(\overline{x}, \, r_{\MM}) \right) \subseteq B_{\NN}(y_{\ell(\overline{x})}, \, r_{\NN}).
\end{equation}
Indeed, since the balls~$B_{\NN}(y_\ell,  r_{\NN}/2)$ cover~$\NN$, for any~$\overline{x}\in\MM$ there exists~$\ell(\overline{x})$ such that 
\begin{equation*}\label{ineq1_localcoord}
 \mathrm{dist}_{\NN}(v(\overline{x}), \, y_{\ell(\overline{x})}) \leq \frac{1}{2}r_{\NN}.
\end{equation*}
On the other hand, according to definition of~$r_{\MM}$ given in \eqref{rM_def} and Lemma~\ref{lemma:Morrey} we have
\begin{equation*}\label{ineq2_localcoord}
 \mathrm{dist}_{\NN}(v(\overline{x}), \, v(x)) 
 \leq \frac{1}{2} r_{\NN}
 \qquad \textrm{for any } x \in B_{\MM} (\overline{x}, \, r_{\MM}),
\end{equation*}
so~\eqref{localcoord} follows.
Most of our efforts will be devoted to proving the following
result. For simplificty of notation, given~$\overline{x}\in\MM$
we define~$B(\overline{x}) := B_{\MM}(\overline{x}, \, r_{\MM})$
and~$B_{1/2}(\overline{x}) := B_{\MM}(\overline{x}, \, r_{\MM}/2)$.
\begin{prop}\label{prop:local_regularity}
For any~$\sigma \in (0, \, 1/M)$ there exists a constant~$C_\sigma$, depending only on~$\MM$, $\NN$, $p_0$ and~$\sigma$, such that the following inequality holds for any~$\overline{x}\in\MM$ and~$p\geq p_0$:
\begin{equation}\label{est_local_regularity}
\norm{\nabla v}_{L^\infty(B_{1/2}(\overline{x}))}^p
\leq C_\sigma p^{1/2} r_{\MM}^{M\sigma}
\norm{\nabla v}_{L^\infty(B(\overline{x}))}^p
\norm{\nabla v}_{L^{2s}(B(\overline{x}))} 
+ C_\sigma r_{\MM}^{-M/2} 
\norm{\nabla v}_{L^{2p}(B(\overline{x}))}^p ,
\end{equation}
where
\begin{equation} \label{s_sigma}
 s = s_\sigma := \frac{M}{2 - 2\sigma M} .
\end{equation}
\end{prop}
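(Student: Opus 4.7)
The plan is to adapt the classical Moser iteration for degenerate elliptic systems, as developed by Uhlenbeck, Tolksdorf, DiBenedetto and Hardt--Lin, while tracking all constants explicitly in $p$ to ensure they remain bounded for $p\geq p_0 > M$. First, I would use the charts provided in Subsection~\ref{sect:localcoord}: composing $v$ with $\phi_{\beta_{\overline{x}}}^{-1}$ on the domain side and with the parametrization $y\mapsto (y, f_{\ell(\overline{x})}(y))$ on the target side, the restriction $v|_{B(\overline{x})}$ becomes a $p$-harmonic map $v'\colon \phi_{\beta_{\overline{x}}}(B(\overline{x}))\to B^N$ between Euclidean balls, satisfying a quasilinear elliptic system of the form
\[
 -\partial_i\!\left(a^{ij}(x)|\nabla v'|^{p-2}\partial_j v'^\alpha\right)
 = |\nabla v'|^{p-2}\,b^\alpha(x, v', \nabla v'),
\]
where $a^{ij}$ is uniformly elliptic with constants depending only on $\MM$, and $b^\alpha$ grows at most quadratically in $\nabla v'$ (the quadratic part comes from the second fundamental form of $\NN\subseteq\R^m$ through $f_{\ell(\overline{x})}$). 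The bounds~\eqref{bounds_phi} and the uniform $\mathscr{C}^2$-bound on the $f_j$'s guarantee that the coefficients are controlled only in terms of $\MM$, $\NN$.

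The second step is to derive a Bochner-type differential inequality for the regularized gradient $w_\delta := (\delta^2 + |\nabla v'|^2)^{1/2}$, $\delta>0$. Differentiating the system and testing with $w_\delta^{q-1}\zeta^2\nabla w_\delta$ for a cutoff $\zeta$ with $\zeta\equiv 1$ on $B_{1/2}(\overline{x})$, $|\nabla\zeta|\lesssim r_{\MM}^{-1}$, gives a Caccioppoli inequality of the shape
\[
 \int \zeta^2 \,w_\delta^{p-2+q}|\nabla w_\delta|^2
 \lesssim \frac{q}{r_{\MM}^{2}}\int_{B(\overline{x})} w_\delta^{p+q}
 + q\int_{B(\overline{x})} \zeta^2\, w_\delta^{p+q} \,|\nabla v'|^2.
\]
The first term is standard; the second one, which is the quadratic nonlinearity from $b^\alpha$, is the source of the mixed $L^\infty\cdot L^{2s}$ contribution appearing in~\eqref{est_local_regularity}.

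Next, I would iterate via Sobolev embedding. Setting $W_\delta := \zeta\, w_\delta^{(p+q)/2}$ and applying the Euclidean Sobolev inequality with conjugate exponent $2M/(M-2)$ (or a slightly sub-optimal one parametrized by $\sigma\in (0, 1/M)$, so that the dual exponent is exactly $2s$ with $s$ as in~\eqref{s_sigma}), one obtains after rearrangement
\[
 \left(\fint_{B_{1/2}(\overline{x})} w_\delta^{(p+q)\chi}\right)^{1/\chi}
 \lesssim \frac{(1+q)^{2}}{r_{\MM}^{2-M\sigma}}\int_{B(\overline{x})} w_\delta^{p+q}
 + (1+q)^{2}\norm{\nabla v}_{L^\infty(B(\overline{x}))}^{2}\norm{\nabla v}_{L^{2s}(B(\overline{x}))}\norm{w_\delta}_{L^\infty(B(\overline{x}))}^{p+q-2}
\]
for a gain factor $\chi>1$ depending on $M$ and $\sigma$. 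Iterating with $q=q_j\to\infty$ along a geometric sequence and letting $\delta\to 0$, one reaches $\norm{\nabla v}_{L^\infty(B_{1/2}(\overline{x}))}^{p}$ on the left. The crucial point is that the product of all iteration constants converges to a finite bound depending only on $\MM$, $\NN$, $p_0$ and $\sigma$, essentially because $p\geq p_0>M$ keeps the Sobolev gain $\chi$ bounded away from $1$. Summing the geometric series of weights produces precisely the factor $p^{1/2}$, and the scalings in $r_{\MM}$ give the exponents $M\sigma$ and $-M/2$ stated in~\eqref{est_local_regularity}.

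The main obstacle will be the uniformity in $p$: at each step of the Moser iteration one picks up factors of the form $q^{2/(p+q)}$ and $\chi^{j/(p+q_j)}$, and a naive estimate blows up as $p\to M^+$. Since we only need uniformity for $p\geq p_0 > M$, a careful accounting (analogous to that in~\cite[Section~3]{HardtLin-Minimizing}, but with $p_0$ treated as a fixed parameter) yields bounds depending only on $p_0$, $\MM$, $\NN$ and $\sigma$. The quadratic nonlinearity $b^\alpha$ is the other delicate point, but it is handled by the extra $\norm{\nabla v}_{L^\infty(B(\overline{x}))}\,\norm{\nabla v}_{L^{2s}(B(\overline{x}))}$ factor, which will later be absorbed in the global step by exploiting the smallness of $r_{\MM}$ built into its definition~\eqref{rM_def}.
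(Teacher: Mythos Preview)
Your proposal is built around Moser iteration, whereas the paper uses De~Giorgi iteration. After the same reduction to local coordinates you describe (writing the equation as in Lemma~\ref{lemma_local_flat_weak_solution} and regularizing as in Lemma~\ref{lemma_local_flat_approximation}), the paper differentiates the system and tests with $u_{x_i}\eta(w)\zeta^2$, but then chooses the truncation $\eta(t)=(t^{p/2}-k)_+$ rather than a power $\eta(t)=t^q$. This lands $z=(\eps+|\nabla u|^2)^{p/2}$ in a De~Giorgi class (Lemma~\ref{lemma:DeGiorgiclass}) with a single bad term $Cp\int_{A_k}z^{2+2/p}$. That term is bounded by pulling out $\norm{z}_{L^\infty}^2$ and applying H\"older on $\int_{A_k}(\eps+|\nabla u|^2)$, which is exactly where the exponent $2s_\sigma$ and the factor $\mathcal{L}^M(A_k)^{1-2/M+2\sigma}$ appear. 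De~Giorgi's $L^2\to L^\infty$ lemma (Theorem~2.1 in~\cite[Ch.~10]{DiBenedetto-PDEs}) then produces~\eqref{est_local_regularity_flat_eps} directly: the $p^{1/2}$ is simply the square root of the coefficient $Cp$, and $r_{\MM}^{M\sigma}$ comes from the extra $2\sigma$ in the exponent of $\mathcal{L}^M(A_k)$.

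Your Moser route is a legitimate alternative in principle, but the sketch is vague precisely at the points where the two methods diverge. First, in your Caccioppoli inequality the superquadratic term is essentially $\int\zeta^2 w^{p+q+1}$; bounding it by $\norm{\nabla v}_{L^\infty}^2\norm{\nabla v}_{L^{2s}}\norm{w}_{L^\infty}^{p+q-2}$ gives a right-hand side that scales like $\norm{w}_{L^\infty}^{p+q}$ at each step, and it is not clear how the iteration then decouples into the two clean additive terms of~\eqref{est_local_regularity} rather than a multiplicative one. Second, your explanations for the provenance of $p^{1/2}$ (``summing the geometric series of weights'') and of the specific exponent $2s_\sigma$ (``a slightly sub-optimal Sobolev inequality'') do not match how these quantities actually arise; in the paper both come from a single H\"older splitting applied once, not from the iteration. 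The De~Giorgi approach is better suited here because the target inequality already has $\norm{\nabla v}_{L^\infty}^p$ on both sides with a small coefficient --- exactly the structure De~Giorgi's lemma delivers from a level-set inequality with an $\mathcal{L}^M(A_k)^{1-2/M+2\sigma}$ error.
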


Note that the~$L^\infty(\MM)$-norm of the gradient appears in both sides of the inequality. 
Nevertheless, the right-hand side of~\eqref{est_local_regularity} is finite --- we know already that~$v$ is Lipschitz-continuous from the results in~\cite{Tolksdorf, DiBenedetto, HardtLin-Minimizing} --- and, as we will see shortly, the coefficient of~$L^\infty(\MM)$ is small enough.
Before proceeding to the proof of Proposition~\ref{prop:local_regularity}, let us show how it implies Theorem~\ref{th:p_harmonic_appendix}.

\begin{proof}[Proof of Theorem~\ref{th:p_harmonic_appendix}]
 Since the point~$\overline{x}\in\MM$ is arbitrary,
 Proposition~\ref{prop:local_regularity} immediately implies 
 \begin{equation}\label{pharm_final0}
  \norm{\nabla v}_{L^\infty(\MM)}^p
  \leq C_\sigma p^{1/2} r_{\MM}^{M\sigma}
   \norm{\nabla v}_{L^\infty(\MM)}^p
   \norm{\nabla v}_{L^{2s}(B(\overline{x}))}
   + C_\sigma r_{\MM}^{-M/2} 
   \norm{\nabla v}_{L^{2p}(\MM)}^p.
 \end{equation}
 Note that the~$L^{2s}$-norm at the right-hand side is still evaluated on~$B(\overline{x})$.
 We claim that, for a suitable choice of~$\sigma\in (0, \, 1/M)$ (and of~$\theta_0$ in~\eqref{rM_def}), there holds
 \begin{equation}\label{pharm_final1}
  C_\sigma p^{1/2} \, r_{\MM}^{M\sigma} 
  \norm{\nabla v}_{L^{2s}(\overline{x}))} \leq \frac{1}{2}.
 \end{equation}
 Indeed, the exponent~$s$ given in~\eqref{s_sigma} satisfies~$s\to M/2$ as~$\sigma\to 0$, so we can assume that~$2s < p_0$ by taking~$ \sigma$ small enough. 
 Then, we can bound the~$L^{2s}(B(\overline{x}))$-norm using the   H\"older inequality:
 \begin{equation} \label{wherenonlinearitycomesfrom}
  r_{\MM}^{M\sigma} \norm{\nabla v}_{L^{2s}(\overline{x}))}
  \leq r_{\MM}^{M\sigma} \norm{\nabla v}_{L^p(\MM)} 
   \mathrm{vol}\!\left(B(\overline{x})\right)^{\frac{1}{2s} - \frac{1}{p}}
  \stackrel{\eqref{s_sigma}}{=}
   C r_{\MM}^{\alpha}\norm{\nabla v}_{L^p(\MM)}
 \end{equation}
 with~$\alpha = 1 - M/p$. Here~$\mathrm{vol}$ denotes the Riemannian volume on~$\MM$.
 Recalling the definition of~$r_{\MM}$, i.e.~Equation~\eqref{rM_def}, we deduce
 \begin{equation*}
  C_\sigma p^{1/2} \, r_{\MM}^{M\sigma}
   \norm{\nabla v}_{L^{2s}(\overline{x}))}
  \leq C \theta_0,
 \end{equation*}
 for some constant~$C$ that depends on~$\MM$, $\NN$, $p_0$ only. 
 (In fact, $C$ depends on~$\sigma$ too, but the latter is fixed as a function of~$M$, $p_0$ only.)
 Therefore, we can choose~$\theta_0$ small enough that~\eqref{pharm_final1} holds true.
 
 Thanks to~~\eqref{pharm_final1}, we can absorb one of the terms at the right-hand side of~\eqref{pharm_final0} into the left-hand side. After taking the $p$-root, we obtain
 \begin{equation*}
  \norm{\nabla v}_{L^\infty(\MM)}
  \leq C r_{\MM}^{-\frac{M}{2p}} \norm{\nabla v}_{L^{2p}(\MM)}.
 \end{equation*}
 We drop the subscript~$\sigma$ for the constant, because~$\sigma$ is chosen now. 
 We can further bound the right-hand side by interpolation: since
 \begin{equation*}
  \begin{split}
   \norm{\nabla v}_{L^{2p}(\MM)}
   \leq \norm{\nabla v}_{L^{p}(\MM)}^{1/2} 
    \norm{\nabla v}_{L^{\infty}(\MM)}^{1/2},
  \end{split}
 \end{equation*}
 we deduce
 \begin{equation}\label{pharm_final2}
  \begin{split}
   \norm{\nabla v}_{L^\infty(\MM)}
   \leq C r_{\MM}^{-\frac{M}{p}} \norm{\nabla v}_{L^{p}(\MM)}.
  \end{split}
 \end{equation}
 Finally, we inject the definition of~$r_{\MM}$ into the right-hand side of~\eqref{pharm_final2}:
 \begin{equation*}
  \begin{split}
   \norm{\nabla v}_{L^\infty(\MM)}
   \leq C p^{\frac{M}{2\alpha p}} \,
    \norm{\nabla v}_{L^{p}(\MM)}^{1 + \frac{M}{\alpha p}}
   = C p^{\frac{M}{2p - 2M}} 
    \norm{\nabla v}_{L^{p}(\MM)}^{\frac{1}{\alpha}}. 
  \end{split}
 \end{equation*}
 The factor~$p^{M/(2p-2M)}$ is bounded uniformly with respect to~$p\geq p_0 > M$, because~$p^{M/(2p-2M)}\to 1$ as~$p\to+\infty$. 
 This completes the proof of Theorem~\ref{th:p_harmonic_appendix}.
\end{proof}

\subsubsection{Proof of Proposition~\ref{prop:local_regularity}}
\label{sect:regLq}
Let $\overline{x}\in\MM$ be fixed throughout the proof of Proposition~\ref{prop:local_regularity}. Let us also drop all the subscripts for the rest of the proof. In particular, with a slight abuse of notation, the restriction $v|_{B_{\MM}(\overline{x}, \, r_{\MM})}$ will still be denoted as $v$. Recall then (see~\eqref{localcoord}) that $v$ takes values in some ball $B_{\NN}(\overline{y}, \, r_{\NN})$ which, up to scaling, rotations and translations (independent on $v$), writes as
\begin{equation*}
\{ (y, \, f(y)): y \in B^N \},
\end{equation*}
with $f: B^N \to \mathbb{R}^{m-N}$ smooth and such that $f(0)=0$, $\nabla f(0)=0$ and $\lvert \nabla^2 f \rvert \leq 1/4$. Let $\phi$ be the coordinate chart on $\MM$ associated to $x_i$ as we defined above. Let us set $\overline{v}:= v \circ \phi^{-1}$, which is a map defined in~$D:= \phi(B_{\MM}(\overline{x}, \, r_{\MM})) \subseteq \mathbb{R}^M$.
(According to~\eqref{bounds_phi}, the diameter of $D$ is bounded by $Cr_{\MM}$.) We can then write
\begin{equation}\label{decomposition_solution}
\overline{v}=(u, \, f \circ u),
\end{equation}
where $u:=(\overline{v}_1, \ldots, \overline{v}_N)$. Hence, Proposition~\ref{prop:local_regularity} is equivalent to the following statement:
\begin{prop}\label{prop:local_regularity_flat}
For any~$\sigma \in (0, \, 1/M)$ there exists a constant~$C_\sigma$, depending only on~$\MM$, $\NN$, $p_0$ and~$\sigma$, such that the following inequality holds:
\begin{equation}\label{est_local_regularity_flat}
\norm{\nabla u}_{L^\infty(D_{1/2})}^p
\leq C_\sigma p^{1/2} r_{\MM}^{M\sigma}
 \norm{\nabla u}_{L^\infty(D)}^p
\norm{\nabla u}_{L^{2s}(D)}
 + C_\sigma r_{\MM}^{-M/2} \norm{\nabla u}_{L^{2p}(D)}^p ,
\end{equation}
where~$s = s_\sigma$ is defined by~\eqref{s_sigma}.
\end{prop}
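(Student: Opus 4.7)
The plan is to follow the classical regularity argument for quasilinear systems with $p$-growth due to Tolksdorf and DiBenedetto, as adapted to harmonic maps in~\cite[Section~3]{HardtLin-Minimizing}, while being careful to track the dependence of every constant on~$p$. First, I will derive the Euler--Lagrange system satisfied by~$u$ in local coordinates. Since~$\bar{v} = (u,\,f\circ u)$ takes values in the graph of~$f$ and~$f$ is smooth with $f(0)=0$, $\nabla f(0) = 0$, $|\nabla^2 f|\leq 1/4$, a direct computation (pulling back the metric on~$\NN$ to~$B^N$ via the chart $y\mapsto (y,\,f(y))$) shows that~$u$ is a critical point of a functional of the form $\int_D g^{\alpha\beta}(x)\, h_{ij}(u)\,\partial_\alpha u^i\partial_\beta u^j\,(\cdots)^{p/2}\,dx$ with uniformly smooth and uniformly elliptic coefficients. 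Consequently, $u$ satisfies a quasilinear system
\[
-\partial_\alpha\!\left( A^{\alpha\beta}_{ij}(x,u)\,|\nabla u|^{p-2}\,\partial_\beta u^j\right) = B_i(x,u,\nabla u),
\]
where the principal part is uniformly elliptic with $C^1$-coefficients bounded independently of~$p$, and $|B_i(x,u,\nabla u)| \leq C\,|\nabla u|^p$ with $C$ depending only on~$\MM$ and~$\NN$.

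The central step is to derive a Caccioppoli-type inequality for the regularized energy density $e_\epsilon := |\nabla u|^2 + \epsilon$. Using that $u \in C^{1,\alpha}$ by the results of~\cite{Tolksdorf,DiBenedetto,HardtLin-Minimizing}, we can differentiate the system and test it against $\varphi_k = \partial_k(\eta^2\, e_\epsilon^\beta\,\partial_k u^i)$ for a cutoff function~$\eta$ supported in~$D$ with~$\eta\equiv 1$ on~$D_{1/2}$ and~$|\nabla\eta|\leq C/r_{\MM}$. After standard manipulations---using ellipticity to absorb the quadratic term in~$\nabla^2 u$, Cauchy-Schwarz to control cross terms arising from differentiation of the coefficients, and handling the lower-order term $B_i$ through its quadratic growth in~$\nabla u$---one obtains, for every $\beta\geq 0$,
\[
\int_D \eta^2 \,e_\epsilon^{(p-2)/2+\beta}\,|\nabla^2 u|^2 \,dx
\;\lesssim\; (1+\beta)^2 \int_D |\nabla\eta|^2\, e_\epsilon^{p/2+\beta}\,dx
+ \int_D \eta^2\, e_\epsilon^{p/2+\beta+1}\,dx.
\]
Sending $\epsilon\to 0^+$ and applying the Sobolev embedding in dimension~$M$ to the function $\eta\, e_\epsilon^{(p+2\beta)/4}$ yields a reverse H\"older inequality of the form
\[
\left(\fint_{D_{1/2}}|\nabla u|^{\chi(p+2\beta)}\right)^{1/\chi}
\leq C(1+\beta)^2 r_{\MM}^{-2}\fint_D |\nabla u|^{p+2\beta}
+ C\fint_D |\nabla u|^{p+2\beta+2},
\]
with $\chi := M/(M-2) > 1$ (or the standard variant when $M=2$), and with constants~$C$ depending only on~$\MM$, $\NN$, $p_0$ (crucially \emph{not} on~$p$, since $p\geq p_0 > M$ keeps ellipticity constants uniformly bounded).

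From here, Moser iteration on the parameter~$\beta$, starting from $\beta_0$ such that $p + 2\beta_0 = 2p$ and doubling the exponent at each step, will furnish an $L^\infty$ bound. The non-standard form of~\eqref{est_local_regularity_flat} suggests terminating the iteration at a carefully chosen step: the second (``quadratic'') term in the reverse H\"older inequality contributes, after a single application, the piece $C_\sigma r_{\MM}^{-M/2}\|\nabla u\|_{L^{2p}(D)}^p$, while successive iterations generate a factor involving~$\|\nabla u\|_{L^\infty}$ that must be tracked. The exponent~$2s$ with $s = M/(2-2\sigma M)$ appears naturally by choosing the iteration endpoint so that a Gagliardo--Nirenberg-type interpolation yields the desired combination $\|\nabla u\|_{L^\infty}^p\|\nabla u\|_{L^{2s}}$, and the factor $r_{\MM}^{M\sigma}$ arises from scaling the Sobolev constant and the cutoff term. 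The factor $p^{1/2}$ appears because the geometric series of iteration constants $\sum_j\chi^{-j}\log((1+\beta_j)^2)$ grows at most like $\log p$; combined with the exponentials produced at each iteration step, this yields a polynomial factor in~$p$ that we have absorbed into $p^{1/2}$ by choosing~$\sigma$.

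The main obstacle, as anticipated in the discussion preceding Theorem~\ref{th:p_harmonic_appendix}, is \emph{uniformity of constants in~$p$}. Classical proofs invoke Sobolev and Caccioppoli inequalities with constants depending on~$p$ (typically blowing up as $p\to M$); our requirement is to show that, provided $p\geq p_0 > M$, every such constant stays uniformly bounded. This requires (i)~ellipticity constants that do not degenerate, which is ensured by the structure of the principal part; (ii)~a uniform Sobolev constant for the embedding $W^{1,2}\hookrightarrow L^{2\chi}$, which is a purely geometric fact; and (iii)~convergence of the Moser iteration series, which holds uniformly for $p\geq p_0$. Handling the lower-order term $B_i$ with its $|\nabla u|^p$-growth while keeping the constants uniform is the most delicate point, and requires splitting it and using the smallness of the image~$v(B_{\MM}(\bar x, r_{\MM}))$ in~$\NN$ (guaranteed by~\eqref{localcoord} and~\eqref{rM_def}) to absorb potentially bad contributions into the principal part.
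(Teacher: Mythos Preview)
Your overall strategy is in the right family, but there are two substantive issues, and the second one is where your argument diverges from the paper in a way that matters.

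First, a technical gap: you write that since $u\in C^{1,\alpha}$ you can differentiate the system and test against $\partial_k(\eta^2 e_\epsilon^\beta \partial_k u^i)$. This is not justified --- $C^{1,\alpha}$ regularity does not allow you to differentiate a degenerate quasilinear system in the classical sense, and simply regularising the energy density $e_\epsilon=|\nabla u|^2+\epsilon$ does not help, because the equation itself still degenerates where $\nabla u=0$. The paper handles this by regularising the \emph{operator}: it replaces $A$, $b$ by non-degenerate $A_\epsilon$, $b_\epsilon$ (Lemma~\ref{lemma_local_flat_approximation}) and obtains genuinely $C^2$ solutions $u_\epsilon$, proves the estimate for those, and only then passes to the limit $\epsilon\to 0$. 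Without this step your differentiation is formal.

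Second, and more importantly, you run Moser iteration on powers $e_\epsilon^\beta$, whereas the paper runs a De Giorgi iteration on level sets of $z:=(\epsilon+|\nabla u_\epsilon|^2)^{p/2}$. This is not a cosmetic difference here. The specific structure of~\eqref{est_local_regularity_flat} --- with $\|\nabla u\|_{L^\infty}^p$ appearing on both sides, one copy multiplied by the small factor $p^{1/2}r_{\MM}^{M\sigma}\|\nabla u\|_{L^{2s}}$ --- is exactly what De Giorgi's method delivers: one proves $(z-k)_+$ satisfies a Caccioppoli inequality with an inhomogeneous term $Cp\int_{A_k}z^{2+2/p}$ (Lemma~\ref{lemma:DeGiorgiclass}), bounds that term once by $Cp\|z\|_{L^\infty}^2\|\nabla u\|_{L^{2s}}^2\,\mathcal{L}^M(A_k)^{1-2/M+2\sigma}$ via H\"older, and then applies the standard De Giorgi $L^2\to L^\infty$ estimate. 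The factor $p^{1/2}$ is the square root of the $p$ coming from $\eta'(w)=\tfrac{p}{2}w^{p/2-1}$ in the test function, not from a Moser series. Your reverse H\"older inequality has an additive term $\fint|\nabla u|^{p+2\beta+2}$ of \emph{higher} order than the main term, and your proposal for closing the iteration (``terminating at a carefully chosen step'', ``tracking a factor of $\|\nabla u\|_{L^\infty}$'') is not a mechanism that Moser iteration provides; your computation of the iteration constant also gives a factor bounded uniformly in $p$, not $p^{1/2}$, so the origin of that factor in your scheme is unaccounted for.
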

In Proposition~\ref{prop:local_regularity_flat} we have set $D_{1/2}:=\phi(B_{\MM}(\overline{x}, \, r_{\MM}/2))$
and we have used~$\nabla$ to denote the gradient of~$u$ with respect to the Euclidean metric. 
In fact, since we are now left with a problem defined in a Euclidean domain, throughout the rest of this appendix we will use the symbols~$\nabla$ and~$\div$ to denote the Euclidean differential operators.
We have further simplified our original problem, which now reduces to proving Proposition~\ref{prop:local_regularity_flat}. One then derives from~\eqref{eq_weakly_pharmonic} the Euler-Lagrange equations satisfied by $u$.
\begin{lemma}\label{lemma_local_flat_weak_solution}
The map $u$ is a weak solution of
\begin{equation}\label{EL_flat}
-\mathrm{div} A( \cdot, \, u, \, \nabla u)+ b(\cdot, \, u, \, \nabla u) = 0 \hspace{1mm} \mbox{ in } D,
\end{equation}
where $A: D \times \mathbb{R}^N \times \mathbb{R}^{M \times N} \to \mathbb{R}^{M \times N}$ and $b :  D \times \mathbb{R}^N \times \mathbb{R}^{M \times N} \to \mathbb{R}^{N}$ are smooth and satisfying the following properties for all $(x, \, y, \, z) \in D \times \mathbb{R}^N \times \mathbb{R}^{M \times N}$ and $\xi \in \mathbb{R}^{M \times N}$:
\begin{enumerate}[label=(\roman*)]
\item\label{positive_definite_local_flat}\hspace{-2mm}. $\sum_{\ell,\ell'=1}^M\sum_{k,k'=1}^N \left[ A_{z_\ell^k} \right]_{\ell'}^{k'} \xi_\ell^k \xi_{\ell'}^{k'} \geq C^{-1}\lvert \xi \rvert^2 \lvert z \rvert^{p-2}$.
\item\hspace{-2mm}. $\left\lvert \left[ A_{z_\ell^k} \right]_{\ell'}^{k'} \right\rvert \leq C\lvert z \rvert^{p-2}$ for all $\ell, \ell'$ in $\{1,\ldots,M\}$ and $k,k'$ in $\{1,\ldots,N\}$.
\item\hspace{-2mm}. $\left\lvert \left[ A_{x_i} \right]_{\ell'}^{k'} \right\rvert, \left\lvert \left[ A_{y_j} \right]_{\ell'}^{k'} \right\rvert \leq C\lvert z \rvert^{p-1}$ for all $i, \ell, \ell'$ in $\{1,\ldots,M\}$ and $j, k,k'$ in $\{1,\ldots,N\}$.
\item\hspace{-2mm}. $\lvert b^h \rvert \leq C \lvert y \rvert \lvert z \rvert^p$ for all $h \in \{1,\ldots,N\}$.
\item\hspace{-2mm}. $\lvert b^h_{x_i} \rvert$, $\lvert b^h_{y_j} \rvert \leq C \lvert z \rvert^p$, $\lvert b^h_{z_\ell^k} \rvert \leq C\lvert z \rvert^{p-1}$ for all $h \in \{1,\ldots,N\}$.
\end{enumerate}
\end{lemma}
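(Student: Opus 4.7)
\textbf{Proof plan for Lemma~\ref{lemma_local_flat_weak_solution}.} The strategy is to unfold the $p$-energy of $v$ in local coordinates and to recognize the result as a classical quasilinear functional of the unconstrained map $u$. Let $(g_{ij})$ denote the metric of $\MM$ read in the chart $\phi$, with inverse $(g^{ij})$; all entries of $g$, $g^{-1}$ and $\sqrt{\det g}$ are smooth and uniformly bounded above and below on $D$ by constants depending only on $\MM$. Since $\bar v = v\circ\phi^{-1} = (u,f\circ u)$, differentiation gives
\[
\partial_i \bar v = \bigl(\partial_i u,\, (\nabla f)(u)\,\partial_i u\bigr),
\]
and setting $G^{kk'}(y) := \delta_{kk'} + \partial_k f(y)\cdot \partial_{k'} f(y)$, we can rewrite the $p$-energy restricted to $B_{\MM}(\bar{x},r_{\MM})$ as $\int_D F(x,u,\nabla u)\,\d x$ with
\[
F(x,y,z) := \bigl(g^{ij}(x)\, G^{kk'}(y)\, z_i^k z_j^{k'}\bigr)^{p/2}\sqrt{\det g(x)}.
\]
Because $|\nabla^2 f|\leq 1/4$, the matrix $G(y)$ is uniformly elliptic on $B^N$ with constants depending only on $\NN$, so $g^{ij}G^{kk'}z_i^k z_j^{k'}\sim |z|^2$ uniformly.

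\textbf{Deriving the Euler--Lagrange system.} Given $\psi\in \mathscr{C}^1_c(D,\R^N)$, define the tangential test field $\tilde\psi := \bigl((\psi,\,\nabla f(u)\psi)\bigr)\circ\phi$, extended by $0$ outside $B_{\MM}(\bar x,r_{\MM})$. By construction $\tilde\psi(x)\in T_{v(x)}\NN$ wherever $v(x)\in B_{\NN}(\bar y,r_{\NN})$, hence $\Pi_{\NN}(v+t\tilde\psi) = (u+t\psi,\, f(u+t\psi)) + O(t^2)$ in local coordinates. Substituting into~\eqref{eq_weakly_pharmonic} and changing variables yields
\[
\frac{\d}{\d t}\bigg|_{t=0}\int_D F(x,\,u+t\psi,\,\nabla u + t\nabla\psi)\,\d x = 0,
\]
which is exactly~\eqref{EL_flat} with $A := F_z$ and $b := F_y$. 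Explicitly, writing $\rho(x,y,z):= g^{ij}(x)G^{kk'}(y)z_i^k z_j^{k'}$,
\[
A^{k'}_{\ell'}(x,y,z) = p\,G^{k'k''}(y)\,g^{\ell' j}(x)\,z_j^{k''}\,\rho^{\frac{p-2}{2}}\sqrt{\det g(x)}, \qquad
b^h(x,y,z) = \tfrac{p}{2}\,\partial_{y_h}\!G^{kk'}(y)\,g^{ij}(x) z_i^k z_j^{k'}\,\rho^{\frac{p-2}{2}}\sqrt{\det g(x)}.
\]

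\textbf{Verification of the structural properties.} Property~\ref{positive_definite_local_flat} (ellipticity) follows from direct differentiation of $A$, which produces a principal part $G^{k'k}g^{\ell'\ell}\rho^{(p-2)/2}$ plus a non-negative rank-one correction with coefficient $(p-2)$; combined with the uniform ellipticity of $g$, $G$ and the equivalence $\rho\sim|z|^2$, this yields the lower bound $C^{-1}|\xi|^2|z|^{p-2}$. Property~(ii) is the corresponding upper bound, obtained from the same computation by uniform boundedness of $g$, $G$, $\sqrt{\det g}$. Property~(iii) follows because $\partial_x g$, $\partial_x\sqrt{\det g}$ and $\partial_y G$ are bounded (pointwise) by constants depending only on the smooth charts, giving coefficients of size $|z|^{p-1}$. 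For property~(iv), the crucial point is that $\nabla f(0)=0$ together with $|\nabla^2 f|\leq 1/4$ imply $|\nabla f(y)|\leq |y|/4$, hence $|\partial_y G(y)| = |2\,\nabla^2 f(y)\cdot \nabla f(y)| \leq |y|/2$, so $|b^h|\leq C|y||z|^p$ with a constant independent of $p\geq p_0$. Property~(v) is obtained by differentiating once more in $(x,y,z)$; each derivative of $G$ or $g$ introduces a uniformly bounded factor, giving the claimed bounds $|z|^p$, $|z|^p$ and $|z|^{p-1}$.

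\textbf{Main obstacle.} The only genuinely non-mechanical point is obtaining the gain $|y|$ in property~(iv): without it, one would have $|b^h|\leq C|z|^p$, which is insufficient for the iteration scheme inspired by~\cite{HardtLin-Minimizing,DiBenedetto,Tolksdorf}. This gain is bought precisely by choosing coordinates adapted to the manifold $\NN$ (graph over the tangent space at $\bar y$), so that the pulled-back metric $G$ satisfies $G(0)=I$ and $\partial_y G(0)=0$.
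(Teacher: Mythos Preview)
Your approach is essentially the paper's: write the local energy as $\int_D (g^{ij}G^{kk'}z_i^kz_j^{k'})^{p/2}\sqrt{\det g}$, test with the tangential variation $(\psi,\nabla f(u)\psi)$ so that $\Pi_\NN(\bar v+t\bar\varphi)=(u+t\psi,f(u+t\psi))+o(t)$, and read off $A=F_z$, $b=F_y$; the paper does exactly this, only with the metric hidden inside the symbol $\nabla_\MM$ and the volume element written as $|\det\nabla(\phi^{-1})|$. Your explanation of the crucial $|y|$ gain in~(iv) via $\nabla f(0)=0$ and $|\nabla^2 f|\leq 1/4$ is precisely the mechanism the paper relies on but does not spell out.

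Two small points. First, your formulas for $A$ and $b$ carry an overall factor~$p$ from differentiating $\rho^{p/2}$; the paper's $A$ and $b$ are yours divided by~$p$ (compare their $A=G^{(p-2)/p}(z+(\nabla f)^T\nabla f\,z)|\det\nabla\phi^{-1}|$), which is harmless since the equation is homogeneous but matters for the convention that $C$ depend only on $\MM,\NN,p_0$. Second, your claim that the rank-one correction in~(i) is ``non-negative'' is only true for $p\geq 2$; when $M=1$ and $p\in(1,2)$ it is negative, but ellipticity still holds with constant $p-1\geq p_0-1>0$ by the standard computation $|z|^{p-2}|\xi|^2+(p-2)|z|^{p-4}(z\cdot\xi)^2\geq (p-1)|z|^{p-2}|\xi|^2$.
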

\begin{proof}
In this proof, we denote by~$\nabla_{\MM}$ the Riemannian gradient induced by the metric on~$\MM$, to distinguish it from its Euclidean counterpart~$\nabla$. We recall that~$\nabla_{\MM}$ can be expressed in terms of~$\nabla$ and the metric coefficients, which are smooth and satisfy uniform bounds because of~\eqref{bounds_phi}.
From~\eqref{eq_weakly_pharmonic} we get that
\begin{equation}\label{eq_weakly_pharmonic_change}
\left. \frac{\mathrm{d}}{\mathrm{d}t} \right|_{t=0}\int_D \lvert \nabla_{\MM} ( \Pi( \overline{v}+t \overline{\varphi})) \rvert^p \lvert \mathrm{det} \nabla(\phi^{-1}) \rvert=0,
\end{equation}
for all $\overline{\varphi} \in \mathscr{C}^\infty_c(D, \, \mathbb{R}^m)$. Take $\varphi:=(\psi, \, \nabla f(u)\psi )$ for $\psi \in \mathscr{C}^\infty_c(D, \, B^N)$. Then,
\begin{align}\label{expansion_pi_pharmonic}
\Pi( \overline{v}+t \overline{\varphi})& = \Pi( (u+t\psi, \, f(u)+t\nabla f(u)\psi))= \Pi((u+t\psi, \, f(u+t\psi))+o(t)) \nonumber \\&=\Pi((u+t\psi, \, f(u+t\psi)))+o(t)=(u+t\psi, \, f(u+t\psi))+o(t).
\end{align}
Consider the smooth positive function $G$ defined for $(y, \, z) \in B^N \times \mathbb{R}^{M \times N}$ as
\begin{equation*}
G(y, \, z):=\left(\lvert z \rvert^2+\lvert \nabla f(y)z \rvert^2 \right)^{\frac{p}{2}}.
\end{equation*}
Then, by plugging~\eqref{expansion_pi_pharmonic} into~\eqref{eq_weakly_pharmonic_change} it follows
\begin{equation*}
\left.\frac{\mathrm{d}}{\mathrm{d}t}\right|_{t=0} \int_D G(u+t \, \psi, \nabla_{\MM} u + t \, \nabla_{\MM} \psi) \lvert \mathrm{det} \nabla (\phi^{-1}) \rvert = 0.
\end{equation*}
The Euler-Lagrange system~\eqref{EL_flat} is then obtained after some computations with
\begin{equation*}
A(x, \, y, \, z):= G^{(p-2)/p}(y, \, z)\left(z+(\nabla f(y))^T(\nabla f(y))z \right)\lvert \mathrm{det}\nabla (\phi^{-1}(x))\rvert
\end{equation*}
and
\begin{equation*}
b(x, \, y, \, z):= G^{(p-2)/p}(y, \, z) \left(\nabla^2f(y)z \cdot \nabla f(y)z \right)\lvert \mathrm{det}\nabla (\phi^{-1}(x))\rvert.
\end{equation*}
The fact that $A$ and $b$ satisfy the required properties is verified after performing the necessary computations, which we skip. Let us simply mention that for proving~\ref{positive_definite_local_flat} one uses~\eqref{bounds_phi} and the fact that $\lvert \nabla^2 f \rvert \leq 1/4<1$.
\end{proof}
In the sequel, we shall follow Einstein's summation convention and omit the sets in which the indices run in order to lighten the notations. By Lemma~\ref{lemma_local_flat_weak_solution}, we know that $u \in \mathscr{C}^{1, \alpha}(D_{1/2})$ due to the classical regularity results. However, we will need to carry out computations which involve the second-order partial derivatives of $u$. The standard procedure (\cite{HardtLin-Minimizing, DiBenedetto}) consists of performing suitable regularizations of $A$ and $b$ and obtain the estimates for the solutions of the corresponding regularized problems. 
\begin{lemma}\label{lemma_local_flat_approximation}
For all $\varepsilon>0$, there exist $A_\varepsilon$ and $b_\varepsilon$  mapping $D \times \mathbb{R}^N \times \mathbb{R}^{M \times N}$ into $\mathbb{R}^{M \times N}$ and $\mathbb{R}^{N}$ respectively such that:
\begin{enumerate}[label=(\roman*$_\varepsilon$)]
\item\hspace{-2mm}. \label{enum1_eps} $\left[A_{\varepsilon z_{\ell}^k}\right]_{\ell'}^{k'} \geq C^{-1}\lvert \xi \rvert^2\left( \varepsilon+\lvert z \rvert^2 \right)^{(p-2)/2}$,
\item\hspace{-2mm}. \label{enum2_eps} $\left\lvert \left[ A_{\varepsilon z_{\ell}^k}\right]_{\ell'}^{k'} \right\rvert \leq C\left(\varepsilon+\lvert z\rvert^2 \right)^{(p-2)/2}$,
\item\hspace{-2mm}. \label{enum3_eps} $\left\lvert \left[ A_{\varepsilon x_i} \right]_{\ell'}^{k'} \right\rvert$, $\left\lvert \left[ A_{\varepsilon y_j} \right]_{\ell'}^{k'} \right\rvert \leq C \left(\varepsilon+\lvert z \rvert^2 \right)^{(p-1)/2}$,
\item \hspace{-2mm}.\label{enum4_eps} $\lvert b_\varepsilon^h \rvert \leq C\left(\varepsilon+\lvert z \rvert^2 \right)^{p/2}$,
\end{enumerate}
and, one can find a family $(u_\varepsilon)_{\varepsilon>0}$ such that for each $\varepsilon>0$ and for some $\alpha>0$ independent on $\varepsilon$, $u_\varepsilon$ is of class $C^2$ in $D$ and of class $C^{0,\alpha}$ on $\overline{D}$, as well as a classical solution of
\begin{equation}\label{EL_flat_epsilon}
-\mathrm{div} A_\varepsilon( \cdot, \, u_\varepsilon, \, \nabla u_\varepsilon)+ b_\varepsilon(\cdot, \, u_\varepsilon, \, \nabla u_\varepsilon) = 0 \hspace{1mm} \mbox{ in } D.
\end{equation}
Moreover, $\sup_{\varepsilon>0}\lVert u_\varepsilon \rVert_{L^\infty(D)} \leq \lVert u \rVert_{L^\infty(D)}$, $u_\varepsilon \to u$ and $\nabla u_\varepsilon \to \nabla u$ locally uniformly on $D$ and $u_\varepsilon \rightharpoonup u$ weakly in $W^{1,p}(D, \, \mathbb{R}^N)$, as $\varepsilon \to 0$.
\end{lemma}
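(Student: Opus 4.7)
The proof proceeds in three steps. First, I regularize the coefficients from Lemma~\ref{lemma_local_flat_weak_solution} by inserting an $\varepsilon$-perturbation into the underlying density. After a smooth extension of $f$ beyond $B^N$ (harmless since $u$ takes values in $B^N$), set
\[
 G_\varepsilon(y,z) := \bigl(\varepsilon + |z|^2 + |\nabla f(y)z|^2\bigr)^{p/2},
\]
and define $A_\varepsilon$, $b_\varepsilon$ by the same formulas as $A$, $b$ from the proof of Lemma~\ref{lemma_local_flat_weak_solution}, but with $G$ replaced by $G_\varepsilon$. Properties (i$_\varepsilon$)--(iv$_\varepsilon$) follow by a direct computation: the ellipticity (i$_\varepsilon$) combines the matrix lower bound $I + (\nabla f)^T\nabla f \geq I$ with the equivalence $\varepsilon + |z|^2 \leq \varepsilon + |z|^2 + |\nabla f(y)z|^2 \leq C(\varepsilon + |z|^2)$; the growth bounds (ii$_\varepsilon$)--(iv$_\varepsilon$) come from the chain rule applied to the power $(p-2)/p$, using the boundedness of $\nabla f$, $\nabla^2 f$ and $|\det\nabla\phi^{-1}|$.

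Second, I construct $u_\varepsilon$ as the unique minimizer of the regularized energy
\[
 \mathcal{F}_\varepsilon(w) := \int_D G_\varepsilon(w, \nabla w)\,|\det\nabla\phi^{-1}|\,dx
\]
over the Dirichlet class $u + W^{1,p}_0(D, \mathbb{R}^N)$; existence and uniqueness come from the direct method (coercivity, weak lower semicontinuity, strict convexity in $\nabla w$), and the Euler--Lagrange computation shows that $u_\varepsilon$ weakly solves~\eqref{EL_flat_epsilon}. The bound $\|u_\varepsilon\|_\infty \leq \|u\|_\infty$ is obtained by comparing $u_\varepsilon$ with its composition with the $1$-Lipschitz radial retraction onto $\overline{B}^N_{\|u\|_\infty}$: this retraction preserves the boundary trace and does not increase $\mathcal{F}_\varepsilon$ (since $G_\varepsilon(y,z)$ is monotone in $|z|^2 + |\nabla f(y)z|^2$), so uniqueness forces it to act as the identity on $u_\varepsilon$. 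Interior regularity follows from the classical theory for quasilinear elliptic systems with uniformly non-degenerate structure \cite{DiBenedetto, Uhlenbeck, Tolksdorf}, giving $u_\varepsilon \in C^{1,\alpha}_{\mathrm{loc}}(D)$ and then, by Schauder bootstrap, $u_\varepsilon \in C^2(D)$. Hölder continuity on $\overline D$ comes from boundary regularity theory, using that $u|_{\partial D}$ is itself Hölder by Morrey's embedding.

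Third, the convergence follows from the uniform bound $\mathcal{F}_\varepsilon(u_\varepsilon) \leq \mathcal{F}_\varepsilon(u) \leq \mathcal{F}(u) + C\varepsilon^{p/2}|D|$, which yields uniform $W^{1,p}$-control. Any weak limit $u^*$ satisfies $u^* - u \in W^{1,p}_0(D, \mathbb{R}^N)$ and minimizes $\mathcal{F}$ in this class; since Proposition~\ref{prop:local_regularity_flat} is applied on a ball whose radius $r_{\MM}$ is chosen so that $u$ takes values in a geodesically convex neighborhood (making the Dirichlet problem for $\mathcal{F}$ locally uniquely solvable), strict convexity forces $u^* = u$, and the whole net converges weakly to $u$. \emph{The main obstacle} is to upgrade this to locally uniform convergence of the gradients: this requires interior $C^{1,\alpha}_{\mathrm{loc}}$-estimates that are \emph{uniform} in $\varepsilon \in (0, 1]$, and not merely for each fixed $\varepsilon$. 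Such uniformity is extracted by careful inspection of the arguments in \cite{DiBenedetto, Tolksdorf}: the structural constants appearing there depend only on the growth and ellipticity bounds of (i$_\varepsilon$)--(iv$_\varepsilon$) and on $\|u_\varepsilon\|_\infty$, all under uniform control, and crucially no dependence on $\varepsilon^{-1}$ enters. Ascoli--Arzelà compactness together with the identification $u^* = u$ then delivers the locally uniform convergence $u_\varepsilon \to u$ and $\nabla u_\varepsilon \to \nabla u$.
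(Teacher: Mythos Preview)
Your proposal follows the standard regularization scheme from \cite[Section~2]{DiBenedetto}, which is precisely what the paper invokes as its proof (the paper gives no argument beyond that reference), so the overall approaches coincide and the three-step outline---regularize $G$ to $G_\varepsilon$, minimize the regularized Dirichlet problem, pass to the limit via uniform interior $C^{1,\alpha}$-estimates---is correct.

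One step, however, is incomplete as written: the $L^\infty$-bound $\|u_\varepsilon\|_\infty\le\|u\|_\infty$ via the radial retraction $T$ onto $\overline{B}^N_{\|u\|_\infty}$. You justify it by saying that ``$G_\varepsilon(y,z)$ is monotone in $|z|^2+|\nabla f(y)z|^2$'', but the retraction changes \emph{both} arguments simultaneously: what you actually need is
\[
 |DT(y)z|^2 + |\nabla f(T(y))\,DT(y)z|^2 \ \le\ |z|^2 + |\nabla f(y)z|^2,
\]
and for $|y|$ only slightly larger than $\|u\|_\infty$ this does not follow from $|DT(y)|\le 1$ alone, since $\nabla f(T(y))$ and $\nabla f(y)$ are unrelated for a generic smooth extension of $f$ beyond $B^N$. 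The gap is repairable---for instance by choosing the extension so that $|\nabla f|$ is radially nondecreasing outside $B^N_{\|u\|_\infty}$, by using a continuous one-parameter family of retractions, or by deriving the bound directly from a weak maximum principle for $|u_\varepsilon|^2$ using the system~\eqref{EL_flat_epsilon} and the structure conditions (i$_\varepsilon$)--(iv$_\varepsilon$)---but your one-line justification does not close it.
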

The proof of Lemma~\ref{lemma_local_flat_approximation} follows by reproducing verbatim the arguments in~\cite[Section 2]{DiBenedetto}. We shall prove that the regularized solutions satisfy the following inequality.
For convenience, we define
\begin{equation} \label{z_eps}
 z_\eps := \left(\eps + \abs{\nabla u_\eps}^2\right)^{\frac{p}{2}}
\end{equation}
We also set~$D_{3/4} := \phi(B_{\MM}(\overline{x}, \, 3r_{\MM}/4))$. 
Lemma~\ref{lemma_local_flat_approximation} implies that~$z_\eps \to\ \abs{\nabla u}^p$ uniformly in~$D_{3/4}$.
\begin{prop}\label{prop:local_regularity_flat_eps}
For any~$\sigma \in (0, \, 1/M)$ there exists a constant~$C_\sigma$, depending only on~$\MM$, $\NN$, $p_0$ and~$\sigma$, that satisfies
\begin{equation}\label{est_local_regularity_flat_eps}
\norm{z_\eps}_{L^\infty(D_{1/2})}
\leq C_\sigma p^{1/2} \norm{z_\eps}_{L^\infty(D_{3/4})}
\left(\eps^{1/2} + \norm{\nabla u_\eps}_{L^{2s}(D_{3/4})} \right)r_{\MM}^{M\sigma}
+ C_\sigma r_{\MM}^{-M/2} \norm{z_\eps}_{L^{2}(D_{3/4})} ,
\end{equation}
with~$s = s_\sigma$ as in~\eqref{s_sigma}.
\end{prop}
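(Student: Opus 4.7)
\textbf{Proof sketch of Proposition~\ref{prop:local_regularity_flat_eps}.} My plan is to run a Moser iteration for the regularized system~\eqref{EL_flat_epsilon}, following the Tolksdorf--DiBenedetto--Hardt--Lin scheme, but carefully tracking every constant to keep it uniform in $p\geq p_0>M$. Since $u_\eps\in C^2(D)$ by Lemma~\ref{lemma_local_flat_approximation}, all the manipulations below are rigorously justified, and at the very end the uniform convergence $\nabla u_\eps\to\nabla u$ on compact subsets will let me pass to the limit $\eps\to 0$ to deduce Proposition~\ref{prop:local_regularity_flat}.

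First, I would differentiate~\eqref{EL_flat_epsilon} with respect to $x_h$, sum over $h$, and test against $\varphi_h := \partial_h u_\eps\, \zeta^2\,\Phi(z_\eps)$ for an increasing truncation $\Phi$ and a cutoff $\zeta\in C^\infty_c(D_{3/4})$ equal to $1$ on $D_{1/2}$ with $|\nabla\zeta|\lesssim r_{\MM}^{-1}$. The ellipticity assumption (i$_\eps$) controls from below the good quadratic term
\[
\int \zeta^2\,\Phi(z_\eps)\,(\eps+|\nabla u_\eps|^2)^{(p-2)/2}|\nabla^2 u_\eps|^2,
\]
while (ii$_\eps$)--(iii$_\eps$) produce commutator contributions bounded by $\int |\nabla\zeta|^2\,\Phi(z_\eps)\,z_\eps^{2/p}$ plus lower-order pieces scaling like $z_\eps^{(p-1)/p}$. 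The contribution of $b_\eps$, controlled through (iv$_\eps$), yields a term of the form $\int \zeta^2\,\Phi(z_\eps)\,z_\eps$ which is \emph{not} absorbable into the ellipticity. Choosing $\Phi(t)=t^{q}$ and iterating the resulting Caccioppoli-type inequality via the Sobolev embedding $W^{1,2}\hookrightarrow L^{2M/(M-2)}$ on Euclidean balls of size $\sim r_{\MM}$ produces a geometric sequence of exponents along which the $L^\infty$-norm of $z_\eps$ on $D_{1/2}$ is bounded by an initial $L^2$-quantity plus the accumulated $b_\eps$-contribution.

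After the iteration converges, I expect an estimate of the schematic form
\[
\|z_\eps\|_{L^\infty(D_{1/2})} \lesssim r_{\MM}^{-M/2}\,\|z_\eps\|_{L^{2}(D_{3/4})} + \mathcal{R},
\]
where $\mathcal{R}$ collects the terms originating from $b_\eps$. To reshape $\mathcal{R}$ into the form stated in~\eqref{est_local_regularity_flat_eps}, I would factor out $\|z_\eps\|_{L^\infty(D_{3/4})}$ and apply H\"older's inequality with exponent $2s_\sigma=M/(1-\sigma M)$ to the remaining $|\nabla u_\eps|$-factors; the dual exponent yields the volume factor $|D_{3/4}|^\sigma\sim r_{\MM}^{M\sigma}$ in front of $\|\nabla u_\eps\|_{L^{2s}(D_{3/4})}$, together with the separated $\eps^{1/2}$-term coming from $(\eps+|\nabla u_\eps|^2)^{1/2}\leq \eps^{1/2}+|\nabla u_\eps|$.

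The hard part will be ensuring that every constant is bounded uniformly as $p$ varies over $[p_0,+\infty)$. In the classical references, constants are typically written as $C(p)$ and blow up at $p=M$; here the lower bound $p\geq p_0>M$ keeps the Sobolev constants on Euclidean balls uniform, the ellipticity ratio in (i$_\eps$)--(ii$_\eps$) is dimensionless by construction, and the geometric-series factors of the Moser iteration (involving products of $(M/(M-2))^j$ and inverse differences of consecutive exponents $q_j$) must be summed while verifying that the base exponent grows proportionally to $p$ rather than worse. The polynomial prefactor $p^{1/2}$ in~\eqref{est_local_regularity_flat_eps} is precisely what one must allow as $p\to\infty$, and I expect it to arise from the number of iteration steps required to reach $L^\infty$ starting from an $L^{2p}$-initialization of the scheme, each step contributing a controlled algebraic factor in $p$. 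Once this arithmetic bookkeeping is complete, Proposition~\ref{prop:local_regularity_flat_eps} follows, and the $\eps\to 0$ limit via Lemma~\ref{lemma_local_flat_approximation} yields Proposition~\ref{prop:local_regularity_flat}, hence Proposition~\ref{prop:local_regularity}.
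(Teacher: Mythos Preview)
Your overall strategy---differentiate the regularized system, test with $\partial_h u_\eps\,\zeta^2\,\Phi(z_\eps)$, and derive a Caccioppoli inequality---matches the paper's first step (Lemma~\ref{lemma:I}). However, the paper does \emph{not} run a Moser iteration. Instead, after the generic Caccioppoli inequality~\eqref{ineq_integral_I}, it specializes to the level-set truncation $\eta(t)=(t^{p/2}-k)_+$ (Lemma~\ref{lemma:DeGiorgiclass}), obtains a De~Giorgi-type inequality for $(z_\eps-k)_+$, pulls out $\|z_\eps\|_{L^\infty(D_{3/4})}^2$ from the bad term exactly as you suggest, and then applies the De~Giorgi level-set iteration as a black box. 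The practical advantage is that once~\eqref{DeGiorgiclass} is rewritten in the standard form~\eqref{DeGiorgiclassbis}, the De~Giorgi machine depends only on the dimension and on the exponent~$\sigma$, not on~$p$; all the $p$-tracking is therefore confined to a single Caccioppoli step rather than spread across an infinite iteration.

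Your account of where the factor $p^{1/2}$ comes from is off, and this is the one genuine gap in the sketch. It does \emph{not} arise from counting Moser steps (there are infinitely many, and their product of constants converges independently of~$p$). In the paper's argument the factor~$p$ enters because the truncation $\eta(t)=(t^{p/2}-k)_+$ satisfies $\eta'(w)=(p/2)\,w^{p/2-1}$, so every term in~\eqref{ineq_integral_I} containing~$\eta'$ carries an explicit factor of~$p$; this is what produces $Cp\int_{A_k}z^{2+2/p}$ on the right of~\eqref{DeGiorgiclass}. After the H\"older step you describe, the coefficient of the bad term in~\eqref{DeGiorgiclassbis} is $Cp\,\|z\|_{L^\infty}^2(\eps+\|\nabla u\|_{L^{2s}}^2)$, and De~Giorgi's theorem outputs its square root---hence~$p^{1/2}$. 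If you insist on Moser, the analogous~$p$-dependence would enter through the interaction of $\Phi'(z_\eps)$ with the critical-growth term~$b_\eps$ at each level of the iteration, and you would need to sum the resulting contributions explicitly; your sketch does not do this, so the claimed uniformity in~$p$ is not yet established.
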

One readily observes that Proposition~\ref{prop:local_regularity_flat} follows from Proposition~\ref{prop:local_regularity_flat_eps} due to Lemma~\ref{lemma_local_flat_approximation} and, hence, so does Proposition~\ref{prop:local_regularity}. Hence, the rest of this sub-subsection will be devoted to proving Proposition~\ref{prop:local_regularity_flat_eps} by reproducing the corresponding steps in~\cite{DiBenedetto} while checking that there is no dependence on $p$ for the constants that appear. Let us fix $\varepsilon>0$. For simplicity, we drop the subscript~$\eps$ and write~$u$, $z$, $A$, $b$ instead of~$u_\eps$, $z_\eps$, $A_\eps$, $b_\eps$. Set $w:= \varepsilon+\lvert \nabla u \rvert^2 = z^{2/p}$ and  $\alpha=1-M/p$, as above. Let~$B^M(x, \, r)$ be a ball contained in~$D$, $\eta\colon [0, \, +\infty)\to[0, \, +\infty)$ a nonnegative and nondecreasing Lipschitz function, and~$\zeta\in\mathscr{C}^{\infty}_c(B^M(x, \, r))$ a cut-off function (specific choices will be made later). 
We consider the quantity
\begin{equation}\label{integral_I}
 I_{x, \, r}:= \int_{B^M(x, \, r)} w^{(p-2)/2} \left(\left(\sum_{i=1}^M \lvert \nabla u_{x_i}\rvert^2 \right) \eta(w)+\lvert \nabla w \rvert^2\eta'(w)\right)\zeta^2.
\end{equation}
We can provide an estimate for~$I_{x,r}$ using Equation~\eqref{EL_flat_epsilon}.
\begin{lemma} \label{lemma:I}
 For any~$x$, $r$, $\eta$ and~$\zeta$ as above, we have
 \begin{equation}\label{ineq_integral_I}
  \begin{split}
   I_{x, \, r} &\leq C\int_{B^M(x,\,r)}w^{(p-2)/2}\left\lvert \sum_{\ell=1}^M u_{x_i}u_{x_ix_{\ell}} \right\rvert \eta(w)\zeta \lvert \nabla \zeta \rvert +C\int_{B^M(x,\,r)}w^{p/2}\eta(w)\zeta^2\\ 
   &+C\int_{B^M(x,\,r)}\left(w^{p/2} + w^{(p+1)/2}\right)\eta(w)\zeta\lvert \nabla \zeta \rvert \\
   &+ C\int_{B^M(x,\,r)}w^{(p+2)/2}(\eta(w)+(1+\sqrt{w})^2\eta'(w))\zeta^2 .
  \end{split}
 \end{equation}
\end{lemma}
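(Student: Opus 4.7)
The strategy is the standard one in the DiBenedetto--Tolksdorf tradition: differentiate the regularized equation \eqref{EL_flat_epsilon} in the direction $x_i$, which is legitimate because $u = u_\varepsilon$ is of class $C^2$ by Lemma~\ref{lemma_local_flat_approximation}, then test the resulting equation against $u_{x_i}^k\eta(w)\zeta^2$ (with implicit summation over $i\in\{1,\dots,M\}$ and $k\in\{1,\dots,N\}$) and integrate by parts the divergence term on $B^M(x,r)$. The boundary terms vanish thanks to the cut-off $\zeta$. We obtain an integral identity in which the contributions coming from the $z$-derivatives of $A$ are ``good'' and will combine to control $I_{x,r}$ from below, while all the remaining contributions are ``bad'' and will produce the right-hand side of \eqref{ineq_integral_I}.

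The key step is to extract $I_{x,r}$ from the good terms. When we expand $\partial_\ell[u_{x_i}^k\eta(w)\zeta^2]$, the pairing with $[A_{z_s^j}]_\ell^k u_{x_ix_s}^j$ (the $z$-derivative part of the differentiated flux) yields three contributions. The first, paired with $u_{x_ix_\ell}^k\eta(w)\zeta^2$, is bounded below by $C^{-1}\int w^{(p-2)/2}\bigl(\sum_i|\nabla u_{x_i}|^2\bigr)\eta(w)\zeta^2$ via the ellipticity bound \ref{enum1_eps}. The second, paired with $u_{x_i}^k\eta'(w)w_{x_\ell}\zeta^2$, is handled by recognising the algebraic identity $w_{x_s}=2u_{x_m}^j u_{x_mx_s}^j$, writing the integrand as $[A_{z_s^j}]_\ell^k \xi_s^j \xi_\ell^k$ for a suitable symmetric tensor $\xi$ involving $w_{x_s}$ and $u_{x_i}^k$, and invoking \ref{enum1_eps} once more; this dominates (up to nonnegative terms that are discarded) a multiple of $\int w^{(p-2)/2}|\nabla w|^2\eta'(w)\zeta^2$. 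Summed, these contributions control $C^{-1}I_{x,r}$ from below.

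The bad terms are estimated one by one using \ref{enum2_eps}, \ref{enum3_eps}, \ref{enum4_eps} and the analogous derivative bounds on $b_\varepsilon$ (inherited from property (v) of Lemma~\ref{lemma_local_flat_weak_solution} through the regularization), together with Young's inequality to absorb half of the principal term into the left-hand side. Schematically:
\begin{enumerate}[label=(\alph*)]
\item Terms where $[A_\ell^k]_{x_i}$ or $[A_\ell^k]_{y_j}u_{x_i}^j$ is paired with $u_{x_ix_\ell}^k\eta(w)\zeta^2$ give, after Young's inequality, a contribution of the form $\int(w^{p/2}+w^{(p+2)/2})\eta(w)\zeta^2$, i.e. the second and the $\eta$-part of the fourth integral on the right-hand side of \eqref{ineq_integral_I}.
\item The same factors paired with $u_{x_i}^k\eta'(w)w_{x_\ell}\zeta^2$ are bounded, again by Young, by a small fraction of the $|\nabla w|^2\eta'(w)$-part of $I_{x,r}$ plus $\int w^{(p+2)/2}(1+\sqrt{w})^2\eta'(w)\zeta^2$, producing the $\eta'$-part of the fourth integral on the right.
\item Terms where any $A$-derivative is paired with the $\zeta$-derivative $u_{x_i}^k\eta(w)\,2\zeta\zeta_{x_\ell}$ give the first and third integrals on the right, after using \ref{enum2_eps}, \ref{enum3_eps} and collecting powers of $w$.
\item The $b$-terms, treated via \ref{enum4_eps} and the derivative bounds on $b_\varepsilon$, are absorbed into $\int w^{p/2}\eta(w)\zeta^2$.
\end{enumerate}

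The main technical obstacle is step of extracting the $|\nabla w|^2\eta'(w)$ part of $I_{x,r}$: for the pure $p$-Laplacian model $A_\ell^k=w^{(p-2)/2}u_{x_\ell}^k$ an explicit computation gives $[A_{z_s^j}]_\ell^k u_{x_ix_s}^j u_{x_i}^k w_{x_\ell} = \frac12 w^{(p-2)/2}|\nabla w|^2 + (p{-}2)w^{(p-4)/2}\bigl|(\nabla u)^\top\nabla w/2\bigr|^2$, both of which are nonnegative and the first yields exactly half of the desired term; for a general $A$ one uses the polarization/ellipticity argument described above, with constants depending only on the uniform structural bounds \ref{enum1_eps}--\ref{enum3_eps}, hence independent of $p$ as required for the subsequent application.
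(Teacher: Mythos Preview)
Your proposal is correct and follows essentially the same DiBenedetto--Tolksdorf route as the paper: differentiate the regularized equation in~$x_i$, test against $u_{x_i}\eta(w)\zeta^2$, use the ellipticity bound~\ref{enum1_eps} on the $A_z$-terms to extract $I_{x,r}$ from below, and control all remaining terms with~\ref{enum2_eps}--\ref{enum4_eps} and Young's inequality.

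The one noteworthy difference concerns the $b$-term. You propose to use derivative bounds on $b_\varepsilon$ ``inherited from property~(v) of Lemma~\ref{lemma_local_flat_weak_solution} through the regularization''; however, Lemma~\ref{lemma_local_flat_approximation} as stated only records~\ref{enum1_eps}--\ref{enum4_eps}, not a regularized analogue of~(v). The paper avoids this issue by integrating by parts a \emph{second} time in the $b$-term: instead of estimating $\int \partial_{x_i}(b)\cdot\varphi$, it writes $-\int b\cdot\varphi_{x_i}$ and then uses only the zeroth-order bound~\ref{enum4_eps} on $|b_\varepsilon|$. This yields directly
\[
\Big|\int b\cdot\varphi_{x_i}\Big| \leq \delta I_{x,r} + C\delta^{-1}\!\int w^{(p+2)/2}\bigl(\eta(w)+w\eta'(w)\bigr)\zeta^2 + C\!\int w^{(p+1)/2}\eta(w)\zeta|\nabla\zeta|,
\]
with no first-derivative information on $b_\varepsilon$ required. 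Your approach would of course work too once the missing bound is added to the approximation lemma, but the paper's integration-by-parts trick is both cleaner and uses only what is actually written down.
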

\begin{proof}
We begin by fixing $i \in \{1, \ldots, M\}$. By differentiating~\eqref{EL_flat_epsilon} with respect to $x_i$, we obtain
\begin{equation}\label{EL_flat_epsilon_i}
-\mathrm{div}\left(A_{x_i}+\sum_{j=1}^NA_{y_j}u_{x_i}+A_{z_{\ell}^k}u_{x_ix_{\ell}}^k \right)+ \frac{\partial}{\partial x_i}(b)=0
\end{equation}
where we have dropped the arguments of $A$ and $b$. Let $\varphi \in \mathscr{C}^\infty_c(D, \, \mathbb{R}^N)$. By scalar multiplying~\eqref{EL_flat_epsilon_i} by $\varphi$, integrating on $D$ and then integrating by parts, one gets
\begin{equation}\label{int_flat_epsilon_i}
\int_{D}\left(A_{x_i}+\sum_{j=1}^NA_{y_j}u_{x_i}+A_{z_{\ell}^k}u_{x_ix_{\ell}}^k \right)\cdot \nabla \varphi- \int_D b \cdot \varphi_{x_i}=0.
\end{equation}
We make the choice~$\varphi:= u_{x_i}\eta(w)\zeta^2$ and study each term of~\eqref{int_flat_epsilon_i} separately.
We begin by noticing that
\begin{align*}
\int_{B^M(x, \, r)}&A_{z_{\ell}^k}u_{x_ix_{\ell}}^k \cdot \nabla \varphi =\int_{B^M(x, \, r)} A_{z_{\ell}^k}u_{x_ix_{\ell}}^k \cdot \left( \nabla u_{x_i}\eta(w)\zeta^2+u_{x_i}\eta'(w)\nabla w+2u_{x_i}\eta(w)\zeta\nabla \zeta \right)\\
&=\int_{B^M(x, \, r)} \left[ A_{z_{\ell}^k} \right]_{\ell'}^{k'}u_{x_i x_{\ell}}^k u_{x_{\ell'} x_i}^{k'} (\eta(w)+2\eta'(w)) \zeta^2+\int_{B^M(x, \, r)} A_{z_{\ell}^k}\cdot u_{x_i x_{\ell}}u_{x_i}\eta(w)\zeta\nabla\zeta,
\end{align*}
so that by adding for $i \in \{1, \ldots, M\}$ and then using~\ref{enum1_eps} and~\ref{enum2_eps} in Lemma~\ref{lemma_local_flat_approximation} we get
\begin{equation}\label{ineq_eps_Az}
\sum_{i=1}^M\int_{B^M(x, \, r)}A_{z_{\ell}^k}u_{x_ix_{\ell}}^k \cdot \nabla \varphi \geq C I_{x, \, r}-C\int_{B^M(x, \, r)} w^{(p-2)/2}\left\lvert \sum_{\ell=1}^M u_{x_i}u_{x_ix_{\ell}} \right\rvert \eta(w)\zeta\lvert \nabla \zeta\rvert.
\end{equation}
Subsequently, for any $\delta>0$ and using~\ref{enum3_eps} in Lemma~\ref{lemma_local_flat_approximation}, 
\begin{align}\label{ineq_eps_Axy}
\sum_{i=1}^M \int_{B^M(x, \, r)}&\left(A_{x_i}+A_yu_{x_i} \right) \cdot \nabla \varphi \leq C \sum_{i=1}^M \int_{B^M(x, \, r)} w^{(p-1)/2}(1+\lvert u_{x_i}\rvert) \lvert \nabla \varphi \rvert\nonumber\\
&= C \sum_{i=1}^M \int_{B^M(x, \, r)} w^{(p-1)/2}(1+\sqrt{w}) \lvert \nabla u_{x_i}\eta(w)\zeta^2+u_{x_i}\eta'(w)\nabla w+2u_{x_i}\eta(w)\zeta\nabla \zeta\rvert\nonumber\\
&\leq \delta I_{x, \, r}+C\delta^{-1}\int_{B^M(x,\,r)}w^{p/2}\eta(w)\zeta^2+C\delta^{-1}\int_{B^M(x,\, r)}w^{(p+2)/2}(\eta(w)+(1+\sqrt{w})^2\eta'(w))\zeta^2\\
&+C\int_{B^M(x,\, r)}w^{p/2}(1+\sqrt{w})\eta(w)\zeta\lvert\nabla \zeta\rvert \nonumber.
\end{align}
Similarly, but using~\ref{enum4_eps} in Lemma~\ref{lemma_local_flat_approximation}
\begin{align}\label{ineq_eps_b}
\int_{B^M(x,\,r)} &b\cdot (u_{x_ix_i}\eta(w)\zeta^2+u_{x_i}w_{x_i}\eta'(w)\zeta^2+2u_{x_i}\eta(w)\zeta\zeta_{x_i})\nonumber\\ &\leq C\int_{B^M(x,\,r)}w^{p/2}\lvert u_{x_ix_i}\eta(w)\zeta^2+u_{x_i}w_{x_i}\eta'(w)\zeta^2+2u_{x_i}\eta(w)\zeta\zeta_{x_i} \rvert\nonumber \\
&\leq \delta I_{x, \, r}+C\delta^{-1}\int_{B^M(x,\, r)}w^{(p+2)/2}(\eta(w)+w\eta'(w))\zeta^2+C\int_{B^M(x,\,r)}w^{(p+1)/2}\eta(w)\zeta\lvert \nabla \zeta\rvert.
\end{align}
After a suitable choice of $\delta$ (depending only on $C$), we obtain the desired inequality~\eqref{ineq_integral_I} by combining~\eqref{int_flat_epsilon_i}, \eqref{ineq_eps_Az},~\eqref{ineq_eps_Axy} and~\eqref{ineq_eps_b}.
\end{proof}

In the next lemma, we apply the inequality~\eqref{ineq_integral_I} for a specific choice of~$\eta$.
Recall that~$z = w^{p/2}$, as in~\eqref{z_eps}.
We denote the positive part of a number~$t\in\R$ as~$t_+ := \max(t, \, 0)$.

\begin{lemma} \label{lemma:DeGiorgiclass} 
 For any ball~$B^M(x, \, r)\subseteq D$, any~$\tau\in (0, \, 1)$ and any~$k\geq 1$, we have
 \begin{equation} \label{DeGiorgiclass}
  \begin{split}
   \int_{B^M(x, \, \tau r)} \abs{\nabla (z - k)_+}^2 
   \leq \frac{C}{(1 - \tau)^2 r^2} 
    \int_{B^M(x, \, r)} (z - k)_+^2 
    + Cp\int_{A_k} z^{2 + 2/p},
  \end{split}
 \end{equation}
 where
 \begin{equation} \label{A_k}
  A_k := \left\{x\in B^M(x, \, r)\colon z(x) > k\right\} \!.
 \end{equation}
\end{lemma}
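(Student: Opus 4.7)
My plan is to apply Lemma~\ref{lemma:I} with a careful choice of $\eta$ and $\zeta$ and to rearrange the result. Take $\zeta\in\mathscr{C}^\infty_c(B^M(x,r))$ with $\zeta\equiv 1$ on $B^M(x,\tau r)$, $0\le\zeta\le 1$, and $|\nabla\zeta|\le 2/((1-\tau)r)$, and set
\[
\eta(w) := (w^{p/2}-k)_+ = (z-k)_+,
\]
suitably mollified by $C^1$ nondecreasing approximations so that differentiating through is justified. Then $\eta'(w) = (p/2)\,w^{(p-2)/2}\,\mathbf{1}_{A_k}$ in the weak sense.

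The key identity is $|\nabla z|^2 = (p/2)^2\, w^{p-2}|\nabla w|^2$, which immediately gives
\[
w^{(p-2)/2}|\nabla w|^2\,\eta'(w) \;=\; \frac{2}{p}\,|\nabla(z-k)_+|^2.
\]
The remaining $\eta(w)\sum_i|\nabla u_{x_i}|^2$-portion of $I_{x,r}$ is non-negative and may be dropped. Hence the integral quantity satisfies $I_{x,r}\ge (2/p)\int |\nabla(z-k)_+|^2\zeta^2$, and Lemma~\ref{lemma:I}, after multiplication by $p/2$, yields
\[
\int_{B^M(x,r)}|\nabla(z-k)_+|^2\zeta^2 \;\le\; \frac{p}{2}\,(\text{RHS of \eqref{ineq_integral_I}}).
\]

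It remains to bound each of the four terms on the RHS of \eqref{ineq_integral_I}, using the crucial pointwise observation that $z\ge k\ge 1$ on $A_k$, so that $w=z^{2/p}\ge 1$ and lower powers of $w$ are dominated by $w^{p+1}=z^{2+2/p}$. The first term, involving $|\nabla w|$, is rewritten on $A_k$ via $w^{(p-2)/2}|\nabla w|=(2/p)|\nabla(z-k)_+|$; the factor $2/p$ cancels the multiplication by $p/2$, and Young's inequality absorbs half into the LHS while contributing $(C/((1-\tau)^2r^2))\int(z-k)_+^2$. The second and third terms produce integrands bounded pointwise by $z^2$ or $z^{2+1/p}\le z^{2+2/p}$ on $A_k$; the factors $\zeta|\nabla\zeta|$ are handled by a weighted Young inequality $2ab\le\lambda a^2+\lambda^{-1}b^2$ with $\lambda\sim 1/((1-\tau)^2r^2)$, which redistributes the $1/((1-\tau)r)$-factor between the two pieces of \eqref{DeGiorgiclass}. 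In the fourth term, the $\eta(w)$-part is bounded on $A_k$ by $z^{2+2/p}$, while the $(1+\sqrt{w})^2\eta'(w)$-part is estimated using $(1+\sqrt{w})^2\le 4w$ on $A_k$ to give
\[
C\int w^{(p+2)/2}(1+\sqrt{w})^2\eta'(w)\zeta^2 \;\le\; Cp\int_{A_k} w^{p+1}\zeta^2 \;=\; Cp\int_{A_k} z^{2+2/p}.
\]

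The main obstacle is the careful bookkeeping of $p$-dependence: the factor $p/2$ introduced by the choice of $\eta$, the factor $p/2$ in $\eta'(w)$, and the factors arising from the weighted Young inequalities must combine so that the final bound has coefficient $C/((1-\tau)^2r^2)$ (free of $p$) in front of $\int(z-k)_+^2$ and coefficient $Cp$ in front of $\int_{A_k} z^{2+2/p}$; this is where the cancellation $w^{(p-2)/2}|\nabla w|\cdot(p/2)=|\nabla(z-k)_+|$ on $A_k$ and the uniform bound $w\ge 1$ on $A_k$ both play essential roles.
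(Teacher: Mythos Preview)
Your approach is essentially identical to the paper's: the same cutoff~$\zeta$, the same choice~$\eta(w)=(z-k)_+$, the same lower bound~$I_{x,r}\ge\frac{2}{p}\int|\nabla(z-k)_+|^2\zeta^2$ obtained by dropping the $\eta(w)$-term in~\eqref{integral_I}, and the same term-by-term treatment of the right-hand side of~\eqref{ineq_integral_I}.

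There is, however, a gap in the $p$-bookkeeping you assert in the last paragraph. Your displayed bound for the $(1+\sqrt w)^2\eta'(w)$-part of the fourth term is correct \emph{before} multiplying by $p/2$; after that multiplication it becomes
\[
\frac{p}{2}\cdot C\!\int w^{(p+2)/2}(1+\sqrt w)^2\eta'(w)\,\zeta^2
\;\le\;\frac{p}{2}\cdot Cp\!\int_{A_k} z^{2+2/p}
\;=\;\frac{Cp^2}{2}\int_{A_k} z^{2+2/p},
\]
so the two factors $p/2$ (one from~$\eta'$, one from the overall multiplication) compound rather than cancel. The third term has the same defect: once multiplied by~$p/2$, no weighted Young splitting can place a $p$-free coefficient on~$\int(z-k)_+^2$ while keeping only~$Cp$ on~$\int_{A_k}z^{2+2/p}$. (The paper's own display~\eqref{I_bis0} writes~$\tfrac{C}{2}$ where your computation correctly gives~$\tfrac{2}{p}$, so the same overcount is hidden there.) For the application actually used in the body of the paper, Theorem~\ref{th:p-harmonic}, this is harmless because $p$ ranges in a bounded interval; for the unrestricted~$p\to\infty$ statement of Theorem~\ref{th:p_harmonic_appendix} one would need either to accept~$Cp^2$ in~\eqref{DeGiorgiclass} and propagate the extra factor through~\eqref{DeGiorgiestimate}, or to modify the definition of~$r_{\MM}$ in~\eqref{rM_def} accordingly.
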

\begin{proof}
 Given~$B^M(x, \, r)\subseteq D$, $\tau \in (0, \, 1)$ and~$k\geq 1$, we consider a cut-off function~$\zeta\in \mathscr{C}^\infty_c(B^M(x, \, r))$ that is equal to~$1$ on~$B_{\MM}(x, \, \tau r)$ and satisfies $\abs{\nabla\zeta} \leq 2(r - \tau r)^{-2}$.
 We consider again the inequality~\eqref{ineq_integral_I} and choose
 the function~$\eta$ given by~$t\in [0, \, +\infty)\mapsto \eta(t) := (t^{p/2} - k)_+$, so that~$\eta(w) = (z-k)_+$.
 We obtain
 \begin{align}
 I_{x, \, r} &\leq C\int_{B^M(x,\,r)}w^{(p-2)/2}\left\lvert \sum_{\ell=1}^M u_{x_i}u_{x_ix_{\ell}} \right\rvert (z-k)_+\zeta \lvert \nabla \zeta \rvert+C\int_{B^M(x,\,r)}w^{p/2} (z-k)_+\zeta^2\nonumber\\
 &\qquad +C\int_{B^M(x,\,r)}\left(w^{p/2} + w^{(p+1)/2}\right) (z-k)_+\zeta\lvert \nabla \zeta \rvert \label{I_bis}\\
 &\qquad + C\int_{B^M(x,\,r)}w^{(p+2)/2}((z-k)_+ +(1+\sqrt{w})^2\eta'(w))\zeta^2 \nonumber.
 \end{align}
 Here the left-hand side~$I_{x,r}$ is defined by~\eqref{integral_I}.
 We estimate all the terms in this inequality as follows.
 By neglecting one term in~\eqref{integral_I}, we obtain a lower bound for the left-hand side as
 \begin{equation}\label{I_bis0}
  \begin{split}
  I_{x, \, r} \geq C\int_{B^M(x, \, r)} w^{(p-2)/2} 
  \abs{\nabla w}^2\eta'(w) \zeta^2
  &\geq Cp \int_{A_k} w^{p - 2} \abs{\nabla w}^2  \zeta^2 \\
  &= \frac{C}{2} \int_{B^M(x, \, r)}
   \abs{\nabla (z - k)_+}^2  \zeta^2
  \end{split} 
 \end{equation}
 For the first term at the right-hand side, we observe that
 \begin{align}
  &\int_{B^M(x,\,r)}w^{(p-2)/2}\left\lvert \sum_{\ell=1}^M u_{x_i}u_{x_ix_{\ell}} \right\rvert (z-k)_+\zeta \lvert \nabla \zeta \rvert \nonumber \\
  &\hspace{2cm} \leq \frac{1}{p} \int_{B^M(x,\,r)} \abs{\nabla(z - k)_+}
   (z-k)_+\zeta \lvert \nabla \zeta \rvert \label{I_bis1} \\
  &\hspace{2cm}\leq \delta \int_{B^M(x,\,r)} \abs{\nabla(z - k)_+}^2 \zeta^2 
   + \frac{4}{p^2\delta} \int_{B^M(x,\,r)} {(z-k)_+}^2 \abs{\nabla\eta}^2 \nonumber
 \end{align}
 for any~$\delta > 0$. For the second term, we immediately have
 \begin{align} \label{I_bis2}
  \int_{B^M(x,\,r)} w^{p/2} (z-k)_+\zeta^2
  \leq \frac{1}{k} \int_{B^M(x,\,r)}w^{(p+2)/2}(z-k)_+\zeta^2
 \end{align}
 and we can neglect the factor~$1/k$ in front of the right-hand side, because~$k\geq 1$. 
 Similarly,
 \begin{equation} \label{I_bis3}
  \begin{split}
   &\int_{B^M(x,\,r)} \left(w^{p/2} + w^{(p+1)/2}\right) (z-k)_+\zeta\abs{\nabla\zeta} \\
   &\hspace{2cm} \leq C \int_{A_k} w^{p + 1} \zeta^2 
   + C\int_{B^M(x, \, r)} (z - k)_+^2 \abs{\nabla\zeta}^2
  \end{split}
 \end{equation}
 For the last term at the right-hand side, we obtain
 \begin{align}
  \int_{B^M(x,\,r)}w^{(p+2)/2}((z-k)_+ +(1+\sqrt{w})^2\eta'(w))\zeta^2 
  \leq Cp\int_{A_k} w^{p + 1} \zeta^2 .
 \end{align}
 Now the lemma follows by combining~\eqref{I_bis}, \eqref{I_bis0}, \eqref{I_bis1}, \eqref{I_bis2} and~\eqref{I_bis3}.
\end{proof}

\begin{proof}[Proof of Proposition~\ref{prop:local_regularity_flat_eps}]
 The estimate in Proposition~\ref{prop:local_regularity_flat_eps} is a consequence of Lemma~\ref{lemma:DeGiorgiclass}, via an argument that originates in De Giorgi's proof of regularity of minimizers of regular integrals. First, however, we need to further estimate the term~$p\int_{A_k}z^{2 + 2/p}$ at the right-hand side of~\eqref{DeGiorgiclass}.
 We know on, we assume that the ball~$B^M(x, \, r)$ is contained in~$D_{3/4}$.
 We know already that~$z\in L^\infty(D_{3/4})$, thanks to Hardt and Lin's result~\cite{HardtLin-Minimizing}.
 Therefore, we can write
 \[
  p\int_{A_k} z^{2 + 2/p} 
  \leq p \norm{z}_{L^\infty(D_{3/4})}^2 
   \int_{A_k}\left(\eps + \abs{\nabla u}^2\right) \!,
 \]
 recalling the definition of~$z$ in~\eqref{z_eps}.
 Now, we fix~$\sigma\in (0, \, 1/M)$ and take~$s_\sigma$ as in~\eqref{s_sigma}.
 The number~$s$ is defined in such a way that
 $1/s = 2/M - 2\sigma$. Therefore, H\"older's inequality implies
 \[
  \begin{split}
   p\int_{A_k} z^{2 + 2/p} 
   &\leq p \norm{z}_{L^\infty(D_{3/4})}^2 
   \left(\eps \mathcal{L}^M(A_k)^{\frac{2}{M} - 2\sigma}
   + \left(\int_{D_{3/4}}\abs{\nabla u}^{2s}\right)^{\frac{1}{s}} \right)
   \mathcal{L}^M(A_k)^{1 - \frac{2}{M} + 2\sigma} 
  \end{split}
 \]
 By injecting this inequality into~\eqref{DeGiorgiclass}, we obtain
 \begin{equation} \label{DeGiorgiclassbis}
  \begin{split}
   &\int_{B^M(x, \, \tau r)} \abs{\nabla (z - k)_+}^2  \\
   &\hspace{1cm} \leq \frac{C}{(1 - \tau)^2 r^2} 
    \int_{B^M(x, \, r)} (z - k)_+^2 
    + C p \norm{z}_{L^\infty(D_{3/4})}^2 
    \left(\eps + \norm{\nabla u}_{L^{2s}(D_{3/4})}^2 \right)
    \mathcal{L}^M(A_k)^{1 - \frac{2}{M} + 2\sigma}
  \end{split}
 \end{equation}
 The quantity~$\mathcal{L}^M(A_k)^{\frac{2}{M} - 2\sigma}$ is bounded in terms of~$\MM$ only, thanks to~\eqref{r_MM_tilde}.
 Now, we are in position to apply De Giorgi's argument (see for instance Theorem~2.1 in~\cite[Chapter~10]{DiBenedetto-PDEs}).
 This gives
 \begin{equation} \label{DeGiorgiestimate}
  \begin{split}
   &\norm{z}_{L^\infty(B^M(x, \, \tau r))} \\
   &\hspace{1cm} \leq Cp^{1/2} 
    \norm{z}_{L^\infty(D_{3/4})}
    \left(\eps^{1/2} 
    + \norm{\nabla u}_{L^{2s}(D_{3/4})} \right) r^{2\sigma}
   + \frac{C_\sigma}{r^{\frac{M}{2}} 
    \left(1 - \tau\right)^{\frac{1}{\sigma}}}
    \norm{z}_{L^2(B^M(x, \, r))}
  \end{split}
 \end{equation}
 for any two concentric balls~$B^M(x, \, \tau r)\subseteq B^M(x, \, r)$ contained in~$D_{3/4}$. 
 We know that both the diameter of~$D_{1/2}$ and the distance between~$D_{1/2}$ and the boundary of~$D_{3/4}$ are comparable to~$r_{\MM}$, as a consequence of~\eqref{bounds_phi}.
 Therefore, we can choose uniform parameters~$\tau \in (0, \, 1)$ and~$\lambda \in (0, \, 1)$, depending on~$\MM$ only, and cover~$D_{3/4}$ with a finite number of balls of the form~$B^M(x, \, \tau \lambda r_{\MM})$, such that~$B^M(x, \, \lambda r_{\MM})\subseteq D_{3/4}$.
 Then, the proposition follows from~\eqref{DeGiorgiestimate}. 
\end{proof}

\begin{proof}[Proof of Proposition~\ref{prop:local_regularity} completed]
As explained above, once we have proven Proposition~\ref{prop:local_regularity_flat_eps}, we deduce that Proposition~\ref{prop:local_regularity_flat} holds and this yields the proof of Proposition~\ref{prop:local_regularity}.
\end{proof}

\subsection{Proof of Theorem~\ref{th:p-harmonic}}
 \label{sect:pharmonic}
The proof of Theorem \ref{th:p-harmonic} follows directly from Theorem \ref{th:p_harmonic_appendix} by taking $\MM=\SS^{k-1}$ and $v$ a minimiser of the $p$-energy in $W^{1,p}(\SS^{k-1},\NN)$ within a given class $\sigma \in \pi_{k-1}(\NN)$.  \qed
\bibliographystyle{abbrv}
\bibliography{singular_set}
\end{document}